\crefname{equation}{}{}
\newtheorem{theorem}{Theorem}
\newtheorem{proposition}[theorem]{Proposition}
\newtheorem{lemma}[theorem]{Lemma}
\newtheorem{corollary}[theorem]{Corollary}
\Crefname{problem}{Problem}{Problems}
\newtheorem*{question*}{Question}
\Crefname{question}{Question}{Questions}
\newtheorem{assumption}[theorem]{Assumption}
\newtheorem{knowntheorem}{Theorem}
\newtheorem{knowncorollary}[knowntheorem]{Corollary}
\newtheorem{knownproposition}[knowntheorem]{Proposition}
\newtheorem{knownlemma}[knowntheorem]{Lemma}
\Crefname{knownlemma}{Lemma}{Lemmas}
\Crefname{knowntheorem}{Theorem}{Theorems}
\newtheorem{notation}[theorem]{Notation}
\Crefname{notation}{Notation}{Notations}
\newtheorem*{notation*}{Notation}
\newtheorem*{example*}{Example}
\theoremstyle{remark}
\newtheorem*{remark}{Remark}
\numberwithin{equation}{section}
\newcommand{\Z}{\mathbb{Z}}
\newcommand{\R}{\mathbb{R}}
\newcommand{\Q}{\mathbb{Q}}
\newcommand{\C}{\mathbb{C}}
\renewcommand{\H}{\mathbb{H}}
\newcommand{\MM}{\mathcal{M}}
\newcommand{\NN}{\mathcal{N}}
\newcommand{\CC}{\mathcal{C}}
\newcommand{\msI}{\mathscr{I}}
\newcommand{\mA}{\mathcal{A}}
\newcommand{\mB}{\mathcal{B}}
\newcommand{\mC}{\mathcal{C}}
\newcommand{\mD}{\mathcal{D}}
\newcommand{\mE}{\mathcal{E}}
\newcommand{\mH}{\mathcal{H}}
\newcommand{\mI}{\mathcal{I}}
\newcommand{\mK}{\mathcal{K}}
\newcommand{\mM}{\mathcal{M}}
\newcommand{\mS}{\mathcal{S}}
\newcommand{\ma}{\mathfrak{a}}
\newcommand{\mb}{\mathfrak{b}}
\newcommand{\mc}{\mathfrak{c}}
\newcommand{\one}{\mathbbm{1}}
\renewcommand{\Re}{\mathrm{Re}}
\renewcommand{\Im}{\mathrm{Im}}
\newcommand{\GL}{\mathrm{GL}}
\newcommand{\SL}{\mathrm{SL}}
\newcommand{\PSL}{\mathrm{PSL}}
\newcommand{\Id}{\mathrm{Id}}
\newcommand{\la}{\left\langle}
\newcommand{\ra}{\right\rangle}
\newcommand{\eps}{\varepsilon}
\newcommand{\sgn}{\mathrm{sgn}}
\renewcommand{\bar}{\overline}
\renewcommand{\hat}{\widehat}
\renewcommand{\tilde}{\widetilde}
\renewcommand{\mod}[1]{\ \textnormal{mod } #1}
\renewcommand{\pmod}[1]{\ (\textnormal{mod } #1)}
\newcommand{\lcm}{\mathrm{lcm}}
\newcommand{\ch}{\cosh}
\newenvironment{psmall}
  {\left(\begin{smallmatrix}}
  {\end{smallmatrix}\right)}
\newcommand{\twoline}[2]{$\substack{\text{\normalsize \vphantom{y}#1} \\ \text{\normalsize \vphantom{y}#2}}$}
\definecolor{myBlue}{rgb}{0, 0, 0.6}
\title[Large sieve for exceptional Maass forms and the greatest prime factor of $n^2+1$]{Large sieve inequalities for exceptional Maass forms and the greatest prime factor of $n^2+1$}
\author[Alexandru Pascadi]{Alexandru Pascadi}
\address{Mathematisches Institut, Endenicher Allee 60, 53115 Bonn, Germany}
\email{alexpascadi@gmail.com}
\begin{document}

\begin{abstract}
    We prove new large sieve inequalities for the Fourier coefficients $\rho_{j\mathfrak{a}}(n)$ of exceptional Maass forms of a given level, weighted by sequences $(a_n)$ with sparse Fourier transforms---including two key types of sequences that arise in the dispersion method. These give the first savings in the exceptional spectrum for the critical case of sequences as long as the level, and lead to improved bounds for various multilinear forms of Kloosterman sums.

    As an application, we show that the greatest prime factor of $n^2+1$ is infinitely often greater than $n^{1.3}$, improving Merikoski's previous threshold of $n^{1.279}$. We also announce applications to the exponents of distribution of primes and smooth numbers in arithmetic progressions.
\end{abstract}

\maketitle

\vspace{-0.7cm}
{
\setlength{\parskip}{0em}
\setcounter{tocdepth}{1}
\tableofcontents
}
\vspace{-1cm}

\section{Introduction} \label{sec:intro}

Let $m, n, c \in \Z$ with $c \ge 1$, and consider the classical Kloosterman sums
\begin{equation} \label{eq:kloosterman}
    S(m, n; c) := \sum_{x \in (\Z/c\Z)^\times} e\left(\frac{mx + n\bar{x}}{c}\right),
\end{equation}
where $e(\alpha) := \exp(2\pi i \alpha)$ and $x\bar{x} \equiv 1 \pmod{c}$. A great number of results in analytic number theory, particularly on the distribution of primes \cite{bombieri1986primes,maynard2025primes,maynard2025primes2,maynard2025primes3,deshouillers1982greatest,de2020niveau,merikoski2023largest,lichtman2023primes} and properties of Dirichlet $L$-functions \cite{deshouillers1982power,deshouillers1984power,watt1995kloosterman,young2011fourth,drappeau2023one,topacogullari2018shifted}, rely on bounding exponential sums of the form
\begin{equation} \label{eq:sum-of-kloosterman}
    \sum_{m \sim M} a_m \sum_{n \sim N} b_n
    \sum_{(c, r) = 1} g\left(\frac{c}{C}\right) S(m\bar{r}, \pm n; sc),
\end{equation}
where $(a_m)$ and $(b_n)$ are rough sequences of complex numbers, $g$ is a compactly-supported smooth function, and $r, s$ are coprime positive integers. One can often (but not always \cite{merikoski2023largest,de2020niveau,maynard2025primes}) leverage some additional averaging over $r$ and $s$, if one of the sequences $(a_m), (b_n)$ is independent of $r, s$.

Estimates for sums like \cref{eq:sum-of-kloosterman} are typically obtained via the spectral theory of automorphic forms \cite{iwaniec2021spectral,iwaniec1997topics}, following Deshouillers--Iwaniec \cite{deshouillers1982kloosterman}; this allows one to bound \cref{eq:sum-of-kloosterman} by certain averages of the sequences $(a_m)$, $(b_n)$ with the Fourier coefficients of automorphic forms for $\Gamma_0(rs)$. Often in applications, the limitation in these bounds comes from our inability to rule out the existence of \emph{exceptional Maass cusp forms}, corresponding to exceptional eigenvalues $\lambda \in (0, \tfrac{1}{4})$ of the hyperbolic Laplacian. This is measured by a parameter $\theta = \max_\lambda \max(0, \tfrac{1}{4} - \lambda)^{1/2}$; under \emph{Selberg's eigenvalue conjecture} there would be no exceptional eigenvalues \cite{selberg1965estimation}, so one could take $\theta = 0$. But unconditionally, the record is Kim--Sarnak's bound $\theta \le \tfrac{7}{64}$, based on the automorphy of symmetric fourth power $L$-functions \cite[Appendix 2]{kim2003functoriality}. 

This creates a power-saving gap between the best conditional and unconditional results in various arithmetic problems, for example, on the prime factors of quadratic polynomials \cite{de2020niveau,merikoski2023largest}, the exponents of distribution of primes \cite{lichtman2023primes} and smooth numbers \cite{pascadi2025smooth} in arithmetic progressions, and low-lying zeros of Dirichlet $L$-functions \cite{drappeau2023one}. 
Improvements to the dependency on $\theta$, which help narrow this gap, come from large sieve inequalities for the Fourier coefficients of exceptional Maass cusp forms (see \cite[Theorems 5, 6, 7]{deshouillers1982kloosterman} and their optimizations in \cite{drappeau2017sums,assing2021uniform,lichtman2023primes,pascadi2025smooth}), which function as weak on-average substitutes for Selberg's eigenvalue conjecture. However, in the key setting of fixed $r, s$ and sequences $(a_n)$ of length $N \approx rs$, no such savings were previously available.

Luckily, for many of the most important applications, we don't need to handle \cref{eq:sum-of-kloosterman} for completely arbitrary sequences, but only for those arising from variations of Linnik's dispersion method \cite{linnik1963dispersion,friedlander1985incomplete,bombieri1986primes,bombieri1987primes2,bombieri1989primes3}; these often have the rough form
\begin{equation} \label{eq:dispersion-coeffs}
    a_m = e(m\alpha) \qquad\qquad \text{and} \qquad\qquad b_n = \sum_{\substack{h_1, h_2 \sim H \\ h_1\ell_1 - h_2\ell_2 = n}} 1,
\end{equation}
for $\alpha \in \R/\Z$ and $\ell_1 \asymp \ell_2 \gg H$ with $(\ell_1, \ell_2) = 1$. Our main results in this paper are new large sieve inequalities for such sequences, with Fourier transforms that obey strong concentration conditions. These are obtained by combining the framework of Deshouillers--Iwaniec with combinatorial ideas---specifically, with new estimates for bilinear sums of Kloosterman sums, stemming from a counting argument of Cilleruelo--Garaev \cite{cilleruelo2011concentration}. 
The resulting improved bounds for \cref{eq:sum-of-kloosterman} can then feed through to the strongest results on several well-studied arithmetic problems.

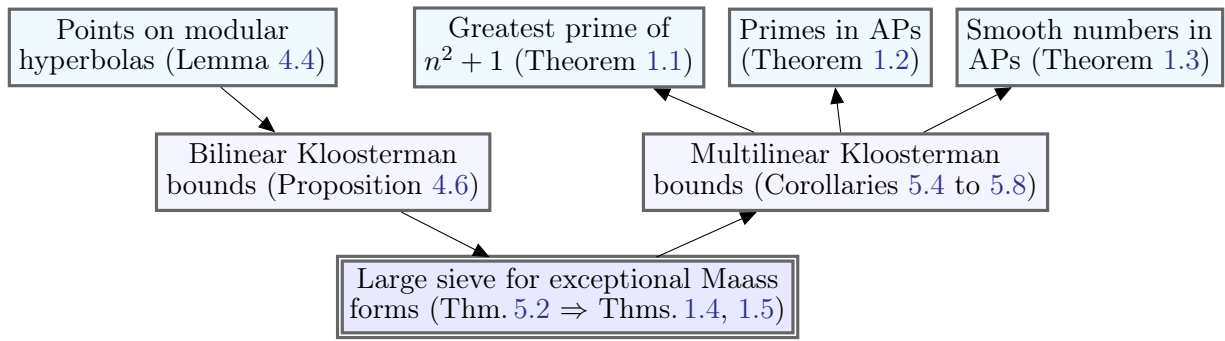
\begin{figure}[ht]
\centering
\begin{tikzpicture}[
square/.style={rectangle, draw=black!60, fill=white, very thick, minimum size=5mm},
important/.style={rectangle, double, draw=black!60, fill=white, very thick, minimum size=5mm},
]
\node[square, fill=cyan!5] (concentration)
    {\twoline{Points on modular}{hyperbolas (\cref{lem:cilleruelo-garaev})}};
\node[square, fill=blue!4] (bilinear) [xshift=1.75cm][below = 0.4cm of concentration]
    {\twoline{Bilinear Kloosterman}{bounds (\cref{prop:bilinear-freq-concentration})}};
\node[important, fill=blue!9] (large-sieve-exc) [xshift=3cm] [below = 0.4cm of bilinear]
    {\twoline{Large sieve for exceptional Maass}{forms (Thm.\,\ref{thm:large-sieve-freq-concentration} $\Rightarrow$ Thms.\,\ref{thm:large-sieve-expo-phases}, \ref{thm:large-sieve-dispersion-coeffs})}};
\node[square, fill=blue!4] (multilinear) [right = 1.5cm of bilinear]
    {\twoline{Multilinear Kloosterman bounds}{(Corollaries \ref{cor:kloosterman-averaging-nc}, \ref{cor:kloosterman-averaging-mnc}, \ref{cor:kloosterman-averaging-qmnc}, \ref{cor:kloosterman-incomplete})}};
\node[square, fill=cyan!5] (appl-greatest-prime) [right = 6cm of concentration]
    {\twoline{Applications}{(including \cref{thm:appl-greatest-prime})}};
\draw[-triangle 45] (concentration) -- (bilinear);
\draw[-triangle 45] (bilinear) -- (large-sieve-exc);
\draw[-triangle 45] (large-sieve-exc) -- (multilinear);
\draw[-triangle 45] (multilinear) -- (appl-greatest-prime);
\end{tikzpicture}
\caption{Structure of paper (\emph{arrows signify logical implications}).}
\label{fig:outline}
\end{figure}
\FloatBarrier

\Cref{fig:outline} summarizes the results outlined above, which go from \emph{counting problems} (on the top row), to \emph{exponential sums} (middle row), to \emph{automorphic forms} (bottom row), and then backwards. The transition between the first two rows is mostly elementary (using successive applications of Poisson summation, Cauchy--Schwarz, combinatorial decompositions, and/or sieve methods), while the transition between the last two rows uses the Kuznetsov trace formula \cite{kuznetsov1980petersson,deshouillers1982kloosterman}.

Before we dive into the large sieve inequalities, let us motivate our discussion with applications.

\begin{theorem}\label{thm:appl-greatest-prime}
    For infinitely many $n \in \Z_+$, the greatest prime factor of $n^2+1$ is larger than $n^{1.3}$.
\end{theorem}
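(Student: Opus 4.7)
The plan is to argue by contradiction, following the strategy of Hooley and Deshouillers--Iwaniec \cite{deshouillers1982greatest}, refined by de la Bretèche--Drappeau \cite{de2020niveau} and Merikoski \cite{merikoski2023largest}, with the quantitative improvement coming from the new multilinear Kloosterman bounds summarised in \cref{cor:kloosterman-averaging-nc,cor:kloosterman-averaging-mnc,cor:kloosterman-averaging-qmnc,cor:kloosterman-incomplete}. Suppose for contradiction that the greatest prime factor of $n^2 + 1$ is bounded by $n^{1.3}$ for every sufficiently large $n$. Localising $n$ to a dyadic interval $[x, 2x]$, the task reduces to establishing a positive lower bound for the count of pairs $(n,p)$ with $n \sim x$, $p \in [x^{1.3-\delta}, x^{1.3}]$ and $p \mid n^2+1$, for some small $\delta > 0$; any such positive bound contradicts the assumption.

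First, I would apply a weighted semi-linear sieve of Greaves/Merikoski type to the sequence $(n^2+1)_{n \sim x}$, detecting the potential large prime factor through a suitable linear combination of congruence sums of the form $\sum_{n \sim x,\, d \mid n^2+1} \lambda_d$ and $\sum_{n \sim x,\, pd \mid n^2+1} \lambda_d$, with $d$ running up to some sieve level $D = x^{1 + 0.3 - \eps}$. Using the standard parametrisation $n \equiv \nu_d \pmod d$ (with $\nu_d^2 \equiv -1 \pmod d$) and then Poisson summation in $n$, the sieve error terms unfold into multilinear sums of Kloosterman sums of the shape \cref{eq:sum-of-kloosterman}, in which the outer coefficient sequences $(a_m), (b_n)$ are precisely of the two types appearing in \cref{eq:dispersion-coeffs}.

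I would then apply the multilinear Kloosterman bounds \cref{cor:kloosterman-averaging-nc,cor:kloosterman-averaging-mnc,cor:kloosterman-averaging-qmnc,cor:kloosterman-incomplete} directly to these error sums. Since the relevant sequences fall into the sparse-Fourier-transform class covered by \cref{thm:large-sieve-dispersion-coeffs,thm:large-sieve-expo-phases}, these corollaries furnish a power saving --- crucially, also against the exceptional spectrum --- even in the critical regime where the coefficient length is comparable to the modulus $rs$. Plugging these estimates into the sieve machinery would then produce a strictly positive lower bound for the number of $(n,p)$ pairs above, yielding the contradiction.

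The principal obstacle is parameter optimisation. One must carefully select the sieve level $D$, a Buchstab decomposition of the sifted set, and a dyadic splitting of the variables $(m, n, c, r, s)$ in \cref{eq:sum-of-kloosterman}, so that every error sum encountered satisfies the structural hypotheses of the new large sieve inequalities, and so that the savings dominate the residual loss incurred by the dependence on $\theta \le 7/32$. Quantitatively, the bottleneck is the regime $N \asymp rs$: Merikoski's threshold of $1.279$ was constrained by having to treat the exceptional spectrum essentially trivially in this range, whereas the new bounds save a power of $rs$ precisely here. Balancing this gain against the classical Hooley--Deshouillers--Iwaniec combinatorics is what I expect to push the exponent from $1.279$ to $1.3$.
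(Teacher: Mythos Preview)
Your plan captures the right high-level ingredients (sieve methods, Poisson summation, multilinear Kloosterman bounds, parameter optimisation against $\theta\le 7/32$), but the logical skeleton of the reduction is off in ways that would block the argument.

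First, the contradiction framing is miscast. You write that it suffices to find pairs $(n,p)$ with $p\in[x^{1.3-\delta},x^{1.3}]$ and $p\mid n^2+1$; but such pairs are perfectly consistent with the hypothesis $P^+(n^2+1)\le n^{1.3}$, so no contradiction arises. What one actually needs is a positive lower bound for the contribution of primes $p>x^{1.3}$. The paper (following Hooley, Deshouillers--Iwaniec, de la Bret\`eche--Drappeau, Merikoski) obtains this via the Chebyshev trick: one computes $\sum_{n\sim x}\sum_{p\mid n^2+1}\log p = 2x\log x + O(x)$ directly, and then proves the \emph{upper} bound $\sum_{n\sim x}\sum_{p\le x^{1.3},\,p\mid n^2+1}\log p < (2-\eps)\,x\log x$. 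You do not mention this reduction, and it is the essential first step.

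Second, your sieve setup is not right. You propose a ``semi-linear sieve of Greaves/Merikoski type'' with level $D=x^{1.3-\eps}$. The paper instead uses Harman's sieve (as in Merikoski) with repeated Buchstab decompositions, and the Type~I level $D$ is at most about $x^{1/2}$ (see \cref{prop:type-I}); one cannot hope to sieve directly with a level as large as $x^{1.3}$. The key arithmetic input is the pair of Type~I and Type~II estimates (\cref{prop:type-I,prop:type-II}) for the sums $|\mA_q|-X\rho(q)/q$ with $q\asymp P=x^\alpha$, $1\le\alpha\le 1.3$. A further technical step you omit is the Gauss-correspondence reparametrisation of the roots $\nu^2\equiv -1\pmod q$, which is what actually produces the (incomplete) Kloosterman sums to which \cref{cor:kloosterman-averaging-nc,cor:kloosterman-averaging-mnc} apply. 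Finally, the endgame is not pure parameter balancing but a numerical computation of several Buchstab-type integrals $G_1,\dots,G_6$ (adapting Merikoski's code), and the exponent $1.3$ emerges from that computation; the new large sieve inequalities enter by enlarging the Type~I and Type~II ranges in \cref{fig:arithm-info}, which shifts the numerical balance just past $1.3$.
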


This result makes progress on a longstanding problem, approximating the famous conjecture that there exist infinitely many primes of the form $n^2+1$. Back in 1967, Hooley \cite{hooley1967greatest} proved the same result with an exponent of $1.1001$, using the Weil bound for Kloosterman sums. In 1982, Deshouillers--Iwaniec \cite{deshouillers1982greatest} used their bounds on multilinear forms of Kloosterman sums \cite{deshouillers1982kloosterman} to improve this substantially, up to an exponent of $1.2024$. More recently, using Kim--Sarnak's bound $\theta \le \tfrac{7}{64}$ \cite[Appendix 2]{kim2003functoriality}, de la Bret\`eche and Drappeau \cite{de2020niveau} optimized the exponent to $1.2182$. Finally, Merikoski \cite{merikoski2023largest} proved a new bilinear estimate (still relying on the bounds of Deshouillers--Iwaniec \cite{deshouillers1982kloosterman}), and used Harman's sieve to reach the exponent $1.279$; assuming Selberg's eigenvalue conjecture, Merikoski also reached the conditional exponent $1.312$. With our new large sieve inequalities (\cref{thm:large-sieve-expo-phases,thm:large-sieve-dispersion-coeffs}), we can improve the arithmetic information due to both Merikoski \cite{merikoski2023largest} and de la Bret\`eche--Drappeau \cite{de2020niveau}, leading to the unconditional result in \cref{thm:appl-greatest-prime}. As in \cite{merikoski2023largest,de2020niveau}, by
adapting our proof, it should be possible to obtain similar results for other irreducible quadratic polynomials.

Additionally, we announce applications to the distribution of primes and smooth numbers in arithmetic progressions to large moduli. In \cite{pascadi2025exponents}, the author will show that the primes have \emph{exponent of distribution} $5/8-\eps$ using ``triply-well-factorable'' weights $(\lambda_q)$ \cite{maynard2025primes2}, in the sense that 
\[
    \sum_{\substack{q \le x^{5/8-\eps} \\ (q, a) = 1}} 
    \lambda_q \left(\pi(x; q, a) - \frac{\pi(x)}{\varphi(q)}\right) \ll_{\eps,A,a}
    \frac{x}{(\log x)^A},
\]
where $\pi(x; q, a)$ denotes the number of primes up to $x$ which are congruent to $a$ mod $q$. A similar result, with the same exponent of $5/8-\eps$, will be established for smooth numbers, using arbitrary $1$-bounded weights $(\lambda_q)$. These will improve results of Maynard \cite{maynard2025primes2} and Lichtman \cite{lichtman2023primes}, respectively Drappeau \cite{drappeau2015theoremes} and the author \cite{pascadi2025smooth}. Notably, our large sieve inequalities will suffice to completely eliminate the dependency on Selberg's eigenvalue conjecture in these cases.

We also note that an extension of our large sieve inequalities to Maass forms with a general nebentypus should have consequences to counting smooth values of irreducible quadratic polynomials \cite{de2020niveau,harman2008values,harmantwo2024} (by improving de la Bret\`eche--Drappeau's \cite[Th\'eor\`eme 5.2]{de2020niveau}), and to enlarging the Fourier support in one-level density estimates for Dirichlet $L$-functions \cite{drappeau2023one}.


\subsection{The large sieve inequalities} \label{subsec:intro-large-sieve}

We now turn to our main technical results. The sums of Kloosterman sums from \cref{eq:sum-of-kloosterman} are related to the Fourier coefficients of $\GL_2$ automorphic forms of \emph{level} $q = rs$ by the Kuznetsov trace formula \cite{kuznetsov1980petersson,deshouillers1982kloosterman} for the congruence group $\Gamma_0(q)$.

More precisely, the spectral side of the Kuznetsov formula contains three terms, corresponding to the contribution of holomorphic forms, Maass forms, and Eisenstein series. The \emph{exceptional} Maass forms are eigenfunctions of the hyperbolic Laplacian on $L^2(\Gamma_0(q) \backslash \H)$ with eigenvalues $0 < \lambda < 1/4$; this (conjecturally empty) exceptional spectrum typically produces losses of the form $X^{\theta(q)}$, where $X$ is a large parameter and $\theta(q) := \max_\lambda \sqrt{\max(0, \tfrac{1}{4} - \lambda)}$. The aforementioned large sieve inequalities for exceptional Maass forms can help alleviate this loss, by incorporating factors of $X^{\theta}$.
Below we state a known result for general sequences $(a_n)$ (the values $X \in \{1, q/N\}$ corresponding to \cite[Theorems 2 and 5]{deshouillers1982kloosterman}), which we aim to improve; we detail our notation in \cref{sec:notation}.

\begin{knowntheorem}[Large sieve with general sequences \cite{deshouillers1982kloosterman}] \label{thm:large-sieve-general}
Let $\eps > 0$, $X > 0$, $N \ge 1/2$, and $(a_n)_{n \sim N}$ be a complex sequence. Let $q \in \Z_+$, $\ma$ be a cusp of $\Gamma_0(q)$ with $\mu(\ma) = q^{-1}$, and $\sigma_{\ma} \in \PSL_2(\R)$ be a scaling matrix for $\ma$. Consider an orthonormal basis of Maass cusp forms for $\Gamma_0(q)$, with eigenvalues $\lambda_j$ and Fourier coefficients $\rho_{j\ma}(n)$ around the cusp $\ma$ (via $\sigma_{\ma}$). Then with $\theta_j := \sqrt{\tfrac{1}{4} - \lambda_j}$, one has 
\begin{equation} \label{eq:large-sieve-general}
    \sum_{\lambda_j < 1/4}
    X^{2\theta_j} 
    \left\vert 
    \sum_{n \sim N} a_n\, \rho_{j\ma}(n)
    \right\vert^2 
    \ll_\eps 
    (qN)^\eps
    \left(1 + \frac{N}{q}\right) \|a_n\|_2^2,
\end{equation}
for any
\begin{equation} \label{eq:basic-X-range}
    X \ll \max\left(1, \frac{q}{N}, \frac{q^2}{N^3}\right).
\end{equation}
\end{knowntheorem}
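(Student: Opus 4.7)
The plan is to apply the Kuznetsov trace formula to the inner spectral correlation obtained by opening the square,
\[
    \sum_{\lambda_j < 1/4} X^{\theta_j} \Bigl|\sum_{n \sim N} a_n \rho_{j\ma}(n)\Bigr|^2 = \sum_{m, n \sim N} a_m \overline{a_n} \sum_{\lambda_j < 1/4} X^{\theta_j} \rho_{j\ma}(m) \overline{\rho_{j\ma}(n)},
\]
with a test function chosen so that the exceptional eigenvalues alone carry the amplification $X^{\theta_j}$. Concretely, one seeks a smooth $h$ with: (i) $h(\kappa) \geq 0$ for real $\kappa$ and on the exceptional segment $\kappa \in i(0, 1/2)$, so that the principal-series, holomorphic, and Eisenstein contributions in the Kuznetsov identity can be discarded by positivity; (ii) $h(i\theta_j/2)/\cosh(\pi i \theta_j/2) \gtrsim X^{\theta_j}$ at each exceptional parameter $\kappa_j = i\theta_j/2$; and (iii) enough decay that the diagonal spectral integral $\int h(\kappa)\kappa\tanh(\pi\kappa)\,d\kappa$ is $O((qN)^\eps)$.

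A natural prototype is $h(\kappa) = (X^{i\kappa} + X^{-i\kappa})^2\, w(\kappa)$, with $w(\kappa)$ a smooth non-negative weight such as $w(\kappa) = (\kappa^2 + \tfrac14)\, e^{-(\kappa/K)^2}$ for a spectral cutoff $K$: the first factor equals $2 + 2\cos(2\kappa\log X) \geq 0$ on $\R$, and at exceptional $\kappa_j = i\theta_j/2$ it is $(X^{-\theta_j/2} + X^{\theta_j/2})^2 \geq X^{\theta_j}$. Kuznetsov then produces an upper bound of the form $\mathcal{D}(a) + \mathcal{K}(a)$, where the diagonal $\mathcal{D}(a) \ll (qN)^\eps \|a\|_2^2$ after direct evaluation (the oscillatory $X^{\pm 2i\kappa}$ terms contributing $O(1)$ via stationary phase, and the constant $2$ contributing $O(\log K)$), while
\[
    \mathcal{K}(a) = \sum_{m, n \sim N} a_m \overline{a_n} \sum_{c\equiv 0 \pmod q} \frac{S_{\ma\ma}(m, n; c)}{c}\,\tilde h\!\left(\frac{4\pi\sqrt{mn}}{c}\right),
\]
with $\tilde h$ the Bessel-type transform of $h$. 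To estimate $\mathcal{K}(a)$ I would apply the Weil bound $|S_{\ma\ma}(m, n; c)| \ll \tau(c)(m, n, c)^{1/2} c^{1/2}$ and Cauchy--Schwarz in $m, n$ to factor out $\|a\|_2^2$, reducing the problem to bounding a weighted sum over $c$.

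The main obstacle, and the source of the three ranges in \cref{eq:basic-X-range}, is the precise analysis of $\tilde h$. The support of $\tilde h$ localizes $c$ to a window whose size grows with $X$, while the cusp $\ma$ forces $c \geq q$; the borderline cases $X \ll 1$, $X \ll q/N$, and $X \ll q^2/N^3$ correspond exactly to the ranges in which this window, convolved with the contribution of the Weil bound, yields the total $(qN)^\eps(1 + N/q)\|a\|_2^2$. Producing sharp asymptotics for $\tilde h$ --- typically via Mellin--Barnes contour shifts that cross the exceptional poles of the Kuznetsov kernel --- and tracking the trade-off between the amplification of $h$ on the exceptional segment and the resulting growth of $\tilde h$ near the origin, is where the analysis is most delicate.
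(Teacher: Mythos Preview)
Your overall strategy (open the square, apply Kuznetsov, amplify the exceptional spectrum, bound the Kloosterman side by Weil) is correct, but you take a different and more laborious route than the paper. You choose the test function on the \emph{spectral} side, building $h(\kappa)=(X^{i\kappa}+X^{-i\kappa})^2 w(\kappa)$ and then facing the problem of understanding the inverse Bessel transform $\tilde h$. The paper instead chooses the test function on the \emph{geometric} side: it fixes a nonnegative bump $\Phi$ and sets $\varphi(y)=\Phi(yX/(4\pi))$, compactly supported in $y\asymp X^{-1}$. Applying Kuznetsov in the forward direction (\cref{prop:kuznetsov}) then gives, on the spectral side, the weight $\hat\mB_\varphi(\kappa_j)\asymp X^{\theta_j}$ on exceptional eigenvalues by \cref{eq:exc-bound-2}, which is exactly the amplification you want. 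The non-exceptional Maass, holomorphic, and Eisenstein pieces are \emph{not} discarded by positivity; they are bounded outright using the Bessel estimates \cref{eq:bessel-bound-2,eq:bessel-bound-3} together with the standard large sieve (\cref{lem:large-sieve-non-exceptional}). This is the content of \cref{cor:exc-bound}.

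The payoff of working from the geometric side is that the Kloosterman sum is over a \emph{finite} range $c\asymp NX$ with $q\mid c$, and two of the three ranges in \cref{eq:basic-X-range} then require no Bessel analysis whatsoever: the case $X\ll 1$ is just \cref{lem:large-sieve-non-exceptional} directly, and the case $X\ll q/N$ holds because the sum over $c$ is \emph{empty} when $NX<q$. Only the third range $X\ll q^2/N^3$ uses Weil, and there the computation is the four lines around \cref{eq:sqrt-growth-weil}. Your plan pushes all three ranges into the analysis of $\tilde h$, which (as you correctly flag) is the hard part of your arrangement; the paper's choice sidesteps that analysis entirely. A minor caution: your positivity claim for the holomorphic contribution is not automatic in the form of Kuznetsov stated in \cref{prop:kuznetsov} (note the factor $i^k$ in \cref{eq:holomorphic-contribution}); the paper avoids this issue by bounding rather than discarding that term.
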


\begin{remark}
As in \cite{maynard2025primes,pascadi2025smooth,lichtman2023primes}, we use Deshouillers--Iwaniec's normalization \cite{deshouillers1982kloosterman} for the Fourier coefficients $\rho_{j\ma}(n)$ of Maass forms. In various other works \cite{topacogullari2018shifted,drappeau2017sums,de2020niveau,merikoski2023largest}, $\rho_{j\ma}(n)$ are rescaled by $n^{-1/2}$.
\end{remark}

\begin{remark}
An equivalent (and more common \cite{deshouillers1982kloosterman,drappeau2017sums}) way to phrase results like \cref{thm:large-sieve-general} is that
\[
    \sum_{\lambda_j < 1/4}
    X^{2\theta_j} 
    \left\vert 
    \sum_{n \sim N} a_n\, \rho_{j\ma}(n)
    \right\vert^2 
    \ll_\eps 
    (qN)^\eps 
    \left(1 + \frac{X}{X_0}\right)^{2\theta(q)}
    \left(1 + \frac{N}{q}\right) \|a_n\|_2^2,
\]
for \emph{any} $X > 0$, and $X_0 = X_0(N, q)$ given by the right-hand side of \cref{eq:basic-X-range}. We prefer to state our large sieve inequalities in terms of the maximal value of $X$ which does not produce any losses in the right-hand side, compared to the regular spectrum (i.e., $X \ll X_0$). We note that in applications, one usually has $\sqrt{q} \ll N \ll q$, and the best choice in \cref{eq:basic-X-range} for this range is $X \asymp q/N$. But in the critical range $N \asymp q$, \cref{thm:large-sieve-general} is as good as the large sieve inequalities for the full spectrum \cite[Theorem 2]{deshouillers1982kloosterman}, since the limitation $X \ll 1$ forestalls any savings in the $\theta$-aspect.

When some averaging over levels $q \sim Q$ is available, $\ma = \infty$, and $(a_n)$, $\sigma_\infty$ are independent of $q$, Deshouillers--Iwaniec \cite[Theorem 6]{deshouillers1982kloosterman} improved the admissible range to $X \ll \max(1, (Q/N)^2)$; Lichtman \cite{lichtman2023primes} recently refined this to $X \ll \max(1, \min((Q/N)^{32/7}, Q^2/N))$, by making $\theta$-dependencies explicit in \cite[\S 8.2]{deshouillers1982kloosterman}. We note that these results are still limited at $X \ll 1$ when $N \asymp Q$.
\end{remark}

Although it seems difficult to improve \cref{thm:large-sieve-general} in general (see \cref{subsec:overview-general-sequences}), one can hope to do better for special sequences $(a_n)$; for instance, the last term in \cref{eq:basic-X-range} can be improved if the sequence $(a_n)$ is sparse. In this paper, we consider the ``dual'' setting when $(a_n)$ is sparse in frequency space, i.e., when the Fourier transform $\hat{a}(\xi) := \sum_n a_n\, e(-n\xi)$ is concentrated on a subset of $\R/\Z$. We give a general result of this sort in \cref{thm:large-sieve-freq-concentration}, which also depends on rational approximations to the support of $\hat{a}$. Below we state the two main cases of interest, corresponding to the sequences in \cref{eq:dispersion-coeffs} (we also incorporate a scalar $a$ in the Fourier coefficients, but on a first read one should take $a = 1$).

\begin{theorem}[Large sieve with exponential phases] \label{thm:large-sieve-expo-phases}
Let $\eps, X > 0$, $N \ge 1/2$, $\alpha \in \R/\Z$, and $q, a \in \Z_+$. Then with the notation of \cref{thm:large-sieve-general} and the choice of scaling matrix in \cref{eq:scaling-choices}, the bound
\begin{equation} \label{eq:large-sieve-expo}
    \sum_{\lambda_j < 1/4}
    X^{2\theta_j} 
    \left\vert 
    \sum_{n \sim N} e(n\alpha)\, \rho_{j\ma}(an)
    \right\vert^2 
    \ll_\eps 
    (qaN)^\eps
    \left(1 + \frac{aN}{q}\right) N
\end{equation}
holds for all
\begin{equation} \label{eq:expo-X-range}
        X \ll \frac{\max\left(N, \frac{q}{a}\right)}{\min_{t \in \Z_+} \left(t + N\|t\alpha\| \right)}.
\end{equation}
In particular, this implies the range $X \ll \max(\sqrt{N}, \tfrac{q}{a\sqrt{N}})$, uniformly in $\alpha$ and $\sigma_{\ma}$.
The same result holds if $e(n\alpha)$ is multiplied by $\Phi(n/N)$, for any smooth function  $\Phi : (0, 4) \to \C$ with $\Phi^{(j)} \ll_j 1$.
\end{theorem}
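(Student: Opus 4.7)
The natural strategy is to deduce \cref{thm:large-sieve-expo-phases} as a specialization of the more general \cref{thm:large-sieve-freq-concentration}, which (as advertised in the text) handles sequences whose Fourier transform $\hat{a}(\xi) = \sum_n a_n\, e(-n\xi)$ concentrates on a sparse subset of $\R/\Z$ equipped with good rational approximations. The sequence $a_n = e(n\alpha)$ (or its smoothed variant $\Phi(n/N)\, e(n\alpha)$) is as concentrated as possible in this sense: $\hat{a}(\xi)$ is a Dirichlet kernel (or a rapidly-decaying smooth bump) peaked at $\xi = \alpha$, with essentially all of its $L^2$ mass in an $O(1/N)$-neighborhood of $\alpha$ in $\R/\Z$.

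First, I would apply \cref{thm:large-sieve-freq-concentration} with the Fourier support taken to be the singleton $\{\alpha\}$ (up to the natural $O(1/N)$ fuzz). The resulting bound should contain the expected ``$1 + aN/q$'' factor together with $\|a_n\|_2^2 = N$, recovering the right-hand side of \cref{eq:large-sieve-expo}. The admissible range of $X$ in the general theorem should be controlled by how efficiently the support of $\hat{a}$ can be covered by Farey fractions of small denominator; for a single point $\alpha$, this is precisely quantified by $\min_{t \in \Z_+}(t + N\|t\alpha\|)$, which trades off the denominator size $t$ against the scaled approximation error $N\|t\alpha\|$. Substituting this should yield \cref{eq:expo-X-range}. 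To obtain the uniform sub-range $X \ll \max(\sqrt{N}, q/(a\sqrt{N}))$, I would then invoke Dirichlet's approximation theorem: for any $\alpha$ and any $N \ge 1$, there exists $t \in \Z_+$ with $t \le \sqrt{N}$ and $\|t\alpha\| \le 1/\sqrt{N}$, so the minimum in \cref{eq:expo-X-range} is at most $2\sqrt{N}$ unconditionally in $\alpha$.

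The smoothed variant with $\Phi(n/N)$ follows by the same argument, since the rapid decay of the Fourier transform of $\Phi$ keeps $\hat{a}$ concentrated in a comparable neighborhood of $\alpha$, with any polynomial-order tails absorbed by the $(qaN)^\eps$ factor. The main obstacle here is of course \cref{thm:large-sieve-freq-concentration} itself, whose proof presumably relies on the bilinear Kloosterman bound \cref{prop:bilinear-freq-concentration} (in turn built on the Cilleruelo--Garaev counting argument) fed through the Kuznetsov trace formula, together with a Farey dissection of $\R/\Z$ to exploit the rational approximations to the support of $\hat{a}$. Granted that machinery, the specialization to pure exponentials is essentially a bookkeeping exercise: identifying the cost $\min_t(t + N\|t\alpha\|)$ as the correct rational approximation quantity for a singleton support, and verifying that the general bound collapses to \cref{eq:large-sieve-expo} in this case.
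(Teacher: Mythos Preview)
Your proposal is correct and matches the paper's approach almost exactly: the paper applies \cref{thm:large-sieve-freq-concentration} with the Dirac measure $\mu = \delta_{\{\alpha\}}$ (so $|\mu|(\R/\Z)=1$ and $\mI_N(\mu)=T_N(\alpha,\alpha)\asymp \min_{t}(t+N\|t\alpha\|)$), sets $A=\sqrt{N}\asymp\|a_n\|_2$, checks the lower-bound condition on $A$ in \cref{eq:X-freq-concentration}, reads off \cref{eq:expo-X-range}, and then uses the one-variable Dirichlet bound $T_N(\alpha)\ll\sqrt{N}$ from \cref{eq:tn-bound} for the uniform range. The only small refinements you might add are: (a) taking $\mu$ to be a pure Dirac mass rather than an $O(1/N)$-fuzzed bump is cleaner and immediately gives $\mI_N(\mu)=T_N(\alpha,\alpha)$ with no tail bookkeeping; and (b) the uniformity in the scaling matrix $\sigma_{\ma}$ follows because, by \cref{eq:vary-scaling}, changing $\sigma_{\ma}$ only shifts $\alpha$, and the $\sqrt{N}$ bound is uniform in $\alpha$.
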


Here, $\|\alpha\|$ denotes the distance from $\alpha$ to $0$ inside $\R/\Z$; the fact that the worst (``minor-arc'') range covered by \cref{eq:expo-X-range} is $X \ll \max(\sqrt{N}, \tfrac{q}{a\sqrt{N}})$ follows from a pigeonhole argument. The best range, $X \ll \max(N, \tfrac{q}{a})$, is achieved when $\alpha$ is $O(N^{-1})$ away from a rational number with bounded denominator.
In particular, \cref{thm:large-sieve-expo-phases} obtains significant savings in the $\theta$-aspect in the critical case $N \asymp q$, for an individual level $q$, which was previously impossible to the best of our knowledge.

\begin{remark}
As detailed in \cref{subsec:automorphic}, altering the scaling matrix $\sigma_{\ma}$ in bounds like \cref{eq:large-sieve-expo} is equivalent to altering the phase $\alpha$; the canonical choice in \cref{eq:scaling-choices} leads to several simplifications in practice.
\end{remark}

When $a = 1$, $\ma = \infty$, and $\alpha$ is independent of $q$, Deshouillers--Iwaniec \cite[Theorem 7]{deshouillers1982kloosterman} showed that the bound in \cref{eq:large-sieve-expo} holds on average over levels $q \sim Q$ in the larger range $X \ll \max(N, Q^2/N)$. In this on-average setting, we also mention the large sieve inequality of Watt \cite[Theorem 2]{watt1995kloosterman}, which saves roughly $X = Q^2/N^{3/2}$ when $a_n$ is a smoothed divisor-type function.

For the second sequences mentioned in \cref{eq:dispersion-coeffs}, we state a bound which also incorporates exponential phases $e(h_i\alpha_i)$. The reader should keep in mind the case of parameter sizes $N \asymp HL$, $H \asymp L$, and $\alpha_i = 0$, when the $X$-factor saved below can be as large as $\max(\sqrt{N}, \tfrac{q}{a\sqrt{N}})$.

\begin{theorem}[Large sieve with dispersion coefficients] \label{thm:large-sieve-dispersion-coeffs}
Let $\eps, X > 0$, $N \ge 1/2$, $L, H \gg 1$, $\alpha_1, \alpha_2 \in \R/\Z$, and $q, a, \ell_1, \ell_2 \in \Z_+$ satisfy $\ell_1, \ell_2 \asymp L$, $(\ell_1, \ell_2) = 1$. Consider the sequence $(a_n)_{n \sim N}$ given by
\[
    a_n := \sum_{\substack{h_1, h_2 \in \Z \\ h_1 \ell_1 - h_2 \ell_2 = n}} \Phi_1\left(\frac{h_1}{H}\right) \Phi_2\left(\frac{h_2}{H}\right) e(h_1 \alpha_1 + h_2 \alpha_2),
\]
where $\Phi_i : (-\infty, \infty) \to \C$ are smooth functions supported in $(-O(1),O(1))$, with $\Phi_i^{(j)} \ll_j 1$, $\forall j \ge 0$. Then with the notation of \cref{thm:large-sieve-general} and the choice of scaling matrix in \cref{eq:scaling-choices}, if $q \gg L^2$, one has
\begin{equation} \label{eq:large-sieve-disp}
    \sum_{\lambda_j < 1/4}
    X^{2\theta_j} 
    \left\vert 
    \sum_{n \sim N} a_n\, \rho_{j\ma}(an)
    \right\vert^2 
    \ll_\eps 
    (qaH)^{\eps}
    \left(1 + \frac{aN}{q}\right)
    \left(\|a_n\|_2^2 + \gcd(a, q) N \left(\frac{H}{L} + \frac{H^2}{L^2}\right)\right),
\end{equation}
whenever 
\begin{equation} \label{eq:disp-X-range}
    X \ll \max\left(1, \frac{q}{aN}\right) \max\left(1, \frac{NH}{(H+L)LM}\right),
    \qquad\quad 
    M := \min_{\substack{t \in \Z_+ \\ i \in \{1, 2\}}} \left(t + \frac{N}{L} \|t\alpha_i\|\right).
\end{equation}
\end{theorem}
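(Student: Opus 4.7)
The plan is to deduce \cref{thm:large-sieve-dispersion-coeffs} from the general frequency-concentration large sieve \cref{thm:large-sieve-freq-concentration}, via a Fourier-analytic description of the sequence $(a_n)$. Because $a_n$ has convolution-type structure, its Fourier transform $\hat a(\xi) := \sum_n a_n\, e(-n\xi)$ factorises as
\[
    \hat a(\xi) = \bigg(\sum_{h_1 \in \Z} \Phi_1(h_1/H)\, e(h_1(\alpha_1 - \ell_1\xi))\bigg) \bigg(\sum_{h_2 \in \Z} \Phi_2(h_2/H)\, e(h_2(\alpha_2 + \ell_2\xi))\bigg).
\]
First I would estimate each factor by Poisson summation, obtaining $\ll_A H(1 + H\|\ell_i\xi \mp \alpha_i\|)^{-A}$ for any $A \ge 0$ by the rapid decay of $\hat\Phi_i$. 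Thus $\hat a$ is effectively supported on those $\xi$ admitting \emph{simultaneous} rational approximations $\xi \approx (k_1+\alpha_1)/\ell_1$ and $\xi \approx (k_2-\alpha_2)/\ell_2$, each of precision $\asymp 1/H$.

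Since $(\ell_1,\ell_2)=1$, these two systems of approximations combine via CRT into rationals of denominator $\ell_1\ell_2 \asymp L^2$; a brief count of admissible pairs $(k_1,k_2)$ bounds the number of such concentration points in $[0,1)$ (with regimes $H \gg L$ giving few isolated points, and $H \ll L$ giving many overlapping arcs). The Diophantine quantity $M = \min_{t,i}(t + H\|t\alpha_i\|)$ enters as a measure of how close an $\alpha_i$ lies to a rational with small denominator: a small $M$ would cause several nearby concentration intervals to collapse onto a common low-height fraction, inflating the effective frequency density there and worsening the bound.

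Next I would invoke \cref{thm:large-sieve-freq-concentration} with this concentration data: the rationals of denominator $\ell_1\ell_2$ just identified, the precision $1/H$, and the pointwise bound on $\hat a$ at each centre. The hypothesis $q \gg L^2$ is precisely what is needed for these denominators to be compatible with (``absorbed by'') the cusp geometry of $\Gamma_0(q)$. In the resulting estimate, the first summand $\|a_n\|_2^2$ is the trivial contribution always provided by \cref{thm:large-sieve-general}, while the second, $\gcd(a,q) N(H/L + H^2/L^2)$, arises as the main-term contribution from the concentration intervals -- with $\gcd(a,q)$ tracking the twist by $a$ in $\rho_{j\ma}(an)$, and $N(H/L + H^2/L^2)$ matching the size of $\|a_n\|_2^2$ up to constants.

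The main obstacle I anticipate is the joint Diophantine bookkeeping in the middle step: tracking how the two families of width-$1/H$ arcs, centred at translates of $1/\ell_1$ and $1/\ell_2$ respectively, intersect in both regimes $H \gg L$ (where they meet sparsely) and $H \ll L$ (where they pile up), and simultaneously handling the small-denominator edge cases for both $\alpha_1$ and $\alpha_2$ through the unified parameter $M$. Everything else -- the Poisson analysis of the two smooth exponential sums, the decay of $\hat\Phi_i$, verifying the pointwise bound on $\hat a$ at the concentration centres, and reconciling the output of \cref{thm:large-sieve-freq-concentration} with the form stated in \cref{eq:disp-X-range} -- should be routine.
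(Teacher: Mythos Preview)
Your overall strategy---reduce to \cref{thm:large-sieve-freq-concentration} via the Fourier factorisation of $(a_n)$ (which is \cref{lem:fourier-dispersion} in the paper)---is exactly the paper's route. But two gaps in the connecting steps need repair.

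First, \cref{thm:large-sieve-freq-concentration} does not accept ``concentration data'' (centres, widths, pointwise bounds) as input; it requires the rational-approximation integral $\mI_N(\mu) = \iint T_N(\alpha,\beta)\, d|\mu|(\alpha)\, d|\mu|(\beta)$, where $T_N(\alpha,\beta) = \min_{t\in\Z_+}(t + N\|t\alpha\| + N\|t\beta\|)$. The heart of the proof is bounding $T_N(\alpha,\beta)$ on the essential support of $\hat a$: for $\alpha,\beta$ with $\|\ell_i\alpha - \alpha_i\|,\, \|\ell_i\beta - \alpha_i\| \le H^{\eps-1}$, one inserts the scaled trial value $t\ell_i$ into the definition of $T_N$ to get
\[
    T_N(\alpha,\beta) \le t\ell_i + N\|t\ell_i\alpha\| + N\|t\ell_i\beta\|
    \ll tL + NtH^{\eps-1} + N\|t\alpha_i\|
    \ll H^\eps L(t + H\|t\alpha_i\|),
\]
using $N \ll HL$ in the last step; then minimising over $t\in\Z_+$ and $i\in\{1,2\}$ yields $T_N(\alpha,\beta) \ll H^\eps LM$. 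This scaling trick is the step you have not identified, and without it there is no mechanism producing the factor $NH/((H+L)LM)$ in \cref{eq:disp-X-range}. Your CRT/interval-counting controls only $|\mu|(\R/\Z)$, not $\mI_N(\mu)$.

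Second, two of your heuristics are inverted. The parameter $M$ is \emph{small} precisely when some $\alpha_i$ is near a low-height rational, and small $M$ \emph{improves} the bound (enlarges the admissible $X$-range), via the mechanism above---not the reverse. And the hypothesis $q \gg L^2$ has nothing to do with denominators being ``absorbed by the cusp geometry of $\Gamma_0(q)$''; it is a purely analytic condition used to verify the lower bound on $A$ in \cref{eq:X-freq-concentration}, namely to ensure that $|\mu|(\R/\Z) \ll_\eps H^\eps(1 + H/L)$ is dominated by $\frac{\sqrt{q+aN}}{\sqrt{\gcd(a,q)}\,N}\, A$ for the chosen $A$.
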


\begin{remark}
In \cref{thm:large-sieve-dispersion-coeffs}, when $N \asymp HL$ and $\alpha_i = 0$, the norm $\|a_n\|_2^2$ is on the order of $N(\frac{H}{L} + \frac{H^2}{L^2})$. So in this setting, which is the limiting case for our applications, the right-hand side of \cref{eq:large-sieve-disp} produces no important losses over the regular-spectrum bound of $(qN)^\eps (1 + \tfrac{aN}{q})\, \|a_n\|_2^2$.
\end{remark}

\begin{remark}
Some instances of the dispersion method \cite{drappeau2023one,drappeau2017sums,assing2021uniform} use coefficients roughly of the shape
\begin{equation} \label{eq:third-type-disp}
    b_n = \sum_{\substack{h \sim H \\ h(\ell_1-\ell_2) = n}} 1,
\end{equation}
where $\ell_1 \asymp \ell_2 \gg H$, $\ell_1 \neq \ell_2$, and the level is $q = \ell_1 \ell_2$. Although these resemble the second sequence from \cref{eq:dispersion-coeffs} (treated by \cref{thm:large-sieve-dispersion-coeffs}), one should actually handle this case using \cref{thm:large-sieve-expo-phases}, with $\alpha = 0$, $N = H$, and $a = |\ell_1 - \ell_2|$. In particular, for these ranges we have $aN = |\ell_1-\ell_2|H \ll \ell_1\ell_2 = q$, so the $1$-term in the right-hand side of \cref{eq:large-sieve-expo} is dominant, and the range in \cref{eq:expo-X-range} becomes $X \ll \ell_1\ell_2/|\ell_1 - \ell_2|$.
\end{remark}

\begin{remark}
For simplicity, we state and prove our results in the setting of arbitrary bases of classical Maass forms, following the original work of Deshouillers--Iwaniec \cite{deshouillers1982kloosterman}. However, our work should admit two independent extensions, which are relevant for some applications. The first is handling Maass forms with a nontrivial nebentypus, following Drappeau \cite{drappeau2017sums}; this leads to bounds for sums like \cref{eq:sum-of-kloosterman} with $c$ restricted to an arithmetic progression. The second is considering Hecke--Maass forms which are exceptional with respect to the Ramanujan--Petersson conjecture at finite places, the non-Archimedean analogue of Selberg's conjecture; this should improve the dependency on the scalar $a$ when $aN > q$. One could either follow Assing--Blomer--Li \cite{assing2021uniform} to `factor out' $a$ from $\rho_{j\ma}(an)$ (and apply Kim--Sarnak's bound \cite{kim2003functoriality} at places dividing $a$ before using our large sieve inequalities), or treat the exceptional forms at places dividing $a$ similarly to the Archimedean case, to match the regular-spectrum bound whenever $aX$ is at most a function of $q$ and $N$ (this option should work better when $a$ is well-factorable).
\end{remark}

\subsection{Acknowledgements}

The author is grateful to his PhD advisor, James Maynard, for his kind guidance, to Sary Drappeau for many thoughtful discussions, and to Jori Merikoski, Lasse Grimmelt, Jared Duker Lichtman, and the referees for helpful comments and suggestions. For the duration of this project, the author was sponsored by the EPSRC Scholarship at University of Oxford.

\section{Informal overview} \label{sec:overview}

Let us summarize the key ideas behind our work, ignoring a handful of technical details such as smooth weights, GCD constraints, or keeping track of $x^{o(1)}$ factors. 

\subsection{Large sieve with general sequences} \label{subsec:overview-general-sequences} 

Let $q \in \Z_+$ and consider the simplified version
\begin{equation} \label{eq:large-sieve-informal}
    \sum_{\lambda_j < 1/4} X^{2\theta_j} \left\vert \sum_{n \sim N} a_n\, \rho_{j\infty}(n) \right\vert^2 \lesssim \left(1 + \frac{N}{q}\right) \|a_n\|_2^2
\end{equation}
of the large sieve inequality from \cref{thm:large-sieve-general}, for $\ma = \infty$, ignoring $(qN)^{o(1)}$ factors. Here $(a_n)$ are arbitrary complex coefficients, and the reader may pretend that $|a_n| \approx 1$ for each $n$, so that $\|a_n\|_2^2 \approx N$. Such an inequality follows from \cite[Theorem 2]{deshouillers1982kloosterman} when $X = 1$, but we need larger values of $X$ to temper the contribution of exceptional eigenvalues. The Kuznetsov trace formula \cite{kuznetsov1980petersson} in \cref{prop:kuznetsov}, combined with large sieve inequalities for the regular spectrum \cite[Theorem 2]{deshouillers1982kloosterman}, essentially reduces the problem to bounding (a smoothed variant of) the sum
\begin{equation} \label{eq:kloosterman-informal}
    \sum_{\substack{c \sim NX \\ c \equiv 0 \pmod{q}}} \frac{1}{c} \sum_{m \sim N} \bar{a_m} \sum_{n \sim N} a_n\, S(m, n; c)
\end{equation}
by the same amount as in the right-hand side of \cref{eq:large-sieve-informal}---see \cref{cor:exc-bound} for a formal statement in this direction. The left-hand side vanishes for $X < q/(2N)$, so we immediately obtain \cref{eq:large-sieve-informal} for $X \ll q/N$, which is the content of \cite[Theorem 5]{deshouillers1982kloosterman}. Alternatively, we can plug in the pointwise Weil bound for $S(m, n; c)$ and apply Cauchy--Schwarz, to obtain an upper bound of roughly
\begin{equation} \label{eq:sqrt-growth-weil}
    \frac{NX}{q} \frac{1}{NX} N \|a_n\|_2^2 \sqrt{NX} =
    \frac{N^{3/2} X^{1/2}}{q} \|a_n\|_2^2.
\end{equation}
This is acceptable in \cref{eq:large-sieve-informal} provided that $X \le q^2/N^3$, which completes the range from \cref{thm:large-sieve-general}. 

Improving the range $X \le \max(1, q/N, q^2/N^3)$ turns out to be quite difficult. Indeed, it is not clear how to exploit the averaging over $c$ without the Kuznetsov formula, so any savings are more likely to come from bounding bilinear forms of Kloosterman sums $\sum_{m \sim N} a_m \sum_{n \sim N} b_n\, S(m, n; c)$; this is a notoriously hard problem for general sequences $(a_m), (b_n)$ \cite{kowalski2017bilinear,kowalski2020stratification,kerr2023bounds,xi2018ternary}. For example, an extension of the work of Kowalski--Michel--Sawin \cite{kowalski2017bilinear} to general moduli should improve \cref{thm:large-sieve-general} in the critical range $q \approx N^2$, but even then the final numerical savings would be relatively small.

The other critical case encountered in applications is $q \approx N$, where \cref{thm:large-sieve-general} gives no non-trivial savings in the $\theta$-aspect (i.e., $X \ll 1$), and where such savings should in fact be impossible for general sequences $(a_n)$. Indeed, we expect $|\rho_j(n)|$ to typically be of size $\approx q^{-1/2}$, so by picking $a_n = q\, \bar{\rho_1(n)}$, the left-hand side of \cref{eq:large-sieve-informal} is at least $X^{2\theta(q)} N^2$, while the right-hand side is $(1 + \frac{N}{q}) qN$; this limits the most optimistic savings for general sequences at $X = (1 + \frac{q}{N})^{1/(2\theta(q))}$.

The key idea in our work is to make use of the special structure of the sequences $(a_n)$ which show up in variations of the dispersion method \cite{linnik1963dispersion}. Often, such sequences have sparse Fourier transforms, and using Fourier analysis on the corresponding exponential sums leads to a combinatorial problem.

\subsection{Exponential phases and a counting problem} 
\label{subsec:overview-counting}
Let us focus on the case $a_n = e(n\alpha)$, for some $\alpha \in [0, 1)$. Expanding the Kloosterman sums from \cref{eq:kloosterman-informal} and Fourier-completing in $m, n$ leads to a variant of the identity
\begin{equation} \label{eq:Kloosterman-expo-informal}
    \sum_{m \sim N} e(-m\alpha) \sum_{n \sim N} e(n\alpha)\, S(m, n; c)
    \approx 
    N^2 \sum_{\substack{|x - c\alpha| \le c/N \\ |y + c\alpha| \le c/N}} e\left(\frac{N(x+y)}{c} \right) \one_{xy \equiv 1 \pmod{c}}.
\end{equation}
Taking absolute values and ignoring the outer averaging over $c$, we are left with the task of bounding
\begin{equation} \label{eq:combinatorial-counts}
    \sum_{\substack{|x - c\alpha| \le X \\ |y + c\alpha| \le X}} \one_{xy \equiv 1 \pmod{c}},
\end{equation}
for $c \sim NX$,
which is just a count of points on a modular hyperbola in short intervals (as considered in \cite{cilleruelo2011concentration}). When $\alpha = 0$, one can directly use the divisor bound to write
\[
    \sum_{\substack{|x|, |y| \le X}} \one_{xy \equiv 1 \pmod{c}}
    =
    \sum_{|z| \le \frac{X^2}{c}}\, \sum_{|x|, |y| \le X} \one_{xy = cz+1}
    \lesssim 
    \frac{X^2}{c} + 1,
\]
up to a factor of $X^{o(1)}$, which leads to a variant of
\[
    \sum_{m,n \sim N} S(m, n; c) \lesssim c + N^2 = NX + N^2.
\]
(This type of bound was also observed by Shparlinski and Zhang \cite{shparlinski2016cancellations}.)
Overall, we roughly obtain
\begin{equation} \label{eq:final-Kloosterman-informal}
    \sum_{\substack{c \sim NX \\ c \equiv 0 \pmod{q}}} \frac{1}{c} \sum_{m, n \sim N} S(m, n; c)
    \lesssim
    \frac{NX + N^2}{q},
\end{equation}
which is at most $(1 + \frac{N}{q}) N$, as required in \cref{eq:large-sieve-informal}, provided that
\[
    X \le \max(N, q).
\]
This gives the best-case range from \cref{eq:expo-X-range} (when $a = 1$). The analogue of this argument for other values of $\alpha \in \R/\Z$ depends on the quality of the best rational approximations to $\alpha$, due to a rescaling trick of Cilleruelo--Garaev \cite{cilleruelo2011concentration}. For an arbitrary value of $\alpha$, a pigeonhole argument (Dirichlet approximation) leads to a bound of the shape
\begin{equation} \label{eq:final-Kloosterman-informal-2}
    \sum_{\substack{c \sim NX \\ c \equiv 0 \pmod{q}}} \frac{1}{c} \sum_{m \sim N} e(-m\alpha) \sum_{n \sim N} e(n\alpha)\, S(m, n; c)
    \lesssim
    \frac{N^{3/2} X + N^2}{q},
\end{equation}
and ultimately to the range $X \le \max(\sqrt{N}, q/\sqrt{N})$, which is the worst (and average) case in \cref{eq:expo-X-range} when $a = 1$. Incorporating a scalar $a$ inside $\rho_{j\infty}(an)$ is not too difficult, since a similar argument handles the analogous bilinear sums of $S(am, an; c)$, up to a loss of $\gcd(a, c)$. 

\begin{remark}
A consequence of not leveraging the exponential phases in the right-hand side of \cref{eq:Kloosterman-expo-informal} is that the same argument extends to sums over $|m|, |n| \le N$. In particular, the term $m = n = 0$ already gives a contribution of about $c \asymp NX$, which produces a term of $NX/q$ in \cref{eq:final-Kloosterman-informal} with a linear growth in $X$ (as opposed to the square-root growth from \cref{eq:sqrt-growth-weil}, coming from the Weil bound).
\end{remark}

\subsection{Sequences with frequency concentration} \label{subsec:overview-freq-conc}

It will probably not come as a surprise that one can extend the preceeding discussion by Fourier-expanding other sequences $(a_n)$, given a strong-enough concentration condition for their Fourier transforms, but there are some subtleties in how to do this optimally. If $a_n = \check{\mu}(n) = \int_{\R/\Z} e(n\alpha)\, d\mu(\alpha)$ for all $n \sim N$ and some bounded-variation complex measure $\mu$, then there are at least two ways to proceed---depending on whether the integral over $\alpha$ is kept inside or outside of the square. 

Indeed, by applying Cauchy--Schwarz in $\alpha$ and our \cref{thm:large-sieve-expo-phases} for exponential phases as a black-box, one can directly obtain a bound like
\begin{equation} \label{eq:large-sieve-informal-fourier}
    \sum_{\lambda_j < 1/4} X^{2\theta_j} \left\vert \sum_{n \sim N} a_n\, \rho_{j\ma}(n) \right\vert^2 \lesssim \left(1 + \frac{N}{q}\right) N |\mu|(\R/\Z)^2,
\end{equation}
for all $X \le \max(\sqrt{N}, q/\sqrt{N})$ (and this range can be slightly improved given more information about the support of $\mu$ near rational numbers of small denominators). Unfortunately, this replaces the norm $\|a_n\|_2$ from \cref{thm:large-sieve-general} with $\sqrt{N} |\mu|(\R/\Z)$, which produces a significant loss unless $\mu$ is very highly concentrated---and it is difficult to make up for this loss through gains of $X^{2\theta}$. 

The alternative approach is to expand the square in the left-hand side of \cref{eq:large-sieve-informal-fourier}, pass to a sum of Kloosterman sums as in \cref{eq:kloosterman-informal} by Kuznetsov, and only then Fourier-expand (two instances of) the sequence $(a_n)$. Using similar combinatorial ideas as for \cref{eq:final-Kloosterman-informal-2}, we can then essentially bound
\begin{equation} \label{eq:final-Kloosterman-informal-3}
    \sum_{\substack{c \sim NX \\ c \equiv 0 \pmod{q}}} \frac{1}{c} \sum_{m \sim N} e(m\alpha) \sum_{n \sim N} e(n\beta)\, S(m, n; c)
    \lesssim
    \frac{N^{5/3} X + N^2}{q},
\end{equation}
for arbitrary values of $\alpha, \beta \in \R/\Z$. With no further information about the support of $\mu$, this ultimately gives a bound like
\[
    \sum_{\lambda_j < 1/4} X^{2\theta_j} \left\vert \sum_{n \sim N} a_n\, \rho_{j\ma}(n) \right\vert^2 \lesssim \left(1 + \frac{N}{q}\right) \|a_n\|_2^2 
    +
    \frac{N^{5/3}X + N^2}{q} |\mu|(\R/\Z)^2,
\]
which is acceptable in \cref{eq:large-sieve-informal}, in particular, whenever $X < N^{1/3}$ and $\sqrt{N} |\mu|(\R/\Z) \le \sqrt{q/N} \|a_n\|_2$.
Compared to the first approach, this generally gains less in the $X$-aspect, but it relaxes the concentration condition on $\mu$ if $N < q$. This second approach turns out to be better for our applications; the resulting large sieve inequality is \cref{thm:large-sieve-freq-concentration}, which particularizes to \cref{thm:large-sieve-expo-phases,thm:large-sieve-dispersion-coeffs}.

What is perhaps more surprising, though, is that strong-enough frequency concentration (i.e., $\sqrt{N} |\mu|(\R/\Z)^2 \le \sqrt{q/N} \|a_n\|_2$) arises in applications, beyond the case of exponential sequences. A key observation is that the aforementioned dispersion coefficients
\begin{equation} \label{eq:disp-coeff-informal}
    a_n = \sum_{\substack{h_1, h_2 \sim H \\ h_1 \ell_1 - h_2 \ell_2 = n}} 1,
\end{equation}
with $\ell_1 \asymp \ell_2 \asymp L$,
come from a convolution of two ``arithmetic progressions'' $\one_{n \equiv 0 \pmod{\ell_i}} \one_{n \sim H\ell_i}$. The Fourier transform of each of these two sequences has $\ell_i$ periodic peaks of height $H$ and width $(H \ell_i)^{-1}$, supported around multiples of $1/\ell_i$. When $(\ell_1, \ell_2) = 1$, multiplying these two Fourier transforms results in cancellation everywhere away from a small number ($\le 1 + \frac{L}{H}$) of rational points (and thus, in frequency concentration on a set of size $\frac{1}{HL} + \frac{1}{H^2}$); see \cref{lem:fourier-dispersion}. 

\subsection{Multilinear forms of Kloosterman sums} \label{subsec:overview-kloosterman}
Consider once again the sums \cref{eq:sum-of-kloosterman}, in the ranges
\[
    M, N \le rs, \qquad\qquad 
    X := \frac{s\sqrt{r} C}{\sqrt{MN}} \ge 1,
\] 
which are relevant for most applications.
An additional use of the Kuznetsov formula, for the level $q = rs$ and the cusps $\infty, 1/s$ (with suitable scaling matrices), gives a variant of the bound
\[
\begin{aligned}
    &\sum_{m \sim M} a_m \sum_{n \sim N} b_n \sum_{(c, r) = 1} g\left(\frac{c}{C}\right) S(m\bar{r}, \pm n; sc)
    \\ 
    &\lesssim 
    s\sqrt{r}C \sum_{\lambda_j < 1/4} X^{2\theta_j} \left\vert \sum_{m \sim M} a_m\, \rho_{j\infty}(m) \right\vert \left\vert \sum_{n \sim N} b_n\, \rho_{j\, 1/s}(n)\right\vert \ + \ \ldots
\end{aligned}
\]
Here we omitted the contribution of the regular Maass forms, Eisenstein series and holomorphic forms (which will not be dominant).
A priori, this arrangement introduces a factor of $X^{2\theta(q)}$ in our bounds, recalling that $\theta(q) = \max_{\lambda_j(q) < 1/4} \theta_j(q)$ (if the maximum is nonempty, and $\theta(q) = 0$ otherwise). However, the value of $X$ in this loss can be decreased through the large sieve inequalities for exceptional Maass forms. Indeed, after splitting $X = X_0 \sqrt{X_1 X_2}$, taking out a factor of only $(1 + X_0)^{2\theta(q)}$, and applying Cauchy--Schwarz, we reach
\[
    s\sqrt{r}C\, (1 + X_0)^{2\theta(q)} 
    \left(\sum_{\lambda_j < 1/4} X_1^{2\theta_j} \left\vert \sum_{m \sim M} a_m\, \rho_{j\infty}(m) \right\vert^2 \right)^{1/2}
    \left(\sum_{\lambda_j < 1/4} X_2^{2\theta_j} \left\vert \sum_{n \sim N} b_n\, \rho_{j\, 1/s}(n) \right\vert^2 \right)^{1/2}.
\]
Above, we can choose $X_1$ and $X_2$ as the maximal values that can be fully incorporated in large sieve inequalities like \cref{eq:large-sieve-informal} without producing losses in the right-hand side, for the specific sequences $(a_m)$ and $(b_n)$. In this case, we roughly obtain a final bound of
\[
    s\sqrt{r} C \left(1 + \frac{s\sqrt{r}C}{\sqrt{MN X_1 X_2}}\right)^{2\theta(q)} 
    \|a_m\|_2\, \|b_n\|_2.
\]
For example, if $a_m = e(m\alpha_{r,s})$ for some $\alpha_{r,s} \in \R/\Z$, then we may take $X_1 = \max(\sqrt{N}, q/\sqrt{N})$ by \cref{thm:large-sieve-expo-phases}, which ultimately saves a factor of $N^{\theta/2}$. Similarly, if $(b_n)$ are of the form in \cref{eq:disp-coeff-informal}, where $H \asymp L \asymp \sqrt{N}$, then by \cref{thm:large-sieve-dispersion-coeffs} we may also take $X_2 = \max(\sqrt{N}, q/\sqrt{N})$. 

If some averaging over $r \sim R, s \sim S$ is available and the sequence $(a_m)$ does not depend on $r, s$, then larger values of $X_1$ are available due to Deshouillers--Iwaniec \cite[Theorems 6, 7]{deshouillers1982kloosterman}. In this setting, if $a_m = e(m\omega)$ for a fixed $\omega \in \R/\Z$, one can combine the essentially-optimal value $X_1 = Q^2/N$ (see \cref{thm:large-sieve-level-avg} below) with our savings in the $X_2$-aspect. Following \cite[Theorem 12]{deshouillers1982kloosterman}, similar estimates can be deduced for multilinear forms of incomplete Kloosterman sums, simply by Fourier-completing them and appealing to the estimates for complete sums; see our \cref{cor:kloosterman-incomplete}. Such bounds feed directly into the dispersion method and its applications, as we shall see in \cref{sec:greatest-prime-factor}.

\subsection{Layout of paper} \label{subsec:layout}

In \cref{sec:notation}, we cover notation and preliminary results, including several key lemmas from the spectral theory of automorphic forms. \cref{sec:combi} only contains elementary arguments, from counting points on modular hyperbolas in \cref{lem:cilleruelo-garaev} (following Cilleruelo--Garaev \cite{cilleruelo2011concentration}), to the bilinear Kloosterman bounds in \cref{prop:bilinear-freq-concentration} (which may be of independent interest to the reader). In \cref{subsec:large-sieve}, we combine these combinatorial inputs with the Deshouillers--Iwaniec setup \cite{deshouillers1982kloosterman} to prove a general large sieve inequality in \cref{thm:large-sieve-freq-concentration}, which can be viewed as our main technical result; we then deduce \cref{thm:large-sieve-expo-phases,thm:large-sieve-dispersion-coeffs} from it. \cref{subsec:multilinear-kloosterman} contains the corollaries of these large sieve inequalities: various bounds for multilinear forms of Kloosterman sums, with improved dependencies on the $\theta$ parameter. Finally, in \cref{sec:greatest-prime-factor} we will use these bounds to prove \cref{thm:appl-greatest-prime}, building on the work of Merikoski \cite{merikoski2023largest} and de la Bret\`eche--Drappeau \cite{de2020niveau}.

\section{Notation and preliminaries} \label{sec:notation}

\subsection{Standard analytic notation} \label{subsec:standard-notation}

We write $\Z, \Q, \R, \C, \H$ for the sets of integers, rational numbers, real numbers, complex numbers, respectively complex numbers with positive imaginary part. We may scale these sets by constants, and may add the subscript $+$ to restrict to positive numbers; so for example $2\Z_+$ denotes the set of even positive integers, while $i\R$ is the imaginary line. For $\alpha \in \R$ (or $\R/\Z$), we denote $e(\alpha) := \exp(2 \pi i \alpha)$, and set $\|\alpha\| := \min_{n \in \Z} |\alpha - n|$, which induces a metric on $\R/\Z$. We write $\Z/c\Z$ for the ring of residue classes modulo a positive integer $c$, $(\Z/c\Z)^\times$ for its multiplicative group of units, and $\bar{x}$ for the inverse of $x \in (\Z/c\Z)^\times$. We may use the latter notation inside congruences, with $x \equiv y\bar{z} \pmod{c}$ meaning that $xz \equiv y \pmod{c}$ (for $\gcd(z, c) = 1$). We may also use the notation $(a, b)$ for $\gcd(a, b)$, and $[a, b]$ for $\lcm(a, b)$, when it is clear from context to not interpret these as pairs or intervals. We write $\one_S$ for the indicator function of a set $S$ (or for the truth value of a statement $S$), $n \sim N$ for the statement that $N < n \le 2N$ (so, e.g., $\one_{n \sim N} = \one_{n \le 2N} - \one_{n \le N}$), and interpret sums like $\sum_{n \sim N}$, $\sum_{n \equiv 0 \pmod{q}}$, or $\sum_{d \mid n}$ with the implied restrictions that $n, d \in \Z_+$. For $n \in \Z_+$, we define the divisor-counting function by $\tau(n) := \sum_{d \mid n} 1$, and Euler's totient function by $\varphi(n) := \sum_{m = 1}^n \one_{(m, n) = 1}$. We say that a complex sequence $(a_n)$ is \emph{divisor-bounded} iff $|a_n| \ll \tau(n)^{O(1)}$. We also write $P^+(n)$ and $P^-(n)$ for the largest and smallest prime factors of a positive integer $n$, and recall that $n$ is called $y$-smooth iff $P^+(n) \le y$.

We use the standard asymptotic notation $f \ll g$, $f \asymp g$, $f = O(g)$, $f = o_{x \to \infty}(g)$ from analytic number theory, and indicate that the implicit constants depend on some parameter $\eps$ through subscripts (e.g., $f \ll_\eps g$, $f = O_\eps(g)$). In particular, one should read bounds like $f(x) \ll x^{o(1)}$ as $\forall \eps > 0,\, f(x) \ll_\eps x^\eps$. Given $\ell \in \Z_+$, we write $f^{(\ell)}$ for the $\ell$th derivative of a function $f : \R \to \C$, and $f^{(0)} = f$. 
For $q \in [1, \infty]$, we denote by $\|f\|_{L^q}$ the $L^q$-norm of a function $f : \R \to \C$ (or $f : \R/\Z \to \C$), and by $\|a\|_q$ (or $\|a_n\|_q$) the $\ell^q$ norm of a sequence $(a_n)$.

We require multiple notations for the Fourier transforms of $L^1$ functions $f, \Phi : \R \to \C$, $\varphi : \R/\Z \to \C$, and $a : \Z \to \C$ (the latter could be, e.g., a finite sequence $(a_n)_{n \sim N}$ extended with zeroes elsewhere). These are given by
\begin{equation} \label{eq:fourier-transforms}
\begin{aligned}
    f : \R \to \C \qquad 
    \leadsto \qquad\,\,
    \hat{f} : \C \to \C, \qquad\quad 
    &\hat{f}(\xi) := \int_\R f(t)\, e(-\xi t)\ dt, 
    \\
    \Phi : \R \to \C \qquad 
    \leadsto \qquad\,
    \check{\Phi} : \C \to \C, \qquad\quad 
    &\check{\Phi}(t) := \int_\R \Phi(\xi)\, e(\xi t)\ d\xi, 
    \\ 
    a : \Z \to \C \quad\ \ \, \,
    \leadsto \ \ \, \,
    \hat{a} : \R/\Z \to \C, \qquad\quad 
    &\hat{a}(\alpha) := \sum_{n \in \Z} a_n\, e(-n\alpha), \qquad 
    \\
    \varphi : \R/\Z \to \C \qquad 
    \leadsto \qquad \
    \check{\varphi} : \Z \to \C, \qquad\quad 
    &\check{\varphi}(n) := \int_{\R/\Z} \varphi(\alpha)\, e(n\alpha)\, d\alpha.
\end{aligned}
\end{equation}
Note that the first two and the last two of these transforms are inverse operations under suitable conditions; in particular, if $\Phi$ is Schwarz, $a$ is $L^1$, and $\varphi$ is smooth (so $\check{\varphi}(n)$ decays rapidly as $|n| \to \infty$), one has
\begin{equation} \label{eq:Fourier-inverses}
    \check{\hat{\Phi}} \Big\vert_\R = \hat{\check{\Phi}} \Big\vert_\R = \Phi, \qquad\qquad
    \check{\hat{a}} = a, \qquad\qquad
    \hat{\check{\varphi}} = \varphi.
\end{equation}

We also denote the Fourier transform of a bounded-variation complex Borel measure $\mu$ on $\R/\Z$ by $\check{\mu}(n) := \int_{\R/\Z} e(n\alpha)\, d\mu(\alpha)$. For instance, one has $\check{\lambda}(n) = \one_{n = 0}$ for the Lebesgue measure $\lambda$, and $\check{\delta}_{A}(n) = \sum_{\alpha \in A} e(n\alpha)$ for the Dirac delta measure on a finite set $A \subset \R/\Z$. Moreover, if $d\mu = \varphi\, d\lambda$ for some $L^1$ function $\varphi : \R/\Z \to \C$, then $\check{\mu} = \check{\varphi}$. Finally, with our notation, the Parseval--Plancherel identity reads $\|a_n\|_2^2 = \|\hat{a}\|_{L^2}^2$ (and $\|f\|_{L^2} = \|\hat{f}\|_{L^2}$), while Poisson summation states that for any Schwarz function $f$,
\begin{equation} \label{eq:Poisson}
    \sum_{n \in \Z} f(n) = \sum_{n \in \Z} \hat{f}(n) = \sum_{n \in \Z} \check{f}(n).
\end{equation}
In practice it will be useful to truncate the Poisson summation formula; we combine this with a smooth dyadic partition of unity and a separation of variables, in the following lemma.

\begin{lemma}[Truncated Poisson with extra steps] \label{lem:truncated-poisson}
Let $x, N, Q \gg 1$ with $N, Q \ll x^{O(1)}$, $q \asymp Q$ be a positive integer, $a \in \Z$ (or $\Z/q\Z$), and $\Phi : (0, \infty) \to \C$ be a smooth function with $\Phi(t)$ supported in $t \asymp 1$ and $\Phi^{(j)} \ll_j 1$ for $j \ge 0$. Then for any $A, \delta > 0$ and $H := x^\delta N^{-1} Q$, one has
\[
\begin{aligned}
    \sum_{n \equiv a \pmod{q}} \Phi\left(\frac{n}{N}\right)
    &=
    \frac{N}{q} \hat{\Phi}(0)
    +
    O_{A,\delta}\left(x^{-A}\right)
    \\
    &+
    \frac{N}{Q} \int \sum_{\substack{H_j = 2^j \\ 1 \le H_j \le H}}
    \sum_{\frac{1}{2} H_j \le |h| \le 2H_j}
    c_{j,u}(h)\,
    \Phi\left(\frac{uq}{Q}\right) e\left(\frac{ah}{q}\right) du,
\end{aligned}
\]
where the support of the integral in $u$ is bounded, and
\begin{equation} \label{eq:h-coeffs-poisson}
    c_{j,u}(h) := \Psi_j\left(\frac{|h|}{H_j}\right) e\left(-h \frac{uN}{Q} \right),
\end{equation}
for some compactly-supported smooth functions $\Psi_j : (\frac{1}{2}, 2) \to \C$ with $\Psi_j^{(k)} \ll_k 1$ for $k \ge 0$.
\end{lemma}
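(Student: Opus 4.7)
The plan is to apply Poisson summation, truncate the resulting frequency sum using the Schwarz decay of $\hat\Phi$, smoothly decompose the remaining range dyadically, and separate the variables $h$ and $q$ through a change of variables inside the Fourier integral.

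First, I would parametrize the arithmetic progression by $n = a + qm$ and apply Poisson summation in $m$ (together with the linear change of variables $s = (a + qt)/N$) to obtain
\[
    \sum_{n \equiv a \pmod{q}} \Phi\!\left(\tfrac{n}{N}\right)
    = \frac{N}{q} \sum_{h \in \Z} \hat\Phi\!\left(\tfrac{hN}{q}\right) e\!\left(\tfrac{ah}{q}\right).
\]
The $h=0$ term produces the claimed main term $(N/q)\hat\Phi(0)$. Since $\Phi$ is smooth and compactly supported, $\hat\Phi$ is Schwarz, so for every $A' \ge 1$ one has $|\hat\Phi(hN/q)| \ll_{A'} (|h|N/Q)^{-A'}$ (using $q \asymp Q$). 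Summing this bound over $|h| > H = x^\delta N^{-1}Q$ and using $N, Q \ll x^{O(1)}$ yields a contribution of $O_{A,\delta}(x^{-A})$, upon taking $A'$ sufficiently large in terms of $A$ and $\delta$.

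Next, I would decompose the remaining frequencies smoothly and dyadically. Fix a nonnegative $\psi \in C_c^\infty((1/2, 2))$ with $\psi^{(k)} \ll_k 1$ and $\sum_{j \ge 0} \psi(t/2^j) = 1$ for all $t \ge 1$; set $\Psi_j := \psi$ and $H_j := 2^j$. Inserting this partition into the Poisson sum and splitting according to whether $H_j \le H$ or $H_j > H$, the latter scales contribute $O_{A,\delta}(x^{-A})$ by the same Schwarz tail estimate (every $h$ in the support has $|h| \gg H$). To put the remaining scales $1 \le H_j \le H$ in the desired form, start from $\hat\Phi(hN/q) = \int_\R \Phi(v)\, e(-hvN/q)\, dv$ and substitute $v = uq/Q$ to obtain
\[
    \hat\Phi\!\left(\frac{hN}{q}\right) = \frac{q}{Q} \int_\R \Phi\!\left(\frac{uq}{Q}\right) e\!\left(-\frac{huN}{Q}\right) du,
\]
whose integrand is compactly supported in $u \asymp 1$ because $\Phi$ is compactly supported and $q \asymp Q$. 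Multiplying by $N/q$ converts the prefactor into $N/Q$, and interchanging the (absolutely convergent, compactly supported) sums and integrals yields exactly the claimed expression with $c_{j,u}(h) = \Psi_j(|h|/H_j)\, e(-huN/Q)$.

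There is no substantive obstacle: the argument is essentially a repackaging of Poisson summation followed by a bookkeeping change of variables. The only care required is in fixing the dyadic bump $\psi$ once and for all so that every $\Psi_j$ inherits uniform bounds $\Psi_j^{(k)} \ll_k 1$, and in confirming that the tails of the frequency sum are genuinely negligible, which is immediate from the polynomial decay of $\hat\Phi$ combined with the choice $H = x^\delta N^{-1}Q$.
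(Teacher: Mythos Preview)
Your proposal is correct and follows essentially the same route as the paper: Poisson summation, extraction of the $h=0$ term, Schwarz-decay truncation of large frequencies, a smooth dyadic partition of $|h|\ge 1$, and the change of variables $t=uq/Q$ inside the Fourier integral to separate $h$ and $q$. The only cosmetic difference is that the paper inserts the dyadic partition first and then discards the scales $H_j>H$, whereas you truncate $|h|>H$ first and then argue again at the scale level; both orderings yield the stated formula.
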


\begin{proof}
The Poisson identity \cref{eq:Poisson} with a change of variables yields
\[
    \sum_{n \equiv a \pmod{q}} \Phi\left(\frac{n}{N}\right)
    =
    \frac{N}{q} \sum_{h \in \Z} \hat{\Phi}\left(\frac{hN}{q}\right) 
    e\left(\frac{ah}{q}\right).
\]
We take out the main term at $h = 0$, put $|h| \ge 1$ in dyadic ranges via a smooth partition of unity
\[
    \one_{\Z_+}(|h|) = \one_{\Z_+}(|h|) \sum_{H_j = 2^j \ge 1} \Psi_j\left(\frac{|h|}{H_j}\right),
\]
and bound the contribution of $H_j > H = x^\delta N^{-1} Q$ by $O_{A,\delta}(x^{-A})$ using the Schwarz decay of $\Phi$. In the remaining sum
\[
\begin{aligned}
    \frac{N}{q} \sum_{\substack{H_j = 2^j \\ 1 \le H_j \le H}}
    \sum_{\frac{1}{2}H_j \le |h| \le 2H_j}
    \Psi_j\left(\frac{|h|}{H_j}\right) \hat{\Phi}\left(\frac{hN}{q}\right)
    e\left(\frac{ah}{q}\right),
\end{aligned}
\]
we separate the $h, q$ variables via the Fourier integral
\[
    \hat{\Phi}\left(\frac{hN}{q}\right) = \int \Phi(t)\, e\left(-h \frac{tN}{q}\right)\, dt
    =
    \frac{q}{Q} \int \Phi\left(\frac{uq}{Q}\right)\, e\left(-h \frac{uN}{Q}\right)\, du,
\]
where we let $t = uq/Q$. Swapping the (finite) sums with the integral completes our proof.
\end{proof}

We also highlight the non-standard \cref{not:tn}, pertaining to rational approximations.
Further analytic notation specific to each section is described therein (see, e.g., \cref{not:concentration,not:set-up-info,not:set-up-harman}).
For the rest of this section, we recount the main concepts relevant to bounding sums of Kloosterman sums via the Kuznetsov trace formula, mostly to clarify our notation (in particular, to point out small changes to the notation in \cite{deshouillers1982kloosterman}), and to explicitate a few useful lemmas.

\subsection{Cusps, automorphic forms, Kloosterman sums} \label{subsec:automorphic}

Recall that $\PSL_2(\R) := \SL_2(\R)/\{\pm 1\}$ acts naturally on $\C \cup \{\infty\}$ by $\begin{psmall} a & b \\ c & d \end{psmall} z := \frac{az+b}{cz+d}$.
For $q \in \Z_+$, we denote by $\Gamma_0(q)$ the modular subgroup of level $q$, consisting of those matrices $\begin{psmall} a & b \\ c & d \end{psmall} \in \PSL_2(\Z)$ with $c \equiv 0 \pmod{q}$. A number $\ma \in \C \cup \{\infty\}$ is called a \emph{cusp} of $\Gamma_0(q)$ iff it is the unique fixed point of some $\sigma \in \Gamma_0(q)$; we write $\Gamma_{\ma} := \{ \sigma \in \Gamma_0(q) : \sigma \ma = \ma\}$ for the stabilizer of $\ma$ inside $\Gamma_0(q)$. Two cusps are \emph{equivalent} iff they lie in the same orbit of $\Gamma_0(q)$; the corresponding stabilizers are then conjugate inside $\Gamma_0(q)$. By \cite[Lemma 2.3]{deshouillers1982kloosterman}, the fractions
\begin{equation} \label{eq:unique-repr-cusps}
    \left\{\frac{u}{w}\ : \ u, w \in \Z_+,\ (u, w) = 1,\ w \mid q,\ u \le \gcd\left(w, \frac{q}{w}\right) \right\}
\end{equation}
form a maximal set of inequivalent cusps of $\Gamma_0(q)$. Following \cite[(1.1)]{deshouillers1982kloosterman}, given a cusp $\ma$ of $\Gamma_0(q)$ and its equivalent representative $u/w$ from \cref{eq:unique-repr-cusps}, we denote
\begin{equation} \label{eq:mu}
    \mu(\ma) := \frac{\gcd\left(w, \frac{q}{w}\right)}{q},
\end{equation}
(Like most of our notation involving cusps, this implicitly depends on the level $q$ as well.)
In particular, the cusp at $\infty$ of $\Gamma_0(q)$ is equivalent to the fraction $1/q$, so we have $\mu(\infty) = q^{-1}$. More generally, we have $\mu(1/s) = q^{-1}$ whenever $q = rs$ with $\gcd(r, s) = 1$, and it is these cusps which account for most applications to sums of Kloosterman sums; thus for simplicity, we restrict all of our main results to cusps with $\mu(\ma) = q^{-1}$. Following \cite[(1.2)]{deshouillers1982kloosterman}, a \emph{scaling matrix} $\sigma_{\ma}$ for a cusp $\ma$ is an element of $\PSL_2(\R)$ such that
\begin{equation} \label{eq:scaling-matrices}
    \sigma_\ma \infty = \ma 
    \qquad \text{and} \qquad 
    \sigma_{\ma}^{-1} \Gamma_\ma \sigma_\ma = \Gamma_\infty = \left\{\begin{psmall} 1 & n \\ 0 & 1 \end{psmall} : n \in \Z \right\}.
\end{equation}
Scaling matrices will allow us to expand $\Gamma_{\ma}$-invariant functions $f : \H \to \C$ as Fourier series around the cusp $\ma$, via the change of coordinates $z \gets \sigma_\ma z$ (note that if $f$ is $\Gamma_{\ma}$-invariant, then $z \mapsto f(\sigma_{\ma} z)$ is $\Gamma_\infty$-invariant). For a given cusp $\ma$, the choice of $\sigma_{\ma}$ can only vary by simple changes of coordinates 
\begin{equation} \label{eq:vary-scaling}
    \tilde{\sigma}_{\ma} = \sigma_{\ma} \begin{psmall} 1 & \alpha \\ 0 & 1 \end{psmall},
\end{equation}
for $\alpha \in \R$ (which result in multiplying the Fourier coefficients by exponential phases $e(n\alpha)$). When $\mu(\ma) = q^{-1}$, we must have $\ma = \tau (1/s)$ for some $\tau \in \Gamma_0(q)$ and $rs = q$ with $(r, s) = 1$; in this case, inspired by Watt \cite[p.\,195]{watt1995kloosterman}, we will use the canonical choice of scaling matrix
\begin{equation} \label{eq:scaling-choices}
    \sigma_{\ma} = \tau \cdot \begin{psmall} 
    \sqrt{r} & -\bar{s}/\sqrt{r} \\
    s\sqrt{r} & \bar{r}\sqrt{r}
    \end{psmall},
\end{equation}
where $\bar{r}, \bar{s}$ are integers such that $r\bar{r} + s\bar{s} = 1$ (for definiteness, let us say we pick $\bar{s} \ge 0$ to be minimal). This is different from the choice in \cite[(2.3)]{deshouillers1982kloosterman}, and leads to the simplification of certain extraneous exponential phases. For the cusp $\ma = \infty = \begin{psmall} 1 & 0 \\ -q & 1 \end{psmall} (1/q)$, \cref{eq:scaling-choices} reduces back to the identity matrix.

We refer the reader to the aforementioned work of Deshouillers--Iwaniec \cite{deshouillers1982kloosterman} for a brief introduction to the classical spectral theory of $\GL_2$ automorphic forms, to \cite{iwaniec2021spectral,iwaniec1997topics,iwaniec2021analytic} for a deeper dive into this topic, to \cite{drappeau2017sums,topacogullari2018shifted,drappeau2023one,de2020niveau,lichtman2023primes,pascadi2025smooth} for follow-up works and optimizations, and to \cite{bump1998automorphic} for the modern viewpoint of automorphic representations. For our purposes, an \emph{automorphic form} of level $q$ and integer weight $k \ge 0$ will be a smooth function $f : \H \to \C$ satisfying the transformation law
\[
    f(\sigma z) = j(\sigma, z)^k f(z)
    \qquad 
    \forall \sigma \in \Gamma_0(q),
    \qquad 
    \text{where}
    \qquad 
    j\left(\begin{psmall} a & b \\ c & d \end{psmall}, z\right) := cz+d. 
\]
as well as moderate (at-most-polynomial) growth conditions near every cusp. We say that $f$ is \emph{square-integrable} iff $\la f, f \ra_k < \infty$, where
$\la f, g \ra_k := \iint_{\Gamma_0(q) \backslash \H} f(x+iy)\, \bar{g(x+iy)}\,  y^{k-2}\, dx\, dy$ is the Petersson inner product. We denote by $L^2(\Gamma_0(q) \backslash \H, k)$ the space of square-integrable automorphic forms of level $q$ and weight $k$; when we drop the dependency on $k$, it should be understood that $k = 0$. Finally, we call $f$ a \emph{cusp form} iff it is square-integrable and vanishes at all cusps. 

Kloosterman sums show up in the Fourier coefficients of \emph{Poincar\'e series}, which are useful in detecting the Fourier coefficients of other automorphic forms via inner products (see \cite[(1.8), (1.18)]{deshouillers1982kloosterman}). In fact, by Fourier expanding a Poincar\'e series corresponding to a cusp $\ma$ around another cusp $\mb$, one is led to a more general family of Kloosterman-type sums, depending on both $\ma$ and $\mb$. 

More specifically (following \cite[(1.3)]{deshouillers1982kloosterman}, \cite[\S 4.1.1]{drappeau2017sums}, \cite{iwaniec1997topics}), given two cusps $\ma, \mb$ of $\Gamma_0(q)$, we first let
\[
    \mC_{\ma\mb} := \left\{c \in \R_+ : \exists\, a, b, d \in \R, \begin{psmall} a & b \\ c & d \end{psmall} \in \sigma_{\ma}^{-1} \Gamma_0(q)\, \sigma_{\mb} \right\}.
\]
Here $\sigma_{\ma}$ and $\sigma_{\mb}$ are arbitrary scaling matrices for $\ma$ and $\mb$, but the set $\mC_{\ma\mb}$ actually depends only on $\ma$ and $\mb$ (since multiplication by matrices $\begin{psmall} 1 & \alpha \\ 0 & 1 \end{psmall}$ does not affect the bottom-left entry). Then we let
\[
    \mD_{\ma\mb}(c) := \left\{ \tilde{d} \in \R/c\Z : \exists\, a, b \in \R,\, d \in \tilde{d}, \begin{psmall} a & b \\ c & d \end{psmall} \in \sigma_{\ma}^{-1} \Gamma_0(q)\, \sigma_{\mb} \right\},
\]
for any $c \in \R_+$ (although this is only nonempty when $c \in \mC_{\ma \mb}$). By this definition, the set $\mD_{\ma\mb}(c)$ is finite, does not depend on $\sigma_{\ma}$, and only depends on $\sigma_{\mb}$ up to translations. It turns out that a given $\tilde{d} \in \mD_{\ma \mb}(c)$ uniquely determines the value of $\tilde{a} \in \R/c\Z$ such that $\begin{psmall} a & b \\ c & d \end{psmall} \in \sigma_{\ma}^{-1} \Gamma_0(q)\, \sigma_{\mb}$ for some $a \in \tilde{a}$, $d \in \tilde{d}$ (see \cite[p.\,239]{deshouillers1982kloosterman}). Symmetrically, this $\tilde{a}$ does not depend on $\sigma_{\mb}$, and only depends on $\sigma_{\ma}$ up to translations. Thus given $c \in \R_+$ and $m, n \in \Z$, it makes sense to define
\begin{equation} \label{eq:generalized-kloosterman}
    S_{\ma \mb}(m, n; c) := 
    \sum_{\tilde{d} \in \mD_{\ma \mb}(c)}
    e\left(\frac{m \tilde{a} + n \tilde{d}}{c}\right),
\end{equation}
where $\tilde{a}$ and $\tilde{d}$ are corresponding values mod $c$; note that this vanishes unless $c \in \mC_{\ma\mb}$. Since varying the choices of $\sigma_{\ma}$ and $\sigma_{\mb}$ has the effect of uniformly translating $\tilde{a}$, respectively $\tilde{d}$, it follows that $S_{\ma \mb}(m, n; c)$ only depends on $\sigma_{\ma}, \sigma_{\mb}$ up to multiplication by exponential phases $e(m\alpha)$, $e(n\beta)$. In fact, the same holds true when varying $\ma$ and $\mb$ in equivalence classes of cusps \cite[p.\,239]{deshouillers1982kloosterman}. We also note the symmetries
\begin{equation} \label{eq:kloosterman-symmetries}
    S_{\ma\mb}(m, -n; c) = \bar{S_{\ma\mb}(-m, n; c)},
    \qquad\qquad 
    S_{\ma\mb}(m, n; c) = \bar{S_{\mb\ma}(n, m; c)},
\end{equation}
the second one following from the fact that
\[
    \begin{psmall}
    a & b \\ 
    c & d
    \end{psmall}
    \in \sigma_{\ma}^{-1} \Gamma_0(q)\, \sigma_{\mb}
    \qquad 
    \iff 
    \qquad
    \begin{psmall}
    -d & b \\ 
    c & -a
    \end{psmall}
    =
    - 
    \begin{psmall}
    a & b \\ 
    c & d
    \end{psmall}^{-1}
    \in \sigma_{\mb}^{-1} \Gamma_0(q)\, \sigma_{\ma}.
\]
Let us now relate these sums to the classical Kloosterman sums from \cref{eq:kloosterman}.

\begin{knownlemma}[Explicit Kloosterman sums \cite{watt1995kloosterman}] \label{lem:explicit-kloosterman}
Let $q = rs$ with $r, s \in \Z_+$, $\gcd(r, s) = 1$.  Then for any $c \in \R_+$ and $m, n \in \Z$, with the choice of scaling matrices from \cref{eq:scaling-choices}, one has
\begin{equation} \label{eq:explicit-kl-diff}
    S_{\infty\,1/s}\left(m, n; s \sqrt{r} c\right) = \one_{c \in \Z_+,\, (c, r) = 1}\, S(m \bar{r}, n; sc).
\end{equation}
Moreover, let $\ma$ be any cusp of $\Gamma_0(q)$ with $\mu(\ma) = q^{-1}$, and $\sigma_{\ma}$ be as in \cref{eq:scaling-choices}. Then one has
\begin{equation} \label{eq:explicit-kl-same}
    S_{\ma \ma}(m, n; c) = \one_{c \in q\Z_+}\, S(m, n; c).
\end{equation}
Varying the choice of scaling matrix as in \cref{eq:vary-scaling} would result in an additional factor of $e((n-m)\alpha)$.
\end{knownlemma}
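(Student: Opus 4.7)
The plan is to verify both identities by direct matrix computation, using the explicit formula \cref{eq:scaling-choices} for the canonical scaling matrices together with the definition \cref{eq:generalized-kloosterman} of the generalised Kloosterman sums.

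For the first claim \cref{eq:explicit-kl-diff}: since $\sigma_\infty = I$, I would expand $\gamma\sigma_{1/s}$ for a generic $\gamma = \begin{psmall} \alpha & \beta \\ rs\gamma'' & \delta \end{psmall} \in \Gamma_0(q)$ (writing the bottom-left of $\gamma$ as $rs\gamma''$). Multiplying out, the bottom-left of $\gamma\sigma_{1/s}$ evaluates to $s\sqrt{r}(r\gamma''+\delta)$, so setting $c := r\gamma''+\delta$ identifies $\mC_{\infty,\,1/s}\cap(0,\infty)$ with $\{s\sqrt{r}c : c \in \Z_+,\, (c,r)=1\}$ (the coprimality follows from $\alpha\delta - \beta rs\gamma'' = 1$, which forces $(\delta,r) = 1$, together with $\delta \equiv c \pmod{r}$). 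The top-left and bottom-right entries are $\sqrt{r}(\alpha+\beta s)$ and $\sqrt{r}(\delta\bar r - s\bar s\gamma'')$; these $\sqrt{r}$ factors cancel in the exponential of \cref{eq:generalized-kloosterman} (whose modulus is $s\sqrt{r}c$), leaving integer residues $a' := \alpha+\beta s$ and $d' := \delta\bar r - s\bar s\gamma''$ modulo $sc$. Using $r\bar r+s\bar s=1$, one checks $d' \equiv \delta\bar r \pmod{s}$, $d' \equiv -\gamma'' \pmod{c}$, and the determinant of $\gamma\sigma_{1/s}$ gives the key congruence $ra'd' \equiv 1 \pmod{sc}$.

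The main bookkeeping task is to show that the induced map $\gamma \mapsto d' \pmod{sc}$ is a bijection onto $(\Z/sc\Z)^\times$ (modulo left multiplication by $\Gamma_\infty$, which does not affect the bottom row of $\gamma\sigma_{1/s}$). In the forward direction, the coprimality constraints implied by the determinant give $(\delta,s) = 1$ from $\alpha\delta \equiv 1 \pmod{s}$, and $(\gamma'',c) = \gcd(\gamma'',\delta) = 1$ from $\alpha\delta \equiv 1 \pmod{\gamma''}$, together yielding $(d',sc) = 1$. In the reverse direction, for any prescribed $x \in (\Z/sc\Z)^\times$ one builds a compatible tuple $(\alpha,\beta,\gamma'',\delta)$ via the Chinese Remainder Theorem, matching $\bar{d'} = x$; the only consistency check (when $(s,c) > 1$) is automatic, since $x \pmod{s}$ and $x \pmod{c}$ must already agree modulo $\gcd(s,c)$. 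With the bijection in hand, the substitution $x = \bar{d'}$ converts the sum into $\sum_{x \in (\Z/sc\Z)^\times} e\bigl((m\bar r x + n\bar x)/(sc)\bigr) = S(m\bar r, n; sc)$.

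For the second claim \cref{eq:explicit-kl-same}: I would first reduce to $\ma = 1/s$ by writing $\ma = \tau(1/s)$ with $\tau \in \Gamma_0(q)$, so that $\sigma_\ma = \tau\sigma_{1/s}$ by \cref{eq:scaling-choices} and $\sigma_\ma^{-1}\Gamma_0(q)\sigma_\ma = \sigma_{1/s}^{-1}\Gamma_0(q)\sigma_{1/s}$. A parallel expansion of $\sigma_{1/s}^{-1}\gamma\sigma_{1/s}$, again exploiting $r\bar r + s\bar s = 1$, shows its bottom-left entry equals $rs(\delta - \alpha + r\gamma'' - s\beta) = qc''$ with $c'' \in \Z$, so $\mC_{\ma\ma} \cap (0,\infty) = q\Z_+$; the analogous CRT bijection then recovers the classical sum $S(m,n;c)$ for $c \in q\Z_+$ (invoking the symmetry $S(m,n;c) = S(n,m;c)$ if needed). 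Finally, replacing $\sigma_\ma$ by $\sigma_\ma \begin{psmall} 1 & \alpha \\ 0 & 1 \end{psmall}$ shifts the top-left of every matrix in $\sigma_\ma^{-1}\gamma\sigma_\ma$ by $-\alpha c$ and the bottom-right by $+\alpha c$ (as reals), producing the claimed factor $e((n-m)\alpha)$. The hard part throughout will be the CRT bijection in the proof of \cref{eq:explicit-kl-diff}; everything else is routine matrix algebra.
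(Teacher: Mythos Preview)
Your proposal is correct and follows essentially the same approach as the paper: direct matrix computation with the explicit scaling matrix \cref{eq:scaling-choices}. The paper itself simply cites Watt \cite[(3.4), (3.5)]{watt1995kloosterman} for these identities and refers to \cite[\S 2]{deshouillers1982kloosterman} for analogous computations, plus the observation that $\tau$ in \cref{eq:scaling-choices} drops out of $\sigma_\ma^{-1}\Gamma_0(q)\sigma_\ma$; you are in effect reconstructing what those references do. One small simplification you may find useful: from $r\bar r + s\bar s = 1$ one has the exact integer identity $d' = \delta\bar r - s\bar s\gamma'' = \bar r(r\gamma''+\delta) - \gamma'' = \bar r c - \gamma''$, which makes the bijection $d' \leftrightarrow \gamma''$ (for fixed $c$) immediate and reduces the coprimality check $\gcd(rs\gamma'',\delta)=1 \Leftrightarrow \gcd(d',sc)=1$ to a couple of lines, avoiding CRT entirely.
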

\begin{proof}
These identities are precisely \cite[(3.5) and (3.4)]{watt1995kloosterman}, at least when $\ma = 1/s$ for some $rs = q$, with $(r, s) = 1$. For a general cusp with $\mu(\ma) = q^{-1}$, we have $\ma = \tau(1/s)$ for some $\tau \in \Gamma_0(q)$, but the presence of $\tau$ in the scaling matrix from \cref{eq:scaling-choices} does not affect the set $\sigma_{\ma}^{-1} \Gamma_0(q)\, \sigma_{\ma}$, nor the generalized Kloosterman sum $S_{\ma\ma}(m, n; c)$. For explicit computations of this type, see \cite[\S 2]{deshouillers1982kloosterman}.
\end{proof}

\subsection{The Kuznetsov formula and exceptional eigenvalues}

We now recognize some important classes of $\GL_2$ automorphic forms of level $q$: 
\begin{itemize}
    \item[(1).] \emph{Classical modular forms}, which are holomorphic with removable singularities at all cusps, and can only have even weights $k \in 2\Z_+$ (except for the zero form). A \emph{holomorphic cusp form} $f$ additionally vanishes at all cusps; such forms have Fourier expansions
    \begin{equation} \label{eq:expansion-holomorphic}
        j(\sigma_\ma, z)^{-k} f(\sigma_{\ma} z) = \sum_{n=1}^\infty \psi_\ma(n)\, e(n z)
    \end{equation}
    around each cusp $\ma$ of $\Gamma_0(q)$ (see \cite[(1.7)]{deshouillers1982kloosterman}). We mention that the space of holomorphic cusp forms of weight $k$ is finite-dimensional, and denote its dimension by $h_k = h_k(q)$.
    \item[(2).] \emph{Maass forms} (of weight $0$), which are invariant under the action of $\Gamma_0(q)$, and are eigenfunctions of the hyperbolic Laplacian $\Delta = -y^2 \left(\partial_x^2 + \partial_y^2\right)$. These include:
    \begin{itemize}
        \item[(a).] Maass \emph{cusp} forms, which additionally vanish at all cusps and are square-integrable. These (plus the constant functions) correspond to the discrete spectrum of the hyperbolic Laplacian on $L^2\left(\Gamma_0(q) \backslash \H\right)$, consisting of eigenvalues $0 = \lambda_0 < \lambda_1 \le \lambda_2 \le \lambda_3 \le \ldots$ with no limit point. Around a given cusp $\ma$, Maass cusp forms have Fourier expansions (see \cite[(1.15)]{deshouillers1982kloosterman})
        \begin{equation} \label{eq:expansion-Maass}
            u(\sigma_{\ma} z) = y^{1/2} \sum_{n \neq 0} \rho_{\ma}(n) K_{i\kappa}(2\pi |n| y)\, e(mx),
        \end{equation}
        where $z = x + iy$ and $K$ is the Whittaker function normalized as in \cite[p.\,264]{deshouillers1982kloosterman}.
        \item[(b).] \emph{Eisenstein series}, explicitly defined by $E_\ma(z; s) := \sum_{\tau \in \Gamma_\ma \backslash \Gamma_0(q)} \Im^s\left(\sigma_{\ma}^{-1} \tau z\right)$ for $\Re\, s > 1$, and meromorphically continued to $s \in \C$. Although not square-integrable themselves, ``incomplete'' versions of Eisenstein series with $s = \frac{1}{2} + ir$ (and $r \in \R$) can be used to describe the orthogonal complement in $L^2\left(\Gamma_0(q) \backslash \H\right)$ of the space of Maass cusp forms, corresponding to the continuous spectrum of the hyperbolic Laplacian. Sharing similarities with both Maass cusp forms and Poincar\'e series, the Eisenstein series $E_\ma$ have Fourier expansions \cite[(1.17)]{deshouillers1982kloosterman} around any cusp $\mb$, involving the Whittaker function and the Kloosterman-resembling coefficients (for $n \in \Z$, $n \neq 0$)
        \begin{equation} \label{eq:expansion-Eisenstein}
            \varphi_{\ma\mb}(n; s) := \sum_{c \in \mC_{\ma\mb}} c^{-2s} \sum_{\tilde{d} \in \mD_{\ma\mb}(c)} e\left(\frac{n
            \tilde{d}}{c}\right).
        \end{equation}
    \end{itemize}
\end{itemize}

We are particularly interested in the \emph{exceptional} Maass cusp forms, which have eigenvalues $\lambda_j \in (0, 1/4)$; there can only be finitely many such forms of each level $q$, and Selberg conjectured \cite{selberg1965estimation} that there are none. With implicit dependencies on $q$, we denote 
\begin{equation} \label{eq:eigenvalue-notation}
    \kappa_j^2 := \lambda_j - \frac{1}{4} \qquad\qquad \text{and} \qquad\qquad \theta_j := i \kappa_j,
 \end{equation}
where $\kappa_j$ is chosen such that $i \kappa_j > 0$ or $\kappa_j \ge 0$; thus exceptional forms correspond to imaginary values of $\kappa_j$ and positive values of $\theta_j$. Letting
\[
    \theta(q) := \sqrt{\max\left(0, \tfrac{1}{4} - \lambda_1(q)\right)} = \begin{cases}
        \theta_1(q), & \theta_1(q) > 0 \\ 
        0, & \text{otherwise}.
    \end{cases},
    \qquad\qquad 
    \theta_{\max} := \sup_{q \ge 1}\, \theta(q),
\]
Selberg's eigenvalue conjecture asserts that $\theta_{\max} = 0$, and the best result towards it is due to Kim--Sarnak \cite[Appendix 2]{kim2003functoriality}. 
This deep unconditional result requires the theory of $\GL_n$ automorphic representations \cite{bump1998automorphic}, but it is a very useful black-box input to spectral methods, where various bounds have exponential dependencies on $\theta$.

\begin{knowntheorem}[Kim--Sarnak's eigenvalue bound \cite{kim2003functoriality}] \label{thm:kim-sarnak}
One has $\theta_{\max} \le \tfrac{7}{64}$.
\end{knowntheorem}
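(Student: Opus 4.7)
My plan is to follow the Langlands--Shahidi method combined with the Luo--Rudnick--Sarnak positivity technique, translating the exceptional eigenvalue problem into a question about local Langlands parameters of $\GL_2$ automorphic representations over $\Q$. The truly deep ingredient will be Kim's theorem on the automorphy of $\Sym^4 \pi$ as a representation of $\GL_5$.

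First I would translate the problem. By the standard Maass correspondence, an exceptional Maass cusp form of level $q$ with eigenvalue $\lambda_j \in (0, 1/4)$ lifts to a cuspidal automorphic representation $\pi$ of $\GL_2(\A_\Q)$ whose archimedean local Langlands parameter has the form $\{t_\infty, -t_\infty\}$ with $t_\infty \in (0, 1/2)$ and $\lambda_j = 1/4 - t_\infty^2$. In the normalization of the excerpt one has $\theta_j = 2 t_\infty$, so the target $\theta_{\max} \le 7/32$ is equivalent to a uniform bound $t_\infty \le 7/64$ on the archimedean local parameters of all such $\pi$. An analogous (and in fact stronger) bound will also be needed at finite places, but the archimedean analysis is representative.

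Next I would invoke the automorphy of $\Sym^n \pi$ for $n \le 4$ (Gelbart--Jacquet for $n = 2$, Kim--Shahidi for $n = 3$, and Kim for $n = 4$). These produce the Rankin--Selberg convolutions $L(s, \Sym^a \pi \times \Sym^b \pi)$ for $a, b \le 4$, which by Clebsch--Gordan factor as products of $L(s, \Sym^n \pi)$ for $n \le 8$. Since each $\Sym^n \pi$ with $n \le 4$ is isobaric automorphic, every such L-function admits meromorphic continuation, a functional equation, and is holomorphic for $\Re s > 1$ apart from explicit simple poles. I would then apply the Luo--Rudnick--Sarnak positivity argument: at an unramified archimedean place, the local Euler factor of $L(s, \Sym^4 \pi \times \bar{\Sym^4 \pi})$ is a positive Dirichlet series for real $s > 1$, so the holomorphy and non-vanishing of the global L-function to the right of $\Re s = 1$ forces a quantitative bound on $t_\infty$. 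Optimizing across all the available symmetric power lifts, as carried out in Sarnak's Appendix 2 to \cite{kim2003functoriality}, yields exactly $t_\infty \le 7/64$.

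The main obstacle is Kim's automorphy of $\Sym^4 \pi$ itself: this is by far the deepest step, resting on the full Langlands--Shahidi analysis of Eisenstein series on an exceptional group together with the converse theorem of Cogdell--Piatetski-Shapiro, and requiring substantial representation-theoretic work at ramified places. By comparison, the downstream positivity and optimization arguments are technically delicate but follow a well-established template in analytic L-function theory; the specific numerical value $7/32$ emerges only after carefully combining the contributions of $\Sym^2$, $\Sym^3$, and $\Sym^4$ simultaneously, rather than applying a single Rankin--Selberg estimate in isolation.
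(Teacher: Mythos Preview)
The paper does not prove this theorem at all: it is stated as a \texttt{knowntheorem} with a bare citation to \cite[Appendix 2]{kim2003functoriality}, and the surrounding text explicitly says it is taken as a ``very useful black-box input to spectral methods''. So there is no ``paper's own proof'' to compare against; the paper simply imports the result.

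Your proposal is therefore not a comparison target but an independent sketch of the Kim--Sarnak argument itself. As a sketch it is broadly accurate: the translation $\theta_j = 2t_\infty$ is correct, the reliance on Kim's automorphy of $\Sym^4$ on $\GL_5$ (building on Gelbart--Jacquet and Kim--Shahidi) is the right deep input, and the Luo--Rudnick--Sarnak positivity mechanism via Rankin--Selberg $L$-functions is indeed how the numerical bound is extracted in Sarnak's appendix. One small imprecision: the value $7/64$ does not arise from a single Rankin--Selberg pair like $\Sym^4 \times \overline{\Sym^4}$ alone, but from a carefully chosen non-negative linear combination of several symmetric-power $L$-functions whose local Dirichlet coefficients are manifestly non-negative; the optimization over such combinations is what produces the specific constant. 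You allude to this in your last sentence, but the earlier paragraph slightly overstates what a single positivity application yields. None of this affects the paper, which never enters into these details.
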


Based on earlier work of Kuznetsov \cite{kuznetsov1980petersson}, Deshouillers--Iwaniec \cite{deshouillers1982kloosterman} developed a trace formula relating weighted sums over $c$ of the generalized Kloosterman sums from \cref{eq:generalized-kloosterman} to (sums of products of) the Fourier coefficients of holomorphic cusp forms, Maass cusp forms, and Eisenstein series, around any two cusps $\ma, \mb$ of $\Gamma_0(q)$. Roughly speaking, this follows by summing two applications of Parseval's identity for the aforementioned Poincar\'e series: one in the space of holomorphic cusp forms (summing over all weights $k \in 2\Z_+$), and one in the space $L^2(\Gamma_0(q) \slash \H)$ of square-integrable automorphic forms of weight $0$, via the spectral decomposition of the hyperbolic Laplacian (leading to the terms from Maass cusp forms and Eisenstein series). 

One can arrange the resulting \emph{Kuznetsov trace formula} so that the Kloosterman sums in the left-hand side are weighted by an arbitrary compactly-supported smooth function $\varphi$; in the right-hand side, the Fourier coefficients of automorphic forms are consequently weighted by \emph{Bessel transforms} of $\varphi$, defined for $r \in \R \setminus \{0\}$ by
\begin{equation} \label{eq:Bessel-transform}
\begin{aligned}
    \tilde{\mB}_\varphi(r) &:= \int_0^\infty J_r(y)\, \varphi(y) \frac{dy}{y},
    \\
    \hat{\mB}_\varphi(r) &:= \frac{\pi}{\sinh(\pi r)} \int_0^\infty \frac{J_{2ir}(x) - J_{-2ir}(x)}{2i} \varphi(x) \frac{dx}{x},
    \\
    \check{\mB}_\varphi(r) &:= \frac{4}{\pi} \cosh(\pi r) \int_0^\infty K_{2ir}(x)\, \varphi(x) \frac{dx}{x},
\end{aligned}
\end{equation}
where $K_{it}$ is the aforementioned Whittaker function, and the Bessel functions $J_\ell$, $J_{it}$ are defined as in \cite[p.\,264--265]{deshouillers1982kloosterman}
(above we slightly departed from the notation in \cite{deshouillers1982kloosterman,drappeau2017sums}, to avoid confusion with Fourier transforms). All we will need to know about these transforms are the following bounds.

\begin{knownlemma}[Bessel transform bounds \cite{deshouillers1982kloosterman}] \label{lem:bessel-bounds}
Let $Y > 0$ and $\varphi : \R \to \C$ be a smooth function with compact support in $y \asymp Y$, satisfying $\varphi^{(j)}(y) \ll_j Y^{-j}$ for $j \ge 0$. Then one has
\begin{align}
    \hat{\mB}_\varphi(ir),\ \check{\mB}_{\varphi}(ir) 
    &\ll 
    \frac{1 + Y^{-2r}}{1+Y}, \qquad\qquad\ \   \text{ for }\ 0 < r < \frac{1}{2},
    \label{eq:bessel-bound-1}
    \\
    \tilde{\mB}_\varphi(r),\ \hat{\mB}_\varphi(r),\ \check{\mB}_{\varphi}(r) 
    &\ll 
    \frac{1 + |\log Y|}{1+Y}, \qquad\qquad \text{ for }\ r \in \R \setminus \{0\},
    \label{eq:bessel-bound-2}
    \\
    \tilde{\mB}_\varphi(r),\ \hat{\mB}_\varphi(r),\ \check{\mB}_{\varphi}(r) 
    &\ll 
    |r|^{-5/2} + |r|^{-3}Y, \qquad \text{ for }\ r \in \R,\ |r| \ge 1,
    \label{eq:bessel-bound-3}
\end{align}
Moreover, if $\varphi$ is nonnegative with $\int \varphi(y)\, dy \gg Y$, and $Y < c$ for some constant $c \ll 1$ (depending on the implied constants so far), then one has
\begin{align} 
    \hat{\mB}_\varphi(\kappa) &\ll (\kappa^2+1)^{-1},
    \qquad  
    \text{ for }\ \kappa \in \R \setminus \{0\},
    \label{eq:exc-bound-1}
    \\
    \hat{\mB}_\varphi(\kappa) &\asymp Y^{-2i\kappa}, 
    \qquad\qquad \, 
    \text{ for }\ 0 < i \kappa < \frac{1}{2}.
    \label{eq:exc-bound-2}
\end{align}
\end{knownlemma}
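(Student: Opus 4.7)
Since these are the classical Bessel transform estimates going back to Deshouillers--Iwaniec \cite{deshouillers1982kloosterman}, the plan is to reduce everything to pointwise bounds on $J_\nu(x)$ and $K_{i\nu}(x)$ and integrate against the bump $\varphi$ supported in $x \asymp Y$. I would treat the four estimates in the order listed, splitting along two axes: whether $|r|$ is bounded or large (controlling the Bessel factor), and whether $Y$ is small or large (controlling the $(1+Y)^{-1}$ factor coming from the $x^{-1/2}$ decay at infinity).

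For \cref{eq:bessel-bound-1}, the key input is the small-argument expansion $J_{\pm 2ir}(x) = (x/2)^{\pm 2ir}/\Gamma(1 \pm 2ir)\cdot(1 + O(x^2))$, so $(J_{2ir}(x) - J_{-2ir}(x))/(2i\sinh(\pi r))$ is uniformly bounded on $r \in (0, 1/2)$ (the $\sinh(\pi r)$ cancels the gamma pole at $r=0$) and decomposes into pieces of size $(x/2)^{\pm 2r}$. Integrating $x^{-2r}\varphi(x)\, dx/x$ over $x \asymp Y$ produces the $Y^{-2r}$ contribution, while for $Y \gg 1$ the large-argument decay $J_{\pm 2ir}(x), K_{2ir}(x) \ll x^{-1/2}$ yields the $(1+Y)^{-1}$ factor; the bound for $\check{\mB}_\varphi(ir)$ is analogous. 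For \cref{eq:bessel-bound-2}, I would use the pointwise estimates $J_r(x), K_{ir}(x) \ll 1 + |\log x|$ as $x \to 0$ (at fixed nonzero real $r$) combined with the same $x^{-1/2}$ decay at infinity; the logarithm only survives when $Y \ll 1$.

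The main obstacle is \cref{eq:bessel-bound-3}, which requires repeated integration by parts in the regime $|r| \ge 1$. Using the Bessel recurrences $2J_r'(x) = J_{r-1}(x) - J_{r+1}(x)$ and $(2r/x)J_r(x) = J_{r-1}(x) + J_{r+1}(x)$ (and the analogous relations for $K_{ir}$), one shifts the weight $\varphi(y)/y$ onto the Bessel function; combined with the uniform pointwise bound $J_r(x) \ll \min(|r|^{-1/3}, x^{-1/2})$, performing two or three integrations by parts separates the $|r|^{-5/2}$ contribution (coming from $Y \ll |r|$, where the Bessel factor is essentially flat) from the $|r|^{-3}Y$ contribution (from $Y \gg |r|$, where the oscillation of $J_r$ dominates). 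The technical weight of the proof lies in the bookkeeping of the $Y$-dependence through these manipulations, together with the treatment of $\hat{\mB}_\varphi$ and $\check{\mB}_\varphi$ via the integral representations of $J_{2ir}$ and $K_{2ir}$ (or their analytic continuation in $\nu$).

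For the positivity refinements \cref{eq:exc-bound-1,eq:exc-bound-2}, once $\varphi \ge 0$ with $\int \varphi \gg Y$ and $Y$ is smaller than an absolute constant, the $O(Y^2)$ relative error in the small-argument expansion becomes negligible, so $\hat{\mB}_\varphi(\kappa)$ inherits the leading-order behaviour $\asymp Y^{-2i\kappa}$ for $0 < i\kappa < 1/2$, giving \cref{eq:exc-bound-2}. For real $\kappa \neq 0$, the same leading term $x^{2i\kappa}$ oscillates, and two integrations by parts against this oscillation recover the $(\kappa^2+1)^{-1}$ saving of \cref{eq:exc-bound-1}; I expect this final step to be routine once the integration-by-parts machinery for \cref{eq:bessel-bound-3} is in place.
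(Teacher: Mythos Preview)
The paper does not prove this lemma at all; it simply cites the relevant places in Deshouillers--Iwaniec: \cref{eq:bessel-bound-1,eq:bessel-bound-2,eq:bessel-bound-3} are quoted as \cite[Lemma~7.1]{deshouillers1982kloosterman}, while \cref{eq:exc-bound-1} and the lower bound in \cref{eq:exc-bound-2} are quoted as \cite[(8.2)--(8.3), via (8.1)]{deshouillers1982kloosterman}, and the upper bound in \cref{eq:exc-bound-2} is observed to follow already from \cref{eq:bessel-bound-1} by evenness of $\hat{\mB}_\varphi$. Your sketch is therefore not an alternative to the paper's proof but rather an outline of the Deshouillers--Iwaniec argument that the paper is invoking as a black box; the overall strategy (small-argument power series for the exceptional range, $x^{-1/2}$ decay for large $Y$, Bessel recurrences plus integration by parts for large $|r|$) is indeed theirs.

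Two small cautions on your sketch. First, in \cref{eq:bessel-bound-1} you write the expansion for $J_{\pm 2ir}$ and then extract a factor $x^{-2r}$; but $|(x/2)^{\pm 2ir}| = 1$ for real $r$. The $Y^{-2r}$ term arises because evaluating $\hat{\mB}_\varphi$ at the \emph{imaginary} argument $ir$ makes the Bessel indices $\pm 2r$ real, and it is $J_{-2r}(x) \sim (x/2)^{-2r}/\Gamma(1-2r)$ that produces the blow-up. The substance of your argument is right, but the indices are mislabelled. Second, for \cref{eq:exc-bound-1} note that the combination of \cref{eq:bessel-bound-2,eq:bessel-bound-3} alone would leave a stray factor of $1+|\log Y|$ in the bounded-$\kappa$ range; Deshouillers--Iwaniec avoid this not by integrating by parts against $x^{2i\kappa}$ directly, but by passing to an alternative integral representation (their (8.1)), which is where the extra hypotheses $\varphi \ge 0$, $\int\varphi \gg Y$, $Y < c$ are actually used. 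Your proposed route would need a little more care there to kill the logarithm.
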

\begin{proof}
The bounds in \cref{eq:bessel-bound-1,eq:bessel-bound-2,eq:bessel-bound-3} constitute \cite[Lemma 7.1]{deshouillers1982kloosterman} (note that $\varphi$ satisfies the requirements in \cite[(1.43) and (1.44)]{deshouillers1982kloosterman} for $(Y, 1)$ in place of $(X, Y)$). Similarly, \cref{eq:exc-bound-1} and the lower bound in \cref{eq:exc-bound-2} are \cite[(8.2) and (8.3), following from (8.1)]{deshouillers1982kloosterman}, using an appropriate choice of the constants $\eta_1, \eta_2$. The upper bound in \cref{eq:exc-bound-2} also follows from \cite[(8.1)]{deshouillers1982kloosterman}, but is in fact already covered by \cref{eq:bessel-bound-1} (using $r = -i\kappa$ and the fact that $\hat{\mB}_\varphi$ is even).
\end{proof}

Finally, let us state the Kuznetsov trace formula, following the notation of Deshouillers--Iwaniec \cite{deshouillers1982kloosterman}.

\begin{knownproposition}[Kuznetsov trace formula \cite{deshouillers1982kloosterman,kuznetsov1980petersson}] \label{prop:kuznetsov}
Let $\varphi : \R \to \C$ be a compactly-supported smooth function, $q \in \Z_+$, and $\ma, \mb$ be cusps of $\Gamma_0(q)$. Then for any positive integers $m, n$ and $\sgn \in \{1, -1\}$, one has
\begin{equation} \label{eq:kuznetsov}
    \sum_{c \in \mC_{\ma\mb}} \frac{S_{\ma\mb}(m, \sgn\cdot n; c)}{c} \varphi \left(\frac{4 \pi \sqrt{mn}}{c}\right)
    =
    \begin{cases}
        \mH + \mM + \mE, & \sgn = 1, \\
        \mM' + \mE', & \sgn = -1,
    \end{cases}
\end{equation}
with the following notations. Firstly, the \emph{holomorphic} contribution is
\begin{equation} \label{eq:holomorphic-contribution}
    \mH = \frac{1}{2\pi} \sum_{k \in 2\Z_+} \tilde{\mB}_\varphi(k-1) \frac{i^k (k-1)!}{(4\pi \sqrt{mn})^{k-1}} \sum_{j = 1}^{h_k(q)}  \bar{\psi_{jk\ma}(m)}\, \psi_{jk\mb}(n),
\end{equation}
for any orthonormal bases of level-$q$ holomorphic cusp forms $(f_{jk})_j$ of weight $k \in 2\Z_+$, with Fourier coefficients $\psi_{jk\ma}(n)$ as in \cref{eq:expansion-holomorphic}. Secondly, the \emph{Maass} contributions are
\begin{equation} \label{eq:Maass-contribution}
    \mM = \sum_{j = 1}^\infty \frac{\hat{\mB}_\varphi(\kappa_j)}{\cosh(\pi \kappa_j)}\, \bar{\rho_{j\ma}(m)}\, \rho_{j\mb}(n),
    \qquad\quad
    \mM' = \sum_{j = 1}^\infty \frac{\check{\mB}_\varphi(\kappa_j)}{\cosh(\pi \kappa_j)}\, \rho_{j\ma}(m)\, \rho_{j\mb}(n),
\end{equation}
for any orthonormal basis $(u_j)_j$ of level-$q$ Maass cusp forms, with eigenvalues $\lambda_j$ (and $\kappa_j, \theta_j$ as in \cref{eq:eigenvalue-notation}), and Fourier coefficients $\rho_{j\ma}(n)$ as in \cref{eq:expansion-Maass}. Thirdly, the \emph{Eisenstein} contributions are
\begin{equation} \label{eq:Eisenstein-contribution}
\begin{aligned}
    \mE &= \frac{1}{\pi} \sum_{\mc} \int_{-\infty}^\infty \hat{\mB}_\varphi(r)\left(\frac{m}{n}\right)^{-ir} \bar{\varphi_{\mc\ma}\left(m; \frac{1}{2} + ir\right)} \varphi_{\mc\mb}\left(n; \frac{1}{2} + ir\right)\, dr,
    \\
    \mE' &= \frac{1}{\pi} \sum_{\mc} \int_{-\infty}^\infty \check{\mB}_\varphi(r)\left(mn\right)^{ir} \varphi_{\mc\ma}\left(m; \frac{1}{2} + ir\right) \varphi_{\mc\mb}\left(n; \frac{1}{2} + ir\right)\, dr,
\end{aligned}
\end{equation}
where the Fourier coefficients $\varphi_{\ma\mb}(n; s)$ are as in \cref{eq:expansion-Eisenstein}, and $\mc$ varies over the cusps of $\Gamma_0(q)$.
\end{knownproposition}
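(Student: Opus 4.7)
The plan is to derive the identity by computing a suitable inner product of two Poincaré series in two different ways — geometrically (via unfolding, producing Kloosterman sums) and spectrally (via the decomposition of $L^2(\Gamma_0(q)\backslash\H)$, producing the Maass and Eisenstein terms) — and then to replace the specific test function arising from this inner product by an arbitrary smooth $\varphi$ through Bessel inversion. Throughout, I would fix the cusps $\ma, \mb$ and scaling matrices $\sigma_\ma, \sigma_\mb$, and define the Poincaré series
\[
    P_{\ma m}(z, s) := \sum_{\gamma \in \Gamma_\ma \backslash \Gamma_0(q)} \Im\bigl(\sigma_\ma^{-1} \gamma z\bigr)^s\, e\bigl(m\, \sigma_\ma^{-1}\gamma z\bigr),
\]
for $m \in \Z_+$ and $\Re s > 1$. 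Unfolding the integral $\la P_{\ma m}(\cdot, s), P_{\mb n}(\cdot, s') \ra$ against the sum defining $P_{\mb n}$ collapses the $\Gamma_0(q)$-quotient, and after using the Bruhat-like decomposition of $\sigma_\ma^{-1} \Gamma_0(q)\,\sigma_\mb$ by the bottom-left entry $c \in \mC_{\ma\mb}$, the off-diagonal terms yield a weighted sum of Kloosterman sums $S_{\ma\mb}(m, n; c)/c^{2s}$ with a kernel that, after computing the $K$-Bessel integrals, matches the shape of the left-hand side of \cref{eq:kuznetsov}.

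For the spectral side, I would expand $P_{\ma m}(\cdot, s)$ in the orthonormal basis $\{u_j\}$ of Maass cusp forms together with the continuous Eisenstein spectrum (plus constants, which contribute only to the diagonal). The inner product $\la P_{\ma m}(\cdot, s), u_j \ra$ unfolds against the sum defining the Poincaré series and reduces to integrating the $m$th Fourier coefficient of $u_j$ at the cusp $\ma$ against $y^s e(mz) K_{i\kappa_j}(2\pi m y)/\sqrt{y}$, producing $\bar{\rho_{j\ma}(m)}$ times a Gamma-function factor depending on $(s, \kappa_j)$. Combining the two expressions for $\la P_{\ma m}, P_{\mb n}\ra$ and analytically continuing to $s = s' = 1$ (formally; one needs to regularise carefully) gives an identity where the same Kloosterman weight appears on the left, and on the right one has the Maass and Eisenstein terms with an explicit Bessel weight, plus a diagonal term. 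The case $\sgn = -1$ is handled by repeating the argument with $P_{\mb n}$ replaced by its complex conjugate analogue, which changes the sign of $n$ inside the Kloosterman sum; the different Bessel transform $\check\mB_\varphi$ arises because integrating $K_{i\kappa}(y)\,y^{s+s'-1}$ has a different closed form than the $J$-Bessel integral.

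For the holomorphic contribution (present only when $\sgn = +1$), I would separately apply Petersson's trace formula: using holomorphic Poincaré series $P^{(k)}_{\ma m}$ of weight $k$ and computing $\la P^{(k)}_{\ma m}, P^{(k)}_{\mb n} \ra_k$ in the finite-dimensional space of weight-$k$ holomorphic cusp forms yields, after unfolding, a finite sum of Kloosterman sums with $J_{k-1}$-Bessel kernel. Summing over even $k \ge 2$ with the weight $\tilde\mB_\varphi(k-1)$ recombines with the Maass/Eisenstein identity, and the reason the holomorphic term has no analogue for $\sgn = -1$ is the sign restriction in the Fourier expansion \cref{eq:expansion-holomorphic}, which forces the unfolded integral to vanish for negative frequencies.

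Finally, one must replace the specific Bessel kernels coming out of the Poincaré-series machinery by the arbitrary compactly-supported $\varphi$ in \cref{eq:kuznetsov}. This is the step I expect to be the main obstacle: one needs the spectral inversion formula for the integral transforms $\varphi \mapsto (\tilde\mB_\varphi, \hat\mB_\varphi, \check\mB_\varphi)$ — essentially a Kontorovich–Lebedev / Sears–Titchmarsh inversion for the $J_{2ir}$ and $K_{2ir}$ transforms — to show that the identities obtained above at the specific weights span a dense enough family to allow taking linear combinations and a limit against any $\varphi \in C_c^\infty((0,\infty))$. The required inversion is classical but subtle because the spectrum $\{\kappa_j\} \cup i\R$ contains both imaginary parameters (from exceptional eigenvalues) and real parameters (from the continuous spectrum and most cuspidal eigenvalues); carrying this out rigorously, while keeping the Eisenstein integrals convergent, is the technical heart of the argument and is the place where the particular choice of Bessel transforms in \cref{eq:Bessel-transform} is forced on us. Once inversion is established, substituting into the specific identity produces \cref{eq:kuznetsov} for arbitrary smooth $\varphi$.
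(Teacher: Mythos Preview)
The paper does not prove this statement at all: its proof reads simply ``This is \cite[Theorem 2]{deshouillers1982kloosterman}.'' The proposition is labelled as a known result and quoted verbatim from Deshouillers--Iwaniec, so there is nothing to compare your argument against within the paper itself.

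That said, your sketch is a faithful high-level outline of the standard proof (and essentially what the cited reference does): compute inner products of Poincar\'e series geometrically via unfolding and the Bruhat decomposition of $\sigma_{\ma}^{-1}\Gamma_0(q)\sigma_{\mb}$ to produce the Kloosterman side, spectrally via the decomposition of $L^2(\Gamma_0(q)\backslash\H)$ to produce the Maass and Eisenstein terms, add in the Petersson formula for the holomorphic part, and then pass to general $\varphi$ by Bessel inversion. Your identification of the inversion step as the delicate part is accurate. For the purposes of this paper, though, you would simply cite the result rather than reproduce this argument.
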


\begin{proof}
This is \cite[Theorem 2]{deshouillers1982kloosterman}.
\end{proof}

\begin{remark}
    Upon inspecting the Maass contribution \cref{eq:Maass-contribution} in light of the bounds \cref{eq:bessel-bound-1,eq:exc-bound-2}, the losses due to the exceptional spectrum are apparent. Indeed, if $\varphi(y)$ is supported in $y \asymp Y \asymp \frac{\sqrt{mn}}{C}$ for some $C > 0$ (indicating the size of $c$), then the Bessel transforms bounds for exceptional eigenvalues are (a priori) worse by a factor of 
    \[
        \max\left(1, Y^{-2\theta(q)}\right) \asymp \left(1 + \frac{C}{\sqrt{mn}}\right)^{2\theta(q)},
    \]
    compared to the regular (non-exceptional) spectrum.
\end{remark}

\subsection{Bounds for Fourier coefficients} \label{subsec:automorphic-fourier-bounds}
If one is interested in a particular holomorphic or Maass cusp form (ideally, a Hecke eigenform), then various bounds for its Fourier coefficients follow from the theory of automorphic representations and their $L$-functions \cite{deshouillers1982kloosterman,sarnak1995selberg,kim2003functoriality,bump1998automorphic,goldfeld2011automorphic1,goldfeld2011automorphic2}. Here we are interested in bounding averages over bases of automorphic forms, resembling those that show up in \cref{eq:Maass-contribution,eq:holomorphic-contribution,eq:Eisenstein-contribution}; naturally, these would be useful in combination with the Kuznetsov formula.

Remarkably, such bounds are often derived using the Kuznetsov formula once again (with different parameters, including the support range of the smooth function $\varphi$), together with various bounds for sums of Kloosterman sums, such as the Weil bound below. 

\begin{knownlemma}[Weil--Ramanujan bound] \label{lem:weil}
For any $c \in \Z_+$ and $m, n \in \Z$, one has
\[
    S(m, n; c) \ll \tau(c)\, (m, n, c)^{1/2} c^{1/2}.
\]
Also, for $m = 0$, one has $|S(0, n; c)| \le (n, c)$.
\end{knownlemma}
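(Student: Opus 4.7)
The plan is to reduce to prime power moduli via a twisted multiplicativity property of Kloosterman sums, and then combine Weil's bound at primes with an explicit evaluation at higher prime powers. Specifically, if $c = c_1 c_2$ with $(c_1, c_2) = 1$, the Chinese Remainder Theorem provides a bijection $(\Z/c\Z)^\times \leftrightarrow (\Z/c_1\Z)^\times \times (\Z/c_2\Z)^\times$ through $x \equiv x_1 c_2 \bar{c_2}_{(c_1)} + x_2 c_1 \bar{c_1}_{(c_2)} \pmod{c}$, where $\bar{c_i}_{(c_j)}$ denotes the inverse modulo $c_j$. Substituting into the definition \cref{eq:kloosterman} and simplifying yields the twisted multiplicativity
\[
    S(m, n; c_1 c_2) \;=\; S\bigl(m \bar{c_2}_{(c_1)},\, n\bar{c_2}_{(c_1)};\, c_1\bigr)\, S\bigl(m\bar{c_1}_{(c_2)},\, n\bar{c_1}_{(c_2)};\, c_2\bigr).
\]
Since $\tau$ and $(m,n,\cdot)^{1/2} \cdot (\cdot)^{1/2}$ are both multiplicative in $c$ (the former completely multiplicative, the latter up to the $\gcd$ splitting across coprime factors), it suffices to prove the bound when $c = p^k$ is a prime power.

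For $c = p$ prime with $p \nmid (m, n)$, the deep theorem of Weil (via the Riemann hypothesis for curves over finite fields) gives the sharp bound $|S(m, n; p)| \le 2 \sqrt{p}$. For $c = p^k$ with $k \ge 2$, one can proceed by an elementary stationary-phase / completing-the-square argument: writing $x = y + p^{\lceil k/2 \rceil} z$ and Taylor expanding the exponent shows that only certain residues $y$ contribute, leading to an explicit formula (a Sali\'e-type sum when $p$ is odd) that directly gives $|S(m, n; p^k)| \le 2\, (m, n, p^k)^{1/2}\, p^{k/2}$. When $p \mid (m, n)$, one can also peel off common factors using $S(pm', pn'; p^k) = p\, S(m', n'; p^{k-1})$ for $k \ge 2$ and recurse. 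Multiplying the bounds across the distinct prime power factors of $c$ gives
\[
    |S(m, n; c)| \;\le\; 2^{\omega(c)}\, (m, n, c)^{1/2}\, c^{1/2} \;\le\; \tau(c)\, (m, n, c)^{1/2}\, c^{1/2},
\]
using $2^{\omega(c)} \le \tau(c)$.

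For the degenerate case $m = 0$, substitute $y = \bar{x}$ in the definition to obtain the Ramanujan sum $S(0, n; c) = \sum_{y \in (\Z/c\Z)^\times} e(ny/c) = c_n(c)$. The standard evaluation $c_n(c) = \mu(c/g)\, \varphi(c)/\varphi(c/g)$ with $g := (n, c)$, together with $\varphi(c)/\varphi(c/g) \le g$, yields $|S(0, n; c)| \le (n, c)$ as required.

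The only non-elementary ingredient is Weil's bound at primes, which is well-documented in the literature; the rest of the argument is a routine CRT reduction and an explicit $p$-adic stationary phase computation. Since this lemma is entirely classical, in practice one would simply cite it (e.g., from Iwaniec--Kowalski or from Deshouillers--Iwaniec \cite{deshouillers1982kloosterman}) rather than reprove it.
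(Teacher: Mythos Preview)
Your proposal is correct and, as you anticipate in your final paragraph, the paper does exactly what you suggest one would do ``in practice'': it simply cites \cite[Corollary 11.12]{iwaniec2021analytic} for the Weil bound and notes that the Ramanujan-sum bound is classical via M\"obius inversion. Your sketch via twisted multiplicativity, Weil at primes, and $p$-adic stationary phase at higher prime powers is the standard route behind that citation, so there is no substantive difference in approach---you have just unpacked what the reference contains.
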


\begin{proof}
See \cite[Corollary 11.12]{iwaniec2021analytic} for the first bound; the second bound, concerning Ramanujan sums, is classical and follows by M\"obius inversion.
\end{proof}

The first results that we mention keep the index $n$ of the Fourier coefficients fixed, while varying the automorphic form.

\begin{knownlemma}[Fourier coefficient bounds with fixed $n$] \label{lem:coefficient-bounds}
    Let $K \gg 1$ and $\eps > 0$. With the notation of \cref{prop:kuznetsov}, each of the three expressions 
    \[
        \sum_{\substack{k \in 2\Z_+ \\ k \le K}} \frac{(k-1)!}{(4\pi n)^{k-1}} \sum_{j = 1}^{h_k(q)}  |\psi_{jk\ma}(n)|^2,
        \qquad\quad
        \sum_{|\kappa_j| \le K} \frac{|\rho_{j\ma}(n)|^2}{\ch(\pi \kappa_j)},
        \qquad\quad
        \sum_{\mc} \int_{-K}^K \left\vert \varphi_{\mc\ma}\left(n; \frac{1}{2} + ir\right) \right\vert^2\, dr
    \]
    is bounded up to a constant depending on $\eps$ by
    \[
        K^2 + (qnK)^{\eps}\, (q, n)^{1/2}\, \mu(\ma)\, n^{1/2}.
    \]
    Moreover, for the exceptional spectrum we have 
    \begin{equation} \label{eq:coefficient-bound-exceptional}
        \sum_{\lambda_j < 1/4} X^{2\theta_j}\, |\rho_{j\ma}(n)|^2 \ll_\eps 
        (qN)^\eps \left(1 + (q, n)^{1/2}\, \mu(\ma)\, n^{1/2}\right),
    \end{equation}
    for any $X \ll \max\left(1, \left((q, n)\, \mu(\ma)^2\, n\right)^{-1}\right)$.
\end{knownlemma}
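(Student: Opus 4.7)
The strategy is to apply the Kuznetsov trace formula (\cref{prop:kuznetsov}) with $m = n$, $\mb = \ma$, and $\sgn = +1$ to a carefully chosen nonnegative test function $\varphi$, use positivity on the spectral side to truncate each expression of interest, and bound the resulting sum of Kloosterman sums via \cref{lem:weil}.

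\textbf{Step 1 (Positive test function).} Following the standard recipe from \cite[\S 2,\ \S 7]{deshouillers1982kloosterman}, construct a smooth nonnegative $\varphi$ supported in $y \asymp Y$ (for a scale $Y$ chosen below) such that all three Bessel transforms $\tilde{\mB}_\varphi,\, \hat{\mB}_\varphi,\, \check{\mB}_\varphi$ in \cref{eq:Bessel-transform} are pointwise nonnegative on the relevant spectral parameters, with $\hat{\mB}_\varphi(r) \gg 1$ on $|r| \le K$ and $\tilde{\mB}_\varphi(k-1) \gg (4\pi n / Y)^{-(k-1)}/(k-1)!$ for even $k \le K$. Such a $\varphi$ can be constructed, e.g., as a square $|\phi|^2 \cdot y$ of a bump $\phi$, using explicit integral/Mellin representations of the three Bessel transforms. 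By positivity, dropping all other spectral terms in \cref{eq:Maass-contribution,eq:holomorphic-contribution,eq:Eisenstein-contribution} and rescaling shows that each of the three expressions in the statement is bounded up to constants by the Kloosterman side of \cref{eq:kuznetsov}.

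\textbf{Step 2 (Main bound).} Choose $Y = 1/K$, so that $\varphi(4 \pi n/c)$ is supported on $c \asymp nK$. Combining the standard counting estimate $|\mC_{\ma\ma} \cap (C, 2C]| \ll 1 + \mu(\ma) C$ with \cref{lem:weil} and the divisor bound gives
\[
    \sum_{\substack{c \in \mC_{\ma\ma} \\ c \asymp nK}} \frac{|S_{\ma\ma}(n, n; c)|}{c}\, \varphi\!\left(\frac{4\pi n}{c}\right) \ll (qnK)^\eps \bigl(K^2 + (q,n)^{1/2}\, \mu(\ma)\, n^{1/2}\bigr),
\]
the $K^2$ term reflecting the ``diagonal''/local-Weyl-law contribution from the smallest admissible $c$ in $\mC_{\ma\ma}$, and the second term coming from the main range of $c$ via Weil. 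This establishes the three uniform bounds.

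\textbf{Step 3 (Exceptional spectrum).} For \cref{eq:coefficient-bound-exceptional} we may assume $X \gg 1$, since otherwise the bound follows from the previous step applied with $K = O(1)$. Take $Y \asymp 1/X$, which is admissible provided $X$ exceeds the absolute constant in \cref{lem:bessel-bounds}. By \cref{eq:exc-bound-2}, $\hat{\mB}_\varphi(\kappa_j) \asymp Y^{-\theta_j} \asymp X^{\theta_j}$ for each exceptional $\kappa_j$, while by \cref{eq:exc-bound-1} the non-exceptional Maass spectrum still contributes nonnegatively and may be discarded. The hypothesis $X \ll \max(1, ((q, n)\, \mu(\ma)^2\, n)^{-1})$ is exactly calibrated so that the Weil estimate from Step 2, now applied with $c \asymp nX$, gives the bound $(qnX)^\eps \bigl(1 + (q, n)^{1/2}\, \mu(\ma)\, n^{1/2}\bigr)$, absorbing the $K^2$-type term (which is $O(1)$ in this regime).

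\textbf{Main obstacle.} The delicate ingredient is the construction of $\varphi$ with all three Bessel transforms simultaneously nonnegative and satisfying the quantitative lower bounds uniformly over the full ranges $|\kappa_j| \le K$, $k \le K$, $|r| \le K$ (and over imaginary $\kappa_j$ in Step 3). The upper bounds in \cref{lem:bessel-bounds} do not yield positivity; one exploits explicit Bessel/Whittaker integral identities as in \cite[\S 7]{deshouillers1982kloosterman}. The rest of the argument is a careful bookkeeping of Weil's bound across the width of $\mC_{\ma\ma}$.
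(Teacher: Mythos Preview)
Your overall strategy---apply a Kuznetsov-type trace formula with $m=n$, $\ma=\mb$, exploit positivity on the spectral side, and bound the Kloosterman side by Weil---is exactly what the paper sketches and what the cited references carry out. Step~3 is fine and in fact mirrors the paper's own argument in \cref{cor:exc-bound}: the scale $Y\asymp 1/X$ combined with \cref{eq:exc-bound-2} gives $\hat{\mB}_\varphi(\kappa_j)\asymp X^{\theta_j}$ on the exceptional spectrum, and the Weil bound over $c\asymp nX$ is precisely calibrated to the stated range of $X$.

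The gap is in Step~2. With $\varphi$ compactly supported at a single dyadic scale $y\asymp Y=1/K$, the decay bound \cref{eq:bessel-bound-3} forces $\hat{\mB}_\varphi(r)\ll |r|^{-5/2}+|r|^{-3}Y\ll K^{-5/2}$ at $|r|=K$, so you \emph{cannot} have $\hat{\mB}_\varphi(r)\gg 1$ uniformly on $|r|\le K$. Consequently the positivity argument as written does not capture the full range $|\kappa_j|\le K$, and the Kloosterman side over $c\asymp nK$ cannot produce the $K^2$ term (Weil over that range gives something like $(q,n)^{1/2}\mu(\ma)(nK)^{1/2}$, not $K^2$). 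In the actual proofs (e.g.\ \cite[Lemmas~2.7,~2.9]{topacogullari2017certain}, or the original treatment in \cite{deshouillers1982kloosterman}), one uses the Kuznetsov formula in the \emph{reverse} (spectral-to-arithmetic) direction, choosing the spectral test function $h(\kappa)$ directly to be nonnegative with $h\gg 1$ on $|\kappa|\le K$; the $K^2$ then arises from the \emph{delta term} $\delta_{mn}\cdot\int h(r)\,r\tanh(\pi r)\,dr$ on the arithmetic side, not from any Kloosterman sum. Your ``smallest admissible $c$'' explanation for $K^2$ is therefore incorrect. The holomorphic case is handled analogously via the Petersson formula, whose identity term produces the corresponding main contribution.
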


\begin{proof}
These bounds roughly follow by combining \cref{lem:weil} with trace formulas like \cref{prop:kuznetsov}, for $m = n$ and suitable choices of $\varphi$.
See for example \cite[Lemmas 2.7 and 2.9]{topacogullari2017certain} with $q_0 = 1$ and $X = X_0$, noting the different normalizations of the Fourier coefficients.
\end{proof}

One of the key insights of Deshouillers--Iwaniec \cite{deshouillers1982kloosterman} was that the bounds in \cref{lem:coefficient-bounds} can be improved when averaging over $n \sim N$, by exploiting the bilinear structure in $m, n$ of the spectral side of the Kuznetsov formula \cref{eq:kuznetsov}. This leads to the so-called \emph{weighted large sieve inequalities} for the Fourier coefficients of automorphic forms, involving arbitrary sequences $(a_n)$; for $1$-bounded sequences, the result below saves a factor of roughly $N$ over the pointwise bounds in \cref{lem:coefficient-bounds}.

\begin{knownlemma}[Deshouillers--Iwaniec large sieve for the regular spectrum \cite{deshouillers1982kloosterman}] \label{lem:large-sieve-non-exceptional}
Let $K, N \ge 1/2$, $\eps > 0$, and $(a_n)$ be a sequence of complex numbers. With the notation of \cref{prop:kuznetsov}, each of the three expressions
\begin{gather*}
    \sum_{\substack{k \in 2\Z_+ \\ k \le K}} \frac{(k-1)!}{(4\pi)^{k-1}} \sum_{j = 1}^{h_k(q)} \left\vert \sum_{n \sim N} a_n n^{-(k-1)/2}\, \psi_{jk\ma}(n) \right\vert ^2,
    \qquad\qquad 
    \sum_{|\kappa_j| \le K} \frac{1}{\ch(\pi \kappa_j)} \left\vert \sum_{n \sim N} a_n\, \rho_{j\ma}(n) \right\vert^2,
    \\
    \sum_{\mc} \int_{-K}^K \left\vert \sum_{n \sim N} a_n\, n^{ir}\, \varphi_{\mc\ma}\left(n; \frac{1}{2} + ir\right) \right\vert^2\, dr
\end{gather*}
is bounded up to a constant depending on $\eps$ by 
\[
    \left(K^2 + \mu(\ma) N^{1+\eps} \right) \|a_n\|_2^2.
\]
\end{knownlemma}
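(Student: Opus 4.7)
The plan is to follow the classical argument of Deshouillers--Iwaniec \cite{deshouillers1982kloosterman}, proving all three estimates simultaneously by running the Kuznetsov trace formula (\cref{prop:kuznetsov}) \emph{in reverse}. The key observation is that each of the three spectral pieces $\mH, \mM, \mE$ from \cref{eq:holomorphic-contribution,eq:Maass-contribution,eq:Eisenstein-contribution}, when applied for each pair $m, n \sim N$ and summed against $\bar{a_m} a_n$, becomes a quadratic form in $(a_n)$ of the shape $\sum_\nu w_\nu |L_\nu(a)|^2$. Such a form is non-negative as long as the weights $\tilde{\mB}_\varphi(k-1)$, $\hat{\mB}_\varphi(\kappa_j)/\ch(\pi\kappa_j)$, and $\check{\mB}_\varphi(r)$ are all non-negative.

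I would therefore choose a smooth function $\varphi : (0, \infty) \to \R_+$, compactly supported in $y \asymp Y \asymp 1/K$, whose three Bessel transforms are simultaneously real, non-negative, and bounded below by $\gg 1$ on the target ranges $k \le K$, $|\kappa_j| \le K$, and $|r| \le K$ respectively. With this choice, the positivity of each spectral piece individually yields that any one of them is bounded above by the Kloosterman side of the trace formula,
\[
    \Sigma := \sum_{m, n \sim N} \bar{a_m} a_n \sum_{c \in \mC_{\ma\ma}} \frac{S_{\ma\ma}(m, n; c)}{c}\, \varphi\!\left(\frac{4\pi \sqrt{mn}}{c}\right),
\]
and by \cref{lem:explicit-kloosterman} the $c$-sum is supported in an arithmetic progression modulo $\mu(\ma)^{-1}$, with $c \asymp K \sqrt{mn}$.

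Finally, I would estimate $\Sigma$ by splitting into the diagonal $m = n$ and off-diagonal contributions. On the diagonal, $S_{\ma\ma}(m, m; c)$ reduces to a Ramanujan sum of size $\ll (m, c)$, contributing $\ll (qN)^{\eps} K^2\, \|a_n\|_2^2$ after summing over the $c$-progression and using the divisor bound. For the off-diagonal $m \ne n$, I would invoke Weil's bound (\cref{lem:weil}) $|S_{\ma\ma}(m, n; c)| \ll \tau(c)\, (m, n, c)^{1/2} c^{1/2}$, symmetrize via $2|\bar{a_m} a_n| \le |a_m|^2 + |a_n|^2$, and sum over $(m, c)$ in the allowed ranges, arriving at the bound $\ll \mu(\ma) N^{1+\eps}\, \|a_n\|_2^2$ (the factor $\mu(\ma)$ entering through the sparsity of the $c$-progression).

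The main obstacle is the simultaneous non-negativity of the three Bessel transforms together with controlled lower bounds on the relevant intervals; this is a delicate analytic construction (related to Kontorovich--Lebedev-type positive kernels), but it is carried out once and for all in \cite[\S 1.5]{deshouillers1982kloosterman}, which I would import as a black-box. Beyond that point, the proof requires only the Weil bound and standard divisor-sum manipulations, plus bookkeeping of the $(qN)^\eps$ factors.
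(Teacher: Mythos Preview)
The paper does not prove this lemma at all; its entire proof is the sentence ``This is \cite[Theorem 2]{deshouillers1982kloosterman}.'' So there is nothing to compare against beyond checking whether your sketch faithfully reproduces the Deshouillers--Iwaniec argument.

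Your high-level picture (positivity on the spectral side, then Weil on the Kloosterman side) is correct, but two concrete steps are wrong. First, the test-function construction you describe is impossible: by \cref{lem:bessel-bounds}, any compactly supported $\varphi$ with $Y \asymp 1/K$ has $\hat{\mB}_\varphi(r), \tilde{\mB}_\varphi(r), \check{\mB}_\varphi(r) \ll |r|^{-5/2}$ for $|r| \ge 1$, so the transforms cannot be $\gg 1$ uniformly on $|r| \le K$ when $K$ is large. What Deshouillers--Iwaniec actually use is the \emph{reverse} Kuznetsov formula, with the test function $h$ placed on the spectral side (so $h(r) \asymp \one_{|r| \le K}$ directly), which is not the formula stated in \cref{prop:kuznetsov}. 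That version carries an additional \emph{diagonal term} $\delta_{m=n}$ on the arithmetic side, weighted by $\int h(r)\, r \tanh(\pi r)\, dr \asymp K^2$; summing against $\bar{a_m} a_n$ this delta term is exactly what produces the $K^2 \|a_n\|_2^2$ contribution.

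Second, and relatedly, your claimed source of the $K^2$ term is incorrect: the sum $S_{\ma\ma}(m,m;c) = S(m,m;c)$ for $m \neq 0$ is a genuine Kloosterman sum, not a Ramanujan sum (the latter is $S(0,m;c)$), and estimating the diagonal $m=n$ of the Kloosterman bilinear form does not recover $K^2 \|a_n\|_2^2$. Once you swap to the correct (spectral-to-Kloosterman) formula with its delta term, the rest of your plan---Weil bound plus $2|\bar{a_m} a_n| \le |a_m|^2 + |a_n|^2$ on the off-diagonal, yielding $\mu(\ma) N^{1+\eps} \|a_n\|_2^2$---goes through as in \cite[\S 7]{deshouillers1982kloosterman}.
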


\begin{proof}
This is \cite[Theorem 2]{deshouillers1982kloosterman}.
\end{proof}

\cref{lem:large-sieve-non-exceptional} includes the contribution of the exceptional Maass cusp forms, but is not the optimal result for handling it. Indeed, to temper the growth of the Bessel functions weighing the exceptional Fourier coefficients in \cref{eq:Maass-contribution}, one needs to incorporate factors of $X^{2\theta_j}$ into the sum over Maass forms, as in \cref{eq:coefficient-bound-exceptional}. The following is a preliminary result toward such bounds.

\begin{knowncorollary}[Preliminary bound for exceptional forms \cite{deshouillers1982kloosterman}] \label{cor:exc-bound}
Let $X, N \ge 1/2$, $\eps > 0$, $(a_n)_{n \sim N}$ be a complex sequence. Let $\Phi(t)$ be a nonnegative smooth function supported in $t \asymp 1$, with $\Phi^{(j)}(t) \ll_j 1$ for $j \ge 0$, and $\int \Phi(t)\, dt \gg 1$. Then with the notation of \cref{prop:kuznetsov}, one has
\begin{equation} \label{eq:exceptional-bound}
\begin{aligned}
    \sum_{\lambda_j < 1/4} X^{2\theta_j} \left\vert \sum_{n \sim N} a_n\, \rho_{j\ma}(n) \right\vert^2 
    &\ll 
    \left\vert \sum_{c \in \mC_{\ma\ma}} \frac{1}{c} 
    \sum_{m,n \sim N} \bar{a_m}\, a_n\, S_{\ma \ma}(m, n; c)\, \Phi\left(\frac{\sqrt{mn}}{c} X \right) \right\vert
    \\
    &+ 
    O_\eps\left(1 + \mu(\ma) N^{1+\eps}\right) \|a_n\|_2^2.
\end{aligned}
\end{equation}
\end{knowncorollary}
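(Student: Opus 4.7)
The plan is to apply the Kuznetsov trace formula (\cref{prop:kuznetsov}) in the symmetric case $\mb = \ma$, $\sgn = +1$, with the test function $\varphi(y) := \Phi(yX/(4\pi))$, so that $\varphi(4\pi\sqrt{mn}/c) = \Phi(X\sqrt{mn}/c)$ matches the weight appearing in the desired Kloosterman sum. This $\varphi$ is supported in $y \asymp Y := X^{-1}$, with $\varphi^{(j)} \ll_j Y^{-j}$ and $\int \varphi \gg Y$, so all hypotheses of \cref{lem:bessel-bounds} are satisfied. (If $X$ is too small for $Y < c$ to hold, the claim follows directly from \cref{lem:large-sieve-non-exceptional} with $K \asymp 1$, so I may assume $X$ is sufficiently large.) After multiplying both sides of \cref{eq:kuznetsov} by $\overline{a_m} a_n$ and summing over $m, n \sim N$, the left-hand side becomes precisely the Kloosterman expression on the right of \cref{eq:exceptional-bound}, while the right-hand side splits as $\Sigma_\mH + \Sigma_\mM + \Sigma_\mE$, the bilinearizations of the three spectral contributions.

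Next I would isolate the exceptional Maass contribution. Writing $\Sigma_\mM = \Sigma_{\mM,\text{exc}} + \Sigma_{\mM,\text{reg}}$, the key observation is that for $\lambda_j < 1/4$, the Bessel bound \cref{eq:exc-bound-2} yields
\[
    \hat{\mB}_\varphi(\kappa_j) \asymp Y^{-2i\kappa_j} = X^{\theta_j},
\]
with a \emph{positive} implicit constant (using that $\Phi \ge 0$ and $\int \Phi \gg 1$); since $\cosh(\pi \kappa_j) \asymp 1$ on the exceptional spectrum,
\[
    \Sigma_{\mM,\text{exc}} \asymp \sum_{\lambda_j < 1/4} X^{\theta_j} \left|\sum_{n \sim N} a_n\, \rho_{j\ma}(n)\right|^2,
\]
and this quantity is nonnegative. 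The corollary will then follow by rearranging $\Sigma_{\mM,\text{exc}} = (\text{Kloosterman sum}) - \Sigma_\mH - \Sigma_{\mM,\text{reg}} - \Sigma_\mE$ and applying the triangle inequality.

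To handle the error terms $\Sigma_\mH, \Sigma_{\mM,\text{reg}}, \Sigma_\mE$, I would use \cref{eq:bessel-bound-2} for spectral parameter $\le 1$ and \cref{eq:bessel-bound-3} for larger values, which together with $Y \le 1$ give $|\hat{\mB}_\varphi(r)|, |\tilde{\mB}_\varphi(r)| \ll (1+|\log X|)(1+|r|)^{-5/2}$ (the $|r|^{-3}Y$ piece being negligible). For each spectral type I would decompose the sum over the spectrum dyadically in the parameter $|\kappa_j|$ (or $k$ for holomorphic forms) and apply \cref{lem:large-sieve-non-exceptional} on each range $\asymp K$:
\[
    \sum_{|\kappa_j| \sim K} \frac{|\hat{\mB}_\varphi(\kappa_j)|}{\cosh(\pi\kappa_j)} \left|\sum_{n \sim N} a_n\, \rho_{j\ma}(n)\right|^2 \ll (1 + \log X)\, K^{-5/2} \bigl(K^2 + \mu(\ma) N^{1+\eps}\bigr)\, \|a_n\|_2^2.
\]
Since $\sum_K K^{-1/2}$ and $\sum_K K^{-5/2}$ both converge dyadically, summing yields $O\!\left((1 + \mu(\ma) N^{1+\eps})\|a_n\|_2^2\right)$, after absorbing $\log X$ factors into $(qaN)^\eps$. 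The holomorphic case is analogous via the weighted large sieve for $\psi_{jk\ma}(n)n^{-(k-1)/2}$, using that $(k-1)!/(4\pi\sqrt{mn})^{k-1}$ bilinearizes cleanly and that $\tilde{\mB}_\varphi(k-1)$ enjoys the same decay.

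I do not expect any serious obstacle: the argument is a textbook deployment of Kuznetsov paired with the positivity of the exceptional Bessel transform and the regular-spectrum large sieve. The only care needed is bookkeeping the dyadic sums and verifying that the constants depend only on $\Phi$ (through $\int \Phi$ and the derivative bounds $\Phi^{(j)} \ll_j 1$), and handling the small-$X$ edge case by direct appeal to \cref{lem:large-sieve-non-exceptional}.
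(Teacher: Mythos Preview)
Your approach is exactly the paper's: apply Kuznetsov with $\varphi(y)=\Phi(yX/(4\pi))$ at $\ma=\mb$, isolate the exceptional piece via the positivity and lower bound $\hat{\mB}_\varphi(\kappa_j)\gg X^{\theta_j}$ from \cref{eq:exc-bound-2}, and control the regular spectrum by dyadic decomposition and \cref{lem:large-sieve-non-exceptional}. The small-$X$ edge case and the structure of the argument match.

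One small slip: for small spectral parameters you invoke \cref{eq:bessel-bound-2}, which carries a factor $1+|\log Y|\asymp 1+\log X$. You then say you will ``absorb $\log X$ into $(qaN)^\eps$'', but the stated error term is $O_\eps(1+\mu(\ma)N^{1+\eps})\|a_n\|_2^2$ with no $X$-dependence at all, and the corollary assumes no polynomial relation between $X$ and $N$. The paper avoids this by using, for real $\kappa$, the sharper bound \cref{eq:exc-bound-1}, namely $\hat{\mB}_\varphi(\kappa)\ll(\kappa^2+1)^{-1}$, which holds precisely because $\Phi\ge 0$ with $\int\Phi\gg 1$ (and $Y$ is small). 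This removes the $\log X$ cleanly; the holomorphic and Eisenstein pieces need only \cref{eq:bessel-bound-3} since $k-1\ge 1$. With that one-line substitution your proof goes through verbatim.
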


\begin{proof}
This is essentially present in \cite{deshouillers1982kloosterman} (see \cite[first display on p.\,271]{deshouillers1982kloosterman}, and \cite[(8.7)]{deshouillers1982kloosterman} for the case $\ma = \infty$), but let us give a short proof for completion. If $X \ll 1$, the result follows immediately from \cref{lem:large-sieve-non-exceptional} with $K = 1/4$, and the bound $\cosh(\pi \kappa) \asymp 1$ for $i\kappa \in [0, 1/4]$ (recall that $i \kappa_j = \theta_j \le \tfrac{7}{64}$ by \cref{thm:kim-sarnak}, but the weaker Selberg bound $\theta_j \le \tfrac{1}{4}$ suffices here).

Otherwise, let $\varphi(y) := \Phi(yX(4\pi)^{-1})$, which satisfies all the assumptions in \cref{lem:bessel-bounds} for $Y = 4\pi X^{-1}$; in particular, we have
\begin{align}
    \label{eq:phi-bound-1}
    \max(\hat{\mB}_\varphi(r), \tilde{\mB}_\varphi(r)) &\ll |r|^{-5/2}, \qquad\qquad\quad \text{for } |r| \ge 1,
    \\ 
    \label{eq:phi-bound-2}
    \hat{\mB}_\varphi(\kappa) &\ll \left(\kappa^2 + 1\right)^{-1}, \qquad\quad \text{for } \kappa \in \R \setminus \{0\},
    \\
    \label{eq:phi-bound-3}
    \hat{\mB}_\varphi(\kappa) &\gg X^{2i\kappa}, \qquad\qquad\qquad \text{for } 0 < i \kappa < 1/2.
\end{align}
Now apply \cref{prop:kuznetsov} with this choice of $\varphi$ and $\ma = \mb$, multiply both sides by $\bar{a_m}\, a_n$, and sum over $m, n \sim N$, to obtain
\[
\begin{aligned}
    \sum_{c \in \mC_{\ma\ma}} \frac{1}{c} 
    \sum_{m,n \sim N} \bar{a_m}\, a_n\, S_{\ma \ma}(m, n; c)\, &\varphi\left(\frac{4\pi \sqrt{mn}}{c} \right)
    \\
    &= 
    \sum_{j \ge 1} \frac{\hat{\mB}_\varphi(\kappa_j)}{\ch(\pi \kappa_j)} \left\vert \sum_{n \sim N} a_n\, \rho_{j\ma}(n) \right\vert^2
    \\
    &+
    \frac{1}{\pi} 
    \sum_{\mc} \int_{-\infty}^\infty \hat{\mB}_\varphi(r) \left\vert \sum_{n \sim N} a_n\, n^{ir}\, \varphi_{\mc \ma}\left(n; \frac{1}{2} + ir\right) \right\vert^2 dr
    \\
    &+\frac{1}{2\pi} 
    \sum_{k \in 2\Z_+}
    \tilde{\mB}_\varphi(k-1)
    \frac{(k-1)!}{(4\pi)^{k-1}} 
    \sum_{1 \le j \le h_k(q)} \left\vert \sum_{n \sim N} a_n\, n^{-\frac{k-1}{2}} \psi_{jk\ma}(n) \right\vert^2 .
\end{aligned}
\]
Bounding the contribution of non-exceptional Maass cusp forms, holomorphic cusp forms, and Eisenstein series via \cref{eq:phi-bound-1}, \cref{eq:phi-bound-2}, and \cref{lem:large-sieve-non-exceptional} (in dyadic ranges $K = 2^p$), this reduces to
\begin{equation} \label{eq:Kloosterman-and-exceptional}
\begin{aligned}
    \sum_{c \in \mC_{\ma\ma}} \frac{1}{c} 
    \sum_{m,n \sim N} \bar{a_m}\, a_n\, S_{\ma \ma}(m, n; c)\, \Phi\left(\frac{\sqrt{mn}}{c} X\right)
    &= 
    \sum_{\lambda_j < 1/4} \frac{\hat{\mB}_\varphi(\kappa_j)}{\ch(\pi \kappa_j)} \left\vert \sum_{n \sim N} a_n\, \rho_{j\ma}(n) \right\vert^2
    \\
    &+
    O_\eps\left(1 + \mu(\ma) N^{1+\eps}\right) \|a_n\|_2^2.
\end{aligned}
\end{equation}
Combining this with the lower bound $\hat{\mB}_\varphi(\kappa_j) \gg X^{2\theta_j}$ (due to \cref{eq:phi-bound-3}), we recover the desired bound in \cref{eq:exceptional-bound}. 
\end{proof}

Finally, for the results with averaging over the level $q$, we will also need the following theorem of Deshouillers--Iwaniec \cite{deshouillers1982kloosterman}.

\begin{knowntheorem}[Deshouillers--Iwaniec's large sieve with level averaging \cite{deshouillers1982kloosterman}] \label{thm:large-sieve-level-avg}
Let $\eps > 0$, $X > 0$, $N, Q \ge 1/2$, and $\omega \in \R/\Z$. Let $q \in \Z_+$ and $\infty_q$ denote the cusp at $\infty$ of $\Gamma_0(q)$, with the choice of scaling matrix $\sigma_{\infty_q} = \Id$. Then with the notation of \cref{prop:kuznetsov}, one has
\begin{equation}
    \sum_{q \sim Q} \sum_{\lambda_j(q) < 1/4}
    X^{2\theta_j(q)} 
    \left\vert 
    \sum_{n \sim N} e(n\omega)\, \rho_{j\infty_q}(n)
    \right\vert^2 
    \ll_\eps 
    (QN)^\eps
    \left(Q + N\right) N,
\end{equation}
for any
\begin{equation}
    X \ll \max\left(N, \frac{Q^2}{N}\right).
\end{equation}
\end{knowntheorem}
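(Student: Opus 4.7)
This statement is \cite[Theorem 7]{deshouillers1982kloosterman} restated in our notation, so my plan is to follow the Deshouillers--Iwaniec strategy (with the explicit $\theta$-dependency essentially as in \cite[\S 8.2]{deshouillers1982kloosterman,lichtman2023primes}). The first step is to apply \cref{cor:exc-bound} separately for each $q \sim Q$ with $a_n = e(n\omega)$ and $\ma = \infty_q$; since $\mu(\infty_q) = q^{-1}$, the regular-spectrum errors sum to $(Q+N)N \cdot (QN)^\eps$, which is acceptable. It then remains to bound the level-averaged sum of Kloosterman sums
\[
\sum_{q \sim Q} \left\vert \sum_{c \equiv 0 \pmod q} \frac{1}{c} \sum_{m,n \sim N} e((n-m)\omega)\, S(m,n;c)\, \Phi\!\left(\tfrac{\sqrt{mn}\,X}{c}\right) \right\vert.
\]

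The second step exploits the key feature that $a_m = e(m\omega)$ is independent of $q$: absorbing the absolute values into $1$-bounded phases $\eps_q$ and swapping $\sum_{q \sim Q} \sum_{c \equiv 0 \pmod q}$ with $\sum_c \sum_{q \sim Q,\, q\mid c}$ replaces the outer count of levels with the divisor function $\tau_Q(c) \ll_\eps c^\eps$, at the cost of a harmless factor. The third step is to open $S(m,n;c)=\sum_{x \pmod c}^* e((mx+n\bar x)/c)$ and apply Poisson summation in $m$; this localizes $x \pmod c$ to a short interval of length $\asymp c/N$ around $c\omega$, turning the inner $x$-sum into an incomplete Kloosterman-like sum $\sum_{x \in I,\,(x,c)=1} e(n\bar x/c)$.

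The fourth step handles this incomplete sum together with the $n$-average carrying the companion phase $e(n\omega)$. One can either do a second Poisson in $n$ (localizing $\bar x \pmod c$ to another short interval around $-c\omega$, reducing to a modular-hyperbola count controlled by a divisor bound) or apply Polya--Vinogradov combined with the Weil bound to extract cancellation directly. Summing the resulting estimate against $\tau_Q(c)/c$ over $c \asymp NX$ with $c \ge Q$ yields a total of $O((NX+N^2)(QN)^\eps)$, which is $\ll (Q+N)N \cdot (QN)^\eps$ in the claimed range $X \ll \max(N,\, Q^2/N)$.

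The main obstacle is calibrating the Weil/Poisson balance in the fourth step: the crudest use of Weil on $S(m,n;c)$ only recovers the single-level range $X \ll Q^2/N^3$, while the cleaner Poisson-twice-plus-divisor-bound route only recovers $X \ll \max(N,Q)$; reaching the full $X \ll Q^2/N$ requires combining both — one Poisson step to exploit that $(a_m)$ is $q$-free, followed by Weil-type cancellation in the residual $x$-sum. Finally, the explicit appearance of $X^{\theta_j(q)}$ on the spectral side follows from the lower bound $\hat{\mB}_\varphi(\kappa_j) \gg X^{\theta_j}$ for exceptional $\kappa_j$ given by \cref{eq:exc-bound-2}, exactly as in the proof of \cref{cor:exc-bound}.
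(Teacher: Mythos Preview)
The paper does not prove this statement at all: it simply cites \cite[Theorem 7]{deshouillers1982kloosterman} (with the substitution $X \gets X^{1/2}$ to match normalizations), adding only the remark that although Deshouillers--Iwaniec stated their result for $\omega = 0$, the same proof holds uniformly in $\omega \in \R/\Z$, as already noted in \cite{pascadi2023smooth,bombieri1987primes2,de2020niveau}. So there is nothing to compare at the level of the paper itself; your proposal is instead an attempt to sketch the cited Deshouillers--Iwaniec argument.

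The general shape you describe --- apply \cref{cor:exc-bound} for each $q$, swap $\sum_{q \sim Q}$ with $\sum_{c \equiv 0 \pmod q}$ using that $(a_m)$ is independent of $q$, then Poisson/Weil on the resulting Kloosterman sums --- is indeed the Deshouillers--Iwaniec strategy. But your sketch has a genuine arithmetic gap at the end of the third paragraph. You claim that the bound $O((NX + N^2)(QN)^\eps)$ is $\ll (Q+N)N \cdot (QN)^\eps$ in the range $X \ll \max(N, Q^2/N)$; this is false. The condition $NX \ll (Q+N)N$ only gives $X \ll Q + N$, i.e.\ $X \ll \max(N, Q)$, which you yourself correctly identify in the next paragraph as the ``Poisson-twice'' range. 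When $N < Q$ this misses the target $Q^2/N$ by a full factor of $Q/N$.

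Your final paragraph asserts that ``combining both'' the Weil and Poisson routes recovers the full range, but no mechanism is given, and none of the estimates you write down achieves it. The extra factor of $Q/N$ in \cite[\S 8.2]{deshouillers1982kloosterman} comes from a more careful treatment of the swapped sum over $c$ --- one does not merely bound $\tau_Q(c) \ll c^\eps$ and then estimate the bilinear Kloosterman sum pointwise in $c$, as that throws away the level-averaging gain. You would need to explain concretely how the $c$-sum and the Weil input interact to save the missing $Q/N$; as written, the proposal establishes only $X \ll \max(N, Q)$.
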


\begin{proof}
This follows immediately from \cite[Theorem 1.7]{deshouillers1982kloosterman} with $X \gets X^{1/2}$. As noted in previous works \cite{pascadi2025smooth,bombieri1987primes2,de2020niveau}, although \cite[Theorem 7]{deshouillers1982kloosterman} was only stated for $\alpha = 0$, the same proof holds uniformly in $\alpha \in \R/\Z$. 
\end{proof}

\section{Combinatorial bounds} \label{sec:combi}

In this section, we obtain bounds for bilinear sums of the form $\sum_m a_m \sum_n b_n\, S(m, n; c)$ (say, in the range $c^{1/4} \ll N \ll c$), saving over the P\'olya--Vinogradov and Weil bounds if the Fourier transforms $\hat{a}$ and $\hat{b}$ are concentrated enough. Our computations here are elementary (not requiring the spectral theory of automorphic forms yet, nor any other prerequisites beyond \cref{subsec:standard-notation}), and use a combinatorial argument inspired by \cite{cilleruelo2011concentration}; the latter was also used, e.g., in \cite{kerr2023bounds}.

We highlight the following non-standard notation.

\begin{notation}[Rational approximation] \label{not:tn}
Given $M, N > 0$, let $T_{M,N} : (\R/\Z)^2 \to \R$ denote the function
\begin{equation} \label{eq:tn}
\begin{gathered}
    T_{M,N}(\alpha, \beta) := \min_{t \in \Z_+} \left(t + M\|\alpha t\| + N\|\beta t\|\right)
    \\
    (\text{abbreviating } T_N := T_{N,N}, \,
    T_N(\alpha) := T_N(\alpha, \alpha)).
\end{gathered}
\end{equation}
\end{notation}
This measures how well $\alpha$ and $\beta$ can be simultaneously approximated by rational numbers with small denominators $t$, in terms of the balancing parameters $M, N$. The inverses of these parameters indicate the scales at which $T_{M,N}(\alpha, \beta)$ has roughly constant size, due to the following lemma.

\begin{lemma}[Basic properties of $T_{M,N}$]
Let $M, N > 0$ and $\alpha, \beta, \gamma, \delta \in \R/\Z$. One has $T_{N,M}(\beta, \alpha) = T_{M,N}(\alpha, \beta) = T_{M,N}(\pm \alpha, \pm \beta)$ and
\begin{equation} \label{eq:T-triangle}
    T_N(\alpha, \beta \pm \alpha) \asymp T_N(\alpha, \beta).
\end{equation}
Moreover,
\begin{equation} \label{eq:T-shift}
    T_{M,N}(\alpha + \gamma, \beta) \le (1 + M\|\gamma\|)\, T_{M,N}(\alpha, \beta).
\end{equation}
In particular, if $\|\gamma\| \ll M^{-1}$ and $\|\delta\| \ll N^{-1}$, then
\begin{equation} \label{eq:T-roughly-constant}
    T_{M,N}\left(\alpha + \gamma, \beta + \delta \right)
    \asymp T_{M,N}(\alpha, \beta).
\end{equation}
\end{lemma}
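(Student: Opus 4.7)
The plan is to deduce all the stated properties from two elementary facts about the distance $\|\cdot\|$ on $\R/\Z$: namely, that it is even ($\|-x\| = \|x\|$) and satisfies the triangle inequality $\|x+y\| \le \|x\|+\|y\|$, together with the trivial bound $\|\gamma t\| \le t\|\gamma\|$ for $t \in \Z_+$. The symmetries $T_{N,M}(\beta,\alpha) = T_{M,N}(\alpha,\beta) = T_{M,N}(\pm\alpha, \pm\beta)$ are immediate from \cref{eq:tn} (swap the roles of the two summands in the minimum, and use $\|\pm \alpha t\| = \|\alpha t\|$).

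For the bound \cref{eq:T-shift}, I would let $t^\star$ denote a positive integer achieving the minimum in $T_{M,N}(\alpha,\beta)$ and estimate
\[
    T_{M,N}(\alpha+\gamma, \beta) \le t^\star + M\|(\alpha+\gamma)t^\star\| + N\|\beta t^\star\|
    \le t^\star + M\|\alpha t^\star\| + M t^\star \|\gamma\| + N\|\beta t^\star\|.
\]
The right-hand side equals $T_{M,N}(\alpha,\beta) + M\|\gamma\| t^\star$, and using the crucial observation that $t^\star \le T_{M,N}(\alpha,\beta)$ (because every summand in the definition of $T_{M,N}$ is nonnegative), this is at most $(1+M\|\gamma\|)\, T_{M,N}(\alpha,\beta)$, as desired.

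For \cref{eq:T-roughly-constant}, I would apply \cref{eq:T-shift} twice: first with the shift $\gamma$ in the $\alpha$-slot, and then with the shift $\delta$ in the $\beta$-slot (which is a legal use of the same bound after invoking the symmetry $T_{M,N}(\alpha,\beta) = T_{N,M}(\beta,\alpha)$). This gives
\[
    T_{M,N}(\alpha+\gamma, \beta+\delta) \le (1+M\|\gamma\|)(1+N\|\delta\|)\, T_{M,N}(\alpha,\beta) \asymp T_{M,N}(\alpha,\beta)
\]
under the hypotheses $\|\gamma\| \ll M^{-1}$, $\|\delta\| \ll N^{-1}$. The reverse inequality follows by writing $(\alpha, \beta) = ((\alpha+\gamma) - \gamma,\, (\beta+\delta) - \delta)$ and applying the same bound. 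Finally, \cref{eq:T-triangle} is the specialization $M = N$, $\gamma = \pm\alpha$: the triangle inequality gives $\|(\beta\pm\alpha)t\| \le \|\beta t\| + \|\alpha t\|$ and vice versa, so $N\|\alpha t\| + N\|(\beta\pm\alpha)t\| \asymp N\|\alpha t\| + N\|\beta t\|$ term-by-term inside the minimum. There is no real obstacle here; one just needs to be careful that the intermediate estimate $t^\star \le T_{M,N}(\alpha,\beta)$ is what allows the multiplicative form of \cref{eq:T-shift} (rather than an additive bound).
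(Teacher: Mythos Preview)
The proposal is correct and follows essentially the same route as the paper: the symmetries are immediate, \cref{eq:T-shift} comes from the triangle inequality $\|\gamma t\| \le t\|\gamma\|$ together with $t \le T_{M,N}(\alpha,\beta)$ at the minimizer, \cref{eq:T-triangle} from the triangle inequality termwise, and \cref{eq:T-roughly-constant} from two applications of \cref{eq:T-shift}. The only cosmetic difference is that the paper bounds for generic $t$ and then takes the minimum, whereas you fix the minimizer $t^\star$ from the start; both arguments are equivalent.
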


\begin{proof}
The first equalities are obvious, and \cref{eq:T-triangle} follows from the triangle inequalities 
\[
    \|(\beta \pm \alpha)t\| \le \|\alpha t\| + \|\beta t\|, \qquad \qquad \|\beta t\| \le \|\alpha t\| + \|(\beta \pm \alpha)t\|.
\]
For \cref{eq:T-shift}, we note that 
\[
\begin{aligned}
    t + M\|(\alpha + \gamma) t\| + N\|\beta t\| 
    &\le 
    t + M\|\gamma t\| + M\|\alpha t\| + N\|\beta t\| 
    \\
    &\le 
    t(1 + M\|\gamma\|) +  M\|\alpha t\| + N\|\beta t\| 
    \\
    &\le 
    (1 + M\|\gamma\|) \left(t + M\|\alpha t\| + N\|\beta t\|\right),
\end{aligned}
\]
and take a minimum of both sides over $t \in \Z_+$. Finally, \cref{eq:T-roughly-constant} follows immediately from \cref{eq:T-shift}.
\end{proof}

\begin{lemma}[Dirichlet-style approximation] \label{lem:pigeonhole}
Let $\alpha, \beta \in \R/\Z$. Given any parameters $A, B \gg 1$, there exists a positive integer $t$ such that
\[
    t \ll AB, \qquad\qquad 
    \|\alpha t\| \ll \frac{1}{A}, \qquad\qquad 
    \|\beta t\| \ll \frac{1}{B}.
\]
In particular, for $N \ge 1/2$, one has
\begin{equation} \label{eq:tn-bound}
    T_N(\alpha, \beta) \ll \min \left(\sqrt{N(1 + \|\alpha-\beta\| N)}, N^{2/3} \right).
\end{equation}
\end{lemma}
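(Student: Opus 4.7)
The plan is to first prove the existence statement by a direct two-dimensional Dirichlet/pigeonhole argument, and then optimize two choices of $A, B$ to deduce the displayed bound on $T_N(\alpha, \beta)$.

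For the first part, I would consider the $\lfloor AB \rfloor + 1$ points $(\{\alpha t\}, \{\beta t\}) \in [0,1)^2$ for $t = 0, 1, \ldots, \lfloor AB \rfloor$, and partition the unit square into roughly $AB$ axis-aligned boxes of sidelengths $\asymp 1/A$ and $\asymp 1/B$ (this is where we use $A, B \gg 1$, so that the number of cells is at most $\lfloor AB \rfloor$). By pigeonhole, some two indices $0 \le t_1 < t_2 \le \lfloor AB \rfloor$ share a cell; taking $t := t_2 - t_1 \in \Z_+$ gives $t \ll AB$, $\|\alpha t\| \ll 1/A$, and $\|\beta t\| \ll 1/B$, as desired.

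For the bound \cref{eq:tn-bound}, I would derive the two terms inside the $\min$ separately. The bound $T_N(\alpha, \beta) \ll N^{2/3}$ comes from applying the first part with $A = B \asymp N^{1/3}$, which gives $t \ll N^{2/3}$ and $N\|\alpha t\|, N\|\beta t\| \ll N^{2/3}$, so that $t + N\|\alpha t\| + N\|\beta t\| \ll N^{2/3}$. For the other term, set $\gamma := \beta - \alpha$, apply the one-dimensional Dirichlet theorem (i.e., the first part with only one coordinate, or equivalently $B = 1$) to find $t \ll A$ with $\|\alpha t\| \ll 1/A$, and bound
\[
    \|\beta t\| = \|(\alpha + \gamma) t\| \le \|\alpha t\| + t\|\gamma\| \ll \frac{1}{A} + A\|\gamma\|.
\]
This yields
\[
    T_N(\alpha,\beta) \ll A + \frac{N}{A} + NA\|\gamma\|
    = A\bigl(1 + N\|\gamma\|\bigr) + \frac{N}{A}.
\]
Choosing $A \asymp \sqrt{N/(1 + N\|\gamma\|)}$ (which is $\gg 1$ since $N \ge 1/2$, after rounding appropriately) balances the two terms and gives the bound $T_N(\alpha, \beta) \ll \sqrt{N(1 + \|\alpha - \beta\|N)}$.

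There is no real obstacle here; the only minor care needed is ensuring $A, B \gg 1$ when invoking the pigeonhole step after optimization (in the edge case when $\|\gamma\|$ is tiny, one can simply fall back on the $N^{2/3}$ bound, and in any case both bounds are $\gg 1$ trivially by taking $t = 1$ in the definition of $T_N$).
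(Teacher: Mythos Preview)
The proposal is correct and essentially identical to the paper's proof: the same two-dimensional pigeonhole for the first part, the same choice $A = B \asymp N^{1/3}$ for the $N^{2/3}$ bound, and the same one-dimensional Dirichlet plus triangle-inequality argument for the $\sqrt{N(1+\|\alpha-\beta\|N)}$ bound (the paper phrases the last step via the identity $T_N(\alpha,\beta) \asymp T_N(\alpha,\alpha-\beta)$ rather than bounding $\|\beta t\|$ directly, but this is the same computation). Your parenthetical about the edge case is slightly off—when $\|\gamma\|$ is tiny the chosen $A$ is large, not small—but this is harmless since $A \asymp \sqrt{N/(1+N\|\gamma\|)} \gg 1$ holds for all $\|\gamma\| \le 1/2$ and $N \ge 1/2$.
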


\begin{proof}
Consider the sequence of points $\{(t\alpha, t\beta)\}_{t \le \lceil A \rceil \lceil B \rceil + 2}$ in $(\R/\Z)^2$; by the pigeonhole principle, at least two of these must lie in a box of dimensions $A^{-1} \times B^{-1}$, say $(t_i \alpha, t_i \beta)$ for $i \in \{1, 2\}$. Then we can pick $t := |t_1 - t_2|$ to establish the first claim.

Using $A = B = N^{1/3}$, we find that
\[
    T_N(\alpha, \beta) \ll N^{2/3},
\]
uniformly in $\alpha, \beta \in \R/\Z$. Using $A = \sqrt{N/(1+ \|\beta\| N)}$ and $B = 1$, we also have
\[
\begin{aligned}
    T_N(\alpha, \beta) 
    \le 
    \min_{t \in \Z_+} 
    \left(t + N\left(\|\alpha t\| + \|\beta\| t\right) \right)
    &\ll 
    A + \frac{N}{A} + N \|\beta\| A
    \\
    &\ll 
    \sqrt{N(1 + \|\beta\|N)},
\end{aligned}
\]
and thus
\[
    T_N(\alpha, \beta) \ll T_N(\alpha, \alpha - \beta) \ll 
    \sqrt{N(1 + \|\alpha - \beta\|N)}.
\]
This proves \cref{eq:tn-bound}.
\end{proof}

\begin{lemma}[Concentration of points on modular hyperbolas, following Cilleruelo--Garaev \cite{cilleruelo2011concentration}] \label{lem:cilleruelo-garaev}
Let $c \in \Z_+$, $a, b, \lambda \in \Z/c\Z$, $0 < X,Y \ll c$, and $I, J \subset \R$ be intervals of lengths $|I| = X$, $|J| = Y$. Then for any $\eps > 0$ and any $(c\alpha, c\beta) \in I \times J$, one has
\begin{equation} \label{eq:cilleruelo-garaev}
    \# \left\{
    (x, y) \in (I \cap \Z) \times (J \cap \Z) : xy \equiv \lambda \pmod{c}
    \right\}
    \ll_\eps 
    c^\eps \left(\frac{XY}{c}\, T_{\frac{c}{X}, \frac{c}{Y}}\left(\alpha, \beta\right) + \gcd(\lambda, c)\right),
\end{equation}
with $T_{M,N}(\alpha, \beta)$ as in \cref{not:tn}.
\end{lemma}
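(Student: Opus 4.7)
The plan is to follow the multiplicative rescaling trick of Cilleruelo--Garaev \cite{cilleruelo2011concentration}. Let $t \in \Z_+$ attain the minimum in $T := T_{c/X, c/Y}(\alpha, \beta)$, so that $t \le T$, $c\|t\alpha\| \le TX$, and $c\|t\beta\| \le TY$. Fix integers $p, q$ with $\eta_\alpha := t\alpha - p$ and $\eta_\beta := t\beta - q$ satisfying $|\eta_\alpha| = \|t\alpha\|$, $|\eta_\beta| = \|t\beta\|$. For each solution $(x, y) \in (I \cap \Z) \times (J \cap \Z)$ of $xy \equiv \lambda \pmod{c}$, I define the integers $\tilde u := tx - cp$ and $\tilde v := ty - cq$. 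A short computation gives $\tilde u \tilde v \equiv (tx)(ty) \equiv t^2 \lambda \pmod{c}$, and the map $(x, y) \mapsto (\tilde u, \tilde v)$ is injective because $x = (\tilde u + cp)/t$ (and similarly for $y$).

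The key geometric observation is that $\tilde u - c\eta_\alpha = t(x - c\alpha)$, and since both $x$ and $c\alpha$ lie in $I$ (the latter by hypothesis), $\tilde u$ lies in an interval of length $tX$ centered at $c\eta_\alpha$; analogously $\tilde v$ lies in an interval of length $tY$ centered at $c\eta_\beta$. I would then count integer pairs $(\tilde u, \tilde v)$ in this rectangle satisfying $\tilde u\tilde v \equiv t^2 \lambda \pmod c$ by writing $\tilde u \tilde v = t^2\lambda + c k$ and bounding the number of admissible $k$. Expanding $(\tilde u, \tilde v) = (c\eta_\alpha + \tilde u', c\eta_\beta + \tilde v')$ with $|\tilde u'| \le tX$, $|\tilde v'| \le tY$, the product $\tilde u \tilde v$ ranges over an interval of width controlled by the cross-terms $c\eta_\alpha \tilde v'$ and $c\eta_\beta \tilde u'$ (each of size $O(tT\,XY)$) plus the quadratic term $\tilde u'\tilde v'$ (of size $O(t^2 XY)$). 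Using $t \le T$, the number of $k$'s is $O(TXY/c + 1)$ after optimization; for each such $k$ with $t^2\lambda + ck \ne 0$, the divisor bound yields $\tau(|t^2\lambda + ck|) \ll_\eps c^\eps$ factorizations, contributing $\ll c^\eps XY T/c$ in total.

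For the exceptional case $t^2\lambda + ck = 0$, which requires $c \mid t^2\lambda$, the equation $\tilde u\tilde v = 0$ forces $\tilde u = 0$ or $\tilde v = 0$; combined with the injectivity of the rescaling and the integrality constraint $t \mid \tilde u + cp$ (respectively $t \mid \tilde v + cq$), this contributes at most $O(c^\eps \gcd(\lambda, c))$, which is the second term in the stated bound. The main obstacle is controlling the cross-terms in the expansion of $\tilde u\tilde v$ sharply enough to obtain linear (rather than quadratic) dependence on $T$; this is where the careful choice of the minimizer $t$ and the rectangular (shifted) localization of $(\tilde u, \tilde v)$ pay off, turning what would be a naive $T^2$ loss in a large box $[-TX,TX]\times[-TY,TY]$ into the sharper $T$ gain in a narrower rectangle of size $tX \times tY$ at a specific location.
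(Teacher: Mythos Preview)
Your rescaling $\tilde u = tx - cp$, $\tilde v = ty - cq$ is natural, but the count of $k$'s is off by a factor of $t$. The range of $\tilde u\tilde v$ over the box $|\tilde u'| \le tX$, $|\tilde v'| \le tY$ centered at $(c\eta_\alpha, c\eta_\beta)$ is governed by the cross-terms $c\eta_\alpha\tilde v' + c\eta_\beta\tilde u'$, each of size $\le tTXY$, together with the quadratic term $\tilde u'\tilde v'$ of size $t^2 XY \le tTXY$. Hence the number of integers $k$ with $\tilde u\tilde v = t^2\lambda_0 + ck$ is $\ll tTXY/c + 1$, not $TXY/c + 1$; your ``narrower rectangle'' only reduces the naive $T^2$ to $tT$, not to $T$. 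When the minimizer $t$ is comparable to $T$ (e.g.\ $\alpha = \beta = 1/p$ for a prime $p \asymp T$, where $\eta_\alpha = \eta_\beta = 0$ and the whole range is $t^2XY = T^2XY$), this yields $c^\eps(T^2XY/c + \gcd(\lambda,c))$, which is too weak by a full factor of $T$.

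The paper avoids this loss by first translating to \emph{integer} centers $a \in I \cap \Z$, $b \in J \cap \Z$, so that the shifted variables satisfy $|x| \le X$, $|y| \le Y$, and then scaling the congruence $(x+a)(y+b) \equiv \lambda \pmod c$ by $t$ only once. With $a' \equiv at$, $b' \equiv bt \pmod c$ of minimal absolute value, one obtains $(tx+a')(ty+b') = t(cz+r) + a'b'$ where $z = (txy + a'y + b'x - r)/c$ is an integer. The crucial structural point is that $z$ is linear in $t$: the products $a'y, b'x$ pair the small reduced shifts $a', b'$ with the small $x, y$ (not with quantities of size $tX, tY$), giving
\[
    |z| \ll \frac{XY}{c}\left(t + \frac{c}{X}\left\|\frac{at}{c}\right\| + \frac{c}{Y}\left\|\frac{bt}{c}\right\|\right) + 1,
\]
whose minimum over $t$ is $\frac{XY}{c}T + 1$ directly. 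In your setup, the analogues of $a', b'$ (namely $c\eta_\alpha, c\eta_\beta$) get multiplied by $\tilde v', \tilde u'$ of size $tY, tX$ rather than $Y, X$, which is exactly where the extra factor of $t$ enters. One might hope to recover it via the observation that $\tilde u\tilde v \equiv c^2pq \pmod t$, but this only restricts $k$ to a residue class modulo $t/\gcd(c,t)$, and $\gcd(c,t)$ need not be bounded.
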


\begin{remark}
    \cref{lem:cilleruelo-garaev} counts solutions to the congruence $xy \equiv \lambda \pmod{c}$ in short intervals. On average over intervals of length $X, Y\gg \sqrt{c}$, one should expect around $XY/c$ solutions; \cref{eq:cilleruelo-garaev} essentially recovers this average bound when $\alpha$ and $\beta$ can be simultaneously approximated by rational numbers with a bounded denominator.
\end{remark}

\begin{remark}
    One can also interpret \cref{lem:cilleruelo-garaev} in terms of sum-product phenomena over $\Z/c\Z$. Indeed, the intervals $a + [-X, X]$ and $b + [-Y, Y]$ have many ``additive collisions'' of the form $x_1 + y_1 \equiv x_2 + y_2 \pmod{c}$ (with $x_1, x_2 \in a + [-X, X]$ and $y_1, y_2 \in b + [-Y, Y]$), so they should have few ``multiplicative collisions'' of the form $x_1 y_1 \equiv \lambda \equiv x_2 y_2  \pmod{c}$.
\end{remark}

\begin{proof}
If $I \cap \Z = \emptyset$ or $J \cap \Z = \emptyset$, the claim is trivial. So let $a \in I \cap \Z$ and $b \in J \cap \Z$; by a change of variables, we have
\[
    \#\{(x, y) \in I \times J : xy \equiv \lambda \pmod{c}\} 
    \le
    \# S(a, b),
\]
where
\[
    S(a, b) := \{(x, y) \in ([-X, X] \cap \Z) \times ([-Y, Y] \cap \Z) : (x+a)(y+b) \equiv \lambda \pmod{c}\}.
\]
The key idea, borrowed from \cite[Theorem 1]{cilleruelo2011concentration} (and also used, for example, in \cite[Lemma 5.3]{kerr2023bounds}), is to effectively reduce the size of $a$ and $b$ by appropriately scaling the congruence $(x+a)(y+b) \equiv \lambda \pmod{c}$, and then to pass to an equation in the integers. Indeed, let $t \in \Z_+$ be a scalar, and let $a', b'$ be the integers with minimal absolute values such that
\begin{equation} \label{eq:reduce-a}
    a t \equiv a' \pmod{c}
    \qquad\quad \text{and} \qquad\quad
    b t \equiv b' \pmod{c}.
\end{equation}
Then any given pair $(x, y) \in S(a, b)$ also satisfies the scaled congruence
\[
    t(x+a)(y+b) \equiv t\lambda \pmod{c}
    \qquad 
    \iff 
    \qquad 
    txy + b'x + a'y \equiv t(\lambda - ab) \pmod{c}.
\]
Denoting by $r \in \{0, 1, \ldots, c-1\}$ the residue of $t(\lambda-ab) \pmod{c}$, and 
\[
    z = z(x, y) := \frac{txy + b'x + a'y - r}{c},
\]
it follows that $(x, y, z)$ is an integer solution to the equation
\[
    txy + b'x + a'y = cz + r
    \qquad 
    \Leftrightarrow 
    \qquad 
    (tx + a')(ty + b') = t(cz+r) + a'b'.
\]
Note that
\[
\begin{aligned}
    z &\ll \frac{tXY + |b'|X + |a'| Y + c}{c}
    \\ 
    &\ll 
    \frac{t}{c}XY + \left\| \frac{bt}{c}\right\| X + \left\| \frac{at}{c}\right\| Y + 1 
    =: Z(t).
\end{aligned}
\] 
Now let $n(z) := t(cz+r) + a'b'$. The number of pairs $(x, y) \in S(a, b)$ with $n(z) \neq 0$ is at most
\[
\begin{aligned}
    \sum_{\substack{z \ll Z(t) \\ n(z) \neq 0}}
    \sum_{\substack{x, y \in \Z \\ (tx+a')(ty+b') = n(z) \\ (x+a)(y+b) \equiv \lambda \pmod{c}}}
    1
    \ll_\eps 
    (ct)^\eps Z(t),
\end{aligned}
\]
by the divisor bound. On the other hand, if $(x, y) \in S(a, b)$ satisfies $n(z) = (tx + a')(ty + b') = 0$, this forces $tx = -a'$ or $ty = -b'$, determining one of $x$ and $y$ uniquely. Suppose $x$ is determined; the condition $c \mid (x + a)(y + b) - \lambda$ implies $d := \gcd(x+a, c) \mid \gcd(\lambda, c)$, so 
\[
    \frac{c}{d}\ \bigg\vert \ \frac{x+a}{d} (y+b) - \frac{\lambda}{d}.
\]
Since $\gcd(c/d, (x+a)/d) = 1$, this uniquely determines the value of $y \pmod{c/d}$, leading to a total contribution of $1 + Yd/c$. Putting things together, we conclude that
\[
\begin{aligned}
    \# S(a, b) 
    &\ll_\eps 
    c^\eps \min_{t \in \Z_+} \left(t^\eps Z(t) \right) + 1 + \frac{X+Y}{c} \gcd(\lambda, c)
    \\
    &\ll
    c^\eps \left(\frac{XY}{c} \min_{t \in \Z_+} t^\eps \left(t + \frac{c}{X}\left\| \frac{at}{c}\right\| + \frac{c}{Y}\left\| \frac{bt}{c}\right\|\right) + 1 + \frac{X+Y}{c} \gcd(\lambda, c) \right)
    \\
    &\ll 
    c^{2\eps} \left(\frac{XY}{c} T_{\frac{c}{X},\frac{c}{Y}} \left(\frac{a}{c}, \frac{b}{c}\right) + \gcd(\lambda, c)\right),
\end{aligned}
\]
where we used that $X, Y \ll c$ in the last line (and implicitly that the minimum of $t + \frac{c}{X} \|at/c\| + \frac{c}{Y} \|bt/c\|$ is attained for $t \ll c$). Now if $\alpha, \beta \in \R$ satisfy $(c\alpha, c\beta) \in I \times J$, then we have $|a - c\alpha| \le X$ and $|b - c\beta| \le Y$, i.e.,
\[
    \left\vert\frac{a}{c} - \alpha \right\vert \le \frac{X}{c},
    \qquad\qquad
    \left\vert\frac{b}{c} - \beta \right\vert \le \frac{Y}{c}.
\]
So by \cref{eq:T-roughly-constant}, we have
\[
    T_{\frac{c}{X}, \frac{c}{Y}}\left(\frac{a}{c}, \frac{b}{c}\right)
    \asymp 
    T_{\frac{c}{X}, \frac{c}{Y}}(\alpha, \beta).
\]
We thus obtain the desired bound, up to a rescaling of $\eps$. 
\end{proof}

We now work towards our bilinear Kloosterman bound for sequences with sparse Fourier transforms, reminding the reader of the Fourier-analytic notation in \cref{subsec:standard-notation}. The connection to counting solutions to congruences of the form $xy \equiv 1 \pmod{c}$ comes from the identity 
\begin{equation} \label{eq:kloosterman-fourier}
    \sum_m a_m \sum_n b_n\, S(m, n; c) 
    = \sum_{x, y \pmod{c}} \hat{a}\left(\frac{x}{c}\right) \hat{b}\left(\frac{y}{c}\right) \one_{xy \equiv 1 \pmod{c}},
\end{equation}
obtained by expanding $S(m, n; c)$ and swapping sums. One can interpret this as a Parseval--Plancherel identity, the Kloosterman sum $S(m, n; c)$ being dual to the function $\one_{xy \equiv 1 \pmod{c}}$; this duality is often exploited in the converse direction (see, e.g., \cite[Chapter 6]{maynard2025primes2} and \cite{fouvry2015exponent}), but it turns out to also be a useful input for methods from the spectral theory of automorphic forms.

\begin{proposition}[Bilinear Kloosterman bound with exponential phases] \label{prop:bilinear-expo-phases}
Let $c, a \in \Z_+$, $\alpha, \beta \in \R/\Z$, $1 \ll M, N \ll c$, and $I, J \subset \Z$ be nonempty discrete intervals of lengths $|I| = M$, $|J| = N$. Then for any $\eps > 0$, one has
\[
    \sum_{m \in I} e(m\alpha) \sum_{n \in J} e(n\beta)\, S(am, an; c) 
    \ll_\eps 
    c^\eps \left(c\, T_{M,N}(\alpha, \beta) + \gcd(a, c) MN\right).
\]
\end{proposition}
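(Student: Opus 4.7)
The plan is to open the Kloosterman sum as $S(am, an; c) = \sum_{z \in (\Z/c\Z)^\times} e((amz + an\bar z)/c)$, exchange the order of summation, and bound the result via a dyadic decomposition plus the counting input of \cref{lem:cilleruelo-garaev}. Writing $\phi(\gamma) := \sum_{m \in I} e(m\gamma)$ and $\psi(\gamma) := \sum_{n \in J} e(n\gamma)$, which satisfy the standard bounds $|\phi(\gamma)|, |\psi(\gamma)| \ll \min(M, \|\gamma\|^{-1})$, after exchanging sums the left-hand side becomes
\[
    \Sigma = \sum_{z \in (\Z/c\Z)^\times} \phi(\alpha + az/c)\, \psi(\beta + a\bar z/c).
\]
Decomposing the outer sum dyadically according to the sizes $U \asymp \|c\alpha + az\|_c$ and $V \asymp \|c\beta + a\bar z\|_c$ (with $\|\cdot\|_c$ denoting distance to $c\Z$), I reduce the problem to bounding, for each dyadic pair with $1 \le U, V \le c/2$, the count
\[
    \tilde M(U, V) := \#\!\left\{z \in (\Z/c\Z)^\times : \|c\alpha + az\|_c \asymp U,\ \|c\beta + a\bar z\|_c \asymp V\right\}.
\]

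For the key counting step, I would set $d := \gcd(a, c)$ and write $a = da'$, $c = dc'$. The crucial observation is that both constraints on $z$ depend only on its reduction $w := z \bmod c' \in (\Z/c'\Z)^\times$: the condition $\|c\alpha + az\|_c \asymp U$ forces $a'w \bmod c'$ to lie in a short interval $\tilde I_\alpha \subset \Z/c'\Z$ of length $O(U/d)$, and similarly $a' w^{-1} \bmod c'$ must lie in an analogous interval $\tilde I_\beta$ of length $O(V/d)$. Since the reduction $(\Z/c\Z)^\times \to (\Z/c'\Z)^\times$ has fibers of size at most $d$, setting $X_1 := a'w$ and $X_2 := a' w^{-1}$ in $\Z/c'\Z$ (so that $X_1 X_2 \equiv a'^2 \pmod{c'}$, with $\gcd(a'^2, c') = 1$), the count reduces to
\[
    \tilde M(U, V) \le d \cdot \#\!\left\{(X_1, X_2) \in \tilde I_\alpha \times \tilde I_\beta : X_1 X_2 \equiv a'^2 \pmod{c'}\right\},
\]
and applying \cref{lem:cilleruelo-garaev} with modulus $c'$ then yields $\tilde M(U, V) \ll_\eps c^\eps \bigl(\tfrac{UV}{c} T_{c/U, c/V}(\alpha, \beta) + d\bigr)$.

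Substituting back and splitting the dyadic sum into two parts, the $d$-term contributes $\ll d MN (\log c)^2$, matching the $\gcd(a, c) MN$ term in the target bound. For the $T$-term, substituting $A = c/U$ and $B = c/V$ reduces the sum to $c \sum_{A, B} \tfrac{\min(M, A)\min(N, B)}{AB} T_{A, B}(\alpha, \beta)$. I claim $\tfrac{\min(M, A)}{A} T_{A, B}(\alpha, \beta) \le T_{M, B}(\alpha, \beta)$: the case $A \le M$ follows by monotonicity of $T$ in the first variable, while for $A \ge M$ one evaluates $T_{A, B}(\alpha, \beta) \le \tfrac{A}{M} T_{M, B}(\alpha, \beta)$ at the minimizer of $T_{M, B}$. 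Applying this inequality in both variables bounds each summand by $T_{M, N}(\alpha, \beta)$, so the $T$-term totals $\ll c\, T_{M, N}(\alpha, \beta) (\log c)^2$. Absorbing the logs into $c^\eps$ finishes the proof. The main obstacle is the general $\gcd(a, c) > 1$ case: working with the reduction $w = z \bmod c'$ rather than directly with $(az, a\bar z) \bmod c$ is essential to apply \cref{lem:cilleruelo-garaev} modulo $c'$ (where $\gcd(a'^2, c') = 1$), thereby avoiding spurious factors of $\gcd(a^2, c)$ that would produce a worse gcd-term on the right.
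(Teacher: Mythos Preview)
Your proof is correct and follows essentially the same approach as the paper: open the Kloosterman sum, bound the linear exponential sums, dyadically decompose, reduce modulo $c' = c/\gcd(a,c)$ to pass to a count of points on the hyperbola $X_1 X_2 \equiv a'^2 \pmod{c'}$, and apply \cref{lem:cilleruelo-garaev}. The only cosmetic difference is that the paper decomposes $M\|\alpha + a'x/c'\|$ dyadically (so the resulting $T$-parameters are automatically $\le M, N$ and monotonicity applies directly), whereas your decomposition of $\|c\alpha + az\|_c$ forces the extra inequality $\tfrac{\min(M,A)}{A}\,T_{A,B} \le T_{M,B}$; both routes are equally valid.
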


\begin{remark}
When $\alpha = \beta = 0$, this recovers a result of Shparlinski and Zhang \cite{shparlinski2016cancellations}.
A similar argument produces the more general bound
\[
    \sum_{m \in I} e(m\alpha) \sum_{n \in J} e(n\beta)\, S(am+r, bn+s; c) 
    \ll_\eps 
    c^\eps \left(c\, T_{M,N}(\alpha, \beta) + \gcd\left(\frac{ab}{\gcd(a,b,c)}, c\right)MN\right),
\]
for entries of Kloosterman sums in arithmetic progressions, where $a, b \in \Z \setminus \{0\}$, $r, s \in \Z$.
\end{remark}

\begin{proof}
We first note that if $\gcd(a, c) > 1$ and $a' := a/\gcd(a,c)$, $c' := c/\gcd(a, c)$, the Chinese remainder theorem yields $S(am, an; c) = \frac{\varphi(c)}{\varphi(c')} S(a'm, a'n; c')$. Since $\frac{\varphi(c)}{\varphi(c')} \le \frac{c}{c'} = \gcd(a, c)$, one can deduce the desired bound from the same bound for $(a, c) \mapsto (a', c')$. This allows us to assume without loss of generality that $\gcd(a, c) = 1$.

Let $\mS$ denote the sum in \cref{prop:bilinear-expo-phases}; as in \cref{eq:kloosterman-fourier}, we expand $S(am, an; c)$ and swap sums to obtain
\[
    \mS = \sum_{x \in (\Z/c\Z)^\times} 
    \sum_{m \in I} e\left(m\alpha + m\frac{ax}{c}\right)
    \sum_{n \in J} e\left(n\beta + n\frac{a\bar{x}}{c}\right).
\]
We note that
\[
    \sum_{m \in I} e\left(m\alpha + m\frac{ax}{c}\right) \ll 
    \min\left(M, \left\| \alpha + \frac{ax}{c}\right\|^{-1}\right),
\]
and put $M \|\alpha + \tfrac{ax}{c}\|$ into dyadic ranges
\[
    A_0 := \left[0, 2\right], \qquad\qquad 
    A_j := \left(2^j, 2^{j+1} \right].
\]
Proceeding similarly for the sum over $n$, we obtain
\[
\begin{aligned}
    \mS &= 
    \sum_{\substack{0 \le j \le \log_2 M \\ 0 \le k \le \log_2 N}}
    \sum_{\substack{x \in (\Z/c\Z)^\times \\ M \|\alpha + \frac{ax}{c}\| \in A_j \\ N \|\beta + \frac{a\bar{x}}{c}\| \in A_k}}
    \sum_{m \in I} e\left(m\alpha + m\frac{ax}{c}\right)
    \sum_{n \in J} e\left(n\beta + n\frac{a\bar{x}}{c}\right)
    \\
    &\ll 
    \sum_{\substack{0 \le j \le \log_2 M \\ 0 \le k \le \log_2 N}} 
    \sum_{x \in (\Z/c\Z)^\times}
    \one_{M \|\alpha + \frac{ax}{c}\| \in A_j}
    \one_{N \|\beta + \frac{a\bar{x}}{c}\| \in A_k}\,
    \frac{M N}{2^{j+k}}
    \\
    &\le 
    \sum_{\substack{0 \le j \le \log_2 M \\ 0 \le k \le \log_2 N}} \frac{M N}{2^{j+k}}
    \sum_{\substack{x, y \in \Z/c\Z \\ xy \equiv a^2 \pmod{c}}}
    \one_{M\|\alpha + \frac{x}{c}\| \in A_j}\,
    \one_{N\|\beta + \frac{y}{c}\| \in A_k}.
    \\
    &\le 
    \sum_{\substack{0 \le j \le \log_2 M \\ 0 \le k \le \log_2 N}} \frac{M N}{2^{j+k}}
    \sum_{\substack{x, y \in \Z \\ xy \equiv a^2 \pmod{c}}}
    \one_{|x + c\alpha| \le c\frac{2^{j+1}}{M}}\,
    \one_{|y + c\beta| \le c\frac{2^{k+1}}{N}},
\end{aligned}
\]
where we noted that for any $x_0, y_0 \in \Z/c\Z$, there exist $x, y \in \Z$ with $x \equiv x_0 \pmod{c}$, $y \equiv y_0 \pmod{c}$, and $\|\alpha + \frac{x_0}{c}\| = |\alpha + \frac{x}{c}|$, $\|\beta + \frac{y_0}{c}\| = |\beta + \frac{y}{c}|$.

We can bound the inner sum using \cref{lem:cilleruelo-garaev} with $X = c 2^{j+2} M^{-1}$, $Y = c 2^{k+2} N^{-1}$, and $\lambda = a^2$. Since the function $T_{M,N}$ is non-decreasing in $M, N$, this gives
\[
\begin{aligned}
    \mS &\ll_\eps c^\eps \sum_{\substack{0 \le j \le \log_2 M \\ 0 \le k \le \log_2 N}} \frac{MN}{2^{j+k}} \left(\frac{(c 2^j M^{-1})(c 2^k N^{-1})}{c} T_{\frac{M}{2^{j+1}}, \frac{N}{2^{k+1}}}\left(-\alpha, -\beta\right) + \gcd(a^2, c)\right)
    \\
    &\ll_\eps 
    c^{2\eps} \left(c T_{M, N}\left(\alpha, \beta\right) + MN\right).
\end{aligned}
\]
This yields the desired bound up to a rescaling of $\eps$.
\end{proof}

\begin{proposition}[Bilinear Kloosterman bound with frequency concentration] \label{prop:bilinear-freq-concentration}
Let $c, a \in \Z_+$, $1 \ll M, N \ll c$, and $I, J \subset \Z$ be nonempty discrete intervals of lengths $|I| = M$, $|J| = N$. Let $(a_m)_{m \in I}, (b_n)_{n \in J}$ be complex sequences, and $\mu, \nu$ be bounded-variation complex Borel measures on $\R/\Z$, such that $\check{\mu}(m) = a_m$ for $m \in I$ and $\check{\nu}(n) = b_n$ for $n \in J$. Then for any $\eps > 0$, one has
\begin{equation} \label{eq:bilinear-freq-concentration}
    \sum_{m \in I} a_m \sum_{n \in J} b_n\, S(am, an; c) 
    \ll_\eps c^\eps
    \iint_{(\R/\Z)^2} \left(c\, T_{M,N}(\alpha, \beta) + \gcd(a, c) MN\right) d|\mu|(\alpha)\, d|\nu|(\beta),
\end{equation}
By \cref{eq:tn-bound}, when $M = N$, this bound is $\ll c^\eps (cN^{2/3} + \gcd(a, c) N^2)\, |\mu|(\R/\Z)\, |\nu|(\R/\Z)$.
\end{proposition}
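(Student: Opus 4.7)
The plan is to derive \cref{prop:bilinear-freq-concentration} as a direct consequence of \cref{prop:bilinear-expo-phases} by integrating the pointwise estimate against the measures $\mu$ and $\nu$. Concretely, I would start by substituting $a_m = \check{\mu}(m) = \int_{\R/\Z} e(m\alpha)\, d\mu(\alpha)$ and $b_n = \check{\nu}(n) = \int_{\R/\Z} e(n\beta)\, d\nu(\beta)$ into the left-hand side. Since $I, J$ are finite intervals and $\mu, \nu$ have bounded total variation, Fubini's theorem lets me swap the finite sums with the integrals, yielding
\[
    \sum_{m \in I} a_m \sum_{n \in J} b_n\, S(am, an; c)
    =
    \iint_{(\R/\Z)^2} \left(\sum_{m \in I} e(m\alpha) \sum_{n \in J} e(n\beta)\, S(am, an; c)\right) d\mu(\alpha)\, d\nu(\beta).
\]

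Next I would apply the triangle inequality to move absolute values inside both integrals, producing $d|\mu|$ and $d|\nu|$. At this point \cref{prop:bilinear-expo-phases} applies to the inner bilinear Kloosterman sum uniformly for each fixed $(\alpha, \beta) \in (\R/\Z)^2$, giving the pointwise bound
\[
    \left\vert \sum_{m \in I} e(m\alpha) \sum_{n \in J} e(n\beta)\, S(am, an; c) \right\vert
    \ll_\eps
    c^\eps \left(c\, T_{M,N}(\alpha, \beta) + \gcd(a, c) MN\right).
\]
Integrating this bound against $d|\mu|(\alpha)\, d|\nu|(\beta)$ yields exactly \cref{eq:bilinear-freq-concentration}. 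The final remark about the case $M = N$ then follows by noting that $T_N(\alpha, \beta) \ll N^{2/3}$ uniformly in $(\alpha, \beta)$, by \cref{eq:tn-bound}, so the integrand is bounded by $c\, N^{2/3} + \gcd(a, c) N^2$ pointwise, and integration against the product measure contributes the factor $|\mu|(\R/\Z)\, |\nu|(\R/\Z)$.

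There is essentially no obstacle here beyond checking measurability to justify Fubini (trivial, since the integrand in $(\alpha, \beta)$ is a trigonometric polynomial, hence continuous) and ensuring that the hypotheses $1 \ll M, N \ll c$ from \cref{prop:bilinear-expo-phases} carry over, which they do by assumption. The proof is really just a ``black-boxing'' of the exponential-phase bound, exploiting the fact that the right-hand side of \cref{prop:bilinear-expo-phases} depends on $(\alpha, \beta)$ only through $T_{M,N}(\alpha,\beta)$ and a term independent of them; the latter's uniformity is what allows the $\gcd(a,c)MN$ contribution to factor out cleanly as $\gcd(a,c)MN\, |\mu|(\R/\Z)\,|\nu|(\R/\Z)$ after integration, matching the form of \cref{eq:bilinear-freq-concentration}.
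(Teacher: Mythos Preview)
Your proposal is correct and follows essentially the same approach as the paper: expand $a_m$ and $b_n$ via Fourier inversion, swap sums and integrals, then apply \cref{prop:bilinear-expo-phases} pointwise and integrate. The paper's proof is terser (it omits the Fubini and measurability remarks), but the argument is identical.
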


\begin{proof}
By Fourier inversion, expand
\[
    a_m = \int_{\R/\Z} e(m\alpha)\, d\mu(\alpha),
    \qquad\qquad 
    b_n = \int_{\R/\Z} e(n\beta)\, d\mu(\beta),
\]
then swap sums and integrals, and apply \cref{prop:bilinear-expo-phases}.
\end{proof}

\begin{remark}
Suppose $M = N$ and $a = 1$.
By comparison, the pointwise Weil bound would yield a right-hand side in \cref{eq:bilinear-freq-concentration} of roughly $N \sqrt{c}\, \|a_m\|_2 \|b_n\|_2$, while applying Cauchy--Schwarz after \cref{eq:kloosterman-fourier} gives the bound $c \|a_m\|_2 \|b_n\|_2$ (these essentially lead to the ranges in \cref{thm:large-sieve-general}). It is a very difficult problem \cite{kowalski2017bilinear,kerr2023bounds} to improve these bounds for general sequences $(a_m), (b_n)$, but it becomes easier given suitable information in the frequency space. Indeed, with the natural choice of measures $d\mu = \hat{a}\, d\lambda$, $d\nu = \hat{b}\, d\lambda$ (where $\lambda$ is the Lebesgue measure), \cref{prop:bilinear-freq-concentration} saves over the relevant bound $c \|a_m\|_2 \|b_n\|_2 = c \|\hat{a}\|_{L^2} \|\hat{b}\|_{L^2}$ whenever $\hat{a}, \hat{b}$ satisfy the concentration inequality
\[
    \frac{\|\hat{a}\|_{L^1}}{\|\hat{a}\|_{L^2}} \cdot \frac{\|\hat{b}\|_{L^1}}{\|\hat{b}\|_{L^2}} = o\left(\frac{1}{N^{2/3} + N^2 c^{-1}}\right).
\]
For reference, the left-hand side is always $\gg N^{-1}$.
One may do better by treating the integral in \cref{eq:bilinear-freq-concentration} more carefully, or by including the contribution of other frequencies into $\mu$ and $\nu$ (this liberty is due to the handling of sharp cutoffs in \cref{prop:bilinear-expo-phases}). For instance, one could extend the sequences $(a_m), (b_n)$ with a smooth decay beyond $I$ and $J$ before taking their Fourier transforms, or one could construct $\mu, \nu$ out of Dirac delta measures (in particular, one recovers \cref{prop:bilinear-expo-phases} this way).
\end{remark}

We will ultimately use \cref{prop:bilinear-freq-concentration} for sequences $(a_n)$ of the shape in \cref{eq:dispersion-coeffs}, so it is necessary to understand their Fourier transforms. The case of exponential phases $a_n = e(n\alpha)$ is trivial, but the dispersion coefficients from \cref{thm:large-sieve-dispersion-coeffs} are more interesting, warranting a separate lemma.

\begin{lemma}[Fourier transform of dispersion coefficients] \label{lem:fourier-dispersion}
Let $\eps > 0$ and $H, L \gg 1$.
For $i \in \{1, 2\}$, let $\ell_i \in \Z_+$ with $\ell_i \asymp L$ and $(\ell_1, \ell_2) = 1$, $\alpha_i \in \R/\Z$,
and $\Phi_i : (-\infty, \infty) \to \C$ be smooth functions, with $\Phi_i(t)$ supported in $t \ll 1$ and $\Phi_i^{(j)} \ll_j 1$ for all $j \ge 0$. Then for any $\eps > 0$, the sequence 
\[
    a_n := \sum_{\substack{h_1, h_2 \in \Z \\ h_1 \ell_1 + h_2 \ell_2 = n}} \Phi_1\left(\frac{h_1}{H}\right) \Phi_2\left(\frac{h_2}{H}\right) e(h_1 \alpha_1 + h_2 \alpha_2),
\]
supported in $n \ll HL$, has Fourier transform bounds
\begin{equation} \label{eq:fourier-dispersion-bounds}
    \hat{a} \ll H^2, 
    \qquad\qquad 
    \hat{a}(\alpha) \ll_\eps H^{-100} \ 
    \text{ unless } \
    \|\ell_i \alpha - \alpha_i\| \le H^{\eps-1}\ 
    \forall i \in \{1, 2\}.
\end{equation}
In consequence,
\[
    \|\hat{a}\|_{L^1} \ll_\eps H^{\eps} \left(1 + \frac{H}{L}\right),
    \qquad\qquad 
    \|\hat{a}\|_{L^2} \ll_\eps H^{\eps} \left(H + \frac{H^{3/2}}{L^{1/2}}\right).
\]
\end{lemma}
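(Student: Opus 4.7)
My plan is to compute $\hat{a}$ in closed form and then analyze it via Poisson summation. Swapping the sums defining $a_n$ and $\hat{a}$ factors
\[
    \hat{a}(\alpha) = F_1(\ell_1\alpha - \alpha_1) \cdot F_2(\ell_2\alpha - \alpha_2), \qquad
    F_i(\beta) := \sum_h \Phi_i\!\left(\tfrac{h}{H}\right) e(-h\beta).
\]
Poisson summation on each $F_i$ gives $F_i(\beta) = H \sum_{k\in\Z}\hat{\Phi}_i(H(\beta+k))$, and the Schwartz decay of $\hat{\Phi}_i$ yields the uniform bound $|F_i(\beta)| \ll_A H(1+H\|\beta\|)^{-A}$ for any $A \ge 1$. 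The pointwise claims in \cref{eq:fourier-dispersion-bounds} then follow at once: using $|F_i| \le H$ gives $|\hat{a}| \le H^2$, and if $\|\ell_i\alpha - \alpha_i\| > H^{\eps-1}$ for some $i$, choosing $A$ large in terms of $\eps$ yields $|F_i(\ell_i\alpha-\alpha_i)| \ll_\eps H^{-100}$ and hence $|\hat{a}(\alpha)| \ll_\eps H^{-99}$.

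For the $L^2$ bound I would expand $\|\hat{a}\|_{L^2}^2 = \int_{\R/\Z}|F_1|^2|F_2|^2\, d\alpha$ and interchange the sums with the integral. The $\alpha$-integration forces the diagonal relation $(h_1-h_1')\ell_1 + (h_2-h_2')\ell_2 = 0$; since $\gcd(\ell_1,\ell_2) = 1$, this means $(h_1-h_1',\,h_2'-h_2) = (k\ell_2,\,k\ell_1)$ for some $k \in \Z$, with $|k| \ll 1 + H/L$ from the support constraints $h_i, h_i' \ll H$. For each such $k$, the number of admissible $(h_1,h_2)$ is $O\bigl((\max(0, H-|k|L))^2\bigr)$, and summing gives $\|\hat{a}\|_{L^2}^2 \ll H^2 + H^3/L$, which matches the claim with room to spare for an $H^\eps$ factor.

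The main technical step is the $L^1$ bound. The Schwartz decay of $F_i$ effectively localizes $|F_i(\ell_i\alpha-\alpha_i)|$ near the rationals $c_i(k_i) := (\alpha_i+k_i)/\ell_i$ for $k_i \in [0,\ell_i)$ as smooth bumps of height $\asymp H$ and width $\asymp (HL)^{-1}$. Integrating a product of one such bump for each $i$ over $\R/\Z$ gives
\[
    \int_{\R/\Z} |F_1||F_2|\, d\alpha \ll \frac{H}{L}\sum_{k_1, k_2}\left(1 + HL\,\|c_1(k_1) - c_2(k_2)\|\right)^{-A+1}.
\]
The crucial observation, where coprimality enters, is that $c_1(k_1) - c_2(k_2) = (\alpha_1\ell_2 - \alpha_2\ell_1 + k_1\ell_2 - k_2\ell_1)/(\ell_1\ell_2)$, and by the Chinese Remainder Theorem the residues $k_1\ell_2 - k_2\ell_1 \pmod{\ell_1\ell_2}$ hit each element of $\Z/\ell_1\ell_2\Z$ exactly once. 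Hence $c_1(k_1) - c_2(k_2) \pmod 1$ takes values on a grid of $\ell_1\ell_2 \asymp L^2$ equally-spaced points with spacing $(\ell_1\ell_2)^{-1} \asymp L^{-2}$. Parametrizing these by $m$, the double sum reduces to $\sum_m (1 + Hm/L)^{-A+1} \ll 1 + L/H$, which combined with the leading $H/L$ gives $\|\hat{a}\|_{L^1} \ll 1 + H/L$.

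I expect the $L^1$ estimate to be the main obstacle, since it is the only step that genuinely exploits $\gcd(\ell_1,\ell_2) = 1$ (through the CRT-based counting of near-collisions between $c_1(k_1)$ and $c_2(k_2)$). The factorization, the pointwise bounds, and the $L^2$ estimate are all routine consequences of Poisson summation and Plancherel-style diagonal counting; the mild $H^\eps$ losses in the stated bounds are harmless slack that absorb any polylogarithmic factors that might appear from the dyadic accounting.
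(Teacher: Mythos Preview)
Your proposal is correct and follows essentially the same strategy as the paper: factor $\hat{a}$ via the convolution structure, apply Poisson summation to each factor to obtain the pointwise bounds, and exploit $(\ell_1,\ell_2)=1$ via CRT to control the near-collisions of the peaks $c_i(k_i)$. The paper packages the $L^1$ and $L^2$ bounds uniformly by first estimating the Lebesgue measure of the set $\{\alpha:\max_i\|\ell_i\alpha-\alpha_i\|\le H^{\eps-1}\}$ (which is exactly your CRT collision count, in disguise) and then multiplying by the pointwise bound $H^2$; your direct bump-summation for $L^1$ and Parseval/diagonal-counting for $L^2$ are minor but perfectly valid variations that even avoid the $H^\eps$ slack.
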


\begin{proof}[Proof of \cref{lem:fourier-dispersion}]
We take $\eps \in (0, 1)$ without loss of generality.
The sequence $(a_n)$ can be expressed as a discrete convolution,
\begin{equation} \label{eq:a-split}
    a_n = a(n) = \sum_{m \in \Z} b_1(m)\, b_2(n-m)
    \qquad\quad 
    \Rightarrow 
    \qquad\quad 
    \hat{a}(\alpha) = \hat{b_1}(\alpha) \cdot \hat{b_2}(\alpha),
\end{equation}
where for $i \in \{1, 2\}$, 
\[
    b_i(n) := \one_{n \equiv 0 \pmod{\ell_i}}\,  \Phi_i\left(\frac{n}{H \ell_i}\right) e\left(\frac{n}{\ell_i} \alpha_i\right).
\]
But we further have
\begin{equation} \label{eq:b-split}
    \hat{b_i}(\alpha) = \hat{c_i}(\ell_i \alpha - \alpha_i),
\end{equation}
where $c_i(h) := \Phi_i(h/H)$. By Poisson summation and the Schwarz decay of $\hat{\Phi}_i$, identifying $\alpha \in \R/\Z$ with $\alpha \in (-1/2, 1/2]$, we have
\[
\begin{aligned}
    \hat{c_i}(\alpha) = \sum_{h \in \Z} \Phi_i\left(\frac{h}{H}\right) e(-h\alpha) 
    &= 
    \sum_{n \in \Z} H\hat{\Phi}_i\left(H(n + \alpha) \right)
    \\
    &=
    H\hat{\Phi}_i(H\alpha) + O\left(H^{-200}\right).
\end{aligned}
\]
In fact, we also have $H\hat{\Phi_i}(H\alpha) = O_\eps(H^{-200})$ when $|H\alpha| > H^{\eps}$. So overall,
\[
\begin{aligned}
    \hat{c_i}(\alpha) 
    &\ll H, \qquad\qquad\quad\ \ \, \forall\, \alpha \in \R/\Z,
    \\
    \hat{c_i}(\alpha) 
    &\ll
    O_\eps(H^{-200}), \qquad \text{if } \|\alpha\| > H^{\eps-1}.
\end{aligned}
\]
Thus by \cref{eq:a-split,eq:b-split}, we obtain
\begin{equation} \label{eq:a-fourier-bound}
    \hat{a}(\alpha) 
    \ll 
    \begin{cases}
        H^2, & \max\left(\|\ell_1 \alpha - \alpha_1\|, \|\ell_2 \alpha - \alpha_2\|\right) \le H^{\eps-1}, \\
        O_\eps(H^{-100}), & \max\left(\|\ell_1 \alpha - \alpha_1\|, \|\ell_2 \alpha - \alpha_2\|\right) > H^{\eps-1},
    \end{cases}
\end{equation}
which proves \cref{eq:fourier-dispersion-bounds}.
Now suppose that $\max\left(\|\ell_1 \alpha - \alpha_1\|, \|\ell_2 \alpha - \alpha_2\|\right) \le H^{\eps-1}$; we would like to estimate how often this happens. Identifying $\alpha, \alpha_i \in \R/\Z$ with $\alpha, \alpha_i \in (-1/2, 1/2]$, there must exist integers $m_i(\alpha) \ll L$ such that
\[
    \ell_1 \alpha - \alpha_1 = m_1 + O\left(H^{\eps-1}\right), 
    \qquad\qquad 
    \ell_2 \alpha - \alpha_2 = m_2 + O\left(H^{\eps-1}\right),
\]
so in particular,
\begin{equation} \label{eq:m-pair}
    \ell_1 m_2 - \ell_2 m_1 = \ell_2\alpha_1 - \ell_1 \alpha_2
     + O\left(H^{\eps-1} L\right).
\end{equation}
Since $\gcd(\ell_1, \ell_2) = 1$, as $m_1, m_2 \ll L$ vary, the difference $\ell_1 m_2 - \ell_2 m_1$ can only cover any given integer $O(1)$ times; thus there are a total of $O(1 + H^{\eps-1} L)$ pairs $(m_1, m_2) \in \Z^2$ satisfying \cref{eq:m-pair}. Moreover, to each such pair $(m_1, m_2)$ there can correspond an interval of $\alpha$'s of length at most $O(H^{\eps-1} L^{-1})$, since 
\[
    \alpha = \frac{m_1(\alpha) + \alpha_1}{\ell_1} + O\left(H^{\eps-1} L^{-1}\right).
\]
Overall, we obtain that the set
\[
    \left\{\alpha \in \R/\Z : \max\left(\|\ell_1 \alpha - \alpha_1\|, \|\ell_2 \alpha - \alpha_2\|\right) \le H^{\eps-1} \right\}
\]
has Lebesgue measure at most
\[
    O\left(\left(1 + H^{\eps-1} L\right) \cdot H^{\eps-1}L^{-1}\right) 
    =
    O\left(H^{\eps-1} L^{-1} + H^{2\eps-2}\right).
\]
By \cref{eq:a-fourier-bound}, we conclude that for any $p \ge 1$,
\[
\begin{aligned}
    \|\hat{a}\|_{L^p} &\ll_\eps H^{O(\eps)} \left(H^{2p} \left(H^{-1} L^{-1} + H^{-2}\right) + 1\right)^{\frac{1}{p}}
    \\
    &\ll_p H^{O(\eps)} \left(H^{2-\frac{2}{p}} + \frac{H^{2-\frac{1}{p}}}{L^{\frac{1}{p}}}\right),
\end{aligned}
\]
which completes our proof up to a rescaling of $\eps$.
\end{proof}

\begin{remark}
As in \cite{shparlinski2018character}, the arguments in this subsection extend immediately to sums of weighted Kloosterman sums 
\[
    S_w(m, n; c) := \sum_{x \in (\Z/c\Z)^\times} w(x)\, e\left(\frac{mx + n\bar{x}}{c}\right),
\]
for arbitrary $1$-bounded coefficients $w(x)$. In particular, choosing $w(x)$ in terms of a Dirichlet character $\chi$ mod $q_0$, where $q_0 \mid q \mid c$, should ultimately extend our large sieve inequalities to the exceptional Maass forms of level $q$ associated to a general nebentypus $\chi$, rather than the trivial one.
\end{remark}

\section{Spectral bounds}
\label{sec:spectral}

We now combine the combinatorial arguments from the previous section with techniques from the spectral theory of automorphic forms (inspired by \cite{deshouillers1982kloosterman}), to prove new large sieve inequalities for exceptional Maass cusp forms, and then to deduce bounds for multilinear forms of Kloosterman sums. The reader should be familiar with the prerequisites in all of \cref{sec:notation}, especially \cref{subsec:automorphic-fourier-bounds}.

\subsection{A general large sieve for exceptional Maass forms}
\label{subsec:large-sieve}
Our common generalization of \cref{thm:large-sieve-expo-phases,thm:large-sieve-dispersion-coeffs} requires the following notation, applied to the Fourier transform of a sequence $(a_n)$. 

\begin{notation}[Rational-approximation integrals] \label{not:concentration}
Given $N \ge 1/2$ and a bounded-variation complex Borel measure $\mu$ on $\R/\Z$, we denote
\[
    \mI_N(\mu) := \iint_{(\R/\Z)^2} T_N(\alpha, \beta)\,
        d|\mu|(\alpha)\, d|\mu|(\beta),
\]
recalling the definition of $T_N(\alpha, \beta)$ from \cref{not:tn}.
\end{notation}

In general, the bound in \cref{eq:tn-bound} ensures that
\begin{equation} \label{eq:I-bound-1}
    \mI_N(\mu) \ll \iint_{(\R/\Z)^2} \min \left(\sqrt{N(1 + \|\alpha-\beta\| N)}, N^{2/3} \right)\,
        d|\mu|(\alpha)\, d|\mu|(\beta),
\end{equation}
which is invariant under translations of $\mu$. Noting the trivial lower bound $T_N(\alpha, \beta) \ge 1$, this implies
\begin{equation} \label{eq:I-bound-2}
    |\mu|(\R/\Z)^2 \ll \mI_N(\mu) \ll 
    N^{2/3} |\mu|(\R/\Z)^2.
\end{equation}

\begin{theorem}[Large sieve with frequency concentration] \label{thm:large-sieve-freq-concentration}
Let $\eps > 0$, $X, A > 0$, $N \ge 1/2$, $q, a \in \Z_+$, and $(a_n)_{n \sim N}$ be a complex sequence. Let $f : (0, 4) \to \C$ be a smooth function with $f^{(j)} \ll_j 1$ for $j \ge 0$, and $\mu$ be a bounded-variation complex Borel measure on $\R/\Z$, such that\footnote{We slightly abuse notation in this section: the measure $\mu$ should not be confused with the cusp parameter $\mu(\ma) = q^{-1}$, and the scalar $a$ should not be confused with the sequence $(a_n)$.}
\[
    a_n = f\left(\frac{n}{N}\right) \check{\mu}\left(n\right),
\]
for all $n \sim N$ (in particular, one can take $f \equiv 1$, $d\mu = \hat{a}\, d\lambda$). Let $\ma, \rho_{j\ma}(n), \lambda_j,\theta_j$ be as in \cref{thm:large-sieve-general}, with $\mu(\ma) = q^{-1}$ and the choice of scaling matrix $\sigma_{\ma}$ in \cref{eq:scaling-choices}. Then one has
\begin{equation} \label{eq:large-sieve-freq-concentration}
    \sum_{\lambda_j < 1/4}
    X^{2\theta_j} 
    \left\vert 
    \sum_{n \sim N} a_n\, \rho_{j\ma}(an)
    \right\vert^2 
    \ll_\eps 
    (qaNX)^{2\eps}
    \left(1 + \frac{aN}{q}\right) A^2,
\end{equation}
provided that both of the following hold:
\begin{gather} 
    \label{eq:A-condition-freq}
    A \gg \|a_n\|_2 + \frac{\sqrt{\gcd(a, q)} N}{\sqrt{q+aN}} |\mu|(\R/\Z),
    \\ \label{eq:X-condition-freq}
    X \ll \max\left(1, \frac{q}{aN}\right) \max\left(1, \frac{A^2}{\mI_N(\mu)}\right).
\end{gather}
\end{theorem}

\begin{remark}
\cref{thm:large-sieve-freq-concentration} obtains a saving over \cref{thm:large-sieve-general} whenever we can take $A \ll (qN)^{o(1)}\|a_n\|_2$ and $X > \max(1, \tfrac{q}{aN})$. To satisfy \cref{eq:X-condition-freq,eq:A-condition-freq} in this context, assuming $\gcd(a, q) = 1$, we need
\begin{equation} \label{eq:some-freq-concentration}
    |\mu|(\R/\Z) 
    \ll 
    (qN)^{o(1)} \frac{\sqrt{q+aN}}{N} \|a_n\|_2
    \qquad\quad 
    \text{and}
    \qquad\quad
    \mI_N(\mu) = o\left(\|a_n\|_2^2\right).
\end{equation}
These should be compared with the lower bound
\begin{equation} \label{eq:mu-lower-bound}
    |\mu|(\R/\Z) \gg N^{-1/2}\, \|a_n\|_2,
\end{equation}
which always holds, by Fourier expansion and Cauchy--Schwarz.
This has the following implications:
\begin{itemize}
    \item[(1).] From \cref{eq:mu-lower-bound} and the lower bound in \cref{eq:I-bound-2}, we have $\mI_N(\mu) \gg N^{-1/2} \|a_n\|_2$. With $A \ll (qN)^{o(1)}\|a_n\|_2$, this limits the range of $X$ in \cref{eq:X-condition-freq} to the best-case scenario $X \ll \max(N, \tfrac{q}{a})$. This is indeed achieved by \cref{thm:large-sieve-expo-phases} when $\alpha = 0$.
    \item[(2).] When $a \ll 1$ and $q \approx N$, \cref{eq:some-freq-concentration} requires nearly-optimal concentration for $\mu$, in the sense that $|\mu|(\R/\Z)$ is almost as small as possible; this happens to hold for the sequences in \cref{eq:disp-coeff-informal}.
    \item[(3).] Using the upper bound $\mI_N(\mu) \ll N^{2/3}|\mu|(\R/\Z)^2$ from \cref{eq:I-bound-2} and choosing $f \equiv 1$, $d\mu = \hat{a}\, d\lambda$ (so that $|\mu|(\R/\Z) = \|\hat{a}\|_{L^1}$ and $\|a_n\|_2 = \|\hat{a}\|_{L^2}$), we see that \cref{eq:some-freq-concentration} holds in particular when
    \[
        \frac{\|\hat{a}\|_{L^1}}{\|\hat{a}\|_{L^2}} = o\left(\frac{\min(q^{1/2+o(1)}, (aN)^{1/2+o(1)}, N^{2/3})}{N}\right),
    \]
    which gives a more palpable concentration condition on the Fourier transform $\hat{a}$. The weights of $T_N(\alpha, \beta)$ inside $\mI_N(\mu)$, combined with the liberty to choose other measures $\mu$ and functions $f$, allow for additional flexibility when more information about the sequence $(a_n)$ is available. 
\end{itemize}
\end{remark}

\begin{proof}[Proof of \cref{thm:large-sieve-freq-concentration}]
We assume without loss of generality that $\eps < 1$, and that $f$ is supported in $[0.5, 3]$ (otherwise, multiply $f$ by a fixed smooth function supported in $[0.5, 3]$ and equal to $1$ on $[1, 2]$; then the identity $a_n = f(n/N)\, \check{\mu}(n)$ remains true for $n \sim N$).

In light of \cref{lem:large-sieve-non-exceptional}, we are immediately done if $X \le 1$, so assume $X > 1$. Let $\Phi$ be a fixed nonnegative smooth function supported in $[2, 4]$, with positive integral. Then by \cref{cor:exc-bound} and the fact that $A \gg \|a_n\|_2$ (from \cref{eq:A-condition-freq}), it suffices to show that
\begin{equation} \label{eq:to-show-large-sieve}
    \mS := \sum_{c \in \mC_{\ma\ma}} \frac{1}{c} 
    \sum_{m,n \sim N} \bar{a_m}\, a_n\, S_{\ma \ma}(am, an; c)\, \Phi\left(\frac{a\sqrt{mn}}{c} X \right)
    \ll_\eps 
    (qaNX)^{2\eps} \left(1 + \frac{aN}{q}\right) A^2,
\end{equation}
subject to \cref{eq:A-condition-freq,eq:X-condition-freq}. Since $\mu(\ma) = q^{-1}$, \cref{lem:explicit-kloosterman} implies that
\begin{equation} \label{eq:S-split}
    \mS = \sum_{\substack{c \in (aNX/4, aNX) \\ c \equiv 0 \pmod{q} \\ }} \frac{\mS(c)}{c},
\end{equation}
where 
\[
    \mS(c) := \sum_{m,n \sim N} \bar{a_m} \, a_n\, S(am, an; c)\, \Phi\left(\frac{a\sqrt{mn}}{c} X \right).
\]
If $aNX \le q$, the sum over $c$ is void; so we may assume that $X > \max\left(1, \frac{q}{aN}\right)$, which by \cref{eq:X-condition-freq} implies
\begin{equation} \label{eq:wlog-conc}
    \mI_N(\mu) \ll A^2.
\end{equation}
We aim to bound each of the $\asymp aNX/q$ inner sums $\mS(c)$
separately, using \cref{prop:bilinear-freq-concentration}. To this end, we need to separate the variables $m, n, c$; we can rewrite
\begin{equation} \label{eq:sc-again}
    \mS(c) = \sum_{m,n \sim N} \bar{\check{\mu}(m)} \, \check{\mu}(n)\, S(am, an; c)\, \Psi_c\left(\frac{m}{N}, \frac{n}{N}\right),
\end{equation}
where 
\[
    \Psi_c(x_1, x_2) := \bar{f(x_1)} f(x_2)\, \Phi\left(\sqrt{x_1 x_2} \frac{aNX}{c} \right)
\]
is a compactly-supported smooth function with bounded derivatives (since $c \asymp aNX$ and we assumed WLOG that $f$ is supported in $[0.5, 3]$).
By two-dimensional Fourier inversion, we have
\[
    \Psi_c(x_1, x_2) = \iint_{\R^2} \hat{\Psi_c}(t_1, t_2)\, e(t_1 x_1 + t_2 x_2)\, dt_1\, dt_2,
\]
where
\[
    \hat{\Psi_c}(t_1, t_2) = \iint_{(0, \infty)^2} \Psi_c(x_1, x_2)\, e(- t_1 x_1 - t_2 x_2)\, dx_1\, dx_2.
\]
Since $\Psi_c(x_1, x_2)$ is Schwarz, so is $\hat{\Psi}(t_1, t_2)$; in particular, we have $\hat{\Psi_c}(t_1, t_2) \ll (1+t_1^4)^{-1}(1+t_2^4)^{-1}$ with an absolute implied constant. Plugging the inversion formula into \cref{eq:sc-again} and swapping sums and integrals, we obtain
\begin{equation} \label{eq:Sc-split}
    \mS(c) = \iint_{\R^2} \hat{\Psi_c}(t_1, t_2)\, \mS(c, t_1, t_2)\, dt_1\, dt_2,
\end{equation}
where
\[
    \mS(c, t_1, t_2) := \sum_{m,n \sim N} \bar{\check{\mu}(m)\, e\left(\frac{-mt_1}{N}\right)} \, \check{\mu}(n)\, e\left(\frac{nt_2}{N}\right) S(am, an; c).
\]
Note that translating $\mu$ corresponds to multiplying $\check{\mu}(n)$ by exponential factors $e(n\alpha)$, so \cref{prop:bilinear-freq-concentration} and a change of variables yield
\[
\begin{aligned}
    &\mS(c, t_1, t_2) 
    \\
    &\ll_\eps 
    c^\eps \iint_{(\R/\Z)^2} \left(c T_N(\alpha, \beta) + \gcd(a, c) N^2 \right) d|\mu|\left(-\alpha+\frac{t_1}{N}\right) d|\mu|\left(\beta-\frac{t_2}{N}\right)
    \\
    &=
    c^\eps \iint_{(\R/\Z)^2} \left(c T_N\left(-\alpha + \frac{t_1}{N}, \beta + \frac{t_2}{N}\right) + \gcd(a, c) N^2 \right) d|\mu|(\alpha)\, d|\mu|(\beta).
\end{aligned}
\]
Recalling that $T_N(\alpha, \beta) = T_N(-\alpha, \beta)$ and the bound \cref{eq:T-shift}, we have
\[
    T_N\left(-\alpha + \frac{t_1}{N}, \beta + \frac{t_2}{N}\right)
    \ll 
    (1 + |t_1|)(1 + |t_2|)\, T_{N}(\alpha, \beta),
\]
so that
\[
    \mS(c, t_1, t_2) \ll_\eps (1 + |t_1|)(1 + |t_2|)\,c^\eps\, \left(c\, \mI_{N} (\mu) + \gcd(a, c) N^2 |\mu|(\R/\Z)^2 \right).
\]
Together with \cref{eq:Sc-split} and the bound $\hat{\Psi_c}(t_1, t_2) \ll (1+t_1^4)^{-1}(1+t_2^4)^{-1}$, we obtain
\[
    \mS(c) \ll_\eps c^\eps\, \left(c\, \mI_N \left(\mu\right) + \gcd(a, c) N^2 |\mu|(\R/\Z)^2 \right),
\]
and by \cref{eq:S-split} we conclude that
\begin{equation} \label{eq:final-s-bound}
    \mS \ll_\eps (aNX)^{2\eps} \left(\frac{aNX}{q} \mI_{N} (\mu) + \frac{\gcd(a, q) N^2}{q} |\mu|(\R/\Z)^2 \right).
\end{equation}
By the assumed lower bound for $A$ in \cref{eq:A-condition-freq}, the contribution of the second term is 
\[
    \ll_\eps (aNX)^{2\eps} \left(1 + \frac{aN}{q}\right) A^2,
\]
which is acceptable in \cref{eq:to-show-large-sieve}. Similarly, the first term in \cref{eq:final-s-bound} is acceptable provided that
\[
    \frac{aNX}{q} \mI_N(\mu) \ll \left(1 + \frac{aN}{q}\right) A^2,
\]
i.e.,
\[
    X \ll \max\left(1, \frac{q}{aN}\right) \frac{A^2}{\mI_N(\mu)},
\]
which follows from \cref{eq:X-condition-freq} and \cref{eq:wlog-conc}.
\end{proof}

\subsection{Proofs of \cref{thm:large-sieve-expo-phases,thm:large-sieve-dispersion-coeffs}} We can now deduce the large sieve inequalities promised in \cref{subsec:intro-large-sieve}, starting from \cref{thm:large-sieve-freq-concentration}.

\begin{proof}[Proof of \cref{thm:large-sieve-expo-phases}]
Consider the sequence $a_n := \Phi(n/N)\, e(n\alpha)$ for $n \sim N$ and some $\alpha \in \R/\Z$, which has $\|a_n\|_2 \asymp \sqrt{N} =: A$. Choosing $\mu := \delta_{\{\alpha\}}$, we have $a_n = \Phi(n/N)\, \check{\mu}(n)$ for $n \sim N$, and $|\mu|(\R/\Z) = 1$. In particular, the lower bound for $A$ in \cref{eq:A-condition-freq} holds for any values of $q$ and $a$, since
\[
    |\mu|(\R/\Z) = 1 \ll N^{-1/2}\, \|a_n\|_2.
\]
Finally, we have
\[
    \mI_{N}(\mu) = T_N(\alpha, \alpha) \asymp \min_{t \in \Z_+} \left(t + N\|t\alpha\|\right),
\]
so \cref{thm:large-sieve-freq-concentration} (i.e., \cref{eq:X-condition-freq}) recovers the large sieve range
\[
    X \ll \max\left(1, \frac{q}{aN}\right) \frac{N}{\min_{t \in \Z_+} \left(t + N\|t\alpha\|\right)}
\]
from \cref{eq:expo-X-range}. In particular, we can recall from \cref{eq:tn-bound} that $T_N(\alpha, \alpha) \ll \sqrt{N}$, so this includes the range $X \ll (\sqrt{N}, \tfrac{q}{a\sqrt{N}})$ uniformly in $\alpha$. Since varying the choice of scaling matrix $\sigma_{\ma}$ is equivalent to varying $\alpha$, we can use the same range $X \ll (\sqrt{N}, \tfrac{q}{a\sqrt{N}})$ for an arbitrary scaling matrix.
\end{proof}

\begin{proof}[Proof of \cref{thm:large-sieve-dispersion-coeffs}]
Assume without loss of generality that $\eps \in (0, 1)$. By changing $h_2 \leftrightarrow -h_2$, $\Phi_2(t) \leftrightarrow \Phi_2(-t)$ and $\alpha_2 \leftrightarrow -\alpha_2$, we can equivalently consider the sequence $(a_n)_{n \sim N}$ given by
\[
    a_n = 
    \sum_{\substack{h_1, h_2 \in \Z \\ h_1\ell_1 + h_2\ell_2 = n}}
    \Phi_1\left(\frac{h_1}{H}\right) \Phi_2\left(\frac{h_2}{H}\right) e(h_1 \alpha_1 + h_2 \alpha_2).
\]
We may of course assume that $N \ll HL$, since otherwise $(a_n)_{n\sim N}$ vanishes. Note that the extension $(a_n)_{n \in \Z}$ is exactly the sequence considered in \cref{lem:fourier-dispersion}. Thus letting $\varphi : \R/\Z \to \C$ be the Fourier transform of $(a_n)_{n \in \Z}$, and $d\mu := \varphi\, d\lambda$ (where $\lambda$ is the Lebesgue measure on $\R/\Z$), we have
\[
    \check{\mu}(n) = 
    \check{\varphi}(n) =
    a_n, \qquad \forall n \sim N.
\]
Moreover, \cref{lem:fourier-dispersion} implies that 
\begin{equation} \label{eq:phi-fourier-dispersion-bounds}
    \varphi \ll H^2, 
    \qquad\qquad 
    \varphi(\alpha) \ll_\eps H^{-100} \ 
    \text{ unless } \
    \|\ell_i \alpha - \alpha_i\| \le H^{\eps-1}\ 
    \forall i \in \{1, 2\},
\end{equation}
and
\begin{equation} \label{eq:mu-total-mass}
    |\mu|(\R/\Z) = \|\varphi\|_{L^1} \ll_\eps H^\eps \left(1 + \frac{H}{L}\right).
\end{equation}
To compute the integral 
\[
    \mI_N(\mu)
    =
    \iint_{(\R/\Z)^2} T_N(\alpha, \beta)\, \varphi(\alpha)\, \varphi(\beta)\, d\alpha\, d\beta,
\]
we first consider the contribution of $\alpha, \beta$ which have $\|\ell_i \alpha - \alpha_i\| >  H^{\eps-1}$ or $\|\ell_i \beta - \alpha_i\| > H^{\eps-1}$ for some $i \in \{1, 2\}$. By \cref{eq:phi-fourier-dispersion-bounds}, either $\varphi(\alpha)$ or $\varphi(\beta)$ is $\ll_\eps H^{-100}$ in this case, so the total contribution to $\mI_{N}(\mu)$ is
\[
    \ll_\eps N^{2/3} H^{-100} H^2
    \ll 
    L H^{-90}.
\]
On the other hand, when $\max_{i \in \{1,2\}} \max(\|\ell_i \alpha - \alpha_i\|, \|\ell_i \beta - \alpha_i\|) \le H^{\eps-1}$, we have by definition (\cref{not:tn}) that for any $t \in \Z_+$,
\[
\begin{aligned}
    T_N(\alpha, \beta) 
    &\le 
    t\ell_i + N \|t\ell_i \alpha\| + N \|t\ell_i \beta\| 
    \\
    &\ll 
    tL + N \|t(\ell_i \alpha - \alpha_i)\| + N \|t(\ell_i \beta - \alpha_i)\| + N \|t\alpha_i\|
    \\
    &\ll
    tL + N tH^{\eps-1} + N \|t\alpha_i\|
    \\
    &\ll 
    H^\eps t L + N\|t\alpha_i\|
    \\
    &\ll
    H^\eps L \left(t + \frac{N}{L}\|t\alpha_i\|\right).
\end{aligned}
\]
Taking a minimum over $t \in \Z_+$ and $i \in \{1, 2\}$, we obtain
\[
    T_N(\alpha, \beta) \ll H^\eps L M,
    \qquad\qquad 
    M := \min_{i \in \{1, 2\}} T_{N/L}(\alpha_i).
\]
Using \cref{eq:mu-total-mass}, we conclude that
\begin{equation} \label{eq:In-total-mass}
\begin{aligned}
    \mI_{N}(\mu) 
    =
    \iint_{(\R/\Z)^2} T_{N}(\alpha, \beta)\, d|\mu|(\alpha)\, d|\mu|(\beta) 
    &\ll_{\eps}
    L H^{-90}
    +
    H^\eps L M\, |\mu|(\R/\Z)^2
    \\
    &\ll_{\eps} 
    H^{2\eps} L M\left(1 + \frac{H}{L}\right)^2.
\end{aligned}
\end{equation}
We are now in a position to apply \cref{thm:large-sieve-freq-concentration}, with
\[
    \frac{A}{C_\eps H^\eps} := \|a_n\|_2 + \sqrt{\gcd(a, q) N} \left(\sqrt{\frac{H}{L}} + \frac{H}{L}\right),
\]
where $C_\eps$ is a sufficiently large constant.
Note that by \cref{eq:mu-total-mass}, the assumption $q \gg L^2$, and the fact that $N \ll HL$, we have
\[
\begin{aligned}
    |\mu|(\R/\Z)
    &\ll C_\eps H^\eps \left(1 + \frac{H}{L}\right)
    \\
    &\ll 
    C_\eps
    H^\eps
    \frac{L + \sqrt{N}}{\sqrt{N}} \left(\sqrt{\frac{H}{L}} + \frac{H}{L}\right)
    \ll
    \frac{\sqrt{q + aN}}{\sqrt{\gcd(a, q)} N} A,
\end{aligned}
\]
so the lower bound for $A$ in \cref{eq:A-condition-freq} holds (above we used that $\frac{L}{\sqrt{N}} \sqrt{\frac{H}{L}} = \sqrt{\frac{HL}{N}} \gg 1$). It follows that the large sieve bound \cref{eq:large-sieve-freq-concentration} holds for all
\[
    X \ll \max \left(1, \frac{q}{aN}\right) \max\left(1, \frac{A^2}{\mI_{N}(\mu)}\right),
\]
where by \cref{eq:In-total-mass},
\[
    \frac{A^2}{\mI_{N}(\mu)} \gg \frac{H^{2\eps} N \left(\frac{H}{L} + \frac{H^2}{L^2}\right)}{H^{2\eps} L M\left(1 + \frac{H}{L}\right)^2} 
    =
    \frac{NH}{(H+L)LM}.  
\]
This proves \cref{eq:large-sieve-disp}.
\end{proof}

\subsection{Multilinear Kloosterman bounds} \label{subsec:multilinear-kloosterman}

In contrast to the ``vertical'' bilinear averages of Kloosterman sums $S(m, n; c)$ over $m, n$ from \cref{sec:combi} (or from \cite{kowalski2017bilinear,kerr2023bounds}), the bounds in this subsection also require ``horizontal'' averaging over the modulus $c$---crucially, with a smooth weight in this variable. Generally, it is such horizontal averages that make use of the Kuznetsov trace formula for $\Gamma_0(q)$, leading to dependencies on the spectral parameter $\theta(q) = \sqrt{\max(0, \tfrac{1}{4}-\lambda_1(q))} \le \tfrac{7}{64}$; we recall that the purpose of large sieve inequalities for the exceptional spectrum, like \cref{thm:large-sieve-freq-concentration}, is to improve the dependency on $\theta(q)$.

Throughout this subsection, we will work with sequences obeying the following condition.

\begin{assumption}[Large sieve for the tuple $(q, N, Z, (a_n)_{n \sim N}, A_N, Y_N)$] \label{ass:large-sieve}
This applies to complex sequences $(a_n)_{n \sim N}$ and parameters $q \in \Z_+$, $N \ge 1/2$, $Z \gg 1$, $A_N \gg \|a_n\|_2$, $Y_N > 0$. For any $\eps > 0, \xi \in \R$, any cusp $\ma$ of $\Gamma_0(q)$ with $\mu(\ma) = q^{-1}$ and $\sigma_{\ma}$ chosen as in \cref{eq:scaling-choices}, and any orthonormal basis of Maass cusp forms for $\Gamma_0(q)$, with eigenvalues $\lambda_j$ and Fourier coefficients $\rho_{j\ma}(n)$, one has
\begin{equation} \label{eq:large-sieve-rephrased}
    \sum_{\lambda_j < 1/4} X^{2\theta_j} \left\vert \sum_{n \sim N} e\left(\frac{n}{N} \xi \right) a_n\, \rho_{j\ma}(n) \right\vert^2 
    \ll_\eps
    (qNZ)^\eps 
    \left(1 + \frac{N}{q}\right) 
     A_N^2,
\end{equation}
for all $X \ll \max\left(1, \frac{q}{N}\right) \frac{Y_N}{1+|\xi|^2}$. 
\end{assumption}

For example, \cref{thm:large-sieve-general} shows that the tuple $(q, N, 1, (a_n)_{n \sim N}, \|a_n\|_2^2, 1)$ satisfies \cref{ass:large-sieve} for any $q \in \Z_+$, $N \ge 1/2$ and any complex sequence $(a_n)_{n \sim N}$; attaining higher values of $Y_N$ requires more information about $(a_n)$. \cref{thm:large-sieve-expo-phases} implies that another suitable choice of parameters is
\begin{equation} \label{eq:sequence-choice-1}
    a_n := e(n\alpha), \qquad\qquad Y_N := \frac{N}{T_N(\alpha)} \gg \sqrt{N},
    \qquad\qquad A_N := \sqrt{N},
\end{equation}
for any $\alpha \in \R/\Z$ and $q \in \Z_+$, $N \ge 1/2$, $Z = 1$; note that the phase $\xi/N$ can be incorporated into $\alpha$, and we implicitly used that $T_N(\alpha + \xi/N) \ll (1 + |\xi|^2)\, T_N(\alpha)$ by \cref{eq:T-shift}. Likewise, incorporating $\ell_i \xi/N$ into $\alpha_i$, \cref{thm:large-sieve-dispersion-coeffs} shows that we can choose
\begin{equation} \label{eq:sequence-choice-2}
\begin{gathered}
    a_n := 
    \sum_{\substack{h_1, h_2 \in \Z \\ h_1\ell_1 - h_2\ell_2 = n}} \Phi_1\left(\frac{h_1}{H}\right) \Phi_2\left(\frac{h_2}{H}\right) e(h_1\alpha_1 + h_2\alpha_2),
    \\
    Y_N := \max\left(1, \frac{NH}{(H+L)L \min_i T_H(\alpha_i)}\right) 
    ,
    \qquad
    A_N := \|a_n\|_2 + \sqrt{N}\sqrt{\frac{H}{L} + \frac{H^2}{L^2}},
\end{gathered}
\end{equation}
where $1 \ll L^2 \ll q$, $1 \ll H \ll Z$, $\alpha_i \in \R/\Z$, $\ell_i \asymp L$, $(\ell_1, \ell_2) = 1$, and $\Phi_i(t)$ are smooth functions supported in $t \ll 1$ with $\Phi_i^{(j)} \ll_j 1$. Other than the input from \cref{ass:large-sieve} (and implicitly \cref{thm:large-sieve-expo-phases,thm:large-sieve-dispersion-coeffs}), all arguments in this subsection are fairly standard \cite{deshouillers1982kloosterman,drappeau2017sums,de2020niveau}.

\begin{corollary}[Kloosterman bounds with averaging over $n, c$] \label{cor:kloosterman-averaging-nc}
Let $(q, N, Z, (a_n)_{n \sim N}, A_N, Y_N)$ satisfy \cref{ass:large-sieve}. Let $\eps > 0$, $C \gg 1$, $m \in \Z_+$, and $\ma, \mb$ be cusps of $\Gamma_0(q)$, with $\mu(\ma) = \mu(\mb) = q^{-1}$ and $\sigma_{\mb}$ as in \cref{eq:scaling-choices}. Let $\Phi : (0, \infty)^2 \to \C$ be a smooth function, with $\Phi(x, y)$ supported in $x, y \asymp 1$, and $\partial_x^j \partial_y^k \Phi(x, y) \ll_{j,k,\eps} Z^{j\eps}$ for $j, k \ge 0$. Then with a consistent choice of the $\pm$ sign, one has
\begin{equation} \label{eq:kloosterman-nc}
\begin{aligned}
    \sum_{n \sim N} a_n
    \sum_{c \in \mC_{\ma\mb}} \Phi\left(\frac{n}{N}, \frac{c}{C}\right) S_{\ma\mb}(m, \pm n; c)
    \ll_\eps
    (qmNCZ)^{O(\eps)}
    \left(1 + T\right)^{2\theta(q)}
    \frac{C^2 A_N}{C + \sqrt{mN}} 
    \\
    \times 
    \left(1 + \frac{mN}{C^2} +\frac{\sqrt{(q,m)m}}{q} \right)^{1/2}
    \left(1 + \frac{mN}{C^2} + \frac{N}{q}\right)^{1/2}
    ,
\end{aligned}
\end{equation}
for
\[
    T = \frac{T_0}{\sqrt{Y_N}}, 
    \qquad\qquad 
    T_0 := \frac{C}{\max\left(m, q^2 (q, m)^{-1} \right)^{1/2}\max\left(N, q\right)^{1/2}}
    \le
    \frac{C}{q^{3/2} (q,m)^{-1/2}}.
\]
\end{corollary}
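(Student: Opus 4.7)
The plan is to apply the Kuznetsov trace formula \cref{prop:kuznetsov} to the inner sum over $c \in \mC_{\ma\mb}$, and then bound the resulting spectral sum via Cauchy--Schwarz combined with the pointwise Fourier-coefficient bounds of \cref{lem:coefficient-bounds} and the large sieve \cref{ass:large-sieve}. Rewriting the inner sum as $\sum_c S_{\ma\mb}(m,\pm n;c)\,\varphi_n(4\pi\sqrt{mn}/c)/c$ with
\[
    \varphi_n(y) := \frac{4\pi\sqrt{mn}}{y}\, \Phi\!\left(\frac{n}{N},\,\frac{4\pi\sqrt{mn}}{yC}\right),
\]
we see that $\varphi_n$ is supported in $y \asymp \sqrt{mn}/C$, has size $\ll C$, and has $y$-derivatives of the natural scale (up to $Z^{O(\eps)}$), so it is an admissible test function for \cref{prop:kuznetsov}. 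The Maass contribution then becomes
\[
    \sum_j \frac{\bar{\rho_{j\ma}(m)}}{\ch\pi\kappa_j} \sum_{n\sim N} a_n\, \hat{\mB}_{\varphi_n}(\kappa_j)\, \rho_{j\mb}(n),
\]
(and similarly with $\check{\mB}$ when $\pm=-$). By \cref{lem:bessel-bounds}, $\hat{\mB}_{\varphi_n}(\kappa_j) \ll C^2/(C+\sqrt{mN})$ for the regular spectrum, with an extra factor $(1+C/\sqrt{mN})^{\theta_j}$ for exceptional $\lambda_j<1/4$.

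To apply \cref{ass:large-sieve}, I would separate the $n$-dependence of $\hat{\mB}_{\varphi_n}(\kappa_j)$ by substituting the integral definition in \cref{eq:Bessel-transform}, changing variables $x=4\pi\sqrt{mn}/c$ to convert the transform into a $c$-integral of a Bessel function against $\Phi(n/N,c/C)$, and then Fourier-expanding $\Phi(n/N,c/C)$ in the first variable (with effective range $|\xi|\ll (NCZ)^\eps$ by smoothness) together with a Taylor expansion of the $\sqrt{n/N}\in(1,\sqrt{2}]$ dependence inside the Bessel argument. This reduces the inner $n$-sum to finitely many copies of $\sum_n a_n\,e(n\xi/N)\,\rho_{j\mb}(n)$ multiplied by functions of $\kappa_j$ alone. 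Next, Cauchy--Schwarz in $j$ with the split $(1+C/\sqrt{mN})^{\theta_j}\leq (1+T)^{\theta_j}\, X_1^{\theta_j/2}\, X_2^{\theta_j/2}$ yields two factors: the first, $\sum_j |\rho_{j\ma}(m)|^2 X_1^{\theta_j}/\ch(\pi\kappa_j)$, is bounded via \cref{eq:coefficient-bound-exceptional} provided $X_1\ll\max(1,q^2/((q,m)m))$, contributing $1+\sqrt{(q,m)m}/q$; the second, $\sum_j X_2^{\theta_j}|\sum_n a_n\,e(n\xi/N)\rho_{j\mb}(n)|^2/\ch(\pi\kappa_j)$, is bounded via \cref{ass:large-sieve} provided $X_2\ll\max(1,q/N)\,Y_N/(1+|\xi|^2)$, contributing $(1+N/q)A_N^2$. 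Choosing $X_1,X_2$ maximal gives the residual exceptional loss $(1+T)^{\theta(q)}$ with $T=T_0/\sqrt{Y_N}$ exactly as in the statement; combined with the Bessel transform size $C^2/(C+\sqrt{mN})$ and the $(1+mN/C^2)^{1/2}$ factors produced by the $K^2$-terms in both spectral bounds (with $K\asymp\sqrt{mN}/C$ from the Bessel decay), this recovers the stated bound. The holomorphic (when $\pm=+$) and Eisenstein contributions are handled analogously with $X_1=X_2=1$, using \cref{lem:large-sieve-non-exceptional} and the Bessel bounds \cref{eq:bessel-bound-1,eq:bessel-bound-2,eq:bessel-bound-3}.

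The main obstacle will be to carry out the $n$-separation of $\hat{\mB}_{\varphi_n}(\kappa_j)$ sharply enough to preserve both the Bessel transform size $\asymp C^2/(C+\sqrt{mN})$ and the exceptional-spectrum growth $(1+C/\sqrt{mN})^{\theta_j}$; in particular, the $(1+|\xi|^2)$-factor in the admissible range of \cref{ass:large-sieve} must be absorbed by the Schwarz decay of the Fourier transform of $\Phi$ in the first variable, which requires tracking uniformity in $\xi$ after integration over the (effectively compact) range of Fourier frequencies.
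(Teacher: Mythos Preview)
Your overall strategy matches the paper's: apply Kuznetsov, bound the Bessel transforms via \cref{lem:bessel-bounds}, split the exceptional weight as $(1+C/\sqrt{mN})^{\theta_j}\le(1+T)^{\theta(q)}X_1^{\theta_j/2}X_2^{\theta_j/2}$ with $X_1,X_2$ chosen maximally, apply Cauchy--Schwarz, and then invoke \cref{eq:coefficient-bound-exceptional} for the $m$-side and \cref{ass:large-sieve} for the $n$-side. The regular-spectrum, holomorphic, and Eisenstein terms are handled as you indicate.

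The one notable difference is in how the $n$-dependence is separated from the test function. You propose applying Kuznetsov with the $n$-dependent function $\varphi_n$, then unwinding the Bessel integral and Taylor-expanding the residual $\sqrt{n/N}$ factor inside the Bessel argument. The paper instead performs the separation \emph{before} Kuznetsov: it sets $\Psi(x;y):=\sqrt{x}\,\Phi(x,\sqrt{x}/y)$ and Fourier-expands in the $x$-variable, obtaining
\[
    \Phi\!\left(\frac{n}{N},\frac{c}{C}\right)=Z^{K\eps}\,\frac{C}{c}\int_\R\frac{e(n\xi/N)}{1+\xi^4}\,\varphi_\xi\!\left(\frac{4\pi\sqrt{mn}}{c}\right)d\xi
\]
for a single \emph{$n$-independent} test function $\varphi_\xi$; the point is that the $\sqrt{x}=\sqrt{n/N}$ built into $\Psi$ exactly matches the $\sqrt{n}$ in the Kuznetsov argument $4\pi\sqrt{mn}/c$, so the residual $n$-dependence cancels by design. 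This avoids your Taylor step entirely and keeps the Bessel-transform bounds from \cref{lem:bessel-bounds} in their off-the-shelf form. Your route should also work, but it would require verifying that the Taylor remainder and the resulting polynomial-in-$n$ weights are controlled uniformly in the spectral parameter $\kappa_j$ (including the exceptional range where the Bessel kernel grows), which the paper's substitution sidesteps cleanly. The $(1+\xi^4)^{-1}$ decay then absorbs the $(1+|\xi|^2)$ loss in \cref{ass:large-sieve}, exactly as you anticipated in your final paragraph.
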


\begin{remark}
The parameter $T_0$ indicates the best known dependency on $\theta = \theta(q)$ that one could achieve without our large sieve inequalities; for example, when $a_n = e(n\alpha)$ and $Y_N = \sqrt{N}$, \cref{cor:kloosterman-averaging-nc} saves a total factor of $N^{\theta/2}$ over previous bounds (and up to $N^{\theta}$ if $\alpha$ is close to a rational number of small denominator). We note that in practice, the second term in each maximum from $T_0$ is usually dominant, and the factors in the second line of \cref{eq:kloosterman-nc} are typically $\asymp 1$. 
\end{remark}

\begin{remark}
While the smooth weight in the $c$ variable is necessary here (stemming from \cref{prop:kuznetsov}), the smooth weight in $n$ only confers additional flexibility. Indeed, one can take $\Phi(x, y) = f(x) g(y)$ for compactly-supported functions $f, g : (0, \infty) \to \C$, where $f \equiv 1$ on $(1, 2)$; this effectively replaces $\Phi(n/N, c/C)$ with $g(c/C)$ in \cref{eq:kloosterman-nc}. The same remark applies to the next results.
\end{remark}

\begin{proof}[Proof of \cref{cor:kloosterman-averaging-nc}]
Denote by $\mS$ the sum in \cref{eq:kloosterman-nc}. 
Letting $\Psi(x; y) := \sqrt{x}\, \Phi(x, \sqrt{x}/y)$, we can Fourier expand
\[
    \sqrt{x}\, \Phi\left(x, \frac{\sqrt{x}}{y} \right) = \int_\R \hat{\Psi}(\xi; y) \, e(x\xi)\, d\xi,
\]
where the Fourier transform is taken in the first variable. Integrating by parts in $x$, we note that for $k \ge 0$,
\[
    \partial_y^k\, \hat{\Psi}(\xi; y) \ll_{j,\eps} \frac{Z^{O(\eps)}}{1 + \xi^4},
\]
where the implied constant in $O(\eps)$ (say, $K > 0$) does not depend on $k$. Then we can let
\[
    \varphi_{\xi}(y) := Z^{-K\eps}\, \left(1 + \xi^4 \right) \hat{\Psi}\left(\xi; y \frac{C}{4\pi \sqrt{mN}} \right) \frac{4\pi \sqrt{mN}}{C y},
\]
which is supported in $y \asymp X^{-1}$ and satisfies $\varphi_\xi^{(k)} \ll_{k,\eps} X^k$, for
\begin{equation} \label{eq:X-CmN}
    X := \frac{C}{\sqrt{mN}}.
\end{equation}
This way, we can rewrite
\[
\begin{aligned}
    \Phi\left(\frac{n}{N}, \frac{c}{C}\right)
    &=
    \int_\R \sqrt{\frac{N}{n}} \hat{\Psi}\left(\xi; \frac{C}{c} \sqrt{\frac{n}{N}}\right) e\left(\frac{n}{N} \xi \right)\, d\xi
    \\
    &=
    Z^{K\eps} \frac{C}{c}
    \int_\R \frac{1}{1 + \xi^4} e\left(\frac{n}{N} \xi \right) \varphi_\xi\left(\frac{4\pi \sqrt{mn}}{c}\right) d\xi,
\end{aligned}
\]
and thus
\begin{equation} \label{eq:S-split-xi}
    \mS \ll_\eps Z^{O(\eps)} C \int_\R \frac{|S(\xi)|\, d\xi}{1+\xi^4}, 
\end{equation}
where
\[
    \mS(\xi) := \sum_{n \sim N} e\left(\frac{n}{N} \xi\right) a_n
    \sum_{c \in \mC_{\ma\mb}}\,
    \frac{\mS_{\ma\mb}(m, \pm n; c)}{c} 
    \varphi_\xi\left(\frac{4\pi \sqrt{mn}}{c}\right).
\]
The inner sum is in a suitable form to apply the Kuznetsov trace formula from \cref{prop:kuznetsov}. We only show the case when the choice of the $\pm$ sign is positive; the negative case is analogous (and in fact simpler due to the lack of holomorphic cusp forms). The resulting contribution of the Maass cusp forms to $\mS(\xi)$ is
\[
\begin{aligned}
    \mS_{\mM}(\xi) 
    &\ll \sum_{j=1}^\infty \frac{|\hat{\mB}_{\varphi_\xi}(\kappa_j)|}{\cosh(\pi \kappa_j)} 
    |\rho_{j\ma}(m)| \left\vert \sum_{n \sim N} e\left(\frac{n}{N} \xi\right) a_n\,  
    \rho_{j\mb}(n)\right\vert
    =:
    \mS_{\mM,\text{exc}}(\xi) + \mS_{\mM,\text{reg}}(\xi),
\end{aligned}
\]
where $\mS_{\mM,\text{exc}}$ contains the terms with $\lambda_j < 1/4$ and $\mS_{\mM,\text{reg}}$ contains the rest.
We first bound $\mS_{\mM,\text{reg}}$; the contribution of the holomorphic cusp forms and Eisenstein series is bounded analogously. For the Bessel transforms, we apply \cref{eq:bessel-bound-2} if $|r| \le R$ and \cref{eq:bessel-bound-3} otherwise, where $R \ge 1$ will be chosen shortly. Together with Cauchy--Schwarz and the bounds in \cref{lem:coefficient-bounds} (in $m$) and \cref{lem:large-sieve-non-exceptional} (in $n \sim N$), this yields
\[
\begin{aligned}
    \mS_{\mM,\text{reg}}(\xi)
    \ll_\eps (qmNR)^\eps 
    &\left(\frac{1 + |\log X|}{1 + X^{-1}} + R^{-5/2} + R^{-3} X^{-1}\right) 
    \\
    &\times \left(R^2 + \frac{\sqrt{(q,m)m}}{q}\right)^{1/2} \left(R^2 + \frac{N}{q}\right)^{1/2} \|a_n\|_2.
\end{aligned}
\]
Picking $R := 1 + X^{-1}$, we get
\begin{equation} \label{eq:S-bound-regular}
    \mS_{\mM,\text{reg}}(\xi)
    \ll_\eps (qmNC)^{O(\eps)} \frac{1}{1 + X^{-1}}
    \left(1 + X^{-2} + \frac{\sqrt{(q,m)m}}{q}\right)^{1/2} \left(1 + X^{-2} + \frac{N}{q}\right)^{1/2} \|a_n\|_2.
\end{equation}

For the exceptional spectrum, we let $X = X_0 \sqrt{X_1 X_2}$ for $X_1, X_2 \gg 1$ to be chosen shortly, and note the bound
\[
    1 + X^{2\theta_j} \ll (1 + X_0)^{2\theta_j} X_1^{\theta_j} X_2^{\theta_j} 
    \ll 
    (1 + X_0)^{2\theta(q)} X_1^{\theta_j} X_2^{\theta_j}.
\]
Then by \cref{eq:bessel-bound-1} and Cauchy--Schwarz, we obtain
\begin{equation} \label{eq:exceptional-contribution}
\begin{aligned}
    \mS_{\mM,\text{exc}}(\xi)
    &\ll 
    \frac{1}{1+X^{-1}}
    \sum_{\lambda_j < 1/4} \frac{1 + X^{2\theta_j}}{\cosh(\pi \kappa_j)} 
    |\rho_{j\ma}(m)| \left\vert \sum_{n \sim N} e\left(\frac{n}{N} \xi\right) a_n\, 
    \rho_{j\mb}(n)\right\vert
    \\
    &\ll
    \frac{(1 + X_0)^{2\theta(q)}}{1+X^{-1}}
    \left(
    \sum_{\lambda_j < 1/4} X_1^{2\theta_j} 
    |\rho_{j\ma}(m)|^2 \right)^{1/2}
    \left( 
    \sum_{\lambda_j < 1/4} X_2^{2\theta_j} \left\vert \sum_{n \sim N} e\left(\frac{n}{N} \xi\right) a_n\,  
    \rho_{j\mb}(n)\right\vert^2 \right)^{1/2}.
\end{aligned}
\end{equation}
We pick $X_1$ and $X_2$ as large as \cref{eq:coefficient-bound-exceptional} and \cref{ass:large-sieve} allow, specifically
\begin{equation} \label{eq:X12}
    X_1 := \max\left(1, \frac{q^2}{(q,m) m}\right),
    \qquad\qquad 
    X_2(\xi) := \max\left(1, \frac{q}{N}\right) \frac{Y_N}{1 + |\xi|^2}.
\end{equation}
Then by \cref{lem:coefficient-bounds} and \cref{ass:large-sieve}, we obtain
\begin{equation} \label{eq:S-bound-exceptional}
    \mS_{\mM,\text{exc}}(\xi)
    \ll_\eps 
    (qmNC)^{O(\eps)}
    \left(1 + \frac{X}{\sqrt{X_1 X_2(\xi)}} \right)^{2\theta(q)}
    \frac{1}{1+X^{-1}} \left(1 + \frac{\sqrt{(q,m)m}}{q} \right)^{1/2}
    \left(1 + \frac{N}{q}\right)^{1/2} A_N.
\end{equation}

Putting together \cref{eq:S-bound-regular} (and the identical bounds for Eisenstein series and holomorphic cusp forms) with \cref{eq:S-bound-exceptional} and \cref{eq:S-split-xi}, while noting that $\|a_n\|_2 \ll A_N$ by \cref{ass:large-sieve}, we conclude that
\begin{equation} \label{eq:S-final-bound}
\begin{aligned}
    \mS \ll_\eps (qmNCZ)^{O(\eps)}
    &\left(1 + \frac{X}{\sqrt{X_1 X_2(0)}} \right)^{2\theta(q)}
    \frac{C}{1+X^{-1}} 
    \\
    &\times 
    \left(1 + X^{-2} +\frac{\sqrt{(q,m)m}}{q} \right)^{1/2}
    \left(1 + X^{-2} + \frac{N}{q}\right)^{1/2} A_N,
\end{aligned}
\end{equation}
where the factor of $1 + |\xi|^2$ inside $X_2(\xi)$ disappeared in the integral over $\xi$ with a greater decay.
This recovers the desired bound after plugging in the values of $X, X_1, X_2$ from \cref{eq:X-CmN,eq:X12}.
\end{proof}

\begin{remark}
In treating the regular spectrum, we picked a slightly sub-optimal value of $R$ (following \cite[p.\,268]{deshouillers1982kloosterman}), to simplify the final bounds; in practice, this does not usually matter since one has $X \gg 1$.
\end{remark}

\begin{corollary}[Kloosterman bounds with averaging over $m, n, c$] \label{cor:kloosterman-averaging-mnc}
Let $(q, M, Z, (a_m)_{m \sim M}, A_M, Y_M)$ and $(q, N, Z, (b_n)_{n \sim N}, A_N, Y_N)$ satisfy \cref{ass:large-sieve}. Let $\eps > 0$, $C \gg 1$, $m \in \Z_+$, and $\ma, \mb$ be cusps of $\Gamma_0(q)$, with $\mu(\ma) = \mu(\mb) = q^{-1}$ and $\sigma_{\ma}, \sigma_{\mb}$ as in \cref{eq:scaling-choices}. Let $\Phi : (0, \infty)^3 \to \C$ be a smooth function, with $\Phi(x, y, z)$ supported in $x, y, z \asymp 1$, and $\partial_x^j \partial_y^k \partial_z^\ell \Phi(x, y, z) \ll_{j,k,\ell,\eps} Z^{(j+k)\eps}$ for $j, k, \ell \ge 0$. Then with a consistent choice of the $\pm$ sign, one has 
\begin{equation} \label{eq:kloosterman-mnc}
\begin{aligned}
    \sum_{m \sim M} a_m
    \sum_{n \sim N} b_n
    \sum_{c \in \mC_{\ma\mb}} \Phi\left(\frac{m}{M}, \frac{n}{N}, \frac{c}{C}\right) S_{\ma\mb}(m, \pm n; c)
    \ll_\eps
    (qMNCZ)^{O(\eps)}
    \left(1 + T\right)^{2\theta(q)}
    \\
    \times 
    \frac{C^2 A_M A_N}{C + \sqrt{MN}} 
    \left(1 + \frac{MN}{C^2} + \frac{M}{q} \right)^{1/2}
    \left(1 + \frac{MN}{C^2} + \frac{N}{q}\right)^{1/2},
\end{aligned}
\end{equation}
for
\[
    T = \frac{T_0}{\sqrt{Y_M Y_N}}, 
    \qquad\qquad 
    T_0 := \frac{C}{\max\left(M, q \right)^{1/2}\max\left(N, q\right)^{1/2}}
    \le
    \frac{C}{q}.
\]
In particular, for relatively prime positive integers $r, s$ with $rs = q$, one has
\begin{equation} \label{eq:kloosterman-mnc-explicit}
\begin{aligned}
    \sum_{m \sim M} a_m
    \sum_{n \sim N} b_n
    \sum_{(c, r) = 1} \Phi\left(\frac{m}{M}, \frac{n}{N}, \frac{c}{C}\right) S(m\bar{r}, \pm n; sc)
    \ll_\eps
    (rsMNCZ)^{O(\eps)} \left(1 + \frac{C}{\sqrt{r Y_M Y_N}} \right)^{2\theta(q)}
    \\
    \times
    A_M A_N
    \frac{\left(s\sqrt{r}C + \sqrt{MN} + \sqrt{sM} C\right)\left(s\sqrt{r}C + \sqrt{MN} + \sqrt{sN} C\right)}{s\sqrt{r}C + \sqrt{MN}}.
\end{aligned}
\end{equation}
\end{corollary}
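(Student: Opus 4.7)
The plan is to adapt the proof of \cref{cor:kloosterman-averaging-nc} by Fourier-expanding in both the $m$ and $n$ variables simultaneously. With $x_1 := m/M$, $x_2 := n/N$, and $y$ defined by $c/C = \sqrt{x_1 x_2}/y$ (so that $4\pi\sqrt{mn}/c = 4\pi y\sqrt{MN}/C$), I would set $\Psi(x_1, x_2; y) := \sqrt{x_1 x_2}\,\Phi(x_1, x_2, \sqrt{x_1 x_2}/y)$ and take its Fourier transform $\hat{\Psi}(\xi_1, \xi_2; y)$ in the first two variables. Under the substitution $u = 4\pi y\sqrt{MN}/C$, standard bookkeeping rewrites the left-hand side of \eqref{eq:kloosterman-mnc} as
\[
    C \iint_{\R^2} \sum_{m\sim M} a_m\, e(m\xi_1/M) \sum_{n\sim N} b_n\, e(n\xi_2/N) \sum_c \frac{S_{\ma\mb}(m, \pm n; c)}{c}\, \varphi_{\xi_1, \xi_2}\!\left(\frac{4\pi\sqrt{mn}}{c}\right) d\xi_1\, d\xi_2,
\]
for a family of smooth test functions $\varphi_{\xi_1, \xi_2}$ supported at scale $X^{-1} := \sqrt{MN}/C$ and satisfying $\varphi_{\xi_1,\xi_2}^{(\ell)} \ll_{\ell,J} Z^{O(\eps)} X^\ell (1+|\xi_1|)^{-J}(1+|\xi_2|)^{-J}$ for any $\ell, J \ge 0$ (by repeated integration by parts using the derivative hypothesis on $\Phi$). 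The inner $c$-sum is now in standard Kuznetsov form.

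For each $(\xi_1, \xi_2)$, I would apply \cref{prop:kuznetsov} with test function $\varphi_{\xi_1, \xi_2}$ to obtain the usual three spectral contributions. For the regular spectrum (non-exceptional Maass forms, Eisenstein series, and holomorphic cusp forms when relevant), I would bound the Bessel transforms via \eqref{eq:bessel-bound-2} and \eqref{eq:bessel-bound-3} with truncation parameter $R := 1 + X^{-1}$, apply Cauchy--Schwarz in the spectral index $j$, and invoke \cref{lem:large-sieve-non-exceptional} separately on the two phased sequences $(a_m\, e(m\xi_1/M))_{m\sim M}$ and $(b_n\, e(n\xi_2/N))_{n\sim N}$. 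For the exceptional Maass forms, I would split $X = X_0\sqrt{X_1 X_2}$ so that $1 + X^{\theta_j} \ll (1+X_0)^{\theta(q)}\, X_1^{\theta_j/2}\, X_2^{\theta_j/2}$, use \eqref{eq:bessel-bound-1}, apply Cauchy--Schwarz in $j$, and invoke \cref{ass:large-sieve} on each phased sequence with the maximal admissible parameters $X_1 = \max(1, q/M)\,Y_M/(1+|\xi_1|^2)$ and $X_2 = \max(1, q/N)\,Y_N/(1+|\xi_2|^2)$. A direct computation then gives $X_0 = (T_0/\sqrt{Y_M Y_N})\sqrt{(1+|\xi_1|^2)(1+|\xi_2|^2)}$ with $T_0$ as in the statement, and the polynomial growth in $\xi_1, \xi_2$ from $(1+X_0)^{\theta(q)}$ is absorbed by the Schwarz decay of $\varphi_{\xi_1, \xi_2}$ when integrating over $d\xi_1\, d\xi_2$. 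Combining the regular and exceptional bounds (the former producing the $X^{-2} = MN/C^2$ terms inside the two square roots, the latter the $(1+T)^{\theta(q)}$ prefactor) yields \eqref{eq:kloosterman-mnc}.

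The explicit form \eqref{eq:kloosterman-mnc-explicit} then follows by specializing $\ma = \infty$, $\mb = 1/s$ with $q = rs$ and $(r,s) = 1$: the identity $S_{\infty,\,1/s}(m, \pm n; s\sqrt{r}c) = \one_{c\in \Z_+,\,(c,r)=1}\, S(m\bar{r}, \pm n; sc)$ from \cref{lem:explicit-kloosterman} effectively replaces $C$ by $s\sqrt{r}C$ in \eqref{eq:kloosterman-mnc}, after which $T_0 \le s\sqrt{r}C/(rs) = C/\sqrt{r}$ and the second-line factors simplify algebraically (each of the two square roots becomes $\asymp s\sqrt{r}C + \sqrt{MN} + \sqrt{s\bullet}\, C$, with $\bullet \in \{M, N\}$, after multiplying through by $s^2 r C^2$). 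The main obstacle is the careful bookkeeping in the first step, namely verifying that $\hat{\Psi}(\xi_1, \xi_2; y)$ has the desired uniform Schwarz decay in $(\xi_1, \xi_2)$ together with smooth, compactly-supported dependence on $y \asymp 1$, and that the $Z^{O(\eps)}$ factor is tracked correctly through the derivative hypothesis $\partial_x^j \partial_y^k \partial_z^\ell \Phi \ll Z^{(j+k)\eps}$ (only the first two variables carrying the $Z^\eps$ cost). Beyond this, the argument is a mechanical two-dimensional analogue of the one-variable case.
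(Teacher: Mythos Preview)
Your proposal is correct and follows essentially the same approach as the paper's proof: the same two-variable Fourier inversion via $\Psi(x_1,x_2;y) = \sqrt{x_1x_2}\,\Phi(x_1,x_2,\sqrt{x_1x_2}/y)$, the same Kuznetsov application with $R = 1+X^{-1}$ and \cref{lem:large-sieve-non-exceptional} on the regular spectrum, the same splitting $X = X_0\sqrt{X_1X_2}$ with $X_i = \max(1,q/\bullet)\,Y_\bullet/(1+|\xi_i|^2)$ on the exceptional spectrum, and the same specialization to $\ma=\infty$, $\mb=1/s$ with $C \gets s\sqrt{r}C$ for \eqref{eq:kloosterman-mnc-explicit}. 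The paper's write-up is terser (it only indicates what changes from \cref{cor:kloosterman-averaging-nc}), but the content is identical.
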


\begin{remark}
Once again, $T_0$ represents the smallest value of $T$ that one could use prior to this work; see \cite[Theorem 9]{deshouillers1982kloosterman}. When $a_m = e(m\alpha)$ and $b_n = e(n\beta)$, \cref{cor:kloosterman-averaging-mnc} saves a factor of $(MN)^{\theta/2}$ over previous bounds (and up to $(MN)^{\theta}$ if $\alpha, \beta$ are close to rational numbers with small denominators).
\end{remark}

\begin{proof}[Proof of \cref{cor:kloosterman-averaging-mnc}]
We only mention what changes from the proof of \cref{cor:kloosterman-averaging-nc}. We expand the sum $\mS$ in the left-hand side of \cref{eq:kloosterman-mnc} as a double integral in $\zeta, \xi$, using the Fourier inversion formula 
\[
    \sqrt{xy}\,\Phi\left(x, y, \frac{\sqrt{xy}}{z} \right) = \iint_{\R^2} \hat{\Psi}(\zeta, \xi; z) \, e(x\zeta + y\xi)\, d\zeta\, d\xi,
\]
for $\Psi(x, y; z) := \sqrt{xy}\, \Phi(x, y, \sqrt{xy}/z)$, where the Fourier transform is taken in the first two variables. This yields
\[
    \mS \ll_\eps Z^{O(\eps)} C \iint_{\R^2} \frac{|S(\zeta, \xi)|\, d\zeta\, d\xi}{(1 + \zeta^4)(1 + \xi^4)},
\]
where 
\[
    \mS(\zeta, \xi) := \sum_{m \sim M} a_m\, e\left(m\frac{\zeta}{M}\right) \sum_{n \sim N} b_n\, e\left(n\frac{\xi}{N}\right) 
    \sum_{c \in \mC_{\ma\mb}}\,
    \frac{\mS_{\ma\mb}(m, \pm n; c)}{c} 
    \varphi_{\zeta,\xi}\left(\frac{4\pi \sqrt{mn}}{c}\right),
\]
and $\varphi_{\zeta,\xi}(z)$ is a smooth function supported in $z \asymp X^{-1}$, satisfying $\varphi_{\zeta,\xi}^{(\ell)} \ll_\ell X^\ell$ for
\[
    X := \frac{C}{\sqrt{MN}}.
\]
We proceed as before, applying the Kuznetsov formula from \cref{prop:kuznetsov} to the inner sum, then using the Bessel transform bounds from \cref{lem:bessel-bounds}. When applying Cauchy--Schwarz we keep the variable $m$ inside (as for $n$), and in consequence we use large sieve inequalities for the sequence $(a_m)$ (i.e., \cref{lem:large-sieve-non-exceptional,ass:large-sieve}). The resulting bounds are symmetric in $M, N$, with
\[
    X_1(\zeta) := \max\left(1, \frac{q}{M}\right) \frac{Y_M}{1 + |\zeta|^2}
    \qquad 
    \text{and}
    \qquad
    X_2(\xi) := \max\left(1, \frac{q}{N}\right) \frac{Y_N}{1 + |\xi|^2}.
\]
Instead of \cref{eq:S-final-bound}, we thus obtain
\begin{equation} \label{eq:S-final-bound-2}
\begin{aligned}
    \mS \ll_\eps (qMNCZ)^{O(\eps)}
    &\left(1 + \frac{X}{\sqrt{X_1(0) X_2(0)}} \right)^{2\theta(q)}
    \frac{C}{1+X^{-1}} 
    \\
    &\times 
    \left(1 + X^{-2} + \frac{M}{q}\right)^{1/2}
    \left(1 + X^{-2} + \frac{N}{q}\right)^{1/2} \|a_m\|_2\, \|b_n\|_2,
\end{aligned}
\end{equation}
which recovers \cref{eq:kloosterman-mnc} after plugging in the values of $X, X_1, X_2$.

Finally, to prove \cref{eq:kloosterman-mnc-explicit} for $q = rs$,
we pick $\ma = \infty$, and $\mb = 1/s$, keeping the scaling matrices in \cref{eq:scaling-choices}, and use \cref{eq:explicit-kl-diff} to rewrite $S(m\bar{r}, n; sc)$ as $S_{\infty\,1/s}(m, \pm n; s\sqrt{r}c)$ when $(c, r) = 1$. After substituting $C \gets s\sqrt{r}C$, the value of $T$ inside the $\theta$-factor becomes
\[
    \frac{s\sqrt{r} C}{\max(M, q)^{1/2} \max(N, q)^{1/2} \sqrt{Y_MY_N}}
    \le 
    \frac{s\sqrt{r} C}{rs \sqrt{Y_M Y_N}} = 
    \frac{C}{\sqrt{rY_MY_N}},
\]
and so \cref{eq:kloosterman-mnc} recovers \cref{eq:kloosterman-mnc-explicit} up to minor rearrangements.
\end{proof}

\begin{corollary}[Kloosterman bounds with averaging over $q, m, n, c$] \label{cor:kloosterman-averaging-qmnc}
Let $Q, M, N \ge 1/2$, $C, Z \gg 1$, $Y_N > 0$, $\eps > 0$, and $\omega \in \R/\Z$. For each $q \sim Q$, let $(q, N, Z, (a_{n,q})_{n \sim N}, A_{N,q}, Y_N)$ satisfy \cref{ass:large-sieve}, $w_q \in \C$, $\mb_q$ be a cusp of $\Gamma_0(q)$, and $\Phi_q : (0, \infty)^3 \to \C$ be a smooth function, with $\Phi_q(x, y, z)$ supported in $x, y, z \asymp 1$, and $\partial_x^j \partial_y^k \partial_z^\ell \Phi_q(x, y, z) \ll_{j,k,\ell,\eps} Z^{(j+k)\eps}$ for $j, k, \ell \ge 0$. Then with the choice of scaling matrices in \cref{eq:scaling-choices} and a consistent choice of the $\pm$ sign, one has
\begin{equation} \label{eq:kloosterman-qmnc}
\begin{aligned}
    \sum_{q \sim Q} w_q
    \sum_{m \sim M} e(m\omega)
    \sum_{n \sim N} a_{n,q}
    \sum_{c \in \mC_{\infty\mb_q}} \Phi_q\left(\frac{m}{M}, \frac{n}{N}, \frac{c}{C}\right) S_{\infty\mb_q}(m, \pm n; c)
    \ll_\eps
    (QMNCZ)^{O(\eps)}
    \\
    \times 
    \left(1 + T\right)^{2\theta_{\max}}
    \frac{\sqrt{QM} \|w_q A_{N,q}\|_2 C^2}{C + \sqrt{MN}} 
    \left(1 + \frac{MN}{C^2} + \frac{M}{Q} \right)^{1/2}
    \left(1 + \frac{MN}{C^2} + \frac{N}{Q}\right)^{1/2},
\end{aligned}
\end{equation}
for
\[
    T = \frac{T_0}{\sqrt{Y_N}}, 
    \qquad\qquad 
    T_0 := \frac{C}{\max\left(M, Q \right) \max\left(N, Q\right)^{1/2}}
    \le
    \frac{C}{Q^{3/2}}.
\]
In particular, let $R, S \ge 1/2$; for every $r \sim R, s \sim S$ with $(r, s) = 1$, let $w_{r,s} \in \C$, $\Phi_{r,s}$ be as above, and $(rs, N, Z, (a_{n,r,s})_{n \sim N}, A_{N,r,s}, Y_N)$ satisfy \cref{ass:large-sieve}. Then one has 
\begin{equation} \label{eq:kloosterman-rsmnc}
\begin{aligned}
    \sum_{\substack{r \sim R \\ s \sim S \\ (r, s) = 1}} w_{r,s} \sum_{m \sim M} e(m\omega)
    \sum_{n \sim N} a_{n,r,s}
    \sum_{(c, r) = 1} \Phi_{r,s}\left(\frac{m}{M}, \frac{n}{N}, \frac{c}{C}\right) S(m\bar{r}, \pm n; sc)
    \ll_\eps
    (RSMNCZ)^{O(\eps)}
    \\
    \times 
    \left(1 + \frac{C}{R \sqrt{S Y_N}}\right)^{2\theta_{\max}} 
    \sqrt{RSM} \|w_{r,s} A_{N,r,s}\|_2
    \\
    \times
    \frac{\left(S\sqrt{R}C + \sqrt{MN} + \sqrt{SM} C\right)\left(S\sqrt{R}C + \sqrt{MN} + \sqrt{SN} C\right)}{S\sqrt{R}C + \sqrt{MN}}.
\end{aligned}
\end{equation}
\end{corollary}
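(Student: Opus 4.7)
The plan is to follow the proof template of \cref{cor:kloosterman-averaging-mnc}, replacing the single-level large sieve in the $m$-variable by \cref{thm:large-sieve-level-avg}, which exploits the extra averaging over $q \sim Q$. As in that proof, I first separate the variables $m,n,c$ inside the weight $\Phi_q(m/M,n/N,c/C)$ via two-variable Fourier inversion of $\Psi_q(x,y;z) := \sqrt{xy}\,\Phi_q(x,y,\sqrt{xy}/z)$, which rewrites the inner triple sum (for each fixed $q$) as a double integral over $(\zeta,\xi)\in\R^2$ of
\[
    \sum_{m\sim M} e\!\left(m\bigl(\omega+\tfrac{\zeta}{M}\bigr)\right)\sum_{n\sim N} a_{n,q}\, e\!\left(\tfrac{n\xi}{N}\right)\sum_{c\in\mC_{\infty\mb_q}}\frac{S_{\infty\mb_q}(m,\pm n;c)}{c}\,\varphi_{\zeta,\xi}\!\left(\tfrac{4\pi\sqrt{mn}}{c}\right),
\]
with $\varphi_{\zeta,\xi}$ supported in $z\asymp X^{-1}$, $X:=C/\sqrt{MN}$, and $\varphi_{\zeta,\xi}^{(\ell)}\ll_\ell X^\ell$; the outer integral in $(\zeta,\xi)$ enjoys rapid decay from $\hat{\Psi}_q$. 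I then apply the Kuznetsov trace formula (\cref{prop:kuznetsov}) for each $\Gamma_0(q)$, splitting into regular Maass, exceptional Maass, holomorphic, and Eisenstein contributions.

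The regular Maass, holomorphic, and Eisenstein terms are bounded via \cref{eq:bessel-bound-2,eq:bessel-bound-3} and two applications of Cauchy--Schwarz (in $(q,j)$ jointly), using \cref{lem:large-sieve-non-exceptional} at each level for both the $m$- and $n$-variables, with $\|e(m\omega)\|_2\ll\sqrt{M}$ for the former and $\|a_{n,q}\|_2\ll A_{N,q}$ for the latter; the weight $|w_q|^2$ is placed alongside $A_{N,q}^2$ to produce $\|w_q A_{N,q}\|_2$. The core is the exceptional contribution. Writing $X = X_0\sqrt{X_1 X_2}$ and applying Cauchy--Schwarz over $(q,j)$ jointly, I arrange
\[
    \sum_{q}|w_q|\sum_{\lambda_j(q)<1/4}\frac{1+X^{\theta_j(q)}}{\cosh(\pi\kappa_j)}\,|A_{q,j}|\,|B_{q,j}|\ \ll\ (1+X_0)^{\theta_{\max}}\,S_1^{1/2}\,S_2^{1/2},
\]
where $S_1 := \sum_q\sum_j X_1^{\theta_j(q)}|\sum_m e(m(\omega+\zeta/M))\rho_{j\infty_q}(m)|^2$ collects the $m$-side with \emph{no} $w_q$-weight, and $S_2 := \sum_q |w_q|^2 \sum_j X_2^{\theta_j(q)}|\sum_n a_{n,q}\, e(n\xi/N)\rho_{j\mb_q}(n)|^2$ carries the weights and the $q$-dependent sequence.

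I choose the largest admissible parameters: $X_1 := \max(M,Q^2/M)$, which makes $S_1\ll (QMNCZ)^{O(\eps)}(Q+M)M/(1+\zeta^2)^{O(1)}$ by \cref{thm:large-sieve-level-avg} (with $\omega+\zeta/M$ as the fixed phase, and the scaling matrix for $\infty_q$ being the identity as in \cref{eq:scaling-choices}); and $X_2 := \max(1,Q/N)\,Y_N/(1+|\xi|^2)$, which makes $S_2\ll (QMNCZ)^{O(\eps)}(1+N/Q)\,\|w_q A_{N,q}\|_2^2$ termwise via \cref{ass:large-sieve}. The residual $\zeta,\xi$-dependencies are absorbed by the Schwarz decay of $\hat{\Psi}_q(\zeta,\xi;\cdot)$ upon integration. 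A direct computation using $\sqrt{M\max(M,Q^2/M)}=\max(M,Q)$ and $\sqrt{N\max(1,Q/N)}=\sqrt{\max(N,Q)}$ gives
\[
    X_0 = \frac{X}{\sqrt{X_1 X_2}} \asymp \frac{C\sqrt{1+|\xi|^2}}{\max(M,Q)\sqrt{\max(N,Q)\,Y_N}} = \frac{T_0\sqrt{1+|\xi|^2}}{\sqrt{Y_N}},
\]
which matches the claimed $T = T_0/\sqrt{Y_N}$ after integration (the factor $(1+|\xi|^2)^{\theta_{\max}/2}$ is absorbed into the rapid decay of $\hat{\Psi}_q$, using $\theta_{\max}\le 7/32 < 1$). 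Combining $(1+X_0)^{\theta_{\max}}$ with the regular-spectrum and exceptional bounds yields \cref{eq:kloosterman-qmnc}.

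For the specialization \cref{eq:kloosterman-rsmnc}, I take $q = rs$ with $r\sim R$, $s\sim S$, $(r,s)=1$, and $\mb_q = 1/s$, using \cref{lem:explicit-kloosterman} to convert $S_{\infty\,1/s}(m,\pm n;s\sqrt{r}c)$ into $\one_{(c,r)=1}\,S(m\bar r,\pm n;sc)$. Rescaling $C \gets s\sqrt{r}C$ inside \cref{eq:kloosterman-qmnc} with $Q\asymp RS$ produces the stated bound after rearrangement, the value of $T$ becoming $s\sqrt{r}C/(RS\sqrt{Y_N}) = C/(R\sqrt{SY_N})$. The main obstacle is the Cauchy--Schwarz arrangement at the exceptional step: the weights $|w_q|$ must be placed entirely on the $n$-side so that $S_1$ remains a pure level-averaged sum accessible to \cref{thm:large-sieve-level-avg}, while $S_2$ retains a $q$-by-$q$ structure compatible with \cref{ass:large-sieve}; verifying the algebraic matching $X_0 \asymp T$ across all sub-regimes of $(M,N,Q)$ is then routine but essential.
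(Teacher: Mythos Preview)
Your proposal is correct and follows essentially the same approach as the paper: Fourier inversion in $(x,y)$ to separate variables, Kuznetsov, then for the exceptional spectrum Cauchy--Schwarz jointly in $(q,j)$ with the weights $|w_q|$ placed entirely on the $n$-side so that \cref{thm:large-sieve-level-avg} handles $S_1$ and \cref{ass:large-sieve} handles $S_2$ termwise in $q$. The only slip is the intermediate expression for $T$ in the specialization: $s\sqrt{r}C/(RS\sqrt{Y_N})$ simplifies to $C/(\sqrt{R}\sqrt{Y_N})$, not $C/(R\sqrt{SY_N})$; the correct computation uses $T_0 \le s\sqrt{r}C/(RS)^{3/2}$, and your final value $C/(R\sqrt{SY_N})$ is right. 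Also note that to pass from the $(r,s)$-sum to a $q$-sum one needs the divisor bound to control the multiplicity of factorizations $q=rs$, which the paper makes explicit.
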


\begin{remark}
The norms $\|w_q A_{N,q}\|_2$ and $\|w_{r,s} A_{N,r,s}\|_2$ refer to sequences indexed by $q \sim Q$, respectively $r \sim R$, $s \sim S$ (but not $n \sim N$).
In practice, it is often helpful to follow \cref{eq:kloosterman-rsmnc} with the bound
\begin{equation} \label{eq:Kloosterman-follow-up}
\begin{aligned} 
    &\frac{\left(S\sqrt{R}C + \sqrt{MN} + \sqrt{SM} C\right)\left(S\sqrt{R}C + \sqrt{MN} + \sqrt{SN} C\right)}{S\sqrt{R}C + \sqrt{MN}}
    \\
    &\ll 
    S\sqrt{R}C + \sqrt{MN} + \sqrt{SM}C + \sqrt{SN}C + \frac{\sqrt{SM} C \sqrt{SN} C}{S\sqrt{R} C}
    \\
    &\ll 
    \left(\frac{C^2}{R} (M + RS)(N + RS) + MN \right)^{1/2}.
\end{aligned}
\end{equation}
\end{remark}

\begin{remark}
\cref{cor:kloosterman-averaging-qmnc} should be compared with \cite[Theorem 11]{deshouillers1982kloosterman}, the relevant saving being $Y_N^{\theta_{\max}}$. One can state a similar result, to be compared with \cite[Theorem 10]{deshouillers1982kloosterman}, using a general sequence $(b_m)_{m \sim M}$ instead of $b_m = e(m\omega)$; one would need to replace a factor of $\sqrt{M}$ with $\|b_m\|_2$, and adjust the value of $T_0$ using \cite[Theorem 6]{deshouillers1982kloosterman} (or rather, its optimization in \cite{lichtman2023primes}) instead of \cite[Theorem 7]{deshouillers1982kloosterman}.
\end{remark}

\begin{proof}[Proof of \cref{cor:kloosterman-averaging-qmnc}]
We proceed as in the proof of \cref{cor:kloosterman-averaging-mnc}, swapping the sum over $q$ with the integral to bound the sum $\mS$ in the left-hand side of \cref{eq:kloosterman-mnc} by
\[
    \mS \ll_\eps Z^{O(\eps)} C \iint_{\R^2} \frac{S(\zeta, \xi)\, d\zeta\, d\xi}{(1 + \zeta^4)(1 + \xi^4)},
\]
where
\[
\begin{aligned}
    &\mS(\zeta, \xi) 
    \\
    &:= \sum_{q \sim Q} |w_q| \left\vert \sum_{m \sim M}e\left(m\left(\omega + \frac{\zeta}{M}\right)\right) \sum_{n \sim N} a_{n,q}\, e\left(n\frac{\xi}{N}\right) 
    \sum_{c \in \mC_{\infty\mb_q}}\,
    \frac{\mS_{\infty\mb_q}(m, \pm n; c)}{c} 
    \varphi_{\zeta,\xi,q}\left(\frac{4\pi \sqrt{mn}}{c}\right) \right\vert,
\end{aligned}
\]
and $\varphi_{\zeta,\xi,q}(z)$ are smooth functions supported in $z \asymp X^{-1}$, satisfying $\varphi_{\zeta,\xi,q}^{(\ell)} \ll_\ell X^\ell$ for $X := \frac{C}{\sqrt{MN}}$. After applying the Kuznetsov formula, we bound the contribution of the regular spectrum to $\mS(\zeta, \xi)$ pointwise in $q$, as in the previous proofs (leading only to an extra factor of $\|w_q A_{N,q}\|_1 \le \|w_q A_{N,q}\|_2 \sqrt{Q}$ instead of $A_N$). As in \cref{eq:exceptional-contribution}, the contribution of the exceptional spectrum is
\[
\begin{aligned}
    &\mS_{\mM,\text{exc}}(\zeta, \xi) 
    \ll 
    \\
    &\frac{1}{1+X^{-1}}
    \sum_{q \sim Q} |w_q|
    \sum_{\lambda_j < 1/4} \frac{1 + X^{2\theta_j(q)}}{\cosh(\pi \kappa_j)} 
    \left\vert \sum_{m \sim M} e\left(m \left(\omega + \frac{\zeta}{M}\right) \right) \bar{\rho_{j\infty_q}(m)} \right\vert \left\vert \sum_{n \sim N} a_{n,q}\, e\left(n\frac{\xi}{N}\right)
    \rho_{j\mb_q}(n)\right\vert.
\end{aligned}
\]
We then apply Cauchy--Schwarz in the double sum over $q$ and $j$, splitting $X = X_0 \sqrt{X_1 X_2}$ for $X_2(\xi)$ as in \cref{eq:X12}; but this time we choose
\begin{equation} \label{eq:final-X1-choice}
    X_1 := \max\left(M, \frac{Q^2}{M}\right),
\end{equation}
corresponding to the allowable range in \cref{thm:large-sieve-level-avg}. Keeping $|w_q|$ only in the second sum, this yields
\[
\begin{aligned}
    \mS_{\mM,\text{exc}}(\zeta, \xi) 
    \ll 
    \sqrt{\frac{(1 + X_0)^{2\theta_{\max}}}{1+X^{-1}} \mS_M(\zeta, \xi)\, \mS_N(\zeta, \xi)},
\end{aligned}
\]
where 
\[
\begin{aligned}
    \mS_M(\zeta, \xi) &:= \sum_{q \sim Q}
    \sum_{\lambda_j < 1/4} \frac{X_1^{2\theta_j(q)}}{\cosh(\pi \kappa_j)} 
    \left\vert \sum_{m \sim M} e\left(m \left(\omega + \frac{\zeta}{M}\right) \right) \bar{\rho_{j\infty_q}(m)} \right\vert^2,
    \\
    \mS_N(\zeta, \xi) &:= \sum_{q \sim Q} |w_q|^2
    \sum_{\lambda_j < 1/4} \frac{X_2^{2\theta_j(q)}}{\cosh(\pi \kappa_j)} \left\vert \sum_{n \sim N} a_{n,q}\, e\left(n\frac{\xi}{N}\right) 
    \rho_{j\mb_q}(n)\right\vert^2.
\end{aligned}
\]
The treatment of $\mS_N$ remains the same as before, pointwise in $q$, leading to an extra factor of $\|w_q A_{N,q}\|_2^2$ instead of $A_N^2$. For $\mS_M$, we apply \cref{thm:large-sieve-level-avg} (which allowed the choice of $X_1$ from \cref{eq:final-X1-choice}), leading to an extra factor of $\sqrt{Q}$. Overall, instead of \cref{eq:S-final-bound-2}, we obtain
\[
\begin{aligned}
    \mS \ll_\eps (QMNCZ)^{O(\eps)}
    &\left(1 + \frac{X}{\sqrt{X_1 X_2(0)}} \right)^{2\theta_{\max}}
    \frac{C}{1+X^{-1}} 
    \\
    &\times 
    \left(1 + X^{-2} + \frac{M}{Q}\right)^{1/2}
    \left(1 + X^{-2} + \frac{N}{Q}\right)^{1/2} \sqrt{QM} \|w_q A_{N,q}\|_2,
\end{aligned}
\]
and plugging in the values of $X, X_1, X_2$ yields \cref{eq:kloosterman-qmnc}.

To prove \cref{eq:kloosterman-rsmnc}, let $Q := RS$. By the divisor bound, the left-hand side is at most
\[
    x^{o(1)} \sum_{Q < q \le 4Q}
    \max_{\substack{r \sim R \\ s \sim S \\ (r, s) = 1 \\ rs = q}} |w_{r,s}| 
    \left\vert 
    \sum_{m \sim M} e(m\omega)
    \sum_{n \sim N} a_{n,r,s}
    \sum_{(c, r) = 1}
    \Phi_{r,s} \left(\frac{m}{M}, \frac{n}{N}, \frac{c}{C}\right)
    S(m\bar{r}, \pm n; sc)
    \right\vert,
\]
where we interpret any empty maximum as $0$. For each $q$, let $r = r(q), s = s(q)$ attain the maximum (if there are no such $r, s$, pick $w_q := 0$ and disregard the rest of this paragraph). Then let $w_q := w_{r,s}$, $a_{n,q} := a_{n,r,s}$, $\Phi_q(x, y, z) := \Phi_{r,s}(x, y, z\, (S/s) \sqrt{R/r})$, and $\mb_q := 1/s$, with the scaling matrix in \cref{eq:scaling-choices}.

Due to \cref{lem:explicit-kloosterman}, after the change of variables $c \gets c/(s\sqrt{r})$, this leaves us with the sum
\[
    x^{o(1)} \sum_{Q < q \le 4Q}
    |w_q| \left\vert
    \sum_{m \sim M} e(m\omega)
    \sum_{n \sim N} a_{n,q}
    \sum_{c \in \mC_{\infty \mb_q}}
    \Phi_q \left(\frac{m}{M}, \frac{n}{N}, \frac{c}{S\sqrt{R}C}\right)
    S_{\infty \mb_q}(m, \pm n; c)
    \right\vert.
\]
Incorporating $1$-bounded coefficients into $(w_q)$ to remove absolute values, the desired bound now follows from \cref{eq:kloosterman-qmnc}. We note that the $T$ parameter becomes
\[
    T \ll \frac{S\sqrt{R} C}{Q^{3/2} \sqrt{Y_N}} \asymp \frac{C}{R\sqrt{S Y_N}},
\]
as in \cref{eq:kloosterman-qmnc}.
\end{proof}

As a direct consequence of \cref{cor:kloosterman-averaging-qmnc} and standard techniques, one can also deduce a result for sums of incomplete Kloosterman sums, improving \cite[Theorem 12]{deshouillers1982kloosterman}.

\begin{corollary}[Incomplete Kloosterman bounds with averaging over $r, s, n, c, d$] \label{cor:kloosterman-incomplete}
Let $R, S, N \ge 1/2$, $C, D, Z \gg 1$, $Y_N > 0$, and $\eps > 0$. For each $r \sim R, s \sim S$ with $\gcd(r, s) = 1$, let the tuple $(rs, N, Z, (a_{n,r,s})_{n \sim N}, A_{N,r,s}, Y_N)$ satisfy \cref{ass:large-sieve}, $w_{r,s} \in \C$, and $\Phi_{r,s} : (0, \infty)^3 \to \C$ be a smooth function, with $\Phi_{r,s}(x, y, z)$ supported in $x, y, z \asymp 1$, and $\partial_x^j \partial_y^k \partial_z^\ell \Phi_q(x, y, z) \ll_{j,k,\ell,\eps} Z^{j\eps}$ for $j, k, \ell \ge 0$. Then with a consistent choice of the $\pm$ sign, one has
\begin{equation} \label{eq:kloosterman-incomplete}
    \sum_{\substack{r \sim R \\ s \sim S \\ (r, s) = 1}} w_{r,s}
    \sum_{n \sim N} a_{n,r,s}
    \sum_{\substack{c, d \\ (rd, sc) = 1}} \Phi_{r,s}\left(\frac{n}{N}, \frac{d}{D}, \frac{c}{C}\right) e\left(\pm n \frac{\bar{rd}}{sc}\right)
    \ll_\eps 
    (RSNCDZ)^{O(\eps)}\, \|w_{r,s} A_{N,r,s}\|_2 \msI,
\end{equation}
where
\[
    \msI^2 := D^2 NR
    +
    \left(1 + \frac{C^2}{R^2 S Y_N}\right)^{2\theta_{\max}} 
    CS(C + DR)(RS + N).
\]
\end{corollary}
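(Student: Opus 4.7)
The approach is Fourier completion in the variable $d$, followed by an application of \cref{cor:kloosterman-averaging-qmnc}. After detecting the coprimality constraint $(rd, sc) = 1$ by M\"obius (reducing us to $(d, sc) = 1$ with $r$ coprime to $sc$ imposed), Poisson summation modulo $sc$ gives
\[
    \sum_{\substack{d \\ (d, sc) = 1}} \Phi_{r,s}\!\left(\frac{d}{D}\right) e\!\left(\pm \frac{n\,\bar{rd}}{sc}\right)
    \;=\; \frac{D}{sc}\sum_{m \in \Z}\hat\Phi_d\!\left(\frac{mD}{sc}\right) S(m\bar r, \pm n; sc),
\]
where $\hat\Phi_d$ is the Fourier transform of $\Phi_{r,s}$ in the $d$-variable (with the other variables frozen). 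Rapid decay of $\hat\Phi_d$ permits truncating to $|m| \le M_0 := (RSNCDZ)^\eps\, SC/D$ with a negligible error; in particular, when $D > SC$ only $m = 0$ contributes meaningfully.

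I would then isolate the \emph{main term} $m = 0$, in which the complete Kloosterman sum becomes a Ramanujan sum, with $|S(0, n; sc)| \le (n, sc) \le (n, s)(n, c)$. Using $\sum_{c \sim C}(n, c)/c \ll (NC)^\eps$, the inner sum over $c$ is $\ll (RSNCDZ)^\eps (n, s)/s$. Cauchy--Schwarz in $n$ (via $\sum_{n \sim N}(n, s)^2/s^2 \ll (RSNCDZ)^\eps (N/s + 1)$) followed by Cauchy--Schwarz in $(r, s)$ against the weights $A_{N, r, s}$ yields the $m = 0$ contribution bounded by $(RSNCDZ)^{O(\eps)}\, D \sqrt{NR}\, \|w_{r,s} A_{N,r,s}\|_2$, matching the $D^2 NR$ term of $\msI^2$ upon squaring (modulo a subtlety addressed below).

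For the tail $m \ne 0$, I would dyadically decompose $|m| \asymp M'$ with $1 \le M' \le M_0$. The post-Poisson weight $(D/sc)\, \hat\Phi_d(mD/(sc))$, after rescaling $m \to m/M'$ and $c \to c/C$, is smooth and bounded in all partial derivatives; the $(m, c)$-coupling through the product $sc$ is disentangled by a further Fourier expansion, exactly as in the proof of \cref{cor:kloosterman-averaging-mnc}. One then applies \eqref{eq:kloosterman-rsmnc} with $M \gets M'$ and $\omega = 0$, followed by \eqref{eq:Kloosterman-follow-up}; substituting $M' \le SC/D$ collapses the resulting estimate to
\[
    (RSNCDZ)^{O(\eps)} \left(1 + \frac{C}{R\sqrt{SY_N}}\right)^{\theta_{\max}} \|w_{r,s} A_{N,r,s}\|_2 \sqrt{CS(C + DR)(RS + N)},
\]
and the identity $(1 + C/(R\sqrt{SY_N}))^{2\theta_{\max}} \asymp (1 + C^2/(R^2 SY_N))^{\theta_{\max}}$ delivers the second term of $\msI^2$.

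The main technical subtlety lies in obtaining the sharp $D^2 NR$ constant for the $m = 0$ contribution: a straightforward Cauchy--Schwarz actually yields $D^2 R(N + S)$ in place of $D^2 NR$, and the spurious excess $D^2 RS$ must be absorbed either into the first term when $S \le N$, or into the $m \ne 0$ bound (possible when $M_0 \ge 1$, i.e. $D \le SC$, since then $D^2 RS \le CS \cdot DR \cdot RS \le CS(C + DR)(N + RS)$), or else sharpened using the vanishing of $S(0, n; sc)$ unless $sc/(n, sc)$ is squarefree. The remaining steps---separating variables in the post-Poisson weight, tracking the M\"obius expansion for $(rd, sc) = 1$, and dyadic dissection of $M'$---are routine, following templates already established in this paper and in \cite{deshouillers1982kloosterman}.
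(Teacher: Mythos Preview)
Your approach is essentially the same as the paper's: Poisson-complete in $d$ to pass to a dual variable $m \ll_\eps (CSD)^\eps\, CS/D$, then invoke \eqref{eq:kloosterman-rsmnc} from \cref{cor:kloosterman-averaging-qmnc}. The paper's proof is in fact much terser than yours---it simply cites the analogous deduction of \cite[Theorem~12]{deshouillers1982kloosterman} from \cite[Theorem~11]{deshouillers1982kloosterman} in \cite[\S 9.2]{deshouillers1982kloosterman}, together with the correction of a minor error there by \cite{bombieri2019some}. Notably, the subtlety you flag in the $m=0$ term (obtaining $D^2 NR$ rather than $D^2 R(N+S)$) is precisely the locus of the error in \cite{deshouillers1982kloosterman} that the paper alludes to; your proposed absorption of the spurious $D^2RS$ into the second term of $\msI^2$ when $D \le SC$ is correct, and the remaining case is indeed routine.
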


\begin{proof}[Proof of \cref{cor:kloosterman-incomplete}]
This follows from \cref{cor:kloosterman-averaging-qmnc} (specifically, \cref{eq:kloosterman-rsmnc}) by completing Kloosterman sums, passing from the $d$-variable to a variable $m$ of size $\ll_\eps (CDS)^\eps CS/D$; this is completely analogous to how \cite[Theorem 12]{deshouillers1982kloosterman} follows from \cite[Theorem 11]{deshouillers1982kloosterman} in \cite[\S 9.2]{deshouillers1982kloosterman}. We note that \cite[Theorem 12]{deshouillers1982kloosterman} has a minor error (replacing $D^2 N R$ with $D^2 N R S^{-1}$), which has been corrected in \cite{bombieri2019some}.
\end{proof}

\section{The greatest prime factor of \texorpdfstring{$n^2+1$}{n\^2+1}} \label{sec:greatest-prime-factor}

Here we use our new inputs from \cref{subsec:multilinear-kloosterman} in the computations of Merikoski \cite{merikoski2023largest} and de la Bret\`eche--Drappeau \cite{de2020niveau}, in order to prove \cref{thm:appl-greatest-prime}. We begin with a brief informal sketch. 

\subsection{Sketch of the argument} \label{subsec:informal-overview-greatest}

We will ultimately prove a lower bound of the shape
\[
    \sum_{n \sim x} \sum_{\substack{p \text{ prime} \\ p \mid n^2+1 \\ p > x^{1.3}}} \log p
    > \eps\, x \log x,
\]
which implies that for some (in fact, for many) $n \sim x$, we must have $P^+(n^2+1) > x^{1.3}$. As in previous works \cite{merikoski2023largest,de2020niveau,deshouillers1982greatest,hooley1967greatest},  we use an idea of Chebyshev to estimate the full sum
\[
    \sum_{n \sim x} \sum_{\substack{p \text{ prime} \\ p \mid n^2+1}} \log p
    \approx 
    \sum_{n \sim x}
    \sum_{d \mid n^2+1} \Lambda(d)
    =
    \sum_{n \sim x} 
    \log(n^2+1)
    =
    2x\log x + O(x),
\]
where $\Lambda$ is the von Mangoldt function. It then remains to upper bound
\[
    \sum_{n \sim x} \sum_{\substack{p \text{ prime} \\ p \mid n^2+1 \\ p \le x^{1.3}}} \log p
    =
    \sum_{\substack{p \text{ prime} \\ p \le x^{1.3}}} \log p\,
    \sum_{n \sim x} \one_{n^2 \equiv -1 \pmod{p}}
    \stackrel{?}{<} (2-\eps)\,x \log x.
\]
Following Merikoski \cite{merikoski2023largest}, we use repeated applications of Buchstab's identity inside the Harman sieve method, to reduce estimating the above sum over primes to
bounding ``Type I'' and ``Type II'' sums of the form
\[
    \sum_{d \le D} \lambda_d \sum_{\substack{q \sim Q \\ q \equiv 0 \pmod{d}}} \left(
    \sum_{n \sim x} \one_{n^2 \equiv -1 \pmod{q}}
    -
    \frac{x}{q} \sum_{\nu \pmod{q}} \one_{\nu^2 \equiv -1 \pmod{q}}\right),
\]
respectively
\[
    \sum_{q_1 \sim Q_1} \lambda_{q_1} \sum_{q_2 \sim Q_2} \mu_{q_2} \left(
    \sum_{n \sim x} \one_{n^2 \equiv -1 \pmod{q_1 q_2}}
    -
    \frac{x}{q_1 q_2} \sum_{\nu \pmod{q_1q_2}} \one_{\nu^2 \equiv -1 \pmod{q_1q_2}}\right),
\]
for various ranges of $D, Q, Q_i$ with $Q_1 Q_2 = Q \le x^{1.3}$, aiming to win over the trivial bound of $x$. 

Let us sketch our improvement of the Type I information. As in \cite{de2020niveau,merikoski2023largest}, we separate $n$ into residue classes $\nu \pmod{q}$ and apply a truncated version of Poisson summation to the sum over $n$; the principal frequency $h = 0$ cancels with the subtracted main term, leading to a sum like
\[
    \sum_{d \le D} \lambda_d \sum_{\substack{q \sim Q \\ q \equiv 0 \pmod{d}}} \sum_{\substack{\nu \pmod{q} \\ \nu^2 \equiv -1\ (q)}}\ 
    \sum_{h \sim \frac{Q}{x}} e\left(\frac{h\nu}{q}\right).
\]
We can then parametrize the solutions to $\nu^2 \equiv -1 \pmod{q}$ by the Gauss correspondence (see \cite[Lemma 2]{deshouillers1982greatest}), which gives $q = r^2 + s^2$ and $\tfrac{\nu}{q} \approx \tfrac{\bar{r}}{s} \pmod{1}$. In the critical range $r, s \sim \sqrt{Q}$, this leaves us with the sum
\[
    \sum_{d \le D} \lambda_d \sum_{\substack{r, s \sim \sqrt{Q}\\ r^2 \equiv -s^2 \pmod{d}}} 
    \sum_{h \sim \frac{Q}{x}} e\left(\frac{h\bar{r}}{s}\right).
\]
Separating $r \bar{s}$ into residue classes $\varrho \mod d$ and Fourier-completing the sum over $r$ yields a dual variable $k$ of size $D$, and inner sums of the shape
\[
    \sum_{k \sim D} \sum_{h \sim \frac{Q}{x}} e\left(\frac{k\varrho}{d}\right) \sum_{s \sim \sqrt{Q}} S(h, k\bar{d}; s),
\]
where $\varrho$ is a solution to $\varrho^2 \equiv -1 \pmod{d}$; compare these to \cref{eq:sum-of-kloosterman}. Since the sequence $(e(\tfrac{k\varrho}{d}))_{k \sim D}$ depends on the level $d$ and its length is as large as the level, the large sieve inequality for general sequences from \cref{thm:large-sieve-general} cannot produce any savings in the $X^\theta$-aspect for this sequence. However, our large sieve inequality for exponential phases from \cref{thm:large-sieve-expo-phases} will save a factor of $D^{\theta/2}$, which follows through to the sieve computations. This improves the results of de la Bret\`eche--Drappeau \cite[\S 8]{de2020niveau}, based in turn on Duke--Friedlander--Iwaniec \cite{duke1995equidistribution}, and is nearly enough to remove the dependency on Selberg's eigenvalue conjecture in the relevant Type I ranges, as illustrated in \cref{fig:arithm-info} (left).

Following Merikoski \cite{merikoski2023largest}, the Type II sums can be treated similarly using an additional Cauchy--Schwarz step. Once again, this leads to trilinear forms of Kloosterman sums as in \cref{eq:sum-of-kloosterman}, where both sequences $(a_m)$ and $(b_n)$ depend on the level. Using our large sieve inequalities, we can leverage the fact that $(a_m)$ happen to be exponential phases as in \cref{thm:large-sieve-expo-phases}, while $(b_n)$ have the shape in \cref{thm:large-sieve-dispersion-coeffs} (if the sum over $h$ is kept inside the Cauchy--Schwarz step). This ultimately produces three admissible Type II ranges, all gathered in \cref{prop:type-II} and illustrated in \cref{fig:arithm-info} (right). 

By carefully plugging in these
Type I and II estimates into Merikoski’s Harman sieve computations, which require the numerical
calculation of multidimensional integrals, we deduce \cref{thm:appl-greatest-prime}.

\subsection{Arithmetic information} \label{subsec:arithm-info} 
We aim to improve the dependency on the $\theta$ parameter in the arithmetic information from \cite[Propositions 1 and 2]{merikoski2023largest}; to do so, we first improve a lemma of de la Bret\`eche--Drappeau \cite[Lemme 8.3]{de2020niveau}. 

\begin{lemma}[De la Bret\`eche--Drappeau-style exponential sums] \label{lem:improvement-bd}
Let $\eps > 0$, $M \gg 1$, and $\theta := \tfrac{7}{64}$.
\begin{itemize}
    \item[$(i)$.] Let $q, h \in \Z$ and $1 \le |h| \ll q$. Given a smooth function $f : (0, \infty) \to \C$ supported in $v \asymp 1$, with $f^{(j)} \ll_j 1$ for $j \ge 0$, one has
\begin{equation} \label{eq:bd-1}
    \sum_{(m, q) = 1} f\left(\frac{m}{M}\right) 
    \sum_{\nu^2 \equiv -1 \pmod{mq}}
    e\left(\frac{h\nu}{mq}\right) 
    \ll_\eps  
    (qhM)^\eps \left(
    |h| + \sqrt{qM}\left(1 + (q, h)^{\theta} q^{- 3\theta/2} M^{\theta} \right) \right).
\end{equation}
    \item[$(ii)$.] Let $Q \ge 1/2$, $1/2 \le H \ll QM$, and $t \in \R/\Z$. Given smooth functions $(f_q(v))_{q \sim Q}$ supported in $v \asymp 1$, with $f_q^{(j)} \ll_j 1$ for $j \ge 0$, one has
\begin{equation} \label{eq:bd-2}
\begin{aligned}
    \frac{1}{Q} \sum_{q \sim Q}
    &\left\vert \frac{1}{H} \sum_{h \sim H} e(th) 
    \sum_{(m, q) = 1} f_q\left(\frac{m}{M}\right) 
    \sum_{\nu^2 \equiv -1 \pmod{mq}}
    e\left(\frac{h\nu}{mq}\right) 
    \right\vert
    \\
    &\qquad \qquad \ll_\eps  
    (QHM)^\eps \left(
    H + \sqrt{M} \left(1 + H^{-2\theta} Q^{\theta/2} M^{\theta}\right) + \sqrt{\frac{QM}{H}}\left(1 + Q^{-3\theta/2} M^{\theta} \right)\right).
\end{aligned}
\end{equation}
\end{itemize}
\end{lemma}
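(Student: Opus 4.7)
The plan is to refine the argument of de la Bret\`eche--Drappeau for \cite[Lemme 8.3]{de2020niveau} (itself following Duke--Friedlander--Iwaniec \cite{duke1995equidistribution}) by replacing their use of the general large sieve \cref{thm:large-sieve-general} with our sharper exponential-phase inequality \cref{thm:large-sieve-expo-phases}, and in part $(ii)$ also with the level-averaged \cref{thm:large-sieve-level-avg}. The main terms $|h|+\sqrt{qM}$ and $H+\sqrt{M}+\sqrt{QM/H}$ are exactly those produced by the classical analysis; the novelty is in reducing the multiplicative loss due to the exceptional spectrum.

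For part $(i)$, I would first parametrize the solutions $\nu \pmod{mq}$ of $\nu^2 \equiv -1 \pmod{mq}$ by the Gauss correspondence, writing $\nu \equiv a\overline{b} \pmod{mq}$ with $mq=a^2+b^2$, $(a,b)=1$, so that
\[
    e\!\left(\frac{h\nu}{mq}\right) = e\!\left(\frac{h\,a\overline{b}}{mq}\right).
\]
After separating the $m$- and $q$-dependencies through reciprocity (opening the congruence via additive characters modulo $m$ and modulo $q$), the inner $m$-sum takes the shape of a weighted Kloosterman sum whose modulus $c$ is divisible by $q$. One application of the Kuznetsov trace formula \cref{prop:kuznetsov} on $\Gamma_0(q)$, with the canonical scaling matrix \cref{eq:scaling-choices} at a cusp $\ma$ with $\mu(\ma)=q^{-1}$, then converts the problem to spectral sums. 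The holomorphic, Eisenstein, and regular-Maass contributions give the $|h|+\sqrt{qM}$ term via \cref{lem:large-sieve-non-exceptional} and the Bessel bounds of \cref{lem:bessel-bounds} (the $|h|$ being the zero frequency from Fourier-completion in $m$).

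The decisive new input concerns the exceptional Maass contribution: splitting $X = X_0\sqrt{X_1 X_2}$ and applying Cauchy--Schwarz, I would bound one factor by \cref{lem:coefficient-bounds} (producing the GCD term $(q,h)^{\theta/2}$ through its $(q,n)^{1/2}$ factor), and the $m$-factor by \cref{thm:large-sieve-expo-phases} with a trivial phase from Fourier-completion, obtaining $X_2 \ll \max(\sqrt{M},q/\sqrt{M})$ in place of the $X_2 \ll \max(1,q/M)$ that \cref{thm:large-sieve-general} would deliver. Optimizing $X_0$ (bounded via Kim--Sarnak \cref{thm:kim-sarnak}) against $X_1,X_2$, the prefactor $(1+X/\sqrt{X_1 X_2})^{\theta}$ shrinks to $(q,h)^{\theta/2} q^{-3\theta/4}M^{\theta/2}$, giving \cref{eq:bd-1}. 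For part $(ii)$, the additional averaging over $q\sim Q$ and the phase $e(th)$ in the $h$-aspect enable a stronger saving: expanding the square by duality in $q$ and $h$ simultaneously, and splitting $X=X_0\sqrt{X_1 X_2}$ once more, one arm is bounded by \cref{thm:large-sieve-level-avg} (admitting $X\ll \max(H,Q^2/H)$ for exponential phases $e(\cdot\,t)$ averaged over levels), while the other uses \cref{thm:large-sieve-expo-phases} pointwise in $q$. These two inputs produce the $H^{-\theta}Q^{\theta/4}M^{\theta/2}$ and $Q^{-3\theta/4}M^{\theta/2}$ correction factors in \cref{eq:bd-2}, with the three main terms $H$, $\sqrt{M}$, $\sqrt{QM/H}$ arising respectively from the principal frequency, the diagonal contribution after squaring in $m$, and the standard on-average Kuznetsov bound.

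The hardest part will be correctly propagating the coprimality and GCD constraints (especially $(q,h)$ and $(m,q)=1$) through the Gauss-correspondence parametrization and the reciprocity step, so that after the switch to the spectral side the resulting Fourier-coefficient sums match the scaling-matrix framework of \cref{eq:scaling-choices} and admit our large sieve inequalities without accumulating extraneous twists that would erode the $\theta$-gain. Once the setup is properly aligned, the numerical exponents in \cref{eq:bd-1,eq:bd-2} follow by elementary optimization from the admissible $X$-ranges in \cref{thm:large-sieve-expo-phases,thm:large-sieve-level-avg}.
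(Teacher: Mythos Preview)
Your approach is essentially the same as the paper's: the paper packages the Kuznetsov/Cauchy--Schwarz/exponential-phase-large-sieve strategy into \cref{cor:kloosterman-averaging-nc,cor:kloosterman-averaging-qmnc} and then invokes these as black boxes inside de la Bret\`eche--Drappeau's argument at the point where the sums $V_N$ of \cite[(8.30)]{de2020niveau} arise, whereas you describe applying the same spectral inputs directly. One small correction worth flagging: the sequence to which \cref{thm:large-sieve-expo-phases} is applied is the Fourier-completion variable $n$ (of length $N \ll q^{1+o(1)}$, carrying an exponential phase $e(n\alpha_q)$ hidden in the scaling matrix), not the original $m$-variable; the saving $\sqrt{Y_N}=N^{1/4}$ is what, after substituting $N \ll q$, produces the $q^{-3\theta/4}$ exponent in \cref{eq:bd-1}, and in part $(ii)$ the two correction factors $H^{-\theta}Q^{\theta/4}M^{\theta/2}$ and $Q^{-3\theta/4}M^{\theta/2}$ both arise from the single bound $\max(Q,H)^{-\theta}Q^{\theta/4}M^{\theta/2}$ in the regimes $H\gtrless Q$, rather than from separate diagonal and off-diagonal contributions.
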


\begin{proof}
This is a refinement of the first and third bounds in \cite[Lemme 8.3]{de2020niveau}, winning factors of about $q^{\theta/2}$ via our \cref{cor:kloosterman-averaging-nc,cor:kloosterman-averaging-qmnc}. We only mention what changes from the proof in \cite[\S 8.1]{de2020niveau}, working in the particular case $d = r = 1$, $D = -1$. We note that for $D = -1$, the relevant cusps $\ma$ from \cite[\S 8.1]{de2020niveau} are equivalent to $0/1$, and thus have $\mu(\ma) = q^{-1}$ (which is also why Merikoski's bounds in \cite[\S 3.8]{merikoski2023largest} only require such cusps too). 

For part $(i)$, we consider the sums of Kloosterman sums from \cite[(8.30)]{de2020niveau}, given (with notation to be explained below) by
\[
    V_N = V_N(q, h) := \sum_{N/2 \le |n| \le 2N} \sum_{\gamma \in \mC_{\infty\ma}} S_{\infty\ma}(h, n; \gamma)\, G_N(\gamma, n).
\]
Here, the $n$-variable came from a completion of Kloosterman sums, and was localized to a dyadic range of size $N \ll q^{1+\eta} M^\eta$ (where $\eta > 0$ is a small parameter), while $G_N(\gamma, n)$ is a smooth function normalized such that
\[
    \Phi(x, y) := q\, G_N\left(yq\sqrt{M}, xN\right)
\]
satisfies the assumptions of \cref{cor:kloosterman-averaging-nc} with $Z = qM$ and $\eps \asymp \eta$. Also, $\ma$ is a cusp of $\Gamma_0(q)$, and the scaling matrix $\sigma_\ma$ used implicitly in the Kloosterman sum $S_{\infty\ma}(h, n; \gamma)$ hides an exponential phase of the form $e(n\alpha_q)$; the value of $\alpha_q$ is arbitrary for our purposes. 

We can now apply \cref{cor:kloosterman-averaging-nc} (equivalently, we can bound $\mathscr{M}_N^{\textnormal{exc}}$ in \cite[(8.40)]{de2020niveau} using \cref{thm:large-sieve-expo-phases}), using $a_n = e(n\alpha_q)$, $Y_N = A_N = \sqrt{N}$ (corresponding to \cref{eq:sequence-choice-1}), $C = q\sqrt{M}$, and $m = |h|$. This yields
\[
    V_N \ll_{\eta} (qhM)^{O(\eta)} \left(1 + \frac{q\sqrt{M}}{q^{3/2} (q, h)^{-1/2} N^{1/4}}\right)^{2\theta} \sqrt{NM},
\]
where we used that $q^\eta C = q^{1+\eta}\sqrt{M} \gg \sqrt{hN}$, that $\sqrt{(q, h)|h|} \le |h| \le q$, and that $N \ll q^{1+\eta} M^\eta$ (in particular, the $1$-term is dominant in the last two parentheses from \cref{eq:kloosterman}, up to factors of $(qhM)^{o(1)}$). 

This bound is increasing in $N$, so using $N \ll q^{1+\eta} M^\eta$ once again, we get
\[
    V_N \ll_\eta (qhM)^{O(\eta)}  \sqrt{qM} \left(1 + (q, h)^{\theta} q^{-3\theta/2} M^{\theta}\right),
\]
which gives the second term claimed in the upper bound from \cref{eq:bd-1}.

Part $(ii)$ follows similarly using \cref{cor:kloosterman-averaging-qmnc} (or equivalently, by bounding $\mathscr{M}_N^{\textnormal{exc}}$ in \cite[\S 8.1.12]{de2020niveau} using \cref{thm:large-sieve-expo-phases} once again). Indeed, with the similar choices $a_{n,q} = e(n\alpha_q)$, $Y_N = A_{N,q} = \sqrt{N}$, $Z = QM$, and $C = Q\sqrt{M}$, our bound \cref{eq:kloosterman-qmnc} yields 
\[
    \frac{1}{QH} \sum_{q \sim Q} \left\vert \sum_{h \sim H} e(th)\, V_N(q, h)\right\vert
    \ll_\eta 
    (QHM)^{O(\eta)}
    \left(1 + \frac{Q\sqrt{M}}{\max(Q, H)\, Q^{1/2} N^{1/4}}\right)^{2\theta} \sqrt{\frac{NM}{H}}
    \left(1 + \frac{H}{Q}\right)^{1/2}.
\]
Again, this bound is increasing in $N$, so plugging in $N \ll Q^{1+\eta} M^\eta$ gives a right-hand side of
\[
\begin{aligned}
    &\ll_\eta 
    (QHM)^{O(\eta)} \sqrt{\frac{M}{H}} \max(Q, H)^{1/2} \left(1 + \max(Q, H)^{-2\theta} Q^{\theta/2} M^{\theta}\right)
    \\
    &\ll
    (QHM)^{O(\eta)} \sqrt{\frac{M}{H}} \left(H^{1/2} + Q^{1/2} + \left(H^{(1/2)-2\theta} + Q^{(1/2)-2\theta}\right) Q^{\theta/2} M^{\theta}\right),
\end{aligned}
\]
which gives all but the first term in the upper bound from \cref{eq:bd-2}.
As in \cite{de2020niveau}, the first terms of $|h|$ and $H$ from our bounds in \cref{eq:bd-1,eq:bd-2} could be improved via partial summation, but we omit this optimization too since it will not be relevant for our computations.
\end{proof}

\begin{notation}[Set-up for arithmetic information] \label{not:set-up-info}
Let $x \ge 1$, $\alpha \in [1, 3/2)$, and
\[
    P := x^\alpha.
\] 
Let $\Phi, \Psi$ be smooth functions supported in $[1, 4]$, satisfying $\Phi \ge 0$ and $\Phi^{(j)}, \Psi^{(j)} \ll_j 1$ for $j \ge 0$ (in \cite[\S 2.1]{merikoski2023largest}, Merikoski uses $b(t) = \Phi(t/x)$ and $\Psi(t) \gets \Psi(t/P)$). For $q \in \Z_+$, define
\[
    |\mA_q| := \sum_{n^2 \equiv -1 \pmod{q}} \Phi\left(\frac{n}{x}\right),
    \qquad\qquad 
    X := \int \Phi\left(\frac{t}{x}\right)\, dt
    = 
    x \int \Phi,
\]
\[
    \rho(q) := \# \left\{ \nu \in \Z/q\Z : \nu^2 \equiv -1 \pmod{q} \right\}.
\]
\end{notation}

We will estimate the difference 
\[
    |\mA_q| - X\frac{\rho(q)}{q}
\] 
in ``Type I'' and ``Type II'' sums with $q \asymp P$. The Type I sums average over moduli in arithmetic progressions, say $q \equiv 0 \pmod{d}$ and $d \le D$, with arbitrary divisor-bounded coefficients $\lambda_d$; the Type II sums average over moduli with a conveniently-sized factor, say $q = mn$ with $n \sim N$ (and $m \asymp P/N$), with divisor-bounded coefficients $a_m, b_n$. One can also view the Type I sums as special Type II sums where $a_m = 1$, except that Type II estimates typically require a lower bound on $N$.

The strength of the resulting Type I and Type II information is given by the ranges of parameters $D$ and $N$ (in terms of $x$ and $P$) for which we can obtain power-savings over the trivial bound---i.e., for which the sums over $|\mA_q|$ have an asymptotic formula. \cref{fig:arithm-info} illustrates the (previous unconditional, new unconditional, and conditional) admissible choices of $\log_x D$ and $\log_x N$ in terms of $\alpha = \log_x P$; both graphs continue downwards, the second region being lower-bounded by the function $\alpha - 1$. The previous unconditional and the conditional ranges are due to Merikoski \cite{merikoski2023largest} and de la Bret\`eche--Drappeau \cite{de2020niveau}; our improvements are \cref{prop:type-I,prop:type-II}. 

\begin{figure}[ht]
{\centering
\includegraphics[scale=0.3]{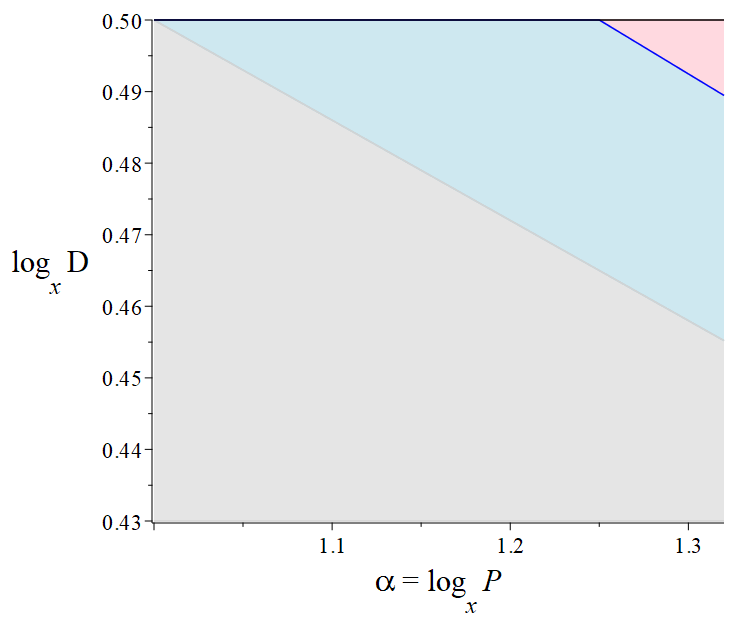}
\hspace{0.3cm}
\includegraphics[scale=0.3]{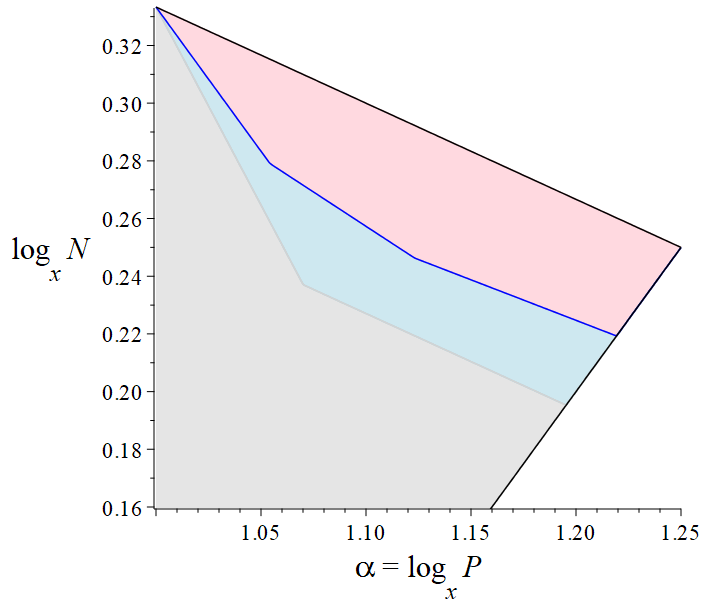}
}
\caption{Type I (left) and Type II (right) ranges. Previous results in gray; our improvements in blue; conditional ranges in red (assuming Selberg's eigenvalue conjecture).}
\label{fig:arithm-info}
\end{figure}

\begin{proposition}[Type I estimate] \label{prop:type-I}
For any sufficiently small $\eps > 0$ there exists $\delta > 0$ such that the following holds. With \cref{not:set-up-info}, $1 \le \alpha \le 1.4$, $\theta := \tfrac{7}{64}$, and $D \ge 1$, one has 
\begin{equation} \label{eq:type-I}
    \sum_{d \le D} \lambda_d \sum_{q \equiv 0 \pmod{d}} \left(|\mA_q| - X\frac{\rho(q)}{q}\right) \Psi\left(\frac{q}{P}\right) \log q 
    \ll_\eps x^{1-\delta},
\end{equation}
for any divisor-bounded coefficients $(\lambda_d)$, provided that
\[
    D \ll_\eps x^{-\eps} \min \left(x^{1/2}, x^{(1-2\theta\alpha)/(2-5\theta)}\right).
\]
\end{proposition}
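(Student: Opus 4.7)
The plan is to follow de la Bret\`eche--Drappeau's framework \cite[\S 8]{de2020niveau}, but replacing the underlying exponential sum estimate with our improved \cref{lem:improvement-bd}(ii) (which in turn relies on \cref{thm:large-sieve-expo-phases}). Writing $q = dm$ and expanding
\[
    |\mA_q| = \sum_{\nu^2 \equiv -1 \pmod q} \sum_{n \equiv \nu \pmod q} \Phi\!\left(\frac{n}{x}\right),
\]
Poisson summation on the inner progression, together with the cancellation of the principal $\ell = 0$ frequency against the subtracted $X\rho(q)/q$, gives
\[
    |\mA_q| - X\frac{\rho(q)}{q} = \frac{x}{q} \sum_{0 < |h| \ll q x^{\eps-1}} \hat\Phi\!\left(\frac{hx}{q}\right) K(h, q) + O_A(x^{-A}),
\]
where $K(h, q) := \sum_{\nu^2 \equiv -1 \pmod q} e(h\nu/q)$, and the tail $|h| > q x^{\eps-1}$ is negligible by the Schwarz decay of $\hat\Phi$.

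Substituting back into the Type I sum $S$ and decoupling the variables $d, m, h$ by a standard Mellin decomposition of the smooth weight $\hat\Phi(hx/(dm))\,\Psi(dm/P)\log(dm)/(dm)$, together with dyadic partitions of unity in $d \sim D_0 \le D$, $m \sim M_0$, $|h| \sim H_0$, reduces the task, up to $x^{O(\eps)}$ losses, to bounding
\[
    S_0 := \frac{x}{P} \sum_{d \sim D_0} \lambda_d \sum_{|h| \sim H_0} e(th)\, g(h) \sum_{m \sim M_0} f(m)\, K(h, dm),
\]
for every $D_0 M_0 \asymp P$, $H_0 \ll x^{\eps} P/x$, and $t \in \R$, with $f, g$ smooth, compactly supported and of bounded derivatives. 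Absorbing $|\lambda_d| \ll x^{o(1)}$ and applying \cref{lem:improvement-bd}(ii) with $(Q, M, H) \gets (D_0, M_0, H_0)$ yields
\[
    |S_0| \ll_\eps \frac{x^{1+o(1)}}{P}\, H_0 D_0\, \bigl(H_0 + \sqrt{M_0}\bigl(1 + H_0^{-\theta} D_0^{\theta/4} M_0^{\theta/2}\bigr) + \sqrt{D_0 M_0/H_0}\bigl(1 + D_0^{-3\theta/4} M_0^{\theta/2}\bigr)\bigr).
\]

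Inserting $D_0 M_0 \asymp P$ and the worst case $H_0 \asymp P/x$, the five resulting contributions to $|S_0|$ simplify to $D_0 P/x$, $\sqrt{D_0 P}$, $P^{(1-\theta)/2} D_0^{1/2-\theta/4} x^{\theta}$, $D_0 x^{1/2}$, and $D_0^{1-5\theta/4} P^{\theta/2} x^{1/2}$. Under the hypothesis $1 \le \alpha \le 1.4$ with $\theta = 7/32$, a direct numerical check shows that only the last two are binding — they impose $D \ll x^{1/2-\eps}$ and $D \ll x^{2(1-\theta\alpha)/(4-5\theta)-\eps}$ respectively — while the remaining three give strictly weaker constraints (for instance, the first two yield $D \ll x^{2-\alpha}$ or better). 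Summing over the $O(x^{O(\eps)})$ dyadic pieces then produces $|S| \ll x^{1-\delta}$ for some $\delta > 0$, as claimed.

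The main technical point is the Mellin separation in the second step: the coupled weight $\hat\Phi(hx/(dm))$ intertwines all three variables, and must be rewritten so as to produce the oscillatory phase $e(th)$ demanded by \cref{lem:improvement-bd}(ii). Schwarz decay limits the effective support to $|h| \le dm\, x^{\eps-1}$, and the Fourier inversion $\hat\Phi(\xi) = \int \Phi(u)\, e(-u\xi)\, du$ (essentially an integral reformulation of Poisson summation) achieves the separation at a cost of only $x^{O(\eps)}$. Once this is done, the crucial input is that \cref{lem:improvement-bd}(ii) averages over the level variable $d$, which is precisely what upgrades a pointwise-in-$d$ application of \cref{lem:improvement-bd}(i) — which only recovers the classical range $D \ll x^{1/2}$ — to the improved range $D \ll x^{2(1-\theta\alpha)/(4-5\theta)}$ in the relevant regime $\alpha > 5/4$.
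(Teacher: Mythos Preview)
Your proposal is correct and takes essentially the same approach as the paper: both follow the de la Bret\`eche--Drappeau framework from \cite[\S 8.4]{de2020niveau}, insert the improved exponential-sum bound \cref{lem:improvement-bd}(ii) at the key step, and arrive at the same five constraints on $D$, of which only $D \ll x^{1/2}$ and $D \ll x^{2(1-\theta\alpha)/(4-5\theta)}$ are binding for $1 \le \alpha \le 1.4$. The paper's proof is terser, simply citing the relevant display in \cite{de2020niveau} and substituting the new lemma, whereas you spell out the Poisson and variable-separation steps explicitly; but the arithmetic content is identical.
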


\begin{proof}
This is a refinement of Merikoski's \cite[Prop.\,1]{merikoski2023largest} (which explicitated the computations in de la Bret\`eche--Drappeau's \cite[\S 8.4]{de2020niveau}), using our \cref{lem:improvement-bd}.$(ii)$ instead of \cite[(8.7)]{de2020niveau}. Indeed, in the first display on \cite[p.\,1620]{de2020niveau}, by applying \cref{eq:bd-2} for $H \gets P X^{-1+\delta}$ (for $\delta = \delta(\eps)$ to be chosen shortly), $Q \gets D \le x^{1/2}$ and $M \gets P/D$, we instead obtain the bound
\[
\begin{aligned}
    &R_H(x, P, D)
    \\
    &\ll_{\delta}
    x^{1+O(\delta)} P^{-1} D H\left(
    H + \sqrt{\frac{P}{D}} \left(1 + H^{-2\theta} D^{\theta/2} \left(\frac{P}{D}\right)^{\theta}\right) + \sqrt{\frac{P}{H}}\left(1 + D^{-3\theta/2} \left(\frac{P}{D}\right)^{\theta} \right)\right)
    \\
    &= 
    x^{O(\delta)} \left( \frac{PD}{x} + \sqrt{PD} \left(1 + x^{2\theta} P^{-\theta} D^{-\theta/2}\right) + \sqrt{x}D \left(1 + D^{-5\theta/2} P^{\theta}\right) \right).
\end{aligned}
\]
Here, $R_H(x, P, D)$ resulted from our Type I sum after putting $d$ in dyadic ranges, expanding and Fourier-completing $|\mA_q|$; see \cite[\S 8.4]{de2020niveau} and then \cite[\S 4, 5]{deshouillers1982greatest}. Overall, this bound is acceptable in \cref{eq:type-I} (i.e., $\ll_\eps x^{1-\delta}$) provided that for an absolute constant $K$, one has
\[
\begin{aligned}
    D 
    &\ll_\eps x^{-K\delta} \min \left( x^2 P^{-1}, x^{1/2}, x^{2(1-2\theta)/(1-\theta)} P^{-(1-2\theta)/(1-\theta)}, x^{1/(2-5\theta)} P^{-2\theta/(2-5\theta)} \right)
    \\
    &=
    x^{-\eps} \min \left(x^{2-\alpha}, x^{1/2}, x^{(2-\alpha)(1-2\theta)/(1-\theta)}, x^{(1-2\theta\alpha)/(2-5\theta)}\right),
\end{aligned}
\]
where we picked $\delta := \eps/K$ and substituted $P = x^\alpha$. A quick numerical verification shows that for $1 \le \alpha \le 1.4$ and $\theta = \tfrac{7}{64}$, the first and the third term do not contribute to the minimum.
\end{proof}

\begin{proposition}[Type II estimate] \label{prop:type-II}
For any sufficiently small $\eps > 0$ there exists $\delta > 0$ such that the following holds. With \cref{not:set-up-info}, $\theta := \tfrac{7}{64}$, and $MN = P$ with $M, N \ge 1$, one has
\begin{equation}\label{eq:type-II}
    \sum_{\substack{m \sim M \\ n \sim N}} a_m b_n \left(|\mA_{mn}| - X \frac{\rho(mn)}{mn}\right) \Psi\left(\frac{mn}{P}\right) \log (mn) 
    \ll_\eps 
    x^{1-\delta},
\end{equation}
for any divisor-bounded coefficients $(a_m)$ and $(b_n)$, provided that one of the following holds: 
\begin{itemize} 
    \item[$(i)$.] $(b_n)$ is supported on square-free integers, and
    \begin{equation} \label{eq:type-II-ranges-1}
        x^{\alpha-1+\eps} \ll_\eps N \ll_\eps x^{-\eps}
        \max \left(x^{(2-(1+2\theta)\alpha)/(3-4\theta)}, x^{(2-\alpha)(1-2\theta)/(3-2\theta)}\right);
    \end{equation}
    \item[$(ii)$.] $(b_n)$ is supported on primes, and
    \begin{equation} \label{eq:type-II-ranges-2}
        x^{\alpha-1+\eps} \ll_\eps N \ll_\eps
        x^{(4-3\alpha)/3 - \eps}.
    \end{equation}
\end{itemize}
\end{proposition}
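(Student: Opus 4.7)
The plan is to follow Merikoski's Type II argument \cite{merikoski2023largest}, but to replace the underlying bounds for multilinear Kloosterman sums with the stronger ones from \cref{subsec:multilinear-kloosterman}, which rely in turn on our new large sieve inequalities. Specifically, I would begin by detecting the congruence $\nu^2 \equiv -1 \pmod{mn}$ inside $|\mA_{mn}|$, subtract the main term $X\rho(mn)/(mn)$, and apply Poisson summation in $\nu$ (as packaged in \cref{lem:truncated-poisson}). This introduces a new dual variable $h$ with $|h| \ll P/x$; the principal frequency $h = 0$ cancels the subtracted main term exactly, and one is left with a sum that can be parameterized via the Gauss correspondence: solutions to $\nu^2 \equiv -1 \pmod{mn}$ correspond to essentially-coprime pairs $(r, s)$ with $r^2 + s^2 \equiv 0 \pmod{mn}$, producing Kloosterman-type phases $e(h\bar{r}/(smn))$.

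Next I would perform the Cauchy--Schwarz step which moves the unknown coefficients $(a_m)$ or $(b_n)$ out of the way, opening a square so that the remaining oscillatory object is a multilinear form of Kloosterman sums as in \cref{eq:sum-of-kloosterman}. After this step, the ``inner'' sequence acts either as a pure exponential phase $e(m \alpha_{r,s,h})$ (in which case \cref{thm:large-sieve-expo-phases}, encoded via the profile \cref{eq:sequence-choice-1}, is applicable) or as a convolution of two progressions of the form $\{n \equiv 0 \pmod{\ell_i},\ n \asymp H\ell_i\}$ with $(\ell_1,\ell_2) = 1$ (in which case \cref{thm:large-sieve-dispersion-coeffs}, via the profile \cref{eq:sequence-choice-2}, is applicable). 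Plugging these choices into \cref{cor:kloosterman-averaging-qmnc} or \cref{cor:kloosterman-incomplete} yields a saving of $X_2^{\theta/2}$ in the exceptional-spectrum factor over the previous estimates, precisely in the critical range where the level of the underlying $\Gamma_0(rs)$ matches the length of the sequence.

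The two Type~II cases are then handled by different arrangements. In case $(i)$, square-freeness of $(b_n)$ lets us factor $b_n = \sum_{\ell_1 \ell_2 = n} b_{\ell_1} b_{\ell_2}$ after a dyadic split, producing genuine dispersion coefficients in the inner variable; applying Cauchy--Schwarz either in the $n$-variable (keeping $h$ outside) or in the $(m,h)$-variables (keeping $n$ inside) gives the two endpoints $x^{(2-(1+\theta)\alpha)/(3-2\theta)}$ and $x^{(2-\alpha)(1-\theta)/(3-\theta)}$ in \cref{eq:type-II-ranges-1}, corresponding to the two maxima inside $Y_N$ from \cref{eq:sequence-choice-2}. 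In case $(ii)$, with $(b_n)$ supported on primes, no such factorization is available; instead I would apply Cauchy--Schwarz once in $n$ to reduce to an exponential-phase situation in the $m$-variable, and use only \cref{thm:large-sieve-expo-phases}, which produces the single range \cref{eq:type-II-ranges-2}.

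The main obstacle I anticipate is carefully tracking the exponential dependence on $\theta = 7/32$ through the combined Cauchy--Schwarz manipulations and the factor $T_0/\sqrt{Y_M Y_N}$ appearing in \cref{eq:kloosterman-rsmnc}, and verifying numerically that the admissible ranges of $M, N, R, S, C, H$ feed through to precisely the exponents $(2-(1+\theta)\alpha)/(3-2\theta)$, $(2-\alpha)(1-\theta)/(3-\theta)$, and $(4-3\alpha)/3$ stated in the proposition. The lower bound $N \gg_\eps x^{\alpha-1+\eps}$ is then forced by the requirement that the ``Type~II'' Cauchy--Schwarz step not introduce diagonal losses exceeding $x^{1-\delta}$, matching the constraint $q \gg L^2$ in the hypotheses of \cref{thm:large-sieve-dispersion-coeffs}.
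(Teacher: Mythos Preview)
Your high-level plan (Poisson, Gauss correspondence, Cauchy--Schwarz, then the new large sieve inequalities) is right, but the mechanism you describe for part $(i)$ is not how the dispersion structure arises, and following your route would not produce the stated ranges. You say square-freeness lets you factor $b_n = \sum_{\ell_1 \ell_2 = n} b_{\ell_1} b_{\ell_2}$ to manufacture dispersion coefficients; this is not what happens, and \cref{thm:large-sieve-dispersion-coeffs} concerns sequences indexed by \emph{differences} $h_1\ell_1 - h_2\ell_2$, not products $\ell_1\ell_2$. In the actual argument (following Merikoski), one applies Cauchy--Schwarz in $m$ keeping \emph{both} $h$ and $n$ inside the square. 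Expanding yields pairs $(h_1, n_1)$ and $(h_2, n_2)$; after the Gauss correspondence and completing Kloosterman sums, the resulting bilinear form in \cref{eq:am-bn-jori} has $a_m = e(-m\alpha/\varrho)$ a pure exponential phase and $b_n = \sum_{h_1 n_2 - h_2 n_1 = n} c_{h_1}\bar{c_{h_2}}$ the dispersion coefficient, with $\ell_i = n_i/n_0$ being the two copies of $n$ produced by the square --- not factors of a single $n$. Square-freeness is only used to control $n_0 = (n_1, n_2)$. Both large sieve inputs then feed into a \emph{single} application of \cref{cor:kloosterman-averaging-mnc} at the fixed level $\varrho \asymp d N^2/n_0$ (not \cref{cor:kloosterman-averaging-qmnc} or \cref{cor:kloosterman-incomplete}, since there is no averaging over the level in Merikoski's Type~II setup). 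The two endpoints in \cref{eq:type-II-ranges-1} do not come from two different Cauchy--Schwarz arrangements either: they come from the two terms in the maximum defining $Y_{\NN}$ in \cref{eq:jori-savings-n}, which after simplification collapse to $\min(\sqrt{P}/N,\, x/\sqrt{NP})^{\theta}$.

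For part $(ii)$, Cauchy--Schwarz is in $m$ (not in $n$), and one invokes \cref{lem:improvement-bd}.$(i)$; this only yields the range for $N \gg x^{2(\alpha-1)}$, and the remaining interval $x^{\alpha-1+\eps} \ll N \ll x^{2(\alpha-1)}$ is covered by appealing to part $(i)$ --- a reduction your plan omits. Finally, the lower bound $N \gg x^{\alpha-1+\eps}$ in $(i)$ comes from the diagonal $h_2 n_1 = h_1 n_2$ after Cauchy--Schwarz, not from the hypothesis $q \gg L^2$ of \cref{thm:large-sieve-dispersion-coeffs} (which holds automatically since $\varrho \ge n_1 n_2 \asymp L^2$).
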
 

\begin{remark}
The upper range in \cref{prop:type-II}.$(ii)$, which completely removes the dependency on Selberg's eigenvalue conjecture, wins over that in \cref{prop:type-I}.$(i)$ only for $\alpha < 136/129 \approx 1.054$. As in \cite{merikoski2023largest}, assuming Selberg's eigenvalue conjecture, the full admissible range in part $(i)$ is $N \ll_\eps x^{(2-\alpha)/3}$, which includes the range in part $(ii)$.
\end{remark}

\begin{proof}[Proof of \cref{prop:type-II}.$(ii)$, assuming $(i)$]
This is a refinement of Merikoski's \cite[Prop.\,4.(ii)]{merikoski2023largest}, using our \cref{lem:improvement-bd}.$(i)$ instead of de la Bret\`eche--Drappeau's bound \cite[(8.5)]{de2020niveau}.

We briefly recall that in \cite[\S 3]{merikoski2023largest}, Merikoski expanded and Fourier-completed $|\mA_{mn}|$ (resulting in a sum over $1 \le |h| \le H := P x^{-1+\delta}$), removed the smooth cross-conditions in $h, m, n$, and inserted the condition $(m, n) = 1$ to reach Type II sums $\Sigma(M, N)$. Then they applied Cauchy--Schwarz with the sum over $n$ inside, to obtain $\Sigma(M, N) \ll M^{1/2}\, \Xi(M, N)^{1/2}$, and trivially bounded the `diagonal' contribution of $n_1 = n_2$ using the condition $N \gg_\eps x^{2(\alpha-1)+\eps}$. To estimate the remaining sum $\Xi_0(M, N)$ from the second-to-last display in \cite[\S 3.10]{merikoski2023largest}, we apply our bound \cref{eq:bd-1} with $q \gets n_1 n_2$ and $h \gets h(n_1 - n_2)$; this gives the refined bound
\[
\begin{aligned}
    \Xi_0(M, N) 
    &\ll_\delta 
    x^{O(\delta)} \sum_{\substack{n_1, n_2 \sim N \\ (n_1, n_2) = 1}} \frac{1}{H} \sum_{1 \le |h| \le H} \left(HN + \sqrt{MN^2} \left(1 + (n_1n_2, h(n_1-n_2))^{\theta} N^{-3\theta} M^{\theta} \right)\right)
    \\
    &\ll_\delta 
    x^{O(\delta)} N^2\left(HN + M^{1/2} N + M^{(1+2\theta)/2} N^{1-3\theta} \right).
\end{aligned}
\]
This results in a contribution to $\Sigma(M, N)$ of
\[
\begin{aligned}
    &\ll_\delta x^{O(\delta)} M^{1/2} N \left(H^{1/2} N^{1/2} + M^{1/4} N^{1/2} + M^{(1+2\theta)/4} N^{(1-3\theta)/2} \right)
    \\
    &\ll x^{O(\delta)} \left(x^{-1/2} PN + P^{3/4} N^{3/4}  + P^{(3+2\theta)/4} N^{(3-8\theta)/4} \right),
\end{aligned}
\]
which is acceptable (i.e., $\ll_\eps x^{1-\delta}$) provided that for a large enough absolute constant $K$,
\[
    N \ll_\eps x^{-K\delta}
    \min \left(x^{3/2} P^{-1}, x^{4/3} P^{-1}, x^{4/(3-8\theta)} P^{-(3+2\theta)/(3-8\theta)}\right).
\]
Trivially removing the first term, picking $\delta := \eps/K$, and substituting $P = x^\alpha$, this proves \cref{eq:type-II} in the range
\[
    x^{2(\alpha-1)+\eps} \ll_\eps N \ll_\eps x^{-\eps}
    \min \left(x^{(4-3\alpha)/3}, x^{(4-(3+2\theta)\alpha)/(3-8\theta)} \right),
\]
when $(b_n)$ is supported on primes. The remaining ranges to consider are
\begin{equation} \label{eq:remaining-range-1}
    x^{\alpha - 1 + \eps} \ll_\eps N \ll_\eps \min\left( x^{2(\alpha-1)+\eps}, x^{(4-3\alpha)/3 - \eps}\right)
\end{equation}
and
\begin{equation} \label{eq:remaining-range-2}
    \min \left( x^{\alpha-1+\eps}, x^{(4-(3+2\theta)\alpha)/(3-8\theta)-\eps} \right)
    \ll_\eps N 
    \ll_\eps 
    x^{(4-3\alpha)/3 - \eps},
\end{equation}
both of which are (barely) covered by \cref{prop:type-II}.$(i)$. Indeed, for \cref{eq:remaining-range-1}, a quick numerical verification shows that 
\[
    \min\left(2(\alpha-1), \frac{4-3\alpha}{3}\right)
    <
    \frac{2-(1+2\theta)\alpha}{3-4\theta}
\]
for $\theta = \tfrac{7}{64}$ and all $\alpha$, the smallest gap being $\approx 0.07$, at $a = 10/9$. In \cref{eq:remaining-range-2}, we have a nontrivial range only when
\[
    \frac{4-(3+2\theta)\alpha}{3-8\theta} \le \frac{4-3\alpha}{3}
    \qquad 
    \iff 
    \qquad 
    \alpha \ge \frac{16}{15} \ge 1.066,
\]
and for such $\alpha$, we have
\[
    \frac{4-3\alpha}{3} < \frac{2-(1+2\theta)\alpha}{3-4\theta}.
\]
Therefore, \cref{eq:type-II} holds in the full range from \cref{eq:type-II-ranges-2}.
\end{proof}

\begin{remark}
As in \cite[\S 3.10]{merikoski2023largest}, the bound for $\Xi_0(M, N)$ in the proof above does not leverage any cancellation over $h$. One can attempt to do this using \cref{cor:kloosterman-averaging-mnc} with $a_m = e(m\alpha_q)$ and $b_n$ as in \cref{eq:third-type-disp}, but the gain in the $H$-aspect would be smaller than the loss in the $\theta$-aspect in our computations. This is because \cref{prop:type-II}.$(ii)$ is only relevant for $\alpha$ close to $1$, i.e., for small values of $H$.
\end{remark}

\begin{proof}[Proof of \cref{prop:type-II}.$(i)$]
This is a refinement of Merikoski's \cite[Prop.\,4.(i)]{merikoski2023largest}, using \cref{cor:kloosterman-averaging-mnc} (plus \cref{thm:large-sieve-dispersion-coeffs}) instead of Deshouillers--Iwaniec's bound \cite[Theorem 9]{deshouillers1982kloosterman}. 

We very briefly recall the relevant parts of Merikoski's argument and the sizes of the parameters therein, pointing the reader to \cite[\S 3]{merikoski2023largest} for details. In \cite[\S 3.4]{merikoski2023largest}, one expanded and Fourier-completed $|\mA_{mn}|$, resulting in a sum over $1 \le |h| \le H$ with
\begin{equation} \label{eq:H-value}
    H := P x^{-1+\delta},
\end{equation}
as before. Then, one removed the smooth cross-conditions in $h, m, n$, and separated $k = (m, n)$ to reach the type-II sums $\Sigma_k(M, N)$ from the first display on \cite[p.\,1275]{merikoski2023largest}; we need to bound these by $\ll_\eps x^{1-\delta}/k$, for $\delta = \delta(\eps)$ to be chosen. 

In \cite[\S 3.5]{merikoski2023largest}, one applied Cauchy--Schwarz keeping the sums over $h, n$ inside, to obtain 
\begin{equation} \label{eq:sigma-k-cauchy}
    \Sigma_k(M, N) \ll \left(\frac{M}{k}\right)^{1/2} \Xi_k(M, N)^{1/2},
\end{equation}
and trivially bounded the contribution of $h_2 n_1 = h_1 n_2$ to $\Xi_k$, using the condition $N \gg_\eps x^{\alpha-1+\eps}$; then they separated $n_0 = (n_1, n_2)$ (and let $n_i \gets n_i/n_0$). We note that considering nontrivial values of the GCD-parameters $k$ and $n_0$ was not necessary in the proof of \cref{prop:type-II}.$(ii)$, since then $(b_n)$ was supported on primes; in a first pass the reader can pretend that $k = n_0 = 1$.

In \cite[\S 3.6]{merikoski2023largest}, one expanded the condition $(m, n_0n_1n_2) = 1$ by M\"obius inversion, resulting in a sum over $d \mid n_0 n_1 n_2$ (we switched notation from $\delta$ to $d$). Then, one applied Gauss' lemma (\cite[Lemma 9]{merikoski2023largest}), resulting in sums $\Psi_k(R, S)$ of incomplete Kloosterman sums, ranging over $r, s$ of sizes 
\begin{equation} \label{eq:RS-values}
    1 \ll R, S \ll \sqrt{\frac{P N}{k n_0}}.
\end{equation}
In \cite[\S 3.7]{merikoski2023largest}, one completed Kloosterman sums, resulting in a sum over $|t| \le T$ with
\begin{equation} \label{eq:T-value}
    T = x^{\delta} \frac{Sd N^2}{Rn_0},
\end{equation}
and trivially bounded the contribution of $t = 0$. This ultimately leads to the sums of Kloosterman sums $\tilde{\Psi}_k(R, S)$
from \cite[p.\,1279]{merikoski2023largest}, which have a relevant \emph{level} of
\begin{equation} \label{eq:rho-value}
    \varrho := dk^2 n_0 n_1 n_2 \asymp \frac{dN^2}{n_0}.
\end{equation}
Finally, in \cite[\S 3.8]{merikoski2023largest}, Merikoski used \cite[Theorem 9]{deshouillers1982greatest} to bound the trilinear sums of Kloosterman sums
\[
    \mK = \mK(d, n_0, n_1, n_2) := \max_{\alpha \pmod{\varrho}} \left\vert 
    \sum_{m \sim \MM} a_m \sum_{n \sim \NN} b_n \sum_{(c, \varrho) = 1} \Phi\left(\frac{m}{\MM}, \frac{n}{\NN}, \frac{c}{\CC}\right) S\left(m \bar{\varrho}, \pm n; c\right) \right\vert,
\]
where $\Phi$ is a smooth function as in \cref{cor:kloosterman-averaging-mnc} with $Z = 1$, $(c_h)$ are bounded coefficients,
\begin{equation} \label{eq:am-bn-jori}
    a_m := e\left(-m\frac{\alpha}{\varrho}\right),
    \qquad\qquad 
    b_n := \sum_{\substack{h_1 \sim H_1 \\ h_2 \sim H_2 \\ n = h_1 n_2 - h_2 n_1}} c_{h_1} \bar{c_{h_2}},
\end{equation}
both of which depend on the level $\varrho$, 
\begin{equation} \label{eq:jori-mn-bounds}
    \MM \ll T, \qquad\qquad 
    \NN \ll \frac{HN}{kn_0}, 
    \qquad\qquad 
    \CC \ll S,
\end{equation}
and $1/2 \le H_2 \le H_1 \le H$.
We will achieve better bounds for $\mK$ by leveraging the structure of the coefficients $(a_m)$ and $(b_n)$. To do so, we note that the coefficients $c_h$ (obtained by removing the cross-condition in $h, m, n$ on \cite[p.\,1274]{merikoski2023largest}) are smooth functions of $h$. In fact, expanding $|\mA_{mn}| - \frac{\rho(mn)}{mn}$ via \cref{lem:truncated-poisson} and fixing $j, u$ up to a logarithmic loss, we can use the coefficients
\[
    c_h := \Psi_j\left(\frac{|h|}{H_j}\right) e\left(-h\frac{ux}{P}\right),
\]
from \cref{eq:h-coeffs-poisson}, where $1 \le 2^j = H_j \le H = P x^{-1+\delta}$, $u \asymp 1$, and $\Psi_j : (\frac{1}{2}, 2) \to \C$ are compactly-supported smooth functions with bounded derivatives. In particular, through \cref{lem:truncated-poisson} we put $|h|$ in (smooth) dyadic ranges, and then separate into positive and negative values of $h$, all before applying Cauchy--Schwarz; so the resulting variables $h_1, h_2$ are of the same size. The coefficients $(b_n)$ from \cref{eq:am-bn-jori} become
\[
    b_n := \sum_{\substack{h_1, h_2 \in \Z \\ n = h_1 n_2 - h_2 n_1}} c_{h_1} \bar{c_{h_2}},
\]
which are in a suitable form to use \cref{thm:large-sieve-dispersion-coeffs} (see also \cref{eq:sequence-choice-2}), with $a = 1$, $H = H_j$, $\alpha_i = \pm ux/P \ll x^\delta H^{-1}$, and 
\begin{equation} \label{eq:L-value}
    L := \frac{N}{k n_0} \asymp n_1 \asymp n_2.
\end{equation}
In particular, since $\varrho \ge n_1 n_2 \asymp L^2$, the tuple $(\varrho, \NN, x, (b_n)_{n \sim \NN}, A_{\NN}, Y_{\NN})$ satisfies \cref{ass:large-sieve} with
\begin{equation} \label{eq:jori-savings-n}
    Y_{\NN} := \max\left(1, \frac{\NN H_j}{(H_j+L)L x^\delta} \right)
    \qquad\quad 
    \text{and}
    \qquad\quad
    A_{\NN} := \|b_n \one_{n \sim \NN}\|_2 + \sqrt{\NN}\sqrt{\frac{H_j}{L} + \frac{H_j^2}{L^2}},
\end{equation}
where we used that $T_{\NN/L}(\alpha_i) \ll T_H(\alpha_i) \ll 1 + H |\alpha_i| \ll x^\delta$. On the other hand, by \cref{thm:large-sieve-expo-phases} (see also \cref{eq:sequence-choice-1}), the tuple $(\varrho, \MM, x, (a_m)_{m \sim \MM}, A_{\MM}, Y_{\MM})$ satisfies \cref{ass:large-sieve}, with
\begin{equation} \label{eq:jori-savings-m}
    Y_{\MM} := \sqrt{\MM} 
    \qquad\quad 
    \text{and}
    \qquad\quad
    A_{\MM} := \sqrt{\MM}.
\end{equation}
By \cref{cor:kloosterman-averaging-mnc}, specifically \cref{eq:kloosterman-mnc-explicit}, it follows that
\[
    \mK \ll_\delta x^{O(\delta)} 
    \left(1 + \frac{\CC}{\sqrt{\varrho Y_{\MM} Y_{\NN}}} \right)^{2\theta}
    A_{\MM} A_{\NN}
    \frac{\left(\sqrt{\varrho}\CC + \sqrt{\MM\NN} + \sqrt{\MM} \CC\right)\left(\sqrt{\varrho}\CC + \sqrt{\MM \NN} + \sqrt{\NN} \CC\right)}{\sqrt{\varrho}\CC + \sqrt{\MM \NN}},
\]
and substituting \cref{eq:jori-savings-m,eq:jori-savings-n} gives
\begin{equation} \label{eq:jori-K-bound}
\begin{aligned}
    \mK \ll_\delta x^{O(\delta)}
    \left(1 + \frac{\CC}{\sqrt{\varrho} \MM^{1/4} \max\left(1, \sqrt{\frac{\NN H_j}{(H_j+L)L}}\right)} \right)^{2\theta}
    \sqrt{\MM} \left(\|b_n \one_{n \sim \NN}\|_2 + \sqrt{\NN} \sqrt{\frac{H_j}{L} + \frac{H_j^2}{L^2}}\right)
    \\
    \times
    \frac{\left(\sqrt{\varrho}\CC + \sqrt{\MM\NN} + \sqrt{\MM} \CC\right)\left(\sqrt{\varrho}\CC + \sqrt{\MM \NN} + \sqrt{\NN} \CC\right)}{\sqrt{\varrho}\CC + \sqrt{\MM \NN}}.
\end{aligned}
\end{equation}
Since $\NN \ll \varrho$ (which follows from $H \ll N$), the term on the second line of \cref{eq:jori-K-bound} is at most $\ll \sqrt{\rho} \CC + \sqrt{\MM} \CC + \sqrt{\MM \NN}$, as in \cite[p.\,1280]{merikoski2023largest}. The resulting bound is non-decreasing in $\MM, \CC$, so we can plug in their upper bounds from \cref{eq:jori-mn-bounds}, as well as \cref{eq:T-value,eq:rho-value} to obtain
\[
\begin{aligned}
    \sum_{d \mid n_0 n_1 n_2}
    \frac{1}{TH^2} \mK(d, n_0, n_1, n_2)
    \ll_\delta 
    x^{O(\delta)}
    \max_{d \ge 1}
    \frac{Rn_0}{SdN^2 H^2}
    \left(1 + \frac{S \min\left(1, \sqrt{\frac{(H_j+L)L}{\NN H_j}}\right)}{\sqrt{\frac{dN^2}{n_0}} \left(\frac{SdN^2}{Rn_0}\right)^{1/4} } \right)^{2\theta}
    \\ 
    \times
    \sqrt{\frac{SdN^2}{Rn_0}} 
    \left(\|b_n \one_{n \sim \NN}\|_2 + \sqrt{\NN} \sqrt{\frac{H_j}{L} + \frac{H_j^2}{L^2}}\right)
    \\
    \times 
    \left(\sqrt{\frac{dN^2}{n_0}} S + \sqrt{\frac{SdN^2}{Rn_0}} S + \sqrt{\frac{SdN^2}{Rn_0}\NN}\right),
\end{aligned}
\]
where none of the remaining variables have implicit dependencies on $d$. The right-hand side is seen to be non-increasing in $d$, so we can plug in $d = 1$ for an upper bound. Moreover, when summing over $n_1, n_2 \sim L = N/(kn_0)$, we have the same bound as in \cite[bottom of p.\,1280]{merikoski2023largest} (by \cite[Lemma 7]{merikoski2023largest}) for the contribution of $A_{\NN}$:
\[
    \sum_{n_1, n_2 \sim L} \left(\|b_n \one_{n \sim \NN}\|_2 + \sqrt{\NN} \sqrt{\frac{H_j}{L} + \frac{H_j^2}{L^2}} \right) 
    \ll 
    \sqrt{\NN} \max\left(H_jL, H_j^{1/2} L^{3/2}\right).
\]
The resulting bound for $\sum_{n_1, n_2 \sim L} \sum_{d \mid n_0 n_1 n_2} \frac{1}{TH^2} \mK(d, n_0, n_1, n_2)$ is non-decreasing in $\NN, H_j$, so we can plug in the upper bounds in $\NN \ll HL$ and $H_j \ll H$ and simplify the resulting expression to obtain
\[
\begin{aligned}
    \sum_{n_1, n_2 \sim L}
    \sum_{d \mid n_0 n_1 n_2}
    \frac{1}{TH^2} \mK(d, n_0, n_1, n_2)
    \ll_\delta 
    x^{O(\delta)}
    \left(1 + \frac{S^{3/4} R^{1/4} n_0^{3/4}}{N^{3/2}} \min\left(1, \frac{\sqrt{H+L}}{H}\right) \right)^{2\theta}
    \\
    \times
    \max\left(H^{1/2} L^{3/2}, L^2\right)
    \left(\frac{\sqrt{RS}}{H} + \frac{S}{H} + \sqrt{\frac{L}{H}}\right).
\end{aligned}
\]
Summing over $n_0$ and plugging in the bounds for $R, S, L$ from \cref{eq:RS-values,eq:L-value}, we get
\[
\begin{aligned}
    \Upsilon_k 
    &:= 
    \sum_{n_0 \ll N} \rho(n_0) \sum_{n_1, n_2 \sim L} \sum_{d \mid n_0 n_1 n_2} \frac{1}{TH^2} \mK(d, n_0, n_1, n_2)
    \\
    &\ll_\delta 
    x^{O(\delta)}
    \sum_{n_0 \ll N} 
    \left(1 + \frac{\sqrt{PN} n_0^{1/4}}{\sqrt{k} N^{3/2}} \min\left(1, \frac{\sqrt{H+\frac{N}{kn_0}}}{H}\right) \right)^{2\theta}
    \\
    &\times
    \max\left(H^{1/2} \left(\frac{N}{kn_0}\right)^{3/2}, \left(\frac{N}{kn_0}\right)^2\right)
    \left(\frac{\sqrt{PN}}{\sqrt{k n_0} H} + \sqrt{\frac{N}{kn_0 H}}\right).
\end{aligned}
\]
Using that $H \ll N$, this further yields
\[
\begin{aligned}
    \Upsilon_k 
    &\ll_\delta 
    \frac{x^{O(\delta)}}{k}
    \sum_{n_0 \ll N} 
    \frac{1}{n_0^{2-(\theta/2)}}
    \left(1 + \frac{\sqrt{PN}}{N^{3/2}} \min\left(1, \frac{\sqrt{N}}{H}\right) \right)^{2\theta}
    N^2
    \left(\frac{\sqrt{PN}}{H} + \sqrt{\frac{N}{H}}\right)
    \\
    &\ll_\delta
    \frac{x^{O(\delta)}}{k} 
    \left(1 + \min\left(\frac{\sqrt{P}}{N}, \frac{\sqrt{P}}{\sqrt{N} H}\right) \right)^{2\theta} 
    \left( 
    \frac{\sqrt{P} N^{5/2}}{H} + \frac{N^{5/2}}{\sqrt{H}}
    \right).
\end{aligned}
\]
Since we have $N \le \sqrt{x} \le \sqrt{P}$ and $\sqrt{N}H \le x^{1/4} P x^{-1+\delta} \le x^{1/4 + 3/4 -1 + \delta} \sqrt{P}$ for the ranges in \cref{prop:type-II}.$(i)$, we may ignore the $1$-term in the $\theta$-factor; plugging in \cref{eq:H-value}, we obtain
\[
    \Upsilon_k \ll_\delta 
    \frac{x^{1+O(\delta)} N^{5/2}}{k \sqrt{P}} 
    \min\left(\frac{\sqrt{P}}{N}, \frac{x}{\sqrt{NP}}\right)^{2\theta},
\]
which improves \cite[(3.7)]{merikoski2023largest}. In light of \cref{eq:sigma-k-cauchy} and $MN = P$, this gives a contribution to $\Sigma_k(M, N)$ of
\[
    \ll_\delta 
    \frac{x^{1/2+O(\delta)}}{k}
    P^{1/4} N^{3/4}
    \min\left(\frac{\sqrt{P}}{N}, \frac{x}{\sqrt{NP}}\right)^{\theta},
\]
which is acceptable (i.e., $\ll_\eps x^{1-\delta}/k$) provided that for a large enough absolute constant $K$,
\[
    N \ll_\eps x^{-K\delta} \max\left(x^{2/(3-4\theta)} P^{-(1+2\theta)/(3-4\theta)}, x^{2(1-2\theta)/(3-2\theta)} P^{-(1-2\theta)/(3-2\theta)} \right).
\]
Choosing $\delta := \eps/K$ and substituting $P = x^\alpha$ completes our proof.
\end{proof}

\subsection{Sieve computations} \label{subsec:harman-sieve}
To complete the proof of \cref{thm:appl-greatest-prime}, it remains to adapt the calculation in \cite[\S 2]{merikoski2023largest} with our Type I and Type II information.

\begin{notation}[Set-up for sieve computations] \label{not:set-up-harman}
Further to \cref{not:set-up-info}, we follow \cite[p.\,1257]{merikoski2023largest} and let $P_x := P^+\left(\prod_{x\le n \le 2x}(n^2+1)\right)$, then use a smooth dyadic partition of unity to split
\[
    S(x) := \sum_{\substack{x < p \le P_x \\ p \text{ prime}}} |\mA_p| \log p
\]
into a sum over $x \le P \le P_x$, $P = P_j = 2^j x$ of
\[
    S(x, P) := \sum_{p \text{ prime}} \Psi_j \left(\frac{p}{P}\right) |\mA_p| \log p,
\]
up to an error of $O(x)$. Here $\Psi_j$ are smooth functions supported on $[1, 4]$, with $\Psi_j^{(k)} \ll_k 1$ for all $k \ge 0$. Following \cite[p.\,1259]{merikoski2023largest}, given $z \ge 1$ and $u \in \Z_+$, we also let $P(z) := \prod_{\text{prime } p < z} p$ and
\[
    S(\mA(P)_u, z) := \sum_{(n, P(z)) = 1} |\mA_{un}| \Psi\left(\frac{un}{P}\right) \log(un),
\]
so that $S(x, P) = S(\mA(P), 2\sqrt{P})$ (where dropping the $u$ index means that $u = 1$). This has a corresponding main term of
\[
    S(\mB(P)_u, z) := X \sum_{(n, P(z)) = 1} \frac{\rho(un)}{un} \Psi\left(\frac{un}{P}\right) \log(un),
\]
sums of which can be computed via \cite[Lemma 1]{merikoski2023largest}. Finally, the linear sieve upper bound will require the solutions $F(s), f(s)$ to the delay-differential equation system from \cite[p.\,1263]{merikoski2023largest}, while the Harman sieve computations will require the Buchstab function $\omega(u)$, bounded as in \cite[(2.5)]{merikoski2023largest}.
\end{notation}

As in \cite[p.\,1257]{merikoski2023largest}, we aim to find the greatest $\bar{\omega}$ for which 
\begin{equation} \label{eq:harman-final-bound}
    \sum_{\substack{x \le P \le x^{\bar{\omega}} \\ P = 2^j x}} S(x, P) \le (1-\eps)\, X \log x.
\end{equation}
Since $S(x) = X \log x + O(x)$ (see \cite[(2.1)]{merikoski2023largest}), this will imply the lower bound $P_x \ge x^{\bar{\omega}}$.

\begin{lemma}[Linear sieve upper bound] \label{lem:linear-sieve}
For any $\eps > 0$ there exists $\delta > 0$ such that the following holds.  With $\theta := \tfrac{7}{64}$, \cref{not:set-up-info}, \cref{not:set-up-harman}, and $D := x^{-\eps} \min\left(x^{1/2}, x^{(1-2\theta \alpha)/(2-5\theta)}\right)$, one has
\[
    S(\mA(P), z) \le (1 + \delta)\, X \int \Psi\left(\frac{u}{P}\right) \frac{\alpha \log x}{e^\gamma \log z} F\left(\frac{\log D}{\log z}\right) \frac{du}{u},
\]
for any $x^\eps < z < D$, where $\gamma$ is the Euler--Mascheroni constant.
\end{lemma}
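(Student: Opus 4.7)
The plan is to apply a standard upper bound linear sieve of Rosser--Iwaniec / Jurkat--Richert type at level $D$ to the weighted sequence $\bigl(|\mA_n|\,\Psi(n/P)\log n\bigr)_n$, with our Type I estimate (Proposition \ref{prop:type-I}) as the key arithmetic input. This produces sieve coefficients $\lambda_d^+$, bounded by $1$ in absolute value and supported on square-free $d \le D$ with $d \mid P(z)$, such that
$$S(\mA(P), z) \le \sum_{d \mid P(z)} \lambda_d^+ \sum_{n \equiv 0 \,(d)} |\mA_n|\, \Psi(n/P)\log n.$$
I would then decompose $|\mA_n| = X \rho(n)/n + \bigl(|\mA_n| - X\rho(n)/n\bigr)$, splitting the right-hand side into a main term and an error $R$.

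For the error $R$, the slowly varying weight $\Psi(n/P)\log n$ can be separated from the arithmetic factor via partial summation (replacing $\log n$ by $\alpha\log x$ up to $O(1)$ and absorbing the smooth dependence on $n/P$ into a test function), which puts $R$ in the exact format of Proposition \ref{prop:type-I}, with $q \leftrightarrow n$ and $(\lambda_d) \leftrightarrow (\lambda_d^+)$. The choice of $D$ in the statement matches the admissible Type I range there, yielding $R \ll_\eps x^{1-\delta'}$ for some $\delta' > 0$ depending only on $\eps$, which is harmless after taking $\delta$ small enough. For the main term, I would swap the order of summation and apply the Jurkat--Richert theorem for the linear sieve of dimension $1$ to the density $g(d) = \rho(d)/d$ (the sifting dimension is $1$ because $\rho(p) = 2$ for $p \equiv 1 \pmod 4$ and $\rho(p) = 0$ for $p \equiv 3 \pmod 4$, so $\sum_{p<z}\rho(p)\log p/p = \log z + O(1)$ by Dirichlet's theorem). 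This gives
$$\sum_{d \mid P(z)} \lambda_d^+\, \frac{\rho(d)}{d} \le V(z)\, F\!\left(\frac{\log D}{\log z}\right)(1 + o(1)), \qquad V(z) := \prod_{p < z}\!\left(1 - \frac{\rho(p)}{p}\right),$$
and Mertens' theorem together with the class-number formula for $\Q(i)$ yields $V(z) \sim c_1/(e^\gamma \log z)$ for an explicit constant $c_1$. Evaluating the remaining smooth sum $\sum_n \Psi(n/P)\log n \cdot \rho(n)/n$ by a second Mertens-type calculation produces the factor $\alpha\log x \cdot c_1^{-1} \int \Psi(u/P)\,\frac{du}{u}(1+o(1))$; the two occurrences of $c_1$ cancel, and collecting everything gives exactly the claimed bound.

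The genuinely new ingredient here is the admissible Type I level $D$ in Proposition \ref{prop:type-I}, which in turn relies on our large sieve for exponential phases (Theorem \ref{thm:large-sieve-expo-phases}); everything else is the standard sieve bookkeeping used in \cite{merikoski2023largest,de2020niveau,hooley1967greatest}. The only care point is arranging the smooth cutoffs and the $\log n$ factor so that Proposition \ref{prop:type-I} applies directly without further manipulation, which is routine.
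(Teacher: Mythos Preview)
Your proposal is correct and follows the same approach as the paper, which simply cites \cite[Lemma 2]{merikoski2023largest} with the updated level $D$ coming from the Type~I estimate in \cref{prop:type-I}. One minor simplification: you do not need partial summation to handle the error term $R$, since \cref{prop:type-I} is already stated with the weight $\Psi(q/P)\log q$ built in, so $R$ is literally the sum appearing there with $\lambda_d = \lambda_d^+$.
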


\begin{proof}
This is just \cite[Lemma 2]{merikoski2023largest} with the updated parameter $D$ from our Type I information (\cref{prop:type-I}).
\end{proof}

\begin{proposition}[Asymptotics for Harman sieve sums]
\label{prop:asymptotics-harman}
For any $\eps > 0$ there exists $\delta > 0$ such that the following hold.
With $\theta := \tfrac{7}{64}$, \cref{not:set-up-info} and \cref{not:set-up-harman},
let 
\[
    D := x^{-\eps} \min\left(x^{1/2}, x^{(1-2\theta \alpha)/(2-5\theta)}\right),
    \qquad\qquad 
    U := Dx^{1-\alpha-\eps} =: x^\xi,
\] 
and $(\lambda_u)$ be divisor-bounded coefficients. Also, let
\begin{equation} \label{eq:sigma-0}
    \sigma_0 := \max \left(\frac{2-(1+2\theta)\alpha}{3-4\theta}, \frac{(1-2\theta)(2-\alpha)}{3-2\theta}\right)
\end{equation}
be the exponent from \cref{prop:type-II}.$(i)$.
\begin{itemize}
    \item[$(i)$.] For $1 \le \alpha < 228/203 - O(\eps)$ and 
    \begin{equation} \label{eq:sigma-exponent}
        \sigma := \max \left(\frac{4-3\alpha}{3}, \sigma_0 \right) - \eps,
    \end{equation}
    one has
    \[
        \sum_{u \le U} \lambda_u \left( S\left(\mA(P)_u, x^\sigma\right) -
        S\left(\mB(P)_u, x^\sigma\right)\right)
        \ll_\eps x^{1-\delta}.
    \]
    \item[$(ii)$.] For $1 \le \alpha < 139/114 -O(\eps)$ and
    \begin{equation} \label{eq:gamma-exponent}
        \gamma := \sigma_0 - (\alpha-1) - 2\eps,
    \end{equation}
    one has
    \[
        \sum_{u \le U} \lambda_u \left( S\left(\mA(P)_u, x^\gamma\right) -
        S\left(\mB(P)_u, x^\gamma\right)\right)
        \ll_\eps x^{1-\delta}.
    \]
\end{itemize}

\end{proposition}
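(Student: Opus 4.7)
The proof adapts Merikoski's iterative Buchstab sieve strategy \cite[Proposition 5]{merikoski2023largest} to our improved arithmetic inputs from \cref{prop:type-I,prop:type-II}. At the core, we iteratively apply Buchstab's identity
\[
    S(A, z) = S(A, w) - \sum_{w \le p < z} S(A_p, p), \qquad w \le z,
\]
in parallel to $\mA(P)_u$ and $\mB(P)_u$, decomposing the sifted difference into a tree indexed by sequences of primes. At each node, we aim to show that some sub-product $u \cdot p_1 \cdots p_k$ of the peeled variables lies in an admissible range for Type I or Type II estimates: the Type I range $u p_1 \cdots p_k \le D$ from \cref{prop:type-I}, or the Type II ranges from \cref{prop:type-II}, namely $[x^{\alpha-1+\eps}, x^{\sigma_0 - \eps}]$ (with square-free coefficients) or $[x^{\alpha-1+\eps}, x^{(4-3\alpha)/3 - \eps}]$ (with coefficients supported on primes). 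Whenever one of these fits, the corresponding contribution is directly bounded. Terminal terms where no such decomposition fits are handled via the linear sieve upper bound \cref{lem:linear-sieve} together with lower bounds on the $\mB$-main term via the Buchstab function $\omega$, as in \cite[(2.5)]{merikoski2023largest}.

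For part $(i)$, the cutoff $\sigma = \max(\tfrac{4-3\alpha}{3}, \sigma_0) - \eps$ is the largest threshold at which some Type II estimate can be invoked at every Buchstab branch. When $\sigma_0 \ge \tfrac{4-3\alpha}{3}$ we have $\sigma = \sigma_0 - \eps$ and \cref{prop:type-II}.$(i)$ (square-free) suffices; for $\alpha$ near $1$, we have $\sigma = \tfrac{4-3\alpha}{3} - \eps > \sigma_0 - \eps$, and single primes $p$ in the extended range $[x^{\sigma_0-\eps}, x^{(4-3\alpha)/3 - \eps}]$ are absorbed using the prime-supported \cref{prop:type-II}.$(ii)$. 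For part $(ii)$, the finer cutoff $\gamma = \sigma_0 - 2\eps$ lies strictly below the top of the square-free Type II range, so every single prime factor peeled by Buchstab can be directly promoted to a Type II variable; this simpler setup accommodates the wider range $\alpha < 139/114 - O(\eps)$, which is precisely the threshold for $\sigma_0 \ge \alpha - 1$ (the non-emptiness condition for the Type II $(i)$ range, controlled by the second term $\frac{(1-\theta)(2-\alpha)}{3-\theta}$ in the definition of $\sigma_0$). The corresponding threshold $\alpha < 228/203 - O(\eps)$ in part $(i)$ emerges from the joint combinatorics of the two Type II ranges under the Buchstab decomposition.

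The principal obstacle is the numerical verification of the two $\alpha$-thresholds. Following \cite[\S 2]{merikoski2023largest}, this reduces to evaluating multidimensional integrals of the Buchstab function $\omega(u)$ over polytopal cells in the exponents $(\log p_1 / \log x, \ldots, \log p_r / \log x)$, and checking that the admissible region (where every cell admits either a Type II or a linear-sieve treatment) covers the full parameter space up to the stated thresholds. The geometric/analytic setup is identical to Merikoski's, but with our updated Type II exponents $\frac{2-(1+\theta)\alpha}{3-2\theta}$ and $\frac{(1-\theta)(2-\alpha)}{3-\theta}$ from \cref{prop:type-II}.$(i)$ replacing his narrower range, and with the Type I level $D$ from our \cref{prop:type-I} controlling the overall reservoir; these enlargements directly translate into the improved thresholds $228/203$ and $139/114$ beyond Merikoski's.
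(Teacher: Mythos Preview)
Your overall strategy---adapt Merikoski's Buchstab-iteration argument with the updated Type~I and Type~II ranges from \cref{prop:type-I,prop:type-II}---is exactly what the paper does. However, two points in your write-up are confused and would not constitute a proof as stated.

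First, the thresholds $228/203$ and $139/114$ are \emph{not} obtained by numerically evaluating Buchstab integrals; that computation belongs to the proof of \cref{thm:appl-greatest-prime}, not to this proposition. Here the thresholds are simple algebraic conditions ensuring that the Buchstab decomposition terminates with every branch covered by a Type~I or Type~II estimate. Concretely, Merikoski's Proposition~3 requires $2(\alpha-1) < \sigma_0 - O(\eps)$, and with $\theta = 7/32$ the second term $\frac{(1-\theta)(2-\alpha)}{3-\theta}$ in \cref{eq:sigma-0} gives exactly $\alpha < 228/203$. Similarly, Merikoski's Proposition~4 requires $\alpha-1 < \sigma_0 - O(\eps)$, yielding $\alpha < 139/114$. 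You identified the latter correctly but left the former as vague ``joint combinatorics,'' and your third paragraph conflates two distinct stages of the overall argument.

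Second, for an asymptotic formula (error $\ll_\eps x^{1-\delta}$), there are no ``terminal terms'' to be handled by one-sided linear-sieve bounds against a Buchstab lower bound for $\mB$: that trade-off is used elsewhere in Harman's method to produce inequalities, not asymptotics. The point of the range restrictions on $\alpha$ is precisely that \emph{every} branch of the Buchstab tree lands in an admissible Type~I or Type~II configuration, so the difference $S(\mA(P)_u, \cdot) - S(\mB(P)_u, \cdot)$ is bounded directly. Your first paragraph should drop that sentence.
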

\begin{proof}
These are just \cite[Propositions 3 and 4]{merikoski2023largest}, adapted with our Type II information from \cref{prop:type-II}; the additional term of $(4-3\alpha)/3$ from \cref{eq:sigma-exponent} comes from \cref{prop:type-II}.$(ii)$. We note that the proof of \cite[Proposition 3]{merikoski2023largest} requires
\[
    2(\alpha - 1) < \sigma_0 - O(\eps) = \max \left(\frac{2-(1+2\theta)\alpha}{3-4\theta}, \frac{(1-2\theta)(2-\alpha)}{3-2\theta}\right) - O(\eps),
\]
which happens for $\alpha < 228/203 - O(\eps)$. Similarly, the proof of \cite[Proposition 4]{merikoski2023largest} requires
\[
    \alpha - 1 < \sigma_0 - O(\eps) = \max \left(\frac{2-(1+2\theta)\alpha}{3-4\theta}, \frac{(1-2\theta)(2-\alpha)}{3-2\theta}\right) - O(\eps),
\]
which happens for $\alpha < 139/114 - O(\eps)$.
\end{proof}

We are now ready to prove our \cref{thm:appl-greatest-prime}, in a very similar manner to \cite[\S 2.6]{merikoski2023largest}.

\begin{proof}[Proof of \cref{thm:appl-greatest-prime}]
We follow the Harman sieve computations in \cite[\S 2.4]{merikoski2023largest}, applying Buchstab's identity in the same ways (with adapted ranges corresponding to the values of $D, U, \sigma_0, \sigma, \gamma, \xi$ from \cref{lem:linear-sieve,prop:asymptotics-harman}). The five ranges relevant in the proof are now $\alpha < 25/24$, $25/24 \le \alpha < 228/203$, $228/203 \le \alpha < 7/6$, $7/6 \le \alpha < 139/114$, and $\alpha \ge 139/114$. Here, the values $228/203$ and $139/114$ come from \cref{prop:asymptotics-harman}, while $25/24$ and $7/6$ are the thresholds deciding the inequalities $\alpha < \xi + 2\sigma$, respectively $2(\alpha-1) < \xi$, up to $o(1)$ factors. Indeed, we recall that
\[
    \xi = \min\left(\frac{1}{2}, \frac{(1-2\theta \alpha)}{2-5\theta}\right) - (\alpha-1) - 2\eps,
\]
and only the first term in the minimum is relevant for the aforementioned inequalities. We thus obtain
\[
    \sum_{\substack{x \le P \le x^{139/114} \\ P = 2^j x}} S(x, P) \le \left(\frac{7}{6}-1 + G_1 + G_2 + G_3 + G_4 + G_5 - G_6 + o(1)\right) X \log x,
\]
where 
\[
\begin{aligned}
    G_1 &:= \int_1^{25/24} \alpha \left(\int_\sigma^{\alpha-2\sigma} \omega\left(\frac{\alpha}{\beta}-1\right) \frac{d\beta}{\beta^2}
    +
    \int_\xi^{\alpha/2} \omega\left(\frac{\alpha}{\beta}-1\right) \frac{d\beta}{\beta^2}\right) d\alpha < 0.02093,
    \\
    G_2 &:= \int_{25/24}^{228/203} \alpha \int_\sigma^{\alpha/2} \omega\left(\frac{\alpha}{\beta}-1\right) \frac{d\beta}{\beta^2} d\alpha < 0.10528,
    \\
    G_3 &:= \int_{228/203}^{7/6} \alpha \int_{\sigma_0}^{\alpha/2} \omega\left(\frac{\alpha}{\beta}-1\right) \frac{d\beta}{\beta^2} d\alpha < 0.07319,
    \\
    G_4 &:= \int f_4\left(\alpha, \vec{\beta}\right) \alpha\, \omega\left(\frac{\alpha-\beta_1-\beta_2-\beta_3}{\beta_3}\right) \frac{d\beta_1\, d\beta_2\, d\beta_3}{\beta_1 \beta_2 \beta_3^2} d\alpha < 0.00163,
    \\
    G_5 &:= 4 \int_{7/6}^{139/114} \alpha\, d\alpha < 0.25116,
    \\
    G_6 &:= \int_{7/6}^{139/114} \alpha \int_{\alpha-1}^{\sigma_0} \omega\left(\frac{\alpha}{\beta} - 1\right) \frac{d\beta}{\beta^2} d\alpha > 0.02789.
\end{aligned}
\]
Here, $f_4$ denotes the characteristic function of the set
\[
\begin{aligned}
    \Big \{\frac{228}{203} < \alpha < \frac{7}{6},\ &\gamma < \beta_3 < \beta_2 < \beta_1 < \alpha-1,
    \\
    &\beta_1 + \beta_2, \beta_1 + \beta_3, \beta_2 + \beta_3, \beta_1 + \beta_2 + \beta_3 \not\in [\alpha-1, \sigma_0] \Big \}.
\end{aligned}
\]
We computed the integrals $G_i$ (for $i \neq 5$) by directly adapting the ranges in Merikoski's Python 3.7 code files (see \cite[p.\,1268]{merikoski2023largest}). In the expression for $G_5$, we implicitly used the value $D = x^{1/2-\eps}$ since $\frac{1}{2} < \frac{(1-2\theta\alpha)}{2-5\theta}$ for $\alpha \le 139/114$, and the fact that $1 < 1/(2(\alpha-1)) \le 3$ for $7/6 \le \alpha \le 139/114$. Thus
\[
    \sum_{\substack{x \le P \le x^{139/114} \\ P = 2^j x}} S(x, P)
    < 
    0.59097\, X \log x.
\]
For the remaining range $\alpha \ge 139/114$, we apply \cref{lem:linear-sieve} to obtain (as in \cite[(2.8)]{merikoski2023largest})
\[
    \sum_{\substack{x^{139/114} \le P \le x^{\bar{\omega}} \\ P = 2^j x}} 
    S(x, P) 
    \le
    \left(4 \int_{139/114}^{1.25} \alpha\, d\alpha + 
    (4-10\theta) \int_{1.25}^{\bar{\omega}} \frac{\alpha}{1-2\theta\alpha} d\alpha \right) X \log x,
\]
where $\alpha = 1.25 = 5/4$ is the threshold at which the expression for $D$ changes (i.e., when $\frac{1}{2} = \frac{(1-2\theta\alpha)}{2-5\theta}$). We conclude that \cref{eq:harman-final-bound} holds (for small enough $\eps$) provided that
\[
    (4-10\theta) \int_{1.25}^{\bar{\omega}} \frac{\alpha}{1-2\theta\alpha} d\alpha 
    < 
    1 
    -
    0.59097
    -
    4 \int_{139/114}^{1.25} \alpha\, d\alpha,
\]
where the right-hand side is at least $0.257406$.
This inequality (barely) holds true when $\bar{\omega} = 1.30008$, which proves \cref{thm:appl-greatest-prime}.
\end{proof}

\bibliographystyle{plain}
\bibliography{main}

\end{document}